\documentclass[12pt]{amsart}
\usepackage{amsthm}
\usepackage[all]{xy}
\usepackage{amsmath,amsfonts,textcomp,longtable,pstricks,mathrsfs}
\usepackage{amssymb}
\usepackage{amscd}

\newtheorem{thm}{Theorem }[section]

\newtheorem{lemma}[thm]{Lemma }

\newtheorem{prop}[thm]{Proposition }

\newtheorem{corollary}[thm]{Corollary }

\theoremstyle{definition}
\newtheorem{deff}[thm]{Definition }

\newtheorem{rem}[thm]{Remark }

\newtheorem{ex}[thm]{Example }

\newtheorem{notation}[thm]{Notation }

\def\RR{{\mathbb R}}

\def\CC{{\mathbb C}}

\def\FF{{\mathbb F}}
\def\FFF{\overline{\mathbb F}}

\def\HH{{\mathbb H}}

\def\NN{{\mathbb N}}

\def\QQ{{\mathbb Q}}
\def\BQQ{{\overline{\mathbb Q}}}
\def\ZZ{{\mathbb Z}}

\def \bra#1\ket {\mathop{\vphantom{#1}\left<\smash{#1}\right>}\nolimits}
\def\Die{Dieudonn\'e }

\def\ord{{\mathrm{ord}}}
\def\op{\mathrm{op}}
\def\max{{\mathrm{max}}}

\DeclareMathOperator{\inv}{inv}

\DeclareMathOperator{\Hom}{Hom}
\DeclareMathOperator{\End}{End}

\DeclareMathOperator{\coker}{Coker}

\DeclareMathOperator{\tr}{tr}

\DeclareMathOperator{\Gal}{Gal}
\DeclareMathOperator{\Spec}{Spec}

\DeclareMathOperator{\Tr}{Tr}

 \DeclareMathOperator{\Br}{Br}
 
\DeclareMathOperator{\AV}{AV} 
\DeclareMathOperator{\Mat}{Mat}

\def\rig{\mathrm{rig}}

\def\Md#1{\operatorname{{#1}--\mathrm{mod}}}
\def\TF#1{\operatorname{{#1}--\mathrm{mod}^\mathrm{{TF}}}}
\def\RM#1{\operatorname{{#1}--\mathrm{mod}^\rig}}

\DeclareMathOperator{\Del}{Del}
\DeclareMathOperator{\TDel}{TDel}
\DeclareMathOperator{\DieW}{Die}

\DeclareMathOperator{\SSDel}{SSDel}
\DeclareMathOperator{\GDel}{GDel}

\newcommand{\leg}[2]{\genfrac{(}{)}{}{}{#1}{#2}}

\def\C{\mathcal{C}}

\def\cC{\mathcal{C}}
\def\D{\mathcal{D}}

\def\L{\mathcal{L}}

\def\M{\mathcal{M}}
\def\hM{\widehat{\mathcal{M}}}

\def\F{\mathcal{F}}

\def\OO{\mathcal{O}}
\def\cT{\mathcal{T}}
\def\hT{\widehat{\mathcal{T}}}

\def\p{\mathfrak{p}}
\def\q{\mathfrak{q}}
\def\T{\Lambda}

\def\CG{\overline{G}}

\def\plim{\mathop{{\lim\limits_{\longleftarrow}}}\nolimits}

\def\locp{[\frac{1}{p}]}

\newcommand \eps {\varepsilon}
\renewcommand \phi {\varphi}

\begin{document}
\author{Sergey Rybakov}
\thanks{Supported by the Israel Science Foundation,  grant No.  1405/22}
\address{Department of Mathematics, Ben-Gurion University of the Negev, Israel}
\email{rybakov.sergey@gmail.com}%
\title[Twisted Deligne modules]
{Twisted Deligne modules and \\ abelian varieties over finite fields}
\date{}
\keywords{abelian variety, finite field, Weil polynomial, cyclic algebra, Kummer theory}

\subjclass{11G10, 14K15, 14G15,}

\begin{abstract}
Isogeny classes of abelian varieties over finite fields were described by Tate and Honda. Deligne proved that the category of ordinary abelian varieties over a finite field is equivalent to the category of ordinary Deligne modules. 
Centeleghe and Stix extended Deligne's results to the whole category of abelian varieties,
but, if the base finite field is not prime, then the target category is not related to Deligne modules. In this paper, we introduce generalized and twisted Deligne modules and give a more direct generalization of the Deligne theorem.
\end{abstract}

\dedicatory{Dedicated to the memory of Yuri Zarhin.}

\maketitle

\section{Introduction.}
\subsection{Abelian varieties and Deligne modules.}
Over complex numbers there is an equivalence of categories from the category of complex polarized abelian varieties to the category of polarized Hodge structures of type $(1,1)$:
\[A\mapsto H_1(A,\ZZ).\]
Moreover, the Hodge structure on the group of singular cohomology $H_1(A,\ZZ)$ is uniquely determined by $A$ up to a unique isomorphism. 

If the base field is finite, the situation is more complicated. 
According to the results of Tate and Honda, a simple abelian variety $A$ over a finite field is determined up to isogeny by an eigenvalue of the Frobenius endomorphism on the first étale cohomology group of $A$, the Weil number. More generally, an abelian variety is determined up to isogeny by its Weil polynomial. There are many results that describe the category of abelian varieties over finite fields under various 
assumptions~\cite{Del, CS15, CS2, EC18, BKM, OO}. 
Let us briefly recall some of the results related to Deligne modules.

Let $\FF_q$ be a finite field.

\begin{deff}
 \emph{A Deligne module} is a free and finitely generated abelian group endowed with an action of two operators $F$ and $V$ such that $FV=VF=q$, and $F\otimes\QQ$ is semi-simple.
 A Deligne module is \emph{ordinary} if the eigenvalues of $F$ are ordinary Weil numbers.
\end{deff}

\noindent

\begin{thm}~\cite{Del}\label{Del_thm}
The category of ordinary abelian varieties over $\FF_q$ is equivalent to the category of ordinary Deligne modules. If $D$ is a Deligne module of $A$, then the Weil polynomial $f_A$ and the Tate module $T_\ell(A)$ of $A$ are related to $D$ as follows:
\[f_A(t)=\det(t-F|D),\text{ and }\; T_\ell(A)\cong D\otimes_\ZZ\ZZ_\ell.\] 
\end{thm}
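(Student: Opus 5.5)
Following Deligne, I would go through canonical lifts and complex uniformization. Let $W=W(\overline{\FF}_q)$ and fix once and for all an embedding $\varepsilon\colon W\hookrightarrow\CC$. For an ordinary abelian variety $A$ over $\FF_q$, the Serre--Tate theory gives a canonical lift $\tilde A$ over $W$. It is characterized as the unique lift to which every endomorphism lifts; equivalently, its $p$-divisible group splits as (multiplicative)$\times$(étale). In particular the Frobenius $F_A$ lifts to an endomorphism $\tilde F$ of $\tilde A$, and the construction $A\mapsto\tilde A$ is functorial and fully faithful, because homomorphisms between canonical lifts are determined by, and exist for, every homomorphism of the reductions. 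I would then set
\[T(A)=H_1\bigl(\tilde A\otimes_{W,\varepsilon}\CC,\ZZ\bigr),\]
with $F$ acting through $\tilde F$. The Verschiebung also lifts, giving $V$ with $FV=VF=q$. Since $\End(A)\otimes\QQ$ acts faithfully and $\QQ[F_A]$ is a product of fields, $F\otimes\QQ$ is semisimple. So $T$ is a functor to ordinary Deligne modules, and it is faithful.

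The key steps, in order, are as follows.

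(1) Show that $T$ is fully faithful. $\Hom(A,B)\to\Hom(\tilde A,\tilde B)$ is bijective by canonical lifting, and this in turn is the same as $\Hom(\tilde A_\CC,\tilde B_\CC)$, since homomorphisms of abelian schemes are unaffected by passing to algebraically closed base extensions (rigidity, and the fact that all homomorphisms are defined over a finite extension of the fraction field). The complex group is the set of lattice maps preserving the Hodge structure. So I must show that a map of lattices commuting with $F$ automatically respects the Hodge decomposition. The ordinarity input is this: the Hodge decomposition $H_1\otimes\CC=H^{-1,0}\oplus H^{0,-1}$ is exactly the splitting of $H_1\otimes\CC$ into the sum of eigenspaces of $F$ whose eigenvalues are $p$-adic units (via $\varepsilon$ and the $p$-adic valuation) and the sum of those eigenspaces with eigenvalues divisible by $p$. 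This comes from the splitting of the $p$-divisible group: on the tangent space, $\tilde F$ kills the étale part and acts invertibly on the multiplicative part. A $\ZZ[F,V]$-linear map preserves both eigenspace sums, hence the Hodge structure, which gives fullness.

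(2) Show essential surjectivity. Given an ordinary Deligne module $(D,F,V)$, define a complex structure on $D\otimes\RR$ by letting $D\otimes\CC$ decompose by the same valuation rule. Ordinarity says exactly half the eigenvalues in each conjugate pair $\pi,q/\pi$ are units, so each piece has the correct dimension $g$ and the two pieces are complex conjugate. A Riemann form exists by choosing a CM-type-compatible polarization on $D\otimes\QQ$, which is a sum of CM fields. This gives a complex abelian variety with CM, hence definable over $\BQQ$, with good reduction at the prime determined by $\varepsilon$. Its reduction is ordinary with the lifted Frobenius $F$ (Shimura--Taniyama), and it is descended to $\FF_q$ using $F$ as the Frobenius. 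Verifying that this reduction $A$ satisfies $T(A)\cong D$ with matching $F$ is the main obstacle. Here I would first use the Shimura--Taniyama formula to pin down the Frobenius up to a unit, and then use that $F$ is an endomorphism of the lattice to fix it exactly.

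(3) Derive the invariants. $T_\ell(A)\cong T_\ell(\tilde A_\CC)=D\otimes\ZZ_\ell$ for $\ell\ne p$, compatibly with Frobenius, by smooth base change along $W$. Therefore $f_A(t)=\det(t-F\mid T_\ell)=\det(t-F\mid D)$.
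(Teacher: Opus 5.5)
Your construction of $T$ is the one the paper sketches (canonical lift, a non-continuous embedding of the Witt vectors into $\CC$, $H_1$ of the complex fibre). The full-faithfulness strategy, ``the Hodge decomposition is cut out by $F$'', is the standard one, and step (3) is fine. However, the mechanism you give for that key fact is backwards. $\tilde F$ lifts $F_A$, whose differential is $0$, so $d\tilde F$ on $\mathrm{Lie}(\tilde A)$ is divisible by $p$: on the multiplicative part $\tilde F=q\tilde V^{-1}$, and it is $V$, not $F$, that is invertible there. Hence the Hodge piece mapping isomorphically onto $\mathrm{Lie}$ is the sum of the eigenspaces with \emph{non-unit} eigenvalues. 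For fullness this is harmless, but in step (2) ``the same rule'' must be this one. If you take the tangent space to be the unit-root part, as your explanation suggests, you build the complex-conjugate torus; on its reduction $\ker F$ has an \'etale part, and it is $V$ that becomes the $q$-Frobenius. Also, with $K=W[1/p]$, the map $\Hom(\tilde A,\tilde B)\to\Hom(\tilde A_{\CC},\tilde B_{\CC})$ is not bijective by rigidity alone, since homomorphisms over $\overline{K}$ need not descend to $K$. You must add that such a homomorphism, defined over a finite extension $K'/K$, extends over $\OO_{K'}$, reduces injectively to $\overline{\FF}_q$, and its reduction already lifts over $W$.

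The genuine gap is step (2), the substantive half of the theorem; your plan for the ``main obstacle'' does not work. Shimura--Taniyama computes the ideal generated by the Frobenius of the reduction over the residue field $\FF_{p^r}$ of a place of good reduction. That endomorphism is not $F$, and integrality of $F$ cannot make it so. What you actually need is the following:
(i) $\ker\bar F=\ker(\mathrm{Frob}_q)$, so that $\bar F=\alpha\circ\mathrm{Frob}_q$ with $\alpha\colon\bar X^{(q)}\to\bar X$ an isomorphism. This is a descent datum to $\FF_q$, and it is effective because $\mathrm{Frob}_{q^k}\circ\bar F^{-k}$ is an automorphism whose eigenvalues have absolute value $1$, hence of finite order by Kronecker.
(ii) Your CM lift $X$ must be the \emph{canonical} lift of its reduction; otherwise $H_1(X_{\CC},\ZZ)=D$ need not equal $T(A)$ at $p$. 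CM lifts of ordinary varieties need not be canonical: quasi-canonical lifts of ordinary elliptic curves are examples.

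Both follow from the idea missing from your sketch. Since $f\equiv f_u\,t^g\pmod p$ with $f_u(0)\in\ZZ_p^*$, the idempotents separating unit and non-unit roots lie in $\ZZ_p[F]$, so $X[p^\infty]=e_uX[p^\infty]\times e_{nu}X[p^\infty]$. With the correct Hodge rule, the first factor has zero Lie algebra (\'etale, with $F$ an automorphism), and the second has dimension equal to its height (multiplicative, with $F$ equal to $q$ times an automorphism).

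Alternatively, use Deligne's route, which is also how the paper proves essential surjectivity in its generalization (Theorem~\ref{main_thm_cyclic}). Honda--Tate gives $A$ with $T(A)\otimes\QQ\cong D\otimes\QQ$: for ordinary $\pi$ one has $e_\pi=1$, and a semisimple $\QQ[F]$-module is determined by its characteristic polynomial. Scale $D$ to a finite-index sublattice of $T(A)$ and realize it by an isogeny $B\to A$. For $\ell\neq p$ use $T_\ell$, as in Lemma~\ref{iso_lemma}. At $p$ use $T(A)\otimes\ZZ_p\cong T_p(\tilde A_{\overline{K}})$: an $F$-stable lattice there splits under the same idempotents into pieces corresponding to finite \'etale and multiplicative subgroups of the split $p$-divisible group $\tilde A[p^\infty]$. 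This avoids both the descent problem and the canonicity problem.
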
 

The proof relies on the fact that an ordinary abelian variety A has a \emph{canonical lift} $A_W$ to the ring of Witt vectors $W(\FF_q)$. %This is an abelian variety $A_W$ over $W(\FF_q)$ such that 
Choose a (non-continuous) homomorphism $W(\FF_q)\to\CC$. The functor sends $A$ to the singular homology group of the lift $A_W\otimes\CC$:
\[A\mapsto H_1(A_W\otimes\CC,\ZZ).\]
The functor depends on the choice of the homomorphism $W(\FF_q)\to\CC$ and, therefore, is \emph{not} unique.

Centeleghe and Stix extended this result to abelian varieties over a prime finite field as follows. 
Let $A$ be an abelian variety over $\FF_q$ with the Weil polynimial $f_A$. 
The set of roots of $f_A$ is called the \emph{Weil support of $A$}. 
\emph{The Weil support of a Deligne module $T$} is the set of eigenvalues of $F\otimes\QQ$. 
 Let $\Pi$ be a $\Gal{\Bar\QQ/\QQ}$-invariant set of Weil numbers over $\FF_q$. Define $\AV_\Pi$ as the category of abelian varieties over $\FF_q$ with the Weil support in $\Pi$.

\begin{thm}\label{CS_thm}\cite{CS15}
Assume that $q=p$ and $\pm\sqrt{p}\notin\Pi$.
The category of abelian varieties $AV_\Pi$ over $\FF_p$ is anti-equivalent to the category of Deligne modules with Weil support in $\Pi$. If $D$ is a Deligne module of $A$, then
\[f_A(t)=\det(t-F|D).\] 
\end{thm}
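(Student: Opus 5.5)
The plan is to follow Centeleghe--Stix: realize the anti-equivalence as a representable functor $A\mapsto \Hom(A,A_\Pi)$, where $A_\Pi$ is a suitable pro-abelian variety (an inverse system of abelian varieties) in $\AV_\Pi$. Then $F$ acts on $T(A):=\Hom(A,A_\Pi)$ by precomposition with the Frobenius $\pi_A$, and $V$ by precomposition with the Verschiebung $p/\pi_A$. The functor is contravariant, which is why we get an anti-equivalence rather than an equivalence. The group $T(A)$ is a finitely generated free $\ZZ$-module, and by Tate's theorem $F\otimes\QQ$ is semisimple with eigenvalues the roots of $f_A$.

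The key steps, in order, are as follows.

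\textbf{Step 1.} Let $R_\Pi=\ZZ[F,V]/(FV-p,\ h_\Pi(F))$, where $h_\Pi$ is the minimal polynomial over $\QQ$ of the elements of $\Pi$. I would show that the centre of $\End(A)$ for suitable $A$ is a quotient of $R_\Pi$. I would also show that $\AV_\Pi$ has, up to isogeny, one simple object per Galois orbit. Since $q=p$ and $\pm\sqrt p\notin\Pi$, Tate's theorem says that $\End^0(A)$ is commutative for simple $A$: each local invariant is $\ord_v(\pi)/\ord_v(p)\cdot[\QQ(\pi)_v:\QQ_p]$, and this is an integer because $[\QQ(\pi):\QQ]$ is even and $\pi\bar\pi=p$ with $p$ prime. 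So every simple $A$ has $\End^0(A)=\QQ(\pi_A)$, and the functor will carry $\End(A)$ to $\End_{R_\Pi}(T(A))^{\op}$ on the rational level.

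\textbf{Step 2.} Construct a projective generator (or its pro-version). The heart of the Centeleghe--Stix argument is to produce $A_\Pi$ such that $\Hom(-,A_\Pi)$ is exact and faithful on $\AV_\Pi$ with values in $R_\Pi$-modules that are $\ZZ$-free. The candidate is an abelian variety with $\End(A_\Pi)$ a maximal-type order, namely $R_\Pi$ itself or its suitable completions. Its $\ell$-adic Tate modules should be free over $R_\Pi\otimes\ZZ_\ell$ for $\ell\neq p$, and its contravariant Dieudonné module should be free over $R_\Pi\otimes\ZZ_p$ with $F,V$ acting through the ring. Locally at each $\ell$ (including $\ell=p$, where freeness of the Dieudonné module is checked using $FV=p$ and the prime base field, so that $W(\FF_p)=\ZZ_p$ and $F$ is linear), one shows that $\Hom(A,A_\Pi)\otimes\ZZ_\ell\cong\Hom_{R_\ell}(T_\ell A,R_\ell)$. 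The prime base field is essential here: over $\FF_{p^n}$ the Dieudonné module carries a semilinear $F$ and is not a $\ZZ_p[F,V]$-module.

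\textbf{Step 3.} Prove full faithfulness and essential surjectivity. Faithfulness and fullness follow from Tate's theorem $\Hom(A,B)\otimes\ZZ_\ell\cong\Hom(T_\ell B,T_\ell A)$ together with the corresponding Dieudonné statement at $p$, combined with the local identification from Step 2 and the reflexivity of $\Hom_{R_\ell}(-,R_\ell)$ on lattices (the Gorenstein property of $R_\Pi$ is needed here). For essential surjectivity, take a Deligne module $D$, find an isogenous module of the form $T(B)$ by Honda--Tate, and realize any $R_\Pi$-sublattice by quotienting $B$ by the finite subgroup scheme that corresponds locally to $T(B)/D$. Finally, the determinant identity $f_A(t)=\det(t-F\mid T(A))$ comes from comparing $T(A)\otimes\ZZ_\ell$ with the dual of $T_\ell A$ and applying Theorem \ref{Del_thm}-style characteristic polynomial compatibility.

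The main obstacle is Step 2 at $\ell=p$: showing that the Dieudonné modules of $A_\Pi$ are free over $R_\Pi\otimes\ZZ_p$ and that $R_\Pi\otimes\ZZ_p$ is Gorenstein. I would first try to prove Gorensteinness directly from the presentation $\ZZ_p[F,V]/(FV-p,h(F))$, which is a complete intersection. I would then build $A_\Pi$ via Honda--Tate plus isogeny adjustments that make each local module free.
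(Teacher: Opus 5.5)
\textbf{A wrong ring in the Gorenstein step.} Your overall architecture is sound and is essentially Centeleghe--Stix's original argument. However, the step you yourself flag as the main obstacle rests on the wrong ring. The ring $\ZZ[F,V]/(FV-p,\,h_\Pi(F))$ is not $R_\Pi$: it has $p$-torsion and is not even finite over $\ZZ$.

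Already for an elliptic curve, with $h=F^2-aF+p$, one has $h\equiv F(F+V-a)$ modulo $FV-p$. So $x=F+V-a$ satisfies $Fx=0$, hence $px=V(Fx)=0$. But $x\neq 0$, because the quotient by $(p,F)$ is $\FF_p[V]$ and $x\mapsto V-a$. In general the height-two prime $(p,F)$ is minimal over $(FV-p,h(F))$.

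So the complete-intersection property of that presentation says nothing about the torsion-free quotient $R_\Pi$, i.e.\ the image of $\ZZ[F,V]$ in $\prod_{\pi\in\Pi_0}\QQ(\pi)$. Gorensteinness of $R_\Pi$ is therefore left unproved. Your Step 3 needs it for the duality $\Hom_{R_\ell}(-,R_\ell)$ on lattices. It is also what reconciles your two normalizations, free $T_\ell(A_\Pi)$ (covariant) and free $M^*(A_\Pi)$ (contravariant), which are opposite ones.

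\textbf{The repair.} Since $\pm\sqrt p\notin\Pi$, no element of $\Pi$ is real. Hence $h_\Pi(F)=\sum_{i=0}^{2g}a_iF^i$ satisfies $a_{g-j}=p^ja_{g+j}$, and so $h_\Pi(F)\equiv F^g\psi(F,V)$ modulo $FV-p$, with $\psi=\sum_{j=0}^{g}a_{g+j}F^j+\sum_{j=1}^{g}a_{g+j}V^j$.

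Now $\ZZ[F,V]/(FV-p,\psi)$ is a complete intersection. Its reduction modulo every prime is finite: modulo $p$ because $\psi$ is monic in $F$ on $V=0$ and monic in $V$ on $F=0$, and modulo $\ell\neq p$ it is $\FF_\ell[F]/(h_\Pi)$. Being Cohen--Macaulay of dimension one, it is therefore $\ZZ$-torsion-free, and it equals $R_\Pi$. So $R_\Pi$ is Gorenstein.

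A minor slip in Step 1: the $p$-adic invariants vanish simply because $\ord_v(p)=e_v$, so $\inv_v=\ord_v(\pi)f_v\in\ZZ$. The evenness of $[\QQ(\pi):\QQ]$ plays no role.

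\textbf{Comparison with the paper.} The paper cites the theorem from \cite{CS15} but recovers it as the case $L=\QQ$, $\T=R_\Pi$ of Corollary~\ref{main_thm_comm} and Corollary~\ref{main_thm}. There the wide variety $A_\T$ has co-free rather than free Tate modules, $\Hom_{\ZZ_\ell}(T_\ell(A_\T),\ZZ_\ell)\cong R_\Pi\otimes\ZZ_\ell$. Since $q=p$ makes $D_S=R_\Pi\otimes\ZZ_p$ commutative, $A_\T$ can also be taken with $M^*(A_\T)$ free of rank one, which is quasi-free by Lemma~\ref{quasi_free}. Adjunction alone then gives $\Hom(A,A_\T)\otimes\ZZ_\ell\cong\Hom_{\ZZ_\ell}(T_\ell(A),\ZZ_\ell)$ and $\Hom(A,A_\T)\otimes\ZZ_p\cong M^*(A)$, so no Gorenstein property is needed. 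Consequently the Dieudonn\'e factor of the fibre product collapses, and $A\mapsto\Hom(A,A_\T)$ is the anti-equivalence onto Deligne modules. Essential surjectivity is the same isogeny adjustment as your Step 3 (Lemmas~\ref{iso_lemma} and~\ref{lem_on_Dmod}).

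Your route gives the more symmetric statement that $A_\Pi$ has free Tate and Dieudonn\'e modules, but it depends on a ring-theoretic fact special to $R_\Pi$. The paper's route works for any order $S\supset R_\Pi$, Gorenstein or not, and it is the version that extends to the cyclic algebras $\HH_{\Pi,L}(\lambda)$.
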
 

\begin{rem}
The endomorphism algebra of a simple abelian variety $A$ over a finite field can be non-commutative. In this case, there is no natural way to assign to the abelian variety a Deligne module. Indeed, assume that there exists a Deligne module $D$ for $A$, where $\End^\circ(A)$ is simple and not commutative, that is, the dimension of $\End^\circ{(A)}$ is equal to $e\deg(f_A)$ for some $e>1$. The algebra ${\End(A)}^\op$ acts on $D$; therefore, the simple algebra
 $\End^\circ{(A)^\op}$ acts on the $\QQ$-vector space $D\otimes\QQ$ of dimension $\deg(f_A)$.
 A contradiction.
\end{rem}

Centeleghe and Stix proved a more general result on the category of all abelian varieties.
They construct an equivalence from the category of abelian varieties with Weil support in $\Pi$ to a category of modules over the endomorphism algebra of a balanced abelian variety~\cite{CS2}. 
Over a prime field, the target category is equivalent to the category of Deligne modules, but, in general, this category is rather inexplicit.

\subsection{Generalized Deligne modules}
We fix once and for all a natural number $n$ and a finite field $\FF_q$ of characteristic $p$, where $q=p^n$. We extend Deligne's theorem to the category $\AV_\Pi$ of abelian varieties with Weil support in $\Pi$, where $\Pi$ is a Galois invariant set of Weil numbers over $\FF_q$.

Let $L/\QQ$ be a cyclic extension of degree $n$ such that $p$ is inert; for the ring of integers of $L$ we have
\[\OO_L\otimes_\ZZ\ZZ_p\cong W(\FF_q).\]
There is a generator $g$ of the Galois group $\Gal(L/\QQ)$ corresponding to the Frobenius automorphism at $p$. The following definition is a counterpart to the definition of a covariant \Die module over $\OO_L$.

\begin{deff}
{A generalized Deligne module} $D$ is a finitely generated locally free $\OO_L$-module with a pair of semi-linear operators $G$ and $\CG$ such that $G^n$ is semi-simple and 
for any $x\in \OO_L$ and $y\in D$ we have
\[G(xy)=g^{-1}(x)G(y),\quad \CG(xy)=g(x)\CG(y), \text{ and } G\CG=\CG G=p.\]  
The set of eigenvalues of the $\OO_L$-linear map $G^n$ is called \emph{the Weil support of $D$}. 
\end{deff}

Let $\Pi$ be a Galois invariant set of Weil numbers. Choose a subset of representatives
$\Pi_0\subset \Pi$, that is, for any Galois orbit in $\Pi$ there is a unique representative in $\Pi_0$. Assume that $L$ and $\QQ(\Pi)$ are linearly disjoint over $\QQ$:
\[L(\Pi)\cong L\otimes\QQ(\Pi).\]
 There is a cyclic algebra $\HH_{\Pi,L}$ such that a generalized Deligne module over $\OO_L$ is a module over some order in the opposite algebra $\HH_{\Pi,L}^\op$.
Namely $\HH_{\Pi,L}=\oplus_{\pi\in\Pi_0}\HH_\pi$, where $\HH_\pi$ is generated over $L\otimes\QQ(\pi)$ by a formal variable $G$ with relations \[G(x\otimes y)=g(x)\otimes yG,\text{ and }\; G^n=\pi,\]
where $x\in L$, and $y\in \QQ(\pi)$.

We would like to have an equivalence from $\AV_\Pi$ to the category of generalized Deligne modules over $\OO_L$ with Weil support in $\Pi$. The algebra $\HH_\pi$ is the first obstruction to the existence of such an equivalence. Indeed, if $D$ is a generalized Deligne module of $A$, then the endomorphism algebra of $D$ is isomorphic to $\End^\circ(A)$; therefore, for any $\pi\in\Pi$, the algebra
$\HH_\pi$ is Brauer equivalent to $\End^\circ(B_\pi)$, where $B_\pi$ is a simple abelian variety with Weil number $\pi$ (see Section~\ref{TH_theory} for more details).
Since the cocycle that defines $\HH_\pi$ is fixed, we have to find a field $L$ such that this obstruction disappears. 

The second obstruction is more delicate. We now formulate it and discuss it in more detail in Section~\ref{twisted_subsec}. Let $F_A, V_A\in\End(A)$ be the Frobenius and Verschiebung endomorphisms of an abelian variety $A$. There is a ring $R_\Pi=\ZZ[F_\Pi,V_\Pi]$ such that
for any abelian variety $A$ with Weil support equal to $\Pi$ we have a monomorphism
$R_\Pi\to \End(A)$, where $F_\Pi\mapsto F_A$ and $V_\Pi\mapsto V_A$ 
(see Section~\ref{TH_theory}). 
Let $\L_\Pi$ be the set of primes $\ell\neq p$ such that
$R_\Pi\otimes_\ZZ\ZZ_\ell$ is not maximal in the semi-simple algebra $R_\Pi\otimes_{\ZZ}\QQ_\ell$.
We would like to find $L$ such that any $\ell\in\L_\Pi$ splits completely in $L/\QQ$.
Let us precisely formulate the desired properties for $L$.

\begin{deff}\label{strict_adm_def}
A cyclic extension $L/\QQ$ of degree $n$ is called \emph{$\Pi$-admissible} if
\begin{enumerate}
\item $p$ is inert in $L/\QQ$;
\item $L/\QQ$ is tamely ramified at a single prime $s\not\in\L_\Pi$;
\item any $\ell\in\L_\Pi$ splits completely in $L/\QQ$;
\item for any $\pi\in\Pi$ the algebra $\HH_\pi$ is Brauer equivalent to $\End^\circ(B_\pi)$.
  \end{enumerate}
\end{deff}

For some sets of Weil numbers $\Pi$ there are no $\Pi$-admissible extensions (see Example~\ref{ex_main}), but, according to Theorem~\ref{Pi_adm_ext2}, in general situations, such extensions exist. 

\begin{thm}\label{adm_ext_intro}
   Assume that $n$ is odd and $\Pi\cap\RR=\emptyset$.
   Then there exist infinitely many $\Pi$-admissible extensions $L/\QQ$.
\end{thm}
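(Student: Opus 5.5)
The plan is to take $L$ to be the unique subfield of degree $n$ of the cyclotomic field $\QQ(\zeta_s)$, where $s$ is a prime with $s\equiv 1 \bmod n$. Such an $L$ is cyclic of degree $n$ and tamely and totally ramified at $s$ only. I will then translate conditions (1)--(4) of Definition~\ref{strict_adm_def} into splitting conditions on $s$ in a fixed finite Galois extension of $\QQ$, and produce infinitely many such $s$ by the Chebotarev density theorem. Linear disjointness of $L$ and $\QQ(\Pi)$ is automatic once $s$ is unramified in $\QQ(\Pi)$, since $L$ is totally ramified at $s$. Excluding the finitely many primes in $\L_\Pi$, together with $p$ and the primes dividing $n$ and the discriminants involved, costs nothing.

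\emph{Translation of (1)--(3).} Let $C=(\ZZ/s)^*/((\ZZ/s)^*)^n\cong\Gal(L/\QQ)$. A prime $\ell\neq s$ splits completely in $L$ iff $\ell$ is an $n$-th power mod $s$. The prime $p$ is inert iff its image generates $C$, i.e.\ $p$ is not a $d$-th power mod $s$ for every prime $d\mid n$.

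\emph{Translation of (4).} Compute local invariants of $\HH_\pi=(L\otimes\QQ(\pi)/\QQ(\pi),g,\pi)$ at the places $v$ of $K=\QQ(\pi)$.
\begin{itemize}
\item Since $\Pi\cap\RR=\emptyset$, the field $K$ is totally imaginary, so there are no real places.
\item At $v\mid p$, the extension $LK_v/K_v$ is unramified and $g^{f_v}$ restricts to the Frobenius of $K_v$. Hence $\inv_v\HH_\pi=\ord_v(\pi)f_v/n$, which is exactly Tate's formula $\frac{\ord_v\pi}{\ord_v q}[K_v:\QQ_p]$ for $\End^\circ(B_\pi)$. This should be checked carefully with the normalisation $\ord_v(p)=e_v$, but I expect it to match automatically.
\item At places $v\nmid ps$, both the extension and $\pi$ are unramified (respectively units), so the invariant is $0$, as for $\End^\circ(B_\pi)$.
\item At $v\mid s$, the extension is tamely totally ramified of degree $n$ and $\pi$ is a unit. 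The invariant is given by the tame symbol and vanishes iff $\pi$ is an $n$-th power in the residue field of $v$. This holds if $s$ splits completely in $K(\zeta_n,\pi^{1/n})$.
\end{itemize}
Since a Brauer class over a number field is determined by its local invariants, (4) follows.

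\emph{Chebotarev step.} Let $M=\QQ(\zeta_n,\ \ell^{1/n}\ (\ell\in\L_\Pi),\ \pi^{1/n}\ (\pi\in\Pi_0))$. I want $s$ to split completely in $M$, which gives $s\equiv1\bmod n$, (3), and the vanishing at $s$. For (1) I also want $s$ not to split in $M(p^{1/d})$ for each prime $d\mid n$. By Chebotarev, infinitely many such $s$ exist provided $p^{1/d}\notin M$ for each prime $d\mid n$, i.e.\ $p$ is not a $d$-th power in $M$ up to the Kummer correction.

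This Kummer independence is the main obstacle, because $\pi\bar\pi=q=p^n$ links $p$ to the radicands $\pi$. My first attempt is Kummer theory over $\QQ(\zeta_n,\Pi)$: the relation $p^{1/d}\in M$ would force $p^{n/d}$, up to $n$-th powers, to lie in the subgroup generated by the $\ell$ and the $\pi$. I will compare $v$-adic valuations at the primes above $p$. The $\ell$ are $p$-units. Since $\Pi\cap\RR=\emptyset$, no $\pi$ is $\pm\sqrt q$, so no product of the $\pi$ has valuation vector proportional to that of $p$ with the required divisibility. The oddness of $n$ should be what rules out the $d=2$ phenomenon, where $\sqrt p$ or $\sqrt{\pm p}$ lies in a cyclotomic or CM field, and the extra roots of unity in the Kummer correction. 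If this valuation argument does not close, I would fall back to enlarging $M$ and using a linear-disjointness argument prime by prime for $d\mid n$.
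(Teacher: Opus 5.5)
Your overall architecture is the paper's: $L\subset\QQ(\zeta_s)$ of degree $n$, Chebotarev in $K_{n,1}(p^{1/n})$ with a Frobenius generating $\Gal(K_{n,1}(p^{1/n})/K_{n,1})$ (Theorem~\ref{cyclic_ext}), and a comparison of local invariants (Theorem~\ref{inv_thm}). Your translation of conditions (1)--(4) and your local computations at $v\mid p$, $v\mid s$, $v\nmid ps$ and $\infty$ are fine. Put $\pi^{1/n}$ for all $\pi\in\Pi$, not only $\Pi_0$, into $M$ so that $M$ is Galois.

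\textbf{The gap.} It is the step you flag yourself, $[M(p^{1/n}):M]=n$. This is the only place where the oddness of $n$ is used, and the valuation argument cannot prove it. After Kummer theory over $F=\QQ(\zeta_n,\Pi)$ you must exclude a relation $p^{n/d}=\lambda\delta x^n$ with $\lambda\in\langle\L\rangle$, $\delta\in\langle\Pi\rangle$, $x\in F^*$, and $d$ a prime divisor of $n$.
\begin{itemize}
\item Valuations at $v\mid p$ only give $\ord_v\delta\equiv (n/d)\ord_v p \pmod n$.
\item This congruence is already satisfied by $\delta=\lambda=1$ whenever $d$ divides all ramification indices $e_v$.
\item What remains is whether $p^{n/d}$ is an $n$-th power up to $\langle\L,\Pi\rangle$. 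That is a question about units, which valuations do not see.
\item Your claim that no product of the $\pi$ has valuation vector proportional to that of $p$ is also false, since $\pi\bar\pi=q$.
\end{itemize}
A concrete case: take $n=3$, $p=7$, $\pi=7\sqrt{-7}\,\zeta_7$. This is a non-real Weil $343$-number with $\QQ(\pi)=\QQ(\zeta_7)$. Here $7$ has ramification index $6$ in $F=\QQ(\zeta_{21})$, so for $d=3$ there is no valuative obstruction at all. Your fallback (``enlarging $M$, linear disjointness prime by prime'') does not say what would replace it.

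\textbf{The missing idea.} It is a reality argument, as in Proposition~\ref{CM_prop} and Lemma~\ref{sqrt_lemma}. Let $E$ be the Galois closure of $\QQ(\zeta_n,\Pi,\Pi^{1/n})$.
\begin{itemize}
\item Each generator $\alpha$ satisfies $\tau(\alpha)\overline{\tau(\alpha)}=r_\alpha\in\{1,p,q\}$ for every embedding $\tau$.
\item Hence $\tau^{-1}\circ(\text{complex conjugation})\circ\tau$ is the automorphism $\alpha\mapsto r_\alpha/\alpha$, independent of $\tau$.
\item So complex conjugation is central in $\Gal(E/\QQ)$, and every real element of $E$ has only real conjugates.
\end{itemize}
Now apply this to the Kummer relation. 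The positive real number $y=p^{1/d}\lambda^{-1/n}$ equals $\zeta\delta^{1/n}x$ for some $\zeta\in\mu_n$, so $y\in E$. Its conjugates are real roots of $t^n-y^n$, hence equal to $\pm y$, so $y^2\in\QQ$. Comparing $p$-adic valuations gives $d\mid 2$, which is impossible since $d\mid n$ is odd.

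Two points about this argument:
\begin{itemize}
\item The radicals $\ell^{1/n}$ must first be split off via Kummer theory, as you did. $\QQ(\zeta_n,\ell^{1/n})$ is not CM, so one cannot simply say that $M$ lies in a CM field.
\item The case $d=2$ is exactly what this argument cannot exclude, which is why oddness of $n$ is needed (compare condition $(*)$ and Example~\ref{ex_main}).
\end{itemize}
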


We can now state an equivalence theorem for generalized Deligne modules.

\begin{thm}\label{main_thm_Del}
Assume that there exists a $\Pi$-admissible extension $L/\QQ$ ramified at a prime $s$.
Then there is an equivalence of categories $\D_\Pi$ from $\AV_\Pi[\frac{1}{s}]$ to $\GDel_{\Pi,L}[\frac{1}{s}]$, where $\GDel_{\Pi,L}$ is the category of generalized Deligne modules over $\OO_L$ with Weil support in $\Pi$.
Moreover, if $A$ is an abelian variety with Weil support in $\Pi$, and 
$\ell\not\in\{s,p\}$, then there is a (non-canonical) isomorphism of 
$R_\Pi\otimes\OO_L\otimes\ZZ_\ell$-modules:
\[\D_\Pi(A)\otimes\ZZ_\ell\cong T_\ell(A)\otimes_\ZZ\OO_L.\]
There is a natural structure of a covariant \Die module on $\D_\Pi(A)\otimes_\ZZ\ZZ_p$ such that \[\D_\Pi(A)\otimes_\ZZ\ZZ_p\cong M_*(A),\]
where $M_*(A)$ is the covariant \Die module of $A$.
\end{thm}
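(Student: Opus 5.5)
The plan is to reduce to the Centeleghe--Stix description of $\AV_\Pi$ as modules over the endomorphism ring of a balanced object, and then to build one explicit generalized Deligne module that plays the role of that balanced object. Concretely, the first step constructs a \emph{test object}: a generalized Deligne module $\T$ over $\OO_L$ with Weil support $\Pi$, together with an abelian variety $B\in\AV_\Pi[\frac1s]$, such that
\[\End(B)^\op\otimes\ZZ[\tfrac1s]\cong\End(\T)\otimes\ZZ[\tfrac1s],\]
and such that $B$ is a local generator at every $\ell\neq s$ (including $\ell=p$). The functor is then
\[\D_\Pi(A)=\Hom(B,A)\otimes_{\End(B)}\T,\]
or equivalently the anti-equivalence of \cite{CS2} composed with $\Hom(-,\T)$. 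Full faithfulness and essential surjectivity reduce, by a standard Morita argument, to checking everything locally at each prime $\ell\neq s$. Both categories are determined by their $\ell$-adic completions together with a rational structure. Rationally, condition (4) of Definition~\ref{strict_adm_def} gives $\HH_\pi\sim\End^\circ(B_\pi)$ in the Brauer group. A dimension count ($\dim_\QQ\HH_\pi=n^2[\QQ(\pi):\QQ]$) then shows that $\HH_{\Pi,L}^\op$ is a matrix algebra over $\End^\circ(B_\Pi)$, where $B_\Pi$ is the product of the $B_\pi$. So the isogeny categories agree.

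The second step is the local analysis at $\ell\notin\{s,p\}$. At $\ell\in\L_\Pi$, condition (3) gives $\OO_L\otimes\ZZ_\ell\cong\ZZ_\ell^n$. The cyclic relations then make $R_\Pi\otimes\OO_L\otimes\ZZ_\ell$-modules with semilinear $G$ equivalent to $R_\Pi\otimes\ZZ_\ell$-modules: $G$ permutes the $n$ idempotents cyclically, so the module is determined by one component with the action of $G^n=F$. Comparing with Tate's theorem $\End(A)\otimes\ZZ_\ell\cong\End_{R_\Pi}(T_\ell A)$ gives the isomorphism $\D_\Pi(A)\otimes\ZZ_\ell\cong T_\ell(A)\otimes\OO_L$. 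At $\ell\notin\L_\Pi\cup\{s,p\}$, the order $R_\Pi\otimes\ZZ_\ell$ is maximal. The relevant order in $\HH_\pi\otimes\QQ_\ell$ is then hereditary: it is a crossed product of an unramified extension by a unit, hence maximal. Lattices over it are determined by their rank, so the same identification holds even when $\ell$ does not split.

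The third step is the prime $p$. Condition (1) gives $\OO_L\otimes\ZZ_p\cong W(\FF_q)$, so $G$ and $\CG$ become $\sigma^{-1}$- and $\sigma$-semilinear operators with $G\CG=p$. This is exactly a covariant \Die module with $V=G$ and $F=\CG$, and $G^n$ is the linear $q$-Frobenius. We then invoke Dieudonn\'e theory, $\End(A)\otimes\ZZ_p\cong\End(M_*(A))$, and match the local endomorphism rings. We discard $s$, where the order is ramified and not controlled, which is why everything is stated after inverting $s$.

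The main obstacle is the first step: constructing $\T$ integrally so that its endomorphism ring is isomorphic, not merely locally isomorphic, to $\End(B)^\op[\frac1s]$. The local pieces can be glued by choosing lattices prime by prime. The difficulty is that a global isomorphism of orders requires the two lattices to lie in the same genus and the same class. I would handle this by taking $\T$ to be a sufficiently large free module, say $\HH$-order$^{\oplus m}$, and $B$ correspondingly large. Eichler's theorem, or stable freeness in the non-Eichler case where $\Pi\cap\RR$ might interfere, then removes the class-group ambiguity.
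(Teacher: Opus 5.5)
Your overall plan is the paper's. There too one builds a single abelian variety $A_\T$ whose endomorphism ring is, after inverting $s$, the order $\T$ generated by $R_\Pi\otimes\OO_L$, $G$ and $\CG$. Its Tate modules are $R_\Pi\otimes\OO_L\otimes\ZZ_\ell$ up to duality, its Dieudonn\'e module is free of rank one over the Dieudonn\'e order, and one concludes prime by prime by Morita theory. The paper writes the functor as $A\mapsto\Hom_\ZZ(\Hom(A,A_\T),\ZZ)$ and passes through $\T^\max$ before inverting $s$, but that is immaterial. Your Steps 2 and 3 are essentially Proposition~\ref{balanced_order} and Proposition~\ref{m0prop2}.

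The gap is Step 1. You correctly call it the crux, but you never carry it out, and the remedy you sketch targets a problem that does not arise and would not solve it if it did. The locally free class group of $\T$ does not go away in higher rank, so a free test module of large rank removes no class-group ambiguity. Eichler's theorem describes that class group; it does not kill it. In the non-Eichler case, stably free need not even imply free.

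More to the point, you never have to match two independently constructed orders. Your own sentence that the local pieces can be glued by choosing lattices prime by prime is already the whole argument, provided the gluing happens on the abelian-variety side inside a fixed algebra:
\begin{enumerate}
\item Condition (4) and a dimension count give $h\colon\HH_{\Pi,L}\cong\End^\circ(A_{\Pi,L})$ for $A_{\Pi,L}=\prod_{\pi\in\Pi_0}B_\pi^{n/e_\pi}$. So $\T$ is a subring of $\End^\circ(A_{\Pi,L})$.
\item For almost all $\ell$ the Tate module is already $\T_\ell$-stable of the required shape. At the finitely many remaining $\ell\neq p$, replace it by a $\T_\ell$-stable lattice on which $\T_\ell$ is the full $R_\Pi\otimes\ZZ_\ell$-endomorphism ring. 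This lattice exists because $F_\Pi$ is a norm of a unit, which is your Step 2.
\item At $p$, replace the Dieudonn\'e module by a copy of the free rank-one Dieudonn\'e module carrying the $\T_p$-action of your Step 3.
\item Lemma~\ref{iso_lemma} and Lemma~\ref{lem_on_Dmod} realize these choices, independently at each prime, by an isogeny to $A_{\Pi,L}$. At $s$ take any $\T_s$-stable lattice.
\item For the resulting $B$, $\T$ preserves every local lattice, so $\T\subset\End(B)$ inside one algebra. Theorem~\ref{Tate_thm}, Theorem~\ref{WM_thm} and balancedness give $\End(B)\otimes\ZZ_\ell=\T_\ell$ for every $\ell\neq s$, including $p$.
\item Two $\ZZ$-lattices in one $\QQ$-algebra that agree at every $\ell\neq s$ agree after inverting $s$. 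Hence $\End(B)[\frac{1}{s}]=\T[\frac{1}{s}]$ on the nose. This is Proposition~\ref{wide_AV}, and no genus or type-number issue ever appears.
\end{enumerate}

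Separately, fix your variance. A covariant equivalence sending $B$ to your test module needs $\End(B)$ to be isomorphic to the endomorphism ring of that module, not to its opposite. Condition (4) gives $\End^\circ(A_{\Pi,L})\cong\HH_{\Pi,L}$. This differs from $\HH_{\Pi,L}^\op$ as an $E_\Pi$-algebra as soon as some $\End^\circ(B_\pi)$ has Brauer class of order greater than $2$. With $\End(B)=\T$, the right $\End(B)$-module $\Hom(B,A)$ is already a left $\T^\op$-module, that is, a generalized Deligne module. So the tensor factor in your functor is unnecessary.
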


\subsection{A general equivalence theorem.}\label{general_subsec}
Theorem~\ref{main_thm_Del} follows from a more general result that we prove in Section~\ref{general_sec}. Here, we formulate it in a simplified form.

Fix a cyclic extension $L/\QQ$ of degree $m$ such that $p$ is unramified.
Put \[n_0=m/[\Hat{L}_p:\QQ_p],\]
where $\Hat{L}_p$ is the completion of $L$ at a prime ideal over $p$.
Choose a generator $g$ of $\Gal(L/\QQ)$ such that $g^{n_0}$ is the Frobenius automorphism at $p$. 
Let \[\HH_{\Pi,L}(\lambda)=\oplus_{\pi\in\Pi_0}\HH_{\pi,L}(\lambda),\] 
where $\HH_{\pi,L}(\lambda)$ is the cyclic algebra $(L(\pi)/\QQ(\pi),g,\lambda)$, and
$\lambda\in R_\Pi\otimes_\ZZ\OO_L$.  

Assume that there exists an abelian variety $A_{\Pi,L}$ such that 
\[\HH_{\Pi,L}(\lambda)\cong \End^\circ(A_{\Pi,L}).\]
Fix an order $\T$ in $\HH_{\Pi,L}(\lambda)$ such that $R_\Pi\otimes_\ZZ\OO_L\subset\T$.
Let $\ell\neq p$ be a prime. We say that $\T$ is \emph{$\ell$-balanced} if 
$T_{\ell,\Pi,L}=R_\Pi\otimes_\ZZ\OO_L\otimes_\ZZ\ZZ_\ell$ 
has a structure of a $\T_\ell^\op$-module, and
\[\T^\op_\ell=\T^\op\otimes_{\ZZ}\ZZ_\ell\cong\End_{R_\Pi\otimes\ZZ_\ell}(T_{\ell,\Pi,L}).\]

Let $\TF{\T^\op}$ be the category of left $\T^\op$-modules that are finitely generated and locally free over $\OO_L$. If $A_\T$ is an abelian variety with a homomorphism $\T\to \End(A_\T)$, then there is a functor \[\D_\T:\AV_\Pi\to\TF{\T^\op}\] given by the formula
$A\mapsto \Hom_\ZZ(\Hom(A,A_\T),\ZZ)$.

\begin{thm}\label{main_simple}
    Assume that $\T\subset \HH_{\Pi,L}(\lambda)$ is $\ell$-balanced for all $\ell\neq p$. 
    Then there exists an abelian variety $A_\T$ with the following properties:
    \begin{enumerate}
        \item $T_\ell(A_\T)\cong\Hom_{\ZZ_\ell}(T_{\ell,\Pi,L},\ZZ_\ell)$;
        \item $\T\locp\cong\End(A_\T)\locp$;
                \item the functor $A\mapsto (\D_\T(A),M_*(A))$ is an equivalence from
        $\AV_\Pi$ to the product category
        \[\TF{\T^\op\locp}\times_{\DieW_\Pi\locp}\DieW_\Pi\]
        of the localization of $\TF{\T^\op}$ and the category $\DieW_\Pi$ of covariant \Die modules with Weil support in $\Pi$ (see Section~\ref{product_sec}).
            \end{enumerate}
\end{thm}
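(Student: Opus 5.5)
Starting from the abelian variety $A_{\Pi,L}$ with $\End^\circ(A_{\Pi,L})\cong\HH_{\Pi,L}(\lambda)$, the plan is to modify it inside its isogeny class until its endomorphism ring contains $\T$ and its Tate modules match the required lattices. Then I would argue, in the style of Centeleghe--Stix, that a variety which is ``balanced'' away from $p$ gives a Morita-type equivalence after inverting $p$, and that the $p$-part is recorded by Dieudonné modules.

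\emph{Construction of $A_\T$.} By Tate's theorem,
\[
\End(A_{\Pi,L})\otimes\ZZ_\ell\cong\End_{R_\Pi\otimes\ZZ_\ell}(T_\ell(A_{\Pi,L})).
\]
The rational module $V_\ell(A_{\Pi,L})$ is a module over $\HH_{\Pi,L}(\lambda)\otimes\QQ_\ell$, and its dimension matches that of $T_{\ell,\Pi,L}\otimes\QQ$. Using the $\ell$-balanced hypothesis, I would choose inside $V_\ell(A_{\Pi,L})$ a $\T_\ell$-stable lattice isomorphic to $\Hom_{\ZZ_\ell}(T_{\ell,\Pi,L},\ZZ_\ell)$. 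For almost all $\ell$ this lattice equals $T_\ell(A_{\Pi,L})$, because $\T$ and $\End(A_{\Pi,L})$ agree at almost all primes. At the remaining finitely many $\ell$, the standard correspondence between lattices in $V_\ell$ and isogenous varieties, via quotients by finite $\ell$-subgroups, produces $A_\T$ with property (1). Tate's theorem together with the balanced condition $\T_\ell^\op\cong\End_{R_\Pi\otimes\ZZ_\ell}(T_{\ell,\Pi,L})$ then gives $\End(A_\T)\otimes\ZZ_\ell\cong\T_\ell$ for every $\ell\ne p$. Since $\T\otimes\QQ\cong\End^\circ(A_\T)$ compatibly, this yields (2).

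\emph{Equivalence away from $p$.} For $A\in\AV_\Pi$, I would compute
\[
\Hom(A,A_\T)\otimes\ZZ_\ell\cong\Hom_{R_\Pi\otimes\ZZ_\ell}(T_\ell A,T_\ell A_\T)
\]
by Tate's theorem. This shows that $\D_\T(A)\otimes\ZZ_\ell\cong T_\ell(A)\otimes_{R_\Pi}T_{\ell,\Pi,L}$ up to the duality, and in particular that it is locally free over $\OO_L$. Full faithfulness of $\D_\T[\frac1p]$ then reduces, prime by prime, to the statement that $T\mapsto\Hom(T,\Hom(T_{\ell,\Pi,L},\ZZ_\ell))$ is fully faithful on $R_\Pi\otimes\ZZ_\ell$-lattices. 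Here $T_{\ell,\Pi,L}$ is free over $R_\Pi\otimes\ZZ_\ell$ of rank $m$, so it is a progenerator, and balancedness gives the Morita equivalence.

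\emph{Essential surjectivity.} For this I would follow Centeleghe--Stix. First, since $\HH_{\Pi,L}(\lambda)$ is the full endomorphism algebra of a variety that hits every simple factor, every $\T^\op\locp$-module is realized rationally by a power of $A_\T$ up to isogeny. Second, lattices are adjusted at each $\ell$ by the same isogeny argument as above.

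\emph{The $p$-part and the fibre product.} For the fibre product with $\DieW_\Pi$, I would use that the category of abelian varieties is obtained by gluing its prime-to-$p$ part with its $p$-part. Concretely, an isogeny class together with a lattice in each $V_\ell$ and a Dieudonné lattice in the isocrystal determines the variety uniquely, because isogenies factor into $\ell$-primary parts. The same holds for morphisms: $\Hom(A,B)$ is the intersection of $\Hom(A,B)\locp$ with $\Hom(M_*(A),M_*(B))$ inside the rational Hom. This last fact comes from Tate's theorem for $p$-divisible groups over finite fields (Waterhouse--Milne) and is what identifies $\AV_\Pi$ with the product category.

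The main obstacle I expect is the construction of $A_\T$ at the finitely many bad primes, in particular checking that the chosen lattice is really $\T_\ell$-stable with full endomorphism ring $\T_\ell$. This is where I would use the $\ell$-balanced isomorphism most directly, identifying lattices via the Morita equivalence.
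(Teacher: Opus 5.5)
Your plan is correct and follows essentially the paper's route. As in the paper, you obtain $A_\T$ from $A_{\Pi,L}$ by isogenies at the finitely many bad $\ell$ (Proposition~\ref{wide_AV} via Lemma~\ref{iso_lemma}), and (2) comes from Tate's theorem plus balancedness. Full faithfulness after inverting $p$ is Tate's theorem plus Morita (Propositions~\ref{Tate_eq} and~\ref{rigid_prop}), and essential surjectivity together with the gluing to $\DieW_\Pi$ comes from adjusting lattices at each $\ell$ and at $p$ using Waterhouse--Milne (Theorem~\ref{main_thm_cyclic}).

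The paper adds one step you omit: a further $p$-isogeny (Lemma~\ref{lem_on_Dmod}) making $M^*(A_\T)$ stable under $\T$. This gives $\T\subset\End(A_\T)$ integrally, so that $\D_\T(A)$ is an honest $\T^\op$-module as the definition of $\D_\T$ requires. Your version, adjusted only away from $p$, still suffices for (1), (2) and the localized statement (3). Also, in your essential-surjectivity step, rational realization should be by a product of the $B_\pi$ (equivalently, a direct factor of a power of $A_\T$), not by a power of $A_\T$ itself.
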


Note that if $L=\QQ$ and $\T=R_\Pi$, then the category $\GDel_{\Pi,\QQ}$ of generalized Deligne modules is the category $\Del_\Pi$ of Deligne modules with Weil support in $\Pi$.
We obtain a generalization of Theorem~\ref{Del_thm} and Theorem~\ref{CS_thm}. 
It is also related to~\cite[Theorem 5.2]{BKM}.

\begin{corollary}\label{main_thm_comm}
Assume that for any $\pi\in\Pi$ the endomorphism algebra $\End^\circ(B_\pi)$ is commutative.
Let $L=\QQ$, and let $\T=R_\Pi$.
Then the functor $A\mapsto (\D_\T(A),M_*(A))$ is an equivalence
from $\AV_\Pi$ to the product  \[\Del_\Pi\locp\times_{\DieW_\Pi\locp}\DieW_\Pi.\]
\end{corollary}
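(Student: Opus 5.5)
The corollary should follow by specializing Theorem~\ref{main_simple} to $L=\QQ$ and $\T=R_\Pi$; what has to be checked is that the hypotheses hold in this degenerate case. With $m=1$ we have $n_0=1$ and $g=\mathrm{id}$. For any $\pi\in\Pi_0$ the cyclic algebra $\HH_{\pi,\QQ}(\lambda)=(\QQ(\pi)/\QQ(\pi),\mathrm{id},\lambda)$ is just $\QQ(\pi)$, whatever $\lambda$ is; take $\lambda=1$. Hence $\HH_{\Pi,\QQ}(\lambda)=\prod_{\pi\in\Pi_0}\QQ(\pi)=R_\Pi\otimes\QQ$. For Theorem~\ref{main_simple} we need an abelian variety $A_{\Pi,\QQ}$ with $\End^\circ(A_{\Pi,\QQ})\cong\prod_{\pi\in\Pi_0}\QQ(\pi)$. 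Take $A_{\Pi,\QQ}=\prod_{\pi\in\Pi_0}B_\pi$. The $B_\pi$ for distinct representatives are pairwise non-isogenous, so $\End^\circ$ is the product of the $\End^\circ(B_\pi)$. By the commutativity assumption and Tate's theorem (Section~\ref{TH_theory}), each $\End^\circ(B_\pi)$ equals its centre $\QQ(\pi)$.

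The main step is $\ell$-balancedness of $\T=R_\Pi$ for every $\ell\neq p$. Here $T_{\ell,\Pi,\QQ}=R_\Pi\otimes\ZZ_\ell$ is the regular module. The commutative ring $R_\Pi\otimes\ZZ_\ell$ acts on it by multiplication, which makes it a module over $\T_\ell^\op=\T_\ell$. The map $R_\Pi\otimes\ZZ_\ell\to\End_{R_\Pi\otimes\ZZ_\ell}(R_\Pi\otimes\ZZ_\ell)$ is the standard isomorphism $a\mapsto(x\mapsto ax)$: its inverse is $\phi\mapsto\phi(1)$. So balancedness holds at every $\ell$, with no maximality assumption on $R_\Pi$. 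This is exactly why $\L_\Pi$ plays no role when $L=\QQ$.

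It remains to identify $\TF{R_\Pi}$ with $\Del_\Pi$. A module over $R_\Pi=\ZZ[F_\Pi,V_\Pi]$ that is finitely generated and free over $\OO_\QQ=\ZZ$ is a lattice with commuting operators $F,V$. The relation $FV=VF=q$ holds in $R_\Pi$. The action factors through the reduced ring $R_\Pi$, which embeds into the product of the fields $\QQ(\pi)$; hence $F\otimes\QQ$ is semisimple with eigenvalues in $\Pi$. Conversely, a Deligne module with Weil support in $\Pi$ has $F\otimes\QQ$ semisimple with minimal polynomial dividing $\prod_{\pi\in\Pi_0}m_\pi$, where $m_\pi$ is the minimal polynomial of $\pi$. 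Then $\ZZ[F,V]\subset\End(D)$ is a quotient of $R_\Pi$, assuming $R_\Pi$ is the image of $\ZZ[x,y]/(xy-q)$ in $\prod\QQ(\pi)$ as in Section~\ref{TH_theory}. So the categories coincide, compatibly with localization at $p$ and with the functor to $\DieW_\Pi\locp$.

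I expect the one place needing care to be the precise definition of $R_\Pi$ from Section~\ref{TH_theory}. One must check that every Deligne module with support in $\Pi$ is indeed an $R_\Pi$-module, and that $R_\Pi$ is the right order so that Theorem~\ref{main_simple} applies with $\T=R_\Pi$. Granting that, part (3) of Theorem~\ref{main_simple} gives the stated equivalence $A\mapsto(\D_\T(A),M_*(A))$.
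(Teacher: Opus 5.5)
Your proposal is correct and follows the paper's route. The paper obtains this corollary by specializing Theorem~\ref{main_simple} to $L=\QQ$, $\T=R_\Pi$, noting only that $\TF{R_\Pi}$ is the category $\Del_\Pi$. You supply the checks that remark leaves implicit: commutativity of $\End^\circ(B_\pi)$ makes $\prod_{\pi\in\Pi_0}B_\pi$ realize $\HH_{\Pi,\QQ}=E_\Pi$; $\ell$-balancedness reduces to the identity $\End_{R_\Pi\otimes\ZZ_\ell}(R_\Pi\otimes\ZZ_\ell)=R_\Pi\otimes\ZZ_\ell$; and Deligne modules supported in $\Pi$ are exactly $R_\Pi$-lattices, because $R_\Pi$ is the image of $\ZZ[t,q/t]$ in $\QQ[t]/P(t)$.
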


\subsection{Twisted Deligne modules.}\label{twisted_subsec}
We introduce a more general notion of $\Omega$-admissible extensions. 
Let $\eps\in\{\pm 1\}$, and let $\Omega=(n,m,m_0,\Pi,\L,\eps)$, where $\L$ is a finite set of prime numbers such that $p\not\in\L$, and $m=nn_0m_0$ for some natural $n_0$ and $m_0$. 

\begin{deff}\label{def_omega}
We say that a cyclic extension $L/\QQ$ is \emph{$\Omega$-admissible} if
\begin{enumerate}
\item $[L:\QQ]=m$;
       \item $L$ is unramified at $p$;
       \item $[\Hat{L}_p:\QQ_p]=nm_0$;
    \item $L$ splits completely at any $\ell\in\L$;
    \item $L$ is tamely ramified at a single prime $s\not\in\L$;
%\item $\HH_\pi$ is Brauer equivalent to $\End^\circ(B_\pi)$;
	 \item for any $\pi\in\Pi$ the product $\eps\pi^{m_0}$ is an $m$-th power in $\QQ_s(\pi)$;
\item if $\Pi\cap\RR\neq\emptyset$, then $L$ is a CM-field, and
for any real $\pi\in\Pi$, we have: $\eps\pi^{m_0}<0$.
\end{enumerate}
\end{deff}

\begin{rem}\label{Milne_ref}
  Milne studied a special case of such extensions for Weil $q$-numbers of weight zero in~\cite{MK}. In his definition $n_0=1$ and $\L=\emptyset$.
   Under the assumption of the Generalized Riemann Hypothesis he proved that such extensions exist if $nm_0$ is odd.
\end{rem}

From Corollary~\ref{adm_prime_cor}, and Proposition~\ref{adm_ext} we immediately obtain.
\begin{thm}\label{Omega_main_thm}
    For any Galois invariant set $\Pi$ of Weil numbers over $\FF_q$ and any finite set of primes $\L$ such that $p\not\in\L$ there exist natural numbers $m_0$ and $n_0$ and infinitely many $\Omega$-admissible extensions $L/\QQ$, where 
    $\Omega=(n,nn_0m_0,m_0,\Pi,\L,\eps)$.
\end{thm}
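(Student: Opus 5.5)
The plan is to build $L$ as a subfield of a cyclotomic field $\QQ(\zeta_s)$ for a suitable prime $s$, obtained from Chebotarev's density theorem. For a prime $s\equiv 1 \pmod m$, the field $\QQ(\zeta_s)$ has a unique cyclic subfield $L$ of degree $m$. This field is tamely and totally ramified at $s$ and unramified at every other prime. The behaviour of another prime $\ell$ in $L$ is governed by the image of $\ell$ in $(\ZZ/s)^\times/((\ZZ/s)^\times)^m\cong\ZZ/m$: the prime $\ell$ splits completely in $L$ iff $\ell$ is an $m$-th power residue mod $s$. Likewise, the residue degree of $p$ equals the order of the class of $p$ in this quotient.

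So conditions (1), (2), (4), (5) of Definition~\ref{def_omega} amount to three requirements on $s$: $s\equiv 1\pmod m$, $s\notin\L\cup\{p\}$, each $\ell\in\L$ is an $m$-th power mod $s$, and $p$ has order exactly $nm_0$ in $\FF_s^\times/\FF_s^{\times m}$. Condition (3) is exactly the last of these. In Kummer-theoretic terms, this is a Frobenius condition on $s$ in the compositum $K=\QQ(\zeta_m, \ell^{1/m}\ (\ell\in\L), p^{1/m})$: $s$ splits in $\QQ(\zeta_m)$, Frobenius at $s$ acts trivially on the $\ell^{1/m}$, and it acts on $p^{1/m}$ through a character of order $nm_0$ with cokernel of order $n_0$.

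Condition (6) also becomes a Frobenius condition. I will choose $s$ to split completely in $\QQ(\Pi)$, so that $\QQ_s(\pi)=\QQ_s$ for every embedding. Then $\eps\pi^{m_0}$ is an $s$-adic unit, since $s\ne p$ and $\pi$ is a $p$-unit away from $p$. Being an $m$-th power in $\QQ_s$ is then the condition that $\eps\pi^{m_0}$ be an $m$-th power in the residue field, that is, that $s$ split completely in $\QQ(\Pi,\zeta_m,(\eps\pi^{m_0})^{1/m})$. So all conditions on $s$ reduce to prescribing the Frobenius conjugacy class of $s$ in the finite Galois extension
\[
M=\QQ(\Pi,\zeta_m,\ell^{1/m}\ (\ell\in\L),\ p^{1/m},\ (\eps\pi^{m_0})^{1/m}\ (\pi\in\Pi)).
\]
Chebotarev's density theorem then gives infinitely many $s$, and hence infinitely many $L$, once the prescribed element exists in $\Gal(M/\QQ)$.

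The main obstacle is this compatibility. We need an element of $\Gal(M/\QQ(\Pi,\zeta_m))$ that fixes all $\ell^{1/m}$ and $(\eps\pi^{m_0})^{1/m}$ but moves $p^{1/m}$ by a primitive $nm_0$-th root of unity. This fails exactly when some power $p^{k}$ with $m/(nm_0)\nmid k$ lies, modulo $m$-th powers, in the group generated by the $\ell$, the $\eps\pi^{m_0}$ and $(\QQ(\Pi,\zeta_m)^\times)^m$. This is the freedom in choosing $m_0$ and $n_0$, and Corollary~\ref{adm_prime_cor} and Proposition~\ref{adm_ext} are presumably designed to handle it.

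The relation $\pi\bar\pi=q=p^n$ forces $p^{nm_0}$ to be, up to sign, a product of conjugates of $\pi^{m_0}$. This is compatible with $p$ having order exactly $nm_0$ modulo $m$-th powers, provided we make $n_0$ large enough to absorb torsion coming from roots of unity and from $p$-power roots lying in $\QQ(\Pi,\zeta_m)$. I would take $m_0$ to kill the finite obstruction from roots of unity in $\QQ(\Pi)$, and $n_0$ a suitable multiple, invoking Kummer theory in the form of Corollary~\ref{adm_prime_cor}.

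For real $\pi$ in condition (7), $\pi=\pm p^{n/2}$. Choose $\eps$ so that $\eps\pi^{m_0}<0$, take $m$ even, and prescribe that complex conjugation restricts nontrivially to $L$. Since $L\subset\QQ(\zeta_s)$ is cyclic of even degree and contains the unique quadratic subfield $\QQ(\sqrt{\pm s})$, this amounts to requiring $s\equiv 3 \pmod 4$ at the appropriate $2$-power level, which is again a congruence condition on $s$ compatible with Chebotarev.

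Proposition~\ref{adm_ext} should package exactly this Chebotarev step, so the theorem follows.
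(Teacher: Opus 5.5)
Your proposal is the paper's argument: the theorem is obtained by combining Corollary~\ref{adm_prime_cor}, which gives infinitely many $\Omega$-admissible primes $s$ (via Chebotarev in the Kummer field $K_{m,m_0}(\zeta_{2m},p^{1/m})$ of Theorem~\ref{cyclic_ext}, after fixing an even $m_0$ and a suitable $n_0$), with Proposition~\ref{adm_ext}, which shows that the degree-$m$ subfield of $\QQ(\zeta_s)$ is then $\Omega$-admissible. The only correction concerns bookkeeping. Your translation of the conditions on $L$ into conditions on $s$ is exactly Proposition~\ref{adm_ext}, while the Chebotarev/compatibility step you defer lives in Theorem~\ref{cyclic_ext} behind Corollary~\ref{adm_prime_cor}; also, your failure criterion should read $nm_0\nmid k$ rather than $m/(nm_0)\nmid k$, since otherwise the fact you point out, that $p^{nm_0}$ lies in the group generated by the $\eps\pi^{m_0}$, would already be an obstruction.
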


The field $L$ is the subfield of degree $m$ in $\QQ(\zeta_s)$, where $s$ is a prime that we find using the Chebotarev Density Theorem (see Theorem~\ref{cyclic_ext}).
    
Fix an $\Omega$-admissible extension $L/\QQ$.
Let $\HH_{\Pi,L}=\oplus_{\pi\in\Pi_0}\HH_\pi$, where 
\[\HH_{\pi,L}=\HH_{\pi,L}(\eps\pi^{m_0}).\]
%The algebra $\HH_\pi$ is generated over $L\otimes_\QQ\QQ(\pi)$ by a formal variable $G$ with relations \[G(x\otimes y)=g(x)\otimes yG,\text{ and }\; G^m=\eps\pi^{m_0},\]
%where $x\in L$, and $y\in \QQ(\pi)$.

We show in Theorem~\ref{inv_thm} that for any $\Omega$-admissible extension $L/\QQ$  
there exists an abelian variety $A_{\Pi,L}$ with Weil support in $\Pi$ such that
$\HH_{\Pi,L}\cong\End^\circ(A_{\Pi,L})$. For the proof, according to Honda Theorem~\cite{Ho}, it suffices to show that if $B_\pi$ is a simple abelian variety with Weil number $\pi$, then invariants of the centarl simple algebras $\HH_\pi$ and $\End^\circ(B_\pi)$ over $\QQ(\pi)$ are equal. Indeed, for a place $v$ of $\QQ(\pi)$ we have:
\begin{itemize}
    \item[$v|p$]: $\inv_v\HH_\pi=\inv_v(\End^\circ(B_\pi))$;
    \item [$v|\ell$]: if $\ell$ is unramified in $L$, then $\inv_v\HH_\pi=\inv_v(\End^\circ(B_\pi))=0$;
    \item[$v|\ell$]: if $\ell$ is ramified in $L$, then 
    $\inv_v\HH_\pi=\inv_v(\End^\circ(B_\pi))=0$ by condition $(6)$ of Definition~\ref{def_omega};
    \item[$\infty$]: if $\pi$ is real, then $\inv_\infty\HH_\pi=\inv_\infty(\End^\circ(B_\pi))$ by condition $(7)$ of Definition~\ref{def_omega}.
\end{itemize}

Let $\T_{\Pi,L}(\rho)\subset\HH_{\Pi,L}$ be the order generated over $R_\Pi\otimes_\ZZ\OO_L$
by semilinear operators $G$ and $\CG$ such that 
\[G\CG=\rho \text{ and }G^m=\eps F_\Pi^{m_0},\] 
where \emph{the twisting} $\rho\in\OO_L$ is defined in Notation~\ref{twisting_not}.
If $n_0=1$, then we may put $\rho=p$.

\begin{deff}\label{twisted_module}
{A twisted Deligne module} $D$ is a finitely generated left module over 
$\T_{\Pi,L}(\rho)^\op$. We say that $\Pi$ is \emph{the Weil support of $D$}.
\end{deff}

\begin{rem}
The condition $(3)$ of Definition~\ref{strict_adm_def} is needed to prove that the order 
$\T_{\Pi,L}(\rho)$ is $\ell$-balanced for all $\ell\not\in\{s,p\}$. 
Namely, in Proposition~\ref{balanced_order} we prove that if $\ell$ splits completely in $L$, then $\T_{\Pi,L}(\rho)$ is $\ell$-balanced.  
On the other hand, if $R_\Pi$ is maximal at $\ell$ and $\ell\neq s$ is unramified in $L$, then $\T_{\Pi,L}(\rho)$ is also $\ell$-balanced. 
\end{rem}

To avoid localization at $s$, in Definition~\ref{H_operator} we introduce an auxiliary endomorphism $H\in\HH_{\Pi,L}$. Denote by $\TDel_{\Pi,L,H}$ the category of twisted Deligne modules over $\OO_L$ with Weil support in $\Pi$ and invariant under $H$.
In Section~\ref{s_section} we prove that the order generated by 
$\T_{\Pi,L}(\rho)$ and $H$ is also $s$-balanced. 
The following result is a special case of Theorem~\ref{Lambda_thm}.

\begin{thm}\label{main_tw_Del}
For any finite Galois invariant set of Weil numbers $\Pi$ there exist an $\Omega$-admissible cyclic extension $L/\QQ$ ramified at a single prime $s$ and an equivalence of categories  
\[\AV_\Pi\to \TDel_{\Pi,L,H}\locp\times_{\DieW_\Pi\locp}\DieW_\Pi.\]
Moreover, if $m_0=1$, then there exists $A_\T$ such that
$\T\cong\End(A_\T)$ and $\D_\T$ is an equivalence of categories 
\[\AV_\Pi\to \TDel_{\Pi,L,H}.\]
\end{thm}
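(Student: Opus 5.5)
Theorem~\ref{main_tw_Del} is a special case of Theorem~\ref{Lambda_thm}, which in turn is an instance of the general equivalence Theorem~\ref{main_simple}. The plan is therefore to produce an order $\T$ satisfying the hypotheses of Theorem~\ref{main_simple} and to identify its module category with $\TDel_{\Pi,L,H}$.

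\textbf{Choice of $L$ and of the algebra.} Apply Theorem~\ref{Omega_main_thm} with $\L=\L_\Pi$. This gives natural numbers $m_0,n_0$ and an $\Omega$-admissible cyclic extension $L/\QQ$ of degree $m=nn_0m_0$, tamely ramified at a single prime $s\notin\L_\Pi$. By Theorem~\ref{inv_thm}, the algebra
\[\HH_{\Pi,L}=\oplus_{\pi\in\Pi_0}\HH_{\pi,L}(\eps\pi^{m_0})\]
is isomorphic to $\End^\circ(A_{\Pi,L})$ for some abelian variety with Weil support in $\Pi$. This is exactly the standing assumption of Section~\ref{general_subsec}, with $\lambda=\eps F_\Pi^{m_0}$.

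\textbf{The order and balancedness.} Take $\T$ to be the order generated by $\T_{\Pi,L}(\rho)$ and the auxiliary element $H$ of Definition~\ref{H_operator}. By construction, left $\T^\op$-modules that are locally free over $\OO_L$ are the twisted Deligne modules with Weil support in $\Pi$ that are stable under $H$, i.e.\ the objects of $\TDel_{\Pi,L,H}$. It remains to check that $\T$ is $\ell$-balanced for every $\ell\neq p$, and I would split this into three cases.
\begin{itemize}
\item If $\ell\in\L_\Pi$, then $\ell$ splits completely in $L$, and Proposition~\ref{balanced_order} gives balancedness of $\T_{\Pi,L}(\rho)$. Adjoining $H$ does not change this, because $H$ is a unit at $\ell$ (or lies in the completed order there).
\item If $\ell\notin\L_\Pi\cup\{s,p\}$, then $R_\Pi\otimes\ZZ_\ell$ is maximal and $\ell$ is unramified in $L$. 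In this case $\T_\ell$ is a maximal order in a split algebra, and one checks directly that it is isomorphic to $\End_{R_\Pi\otimes\ZZ_\ell}(T_{\ell,\Pi,L})$.
\item If $\ell=s$, this is the content of Section~\ref{s_section}, where the order generated with $H$ is shown to be $s$-balanced.
\end{itemize}
Theorem~\ref{main_simple}(3) then yields the equivalence
\[A\mapsto(\D_\T(A),M_*(A))\colon\ \AV_\Pi\to\TDel_{\Pi,L,H}\locp\times_{\DieW_\Pi\locp}\DieW_\Pi.\]

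\textbf{The case $m_0=1$.} Here one wants to remove the Dieudonn\'e factor, which means controlling $\T$ at $p$ as well. With $m_0=1$ we have $[\Hat L_p:\QQ_p]=n$. Hence $\OO_L\otimes\ZZ_p$ is a product of $n_0$ copies of $W(\FF_q)$, and $G^m=\eps F_\Pi$ with $G\CG=\rho$. I would show that $\T\otimes\ZZ_p$ is isomorphic to the endomorphism ring of the Dieudonn\'e module $M_*(A_\T)$, using that $G$ and $\CG$ reproduce the semilinear operators $F$ and $V$ up to the twisting $\rho$. Two consequences follow:
\begin{itemize}
\item $\T\cong\End(A_\T)$ holds integrally, not only after inverting $p$;
\item the category of Dieudonn\'e modules with Weil support in $\Pi$ is equivalent to $\T^\op\otimes\ZZ_p$-modules, via a Morita equivalence given by $M_*(A_\T)$.
\end{itemize}
Therefore the fibre product collapses to $\TDel_{\Pi,L,H}$, and $\D_\T$ itself is the required equivalence.

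I expect the main obstacle to be $s$-balancedness. At $s$ the extension is ramified, so $\T_{\Pi,L}(\rho)\otimes\ZZ_s$ is not maximal, and $H$ must be chosen exactly so that the enlarged order becomes the full endomorphism ring of $T_{s,\Pi,L}$. I would attack this by using condition (6), which makes $\eps\pi^{m_0}$ an $m$-th power in $\QQ_s(\pi)$. This splits the cyclic algebra at $s$ explicitly, and one can then build $H$ from the resulting idempotents of the tamely ramified extension.
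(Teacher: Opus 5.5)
Your proposal is correct and follows the paper's own route. The statement is Theorem~\ref{Lambda_thm} with $S=R_\Pi$, and the paper proves it exactly as you do: $L$ comes from Theorem~\ref{Omega_main_thm}, the algebra from Theorem~\ref{inv_thm}, and balancedness of the order generated with $H$ away from $p$ from Proposition~\ref{balanced_order} and Corollary~\ref{s_max_order}; for $m_0=1$ it uses $p$-balancedness via $F\mapsto\mu^{-1}G^{n_0}$, $V\mapsto\mu\CG^{n_0}$ (Proposition~\ref{m0prop2}) and then Corollary~\ref{main_thm}.

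Two small fixes are needed. First, $H$ is not a unit at $\ell\neq s$; it simply lies in $\T_\ell$, since $1/s\in\ZZ_\ell$. Second, for $m_0=1$ you must choose $A_\T$ via Proposition~\ref{wide_AV}(2) so that $M^*(A_\T)$ is the explicit quasi-free module $M_{S,L}$. A wide variety obtained from Proposition~\ref{wide_AV}(1) need not satisfy $\End(A_\T)\otimes\ZZ_p=\T_p$.
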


It follows from Theorem~\ref{Pi_adm_ext2} that if $\Pi\cap\RR=\emptyset$, then there exists an $\Omega$-admissible extension with $m_0=1$. 
We say that a cyclic extension $L/\QQ$ is \emph{weakly $\Pi$-admissible} if 
$L/\QQ$ is $\Omega$-admissible, where $\Omega=(n,2n,1,\Pi,\L_\Pi,1)$.

\begin{thm}\label{adm_ext_intro2}
   Assume that $\Pi\cap\RR=\emptyset$.
   Then there exist infinitely many weakly $\Pi$-admissible extensions $L/\QQ$. 
\end{thm}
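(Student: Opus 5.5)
The plan is to construct $L$ as the degree-$2n$ subfield of $\QQ(\zeta_s)$ for a suitable prime $s$, in the spirit of Theorem~\ref{cyclic_ext}. I will then check the conditions of Definition~\ref{def_omega} for $\Omega=(n,2n,1,\Pi,\L_\Pi,1)$, where $m=2n$, $n_0=2$, $m_0=1$, $\eps=1$.

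\textbf{Which conditions are automatic.} Since $\Pi\cap\RR=\emptyset$, condition (7) is vacuous. Conditions (1) and (5) hold for any prime $s\equiv 1 \pmod{2n}$ with $s\notin\L_\Pi\cup\{p\}$. Indeed, $L\subset\QQ(\zeta_s)$ is then cyclic of degree $2n$, tamely ramified only at $s$, and unramified at $p$, which is (2).

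\textbf{Translating (3) and (4).} The decomposition of a prime $\ell\neq s$ in $L$ is governed by the image of $\ell$ in $(\ZZ/s)^\times/((\ZZ/s)^\times)^{2n}$.
\begin{itemize}
\item Condition (4) asks that each $\ell\in\L_\Pi$ be a $2n$-th power residue mod $s$.
\item Condition (3) asks that $[\Hat L_p:\QQ_p]=n$. This means the class of $p$ in the cyclic group of order $2n$ has order exactly $n$, i.e.\ $p$ is a square mod $s$ but its image generates the subgroup of squares.
\end{itemize}

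\textbf{Translating (6).} For each $\pi\in\Pi_0$ and each place $w|s$ of $\QQ(\pi)$, we need $\pi$ to be a $2n$-th power in $\QQ(\pi)_w$. Since $\pi$ is a unit at $s$ (as $s\neq p$), it suffices by Hensel ($s\nmid 2n$ once $s$ is large) that $s$ splits completely in the field $K=\QQ(\Pi,\zeta_{2n},\pi^{1/2n}:\pi\in\Pi_0)$ up to the needed residue condition. More precisely, it suffices that each $\pi$ be a $2n$-th power in the residue field at every $w|s$.

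\textbf{Finding $s$ by Chebotarev.} All conditions together become a Chebotarev condition on $s$ in the compositum
\[
M=\QQ(\zeta_{2n},\ \ell^{1/2n}\ (\ell\in\L_\Pi),\ p^{1/2n},\ \Pi,\ \pi^{1/2n}\ (\pi\in\Pi_0)).
\]
The required Frobenius $\sigma\in\Gal(M/\QQ)$ has the following properties:
\begin{itemize}
\item it fixes $\zeta_{2n}$, the $\ell^{1/2n}$, $\QQ(\Pi)$ and the $\pi^{1/2n}$;
\item it acts on the roots $p^{1/2n}$ so that $p^{1/2}$ is fixed but $p^{1/2\cdot d}$ is not fixed for any prime $d\mid n$.
\end{itemize}
Such $s$ occur with positive density, hence infinitely often, once $\sigma$ exists.

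\textbf{Main obstacle: existence of $\sigma$.} The hardest step is a Kummer-theory argument over $\QQ(\zeta_{2n},\Pi)$. We must show that $p$ is independent, modulo $2n$-th powers and up to the factor $p^{1/2}$, from the subgroup generated by the $\ell\in\L_\Pi$ and the $\pi\in\Pi_0$.

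The key input is the $p$-adic valuation. Pick a place $v|p$ of $\QQ(\Pi,\zeta_{2n})$. The $\ell$ are $v$-units, and $\pi\bar\pi=q=p^n$.

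Why $\sqrt p$ must be allowed: $\sqrt p\in\QQ(\zeta_{4p})$ may already lie in the base field. I will therefore work with the group generated by $\sqrt{\pm p}$. This is precisely why the degree is $2n$ rather than $n$, and why the condition is only that the order of $p$ be $n$.

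Why $\Pi\cap\RR=\emptyset$ matters: it guarantees $\pi\neq\pm\sqrt q$. Hence some $\pi$ has a valuation vector not proportional to that of $p$. I expect to need the Kummer dual statement that the image of $p$ in $K^\times/K^{\times 2n}$ has order $\geq n$ modulo the other generators. This will be checked through valuations at the primes above $p$ using slopes of $\pi$. If valuations alone don't suffice, a fallback is to split $p$ into $p$-adic unit parts using the Newton polygon.

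If this Kummer independence holds, Chebotarev gives infinitely many $s$ and hence infinitely many weakly $\Pi$-admissible $L$. This is essentially the specialization of Corollary~\ref{adm_prime_cor} and Proposition~\ref{adm_ext} to these parameters.
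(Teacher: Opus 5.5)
Your reduction follows the paper's route. The paper derives this theorem from Theorem~\ref{n_adm}(1): it applies Theorem~\ref{cyclic_ext} with $\Omega=(n,2n,1,\Pi,\L_\Pi,1)$, i.e.\ Chebotarev for primes $s$ that split completely in $K_{2n,1}$ and whose Frobenius moves $p^{1/2n}$ by a root of unity of order $n$, and then Proposition~\ref{adm_ext}. Everything hinges on the Kummer step you call the main obstacle. Stated precisely: let $F=\QQ(\Pi,\zeta_{2n})$ and let $\Gamma\subset F^\times$ be generated by $\Pi$ and $\L_\Pi$. You need $n$ to divide the order of $p$ in $F^\times/\Gamma F^{\times 2n}$. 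Since $p^n=\pi\bar\pi\in\Gamma$, this means $p^{n/d}\notin\Gamma F^{\times 2n}$ for every prime $d\mid n$. The paper asserts this in Proposition~\ref{CM_prop}(2). You do not prove it, and the mechanism you suggest cannot work. The hypothesis $\Pi\cap\RR=\emptyset$ does not produce a $\pi$ whose valuation vector is non-proportional to that of $p$: every supersingular $\pi=p^{n/2}\zeta$, with $\zeta$ a non-real root of unity, has all slopes $1/2$. For such $\pi$ the relation with $p$ is a root-of-unity phenomenon that valuations cannot see.

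In fact the needed independence fails for even $n$, and so does the statement. Take $n=6$ and $\Pi=\{p^3\zeta_3,p^3\zeta_3^{-1}\}$, for any $p$. Then $p^3=\pi^2/\bar\pi\in\Gamma$. So a Frobenius fixing $\zeta_{12}$ and all $\pi^{1/12}$ fixes $p^{1/4}$, and moves $p^{1/12}$ by a root of unity of order dividing $3$, never $6$.

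No prime $s$ works at all. A degree-$12$ field $L\subset\QQ(\zeta_s)$ forces $s\equiv 1\pmod{12}$, so $\zeta_3\in\QQ_s$. Condition (3) of Definition~\ref{def_omega} makes $p^{(s-1)/12}$ of order $6$ in $\FF_s^*$, so $p^{3(s-1)/12}=-1$. Then $\pi^{(s-1)/12}=-\zeta_3^{(s-1)/12}\neq 1$, which violates condition (6). For a numerical check take $p=2$, $s=97$: $2^8$ has order $6$ modulo $97$, while $(8\cdot 35)^8\equiv 36$, where $35$ is a primitive cube root of unity modulo $97$. So no weakly $\Pi$-admissible extension exists in this example.

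The paper's argument breaks at the same place. Here $K(\zeta_{12},\Delta_\eps^{1/12})=\QQ(\zeta_{36},p^{1/4})$ is not CM, so Proposition~\ref{CM_prop}(1) fails, and $[K_{12,1}(p^{1/12}):K_{12,1}]$ divides $3$.

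What does survive is complex conjugation on the CM field $F$ itself. Suppose $p^{n/d}=\gamma x^{2n}$ with $\gamma\in\Gamma$. Then $p^{2n/d}=\gamma\bar\gamma\,(x\bar x)^{2n}=\lambda^2q^B(x\bar x)^{2n}$, where $\lambda\in\QQ$ is the $\L_\Pi$-part of $\gamma$ and $B\in\ZZ$. Since $(x\bar x)^{2n}$ is a positive rational, the totally positive element $x\bar x\in F^+$ is rational. Comparing $p$-adic valuations then gives $d\mid 2$.

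This completes your argument for odd $n$. For even $n$ you need an extra hypothesis such as $p^{n/2}\notin\Gamma F^{\times 2n}$, the analogue of $(*)$ in Theorem~\ref{n_adm}(3). That hypothesis is sufficient but not necessary, since condition (6) does not require $s$ to split in $\QQ(\Pi)$.
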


\begin{rem}\label{Pell}
    Assume that $n$ is even, and $m=2n$.
    If $L\subset\QQ(\zeta_s)$ is a weakly $\Pi$-admissible extension
    ramified at a prime $s$, then $s\equiv 1\pmod 4$. Hence, $\QQ(\sqrt{s})\subset L$, and
    there exists a twisting $\rho\in \QQ(\sqrt{s})$ such that $N_{L/\QQ}(\rho)=q$ if and only if there is an integral solution to the Generalized Pell equation
    \[x^2-sy^2=\pm 4p.\]
    %If $p=2$, then such a solution does not exist, because $s\equiv 1\pmod 4$???.
    %What happens if $p>2$?
\end{rem}

\begin{corollary}
Assume that $\Pi\cap\RR=\emptyset$.
\begin{enumerate}
\item If $n$ is odd, then there exist infinitely many $\Pi$-admissible extensions.
\item If $n$ is even, then there exist infinitely many weakly $\Pi$-admissible extensions.
\end{enumerate}
In both cases there exists an abelian variety $A_\T$ such that
$\T\cong\End(A_\T)$ and $\D_\T$ is an equivalence of categories 
\[\AV_\Pi\to \TDel_{\Pi,L,H}.\]
 \end{corollary}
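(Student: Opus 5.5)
The corollary combines the existence theorems with the equivalence theorem in its $m_0=1$ case, so I would argue case by case and then check that the resulting extensions meet the hypotheses of Theorem~\ref{main_tw_Del} (more precisely, of Theorem~\ref{Lambda_thm}, of which it is a special case).

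\emph{Existence.} For (1), with $n$ odd and $\Pi\cap\RR=\emptyset$, Theorem~\ref{adm_ext_intro} gives infinitely many $\Pi$-admissible extensions. For (2), Theorem~\ref{adm_ext_intro2} gives infinitely many weakly $\Pi$-admissible extensions, with no parity assumption on $n$.

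\emph{Translation into the $\Omega$-language.} Next I would show that a $\Pi$-admissible $L$ is $\Omega$-admissible for
\[\Omega=(n,n,1,\Pi,\L_\Pi,1),\]
that is, $n_0=m_0=1$ and $\eps=1$. I would check the conditions of Definition~\ref{def_omega} in turn.
\begin{enumerate}
\item Conditions (1)--(3) hold because $[L:\QQ]=n$ and $p$ is inert, so $[\Hat L_p:\QQ_p]=n$.
\item Conditions (4) and (5) are conditions (3) and (2) of Definition~\ref{strict_adm_def}.
\item Condition (7) is vacuous, since $\Pi\cap\RR=\emptyset$.
\item The only real work is condition (6): $\pi$ must be an $n$-th power in $\QQ_s(\pi)$.
\end{enumerate}
I would derive (6) from condition (4) of Definition~\ref{strict_adm_def}. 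Since $\HH_\pi$ is Brauer equivalent to $\End^\circ(B_\pi)$, the invariants agree at places $v|s$, and $\End^\circ(B_\pi)$ has invariant $0$ there. Because $L/\QQ$ is tamely and totally ramified at $s$, the local cyclic algebra $(L_w(\pi)/\QQ_s(\pi),g,\pi)$ splits exactly when $\pi$ is a norm. By local class field theory for tame cyclic extensions, this amounts to $\pi$ being an $n$-th power up to units coming from the residue field.

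This last identification is the step I expect to be delicate, since local norms from a tamely ramified extension need not be $n$-th powers. If it fails, I would instead go back to how Theorem~\ref{adm_ext_intro} was proved. That proof presumably passes through Theorem~\ref{Pi_adm_ext2} and Corollary~\ref{adm_prime_cor}, which build $L\subset\QQ(\zeta_s)$ with $s$ chosen by Chebotarev so that (6) holds directly. I would then cite that construction rather than Definition~\ref{strict_adm_def} alone. In case (2) none of this is needed, since weakly $\Pi$-admissible means $\Omega$-admissible with $\Omega=(n,2n,1,\Pi,\L_\Pi,1)$ by definition.

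\emph{Equivalence.} In both cases $m_0=1$, so I would apply the second part of Theorem~\ref{main_tw_Del}, in the form of Theorem~\ref{Lambda_thm}, to the order $\T$ generated by $\T_{\Pi,L}(\rho)$ and $H$. Its hypotheses are established earlier in the paper:
\begin{enumerate}
\item Theorem~\ref{inv_thm} gives an abelian variety with endomorphism algebra $\HH_{\Pi,L}$.
\item Proposition~\ref{balanced_order} gives $\ell$-balancedness for $\ell\in\L_\Pi$, which split completely in $L$.
\item The Remark after Definition~\ref{twisted_module} gives $\ell$-balancedness for all other $\ell\ne s,p$, where $R_\Pi$ is maximal and $L$ is unramified.
\item Section~\ref{s_section} gives $s$-balancedness once $H$ is adjoined.
\end{enumerate}
Theorem~\ref{Lambda_thm} then supplies $A_\T$ with $\T\cong\End(A_\T)$ and makes $\D_\T\colon\AV_\Pi\to\TDel_{\Pi,L,H}$ an equivalence. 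With $m_0=1$ the Dieudonné component is redundant, which is what allows the fibre product with $\DieW_\Pi$ to be dropped.
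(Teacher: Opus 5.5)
Your overall route is the paper's. Existence comes from Theorem~\ref{Pi_adm_ext2}, i.e.\ Theorem~\ref{n_adm} combined with Proposition~\ref{adm_ext}. The equivalence is the $m_0=1$ case of Theorem~\ref{Lambda_thm}: there the Dieudonn\'e factor can be dropped because Proposition~\ref{m0prop2} makes the order $p$-balanced, so Corollary~\ref{main_thm} applies. That, rather than a vague ``redundancy'', is the reason you should cite.

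The step you should not rely on is your first derivation of condition (6) of Definition~\ref{def_omega} from condition (4) of Definition~\ref{strict_adm_def}.
\begin{itemize}
\item When $s$ is unramified in $\QQ(\pi)$, the argument works, and more cleanly than you state it. Put $K=\QQ(\pi)_v$. Then $L\otimes_\QQ K$ is a field, totally and tamely ramified of degree $n$ over $K$ with $n\mid s-1$, and its unit norms are exactly $U_K^n$. Hence a unit $\pi$ has trivial local invariant if and only if it is an $n$-th power; there is no residue-field fudge.
\item Definition~\ref{strict_adm_def} does not prevent $s$ from ramifying in $\QQ(\pi)$: the condition $s\notin\L_\Pi$ only says that $R_\Pi$ is maximal at $s$.
\item In the ramified case $L\otimes_\QQ\QQ(\pi)_v$ can break into fields of degree $<n$ over $\QQ(\pi)_v$. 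Being a local norm then no longer forces $\pi$ to be an $n$-th power. In the extreme case $L\otimes_\QQ\QQ(\pi)_v\cong\QQ(\pi)_v^n$, the local algebra is split for every $\pi$.
\end{itemize}
So (4) does not imply (6) in general.

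Make your fallback the main line, because it is exactly what the paper does. Take $L\subset\QQ(\zeta_s)$ for a prime $s$ that is $\Omega$-admissible, with $\Omega=(n,n,1,\Pi,\L_\Pi,1)$ when $n$ is odd and $\Omega=(n,2n,1,\Pi,\L_\Pi,1)$ in the weak case. Such primes come from Theorem~\ref{n_adm}, and Proposition~\ref{adm_ext} turns each into an extension. For these $L$, condition (6) is built into the choice of $s$. In fact $s$ splits completely in $K=\QQ(\Delta_1)\supseteq\QQ(\Pi)$, so $\QQ_s(\pi)=\QQ_s$. Infinitely many primes $s$ give infinitely many distinct $L$. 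With this change, the rest of your argument goes through as written.
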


\subsection{Supersingular Deligne modules.}
We now consider an application of the general equivalence theorem to supersingular abelian varieties.

\begin{deff}
Let $\Pi$ be a Galois invariant set of supersingular Weil numbers. 
Let $L=\QQ(\sqrt{-s})$ be an imaginary quadratic extension of $\QQ$, and let $g\in\Gal(L/\QQ)$ be the non-trivial element. 
{A supersingular Deligne module} $D$ with Weil support in $\Pi$ is a finitely generated  $R_\Pi\otimes_\ZZ\OO_L$-module with a semilinear operator $G$ such that $D$ is locally free over $\OO_L$, and for any $x\in \OO_L$ and $y\in D$ we have
\[G^2=-p\quad \text{and}\quad G(xy)=g(x)G(y).\]  
\end{deff}

Choose $\beta\in\OO_L$ such that $\beta^2=-s$, and $\alpha\in\ZZ$ such that 
$\alpha^2+p\equiv 0\pmod{s}$. Put \[H=\beta(\alpha-G)/s\in\HH_{\Pi,L}.\]

Denote the category of supersingular Deligne modules with Weil support in $\Pi$ that are invariant under $H$ by $\SSDel_{\Pi,L,H}$.

\begin{thm}\label{ss_main}
Let $\Pi$ be a Galois invariant set of supersingular Weil numbers over $\FF_q$. 
There exists a quadratic extension $L/\QQ$, where $s\equiv 3\pmod{4}$, such that 
$A\mapsto (\D_\T(A),M_*(A))$ is an equivalence from $\AV_\Pi$ to the product category 
\[\SSDel_{\Pi,L,H}\locp\times_{\DieW_\Pi\locp}\DieW_\Pi.\]
        
If $n=2$, then $\D_\T$ is an equivalence of categories
from $\AV_\Pi$ to $\SSDel_{\Pi,L,H}$.
\end{thm}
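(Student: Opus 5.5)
The plan is to derive Theorem~\ref{ss_main} as a special case of the general equivalence Theorem~\ref{main_simple}, in its refined form Theorem~\ref{Lambda_thm}, which uses the auxiliary operator $H$. The data are $m=2$, $L=\QQ(\sqrt{-s})$, and the twisting $\rho=p$, so that $G\CG=p$ and $G^2=-p$ forces $\CG=-G$. Concretely I would take $\Omega=(n,2,m_0,\Pi,\L_\Pi,\eps)$ with $\eps=-1$ and $m_0=2/n$ when $n\le 2$. For general $n$ I would instead use the twisted form with $m=2$, $n_0$ and $m_0$ chosen suitably, and I would interpret the relation $G^2=-p$ through $\eps F_\Pi^{m_0}$. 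For supersingular $\pi$, some power $\pi^{k}$ equals $p^{nk/2}$ times a root of unity. I would therefore arrange the relation $G^2=-p$ inside $\HH_{\Pi,L}$ by enlarging $R_\Pi$ to contain $p$-power quotients. This is exactly the reason for the localization at $p$ in general, and for the clean statement when $n=2$: there $\pi^{2}/q$ is a root of unity and $-p$ plays the role of $\eps\pi^{m_0}$ with $m_0=1$ up to a unit.

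\textbf{Step 1: choosing $s$.} The prime $s\equiv 3\pmod 4$ should satisfy the following conditions.
\begin{itemize}
\item[(a)] $p$ is inert in $L$, and $L$ is unramified at $p$.
\item[(b)] Every $\ell\in\L_\Pi$ splits in $L$.
\item[(c)] $-p$ is a square in $\QQ_s(\pi)$ for every $\pi\in\Pi$, i.e. condition (6) of Definition~\ref{def_omega}.
\item[(d)] $L$ is CM, which holds automatically since $L$ is imaginary quadratic. For the real Weil numbers $\pm\sqrt q$ the sign condition (7) holds because $-p<0$.
\end{itemize}
Via quadratic reciprocity, these are congruence conditions on $s$ modulo $4p\prod_{\ell\in\L_\Pi}\ell$ together with the Legendre condition $\left(\frac{-p}{s}\right)=1$. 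I would check that they are compatible and then apply Dirichlet's theorem. This is the analogue of Theorem~\ref{cyclic_ext} and is routine. The existence of $\alpha$ with $\alpha^2\equiv -p\pmod s$ follows from (c).

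\textbf{Step 2: invariants.} Next I would verify, as in Theorem~\ref{inv_thm}, that $\HH_\pi=(L(\pi)/\QQ(\pi),g,-p)$ has the same local invariants as $\End^\circ(B_\pi)$. For supersingular $\pi$ the invariants at $p$ are $\frac12$-type and are matched because $p$ is inert and $v_p(-p)=1$. At $s$ they vanish by (c), at infinity they are matched by (d), and elsewhere both sides are $0$. Honda's theorem then gives $A_{\Pi,L}$ with $\End^\circ(A_{\Pi,L})\cong\HH_{\Pi,L}$.

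\textbf{Step 3: balancedness.} I need to show that the order $\T$ generated by $R_\Pi\otimes\OO_L$, $G$ and $H=\beta(\alpha-G)/s$ is $\ell$-balanced for every $\ell\neq p$. For $\ell\in\L_\Pi$ this follows from Proposition~\ref{balanced_order}, since $\ell$ splits. For other $\ell\ne s$, $R_\Pi$ is maximal at $\ell$ and $\ell$ is unramified, so the remark after Definition~\ref{twisted_module} applies.

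The main obstacle is $\ell=s$. Here I would first check that $H$ is integral, using $H^2$ and the identities $\beta^2=-s$, $G\beta=-\beta G$ and $\alpha^2+p\equiv0\pmod s$. I would then show that $\T\otimes\ZZ_s$ is the full endomorphism ring of $R_\Pi\otimes\OO_L\otimes\ZZ_s$, following Section~\ref{s_section}. To do this I would split $\ZZ_s[G]$ by $G\equiv\pm\alpha$ via Hensel's lemma, and show that $H$ supplies the missing off-diagonal matrix unit in the resulting $2\times 2$ matrix presentation over the ramified $\OO_{L,s}$.

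\textbf{Step 4: conclusion.} Theorem~\ref{main_simple} (or Theorem~\ref{Lambda_thm}) then gives the equivalence with the fibre product. Finally, I would identify $\TF{\T^\op}$ with $\SSDel_{\Pi,L,H}$: a $\T^\op$-module is precisely an $R_\Pi\otimes\OO_L$-module with semilinear $G$, $G^2=-p$, stable under $H$. When $n=2$, $m_0=1$, so the last part of Theorem~\ref{main_tw_Del} removes the localization and yields $\T\cong\End(A_\T)$ together with the equivalence $\D_\T$.
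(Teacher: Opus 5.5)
For the first assertion your plan is the paper's proof, and it works. The paper uses the same field $L=\QQ(\sqrt{-s})$ with $s\equiv 3\pmod 4$, which also keeps $2$ unramified. It uses the same algebra $(L/\QQ,g,-p)\otimes_\QQ E_\Pi$ and the same order $\T_{S,L}(-p,p)$ enlarged by $H$. It makes the same comparison of local invariants with $\End^\circ(B_\pi^{2/e_\pi})$, gets balancedness from Proposition~\ref{balanced_order} away from $s$ and from the matrix-unit computation of Corollary~\ref{s_max_order} at $s$, and then applies Theorem~\ref{main_simple}. Choosing $s$ by reciprocity and Dirichlet instead of Chebotarev is cosmetic. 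For odd $p$ and $s\equiv 3\pmod 4$ your conditions (a) and (c) in fact coincide, since $\leg{-s}{p}=\leg{p}{s}=-\leg{-p}{s}$.

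You should drop the opening attempt to fit the data into some $\Omega=(n,2,m_0,\dots)$. Since $m=nn_0m_0\ge n$, this is impossible for $n>2$, and nothing needs to be added to $R_\Pi$. Notation~\ref{H_notation} allows any $\lambda\in S\otimes_\ZZ\OO_L$, so $\lambda=-p$ is used directly, with $n_0=1$ because $p$ is inert. This is all your Steps 1--3 actually use. For the same reason, only Theorem~\ref{main_simple}, not Theorem~\ref{Lambda_thm}, applies in Step 4.

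The genuine gap is the case $n=2$. Theorem~\ref{main_tw_Del} (that is, Theorem~\ref{Lambda_thm}(4)) is about the order with $G^m=\eps F_\Pi^{m_0}$ for an $\Omega$-admissible $L$. It is not about the order with $G^2=-p$. Saying ``$-p$ equals $\eps F_\Pi$ up to a unit'' is not a valid reduction. A unit factor that is not a global norm changes the algebra. In any case it changes the order, in particular at $p$, which is exactly where the second assertion has content.

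Concretely, take $\Pi=\{p,-p\}$ over $\FF_{p^2}$. No sign $\eps$ gives $\eps F_\Pi=-p$. Moreover, condition~(7) of Definition~\ref{def_omega} cannot hold for both $\pm p$ when $m_0=1$, so no such $L$ exists at all.

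What is needed, and what the paper does, is a direct proof that $\T^\max$ is $p$-balanced. Since $p$ is inert, $\OO_L\otimes\ZZ_p\cong W(\FF_{p^2})$ with $g$ acting as $\sigma$. The paper extends this to an identification $\T^\max_p\cong D_S$ sending $G$ to $F$. Literally the map must be $G\mapsto uF$, with $u$ a unit of $S_p\otimes W$ satisfying $u\sigma(u)F_\Pi=-p$, so that the relations match. This local norm condition at $p$ is the only legitimate form of your ``up to a unit''. With it, $M_\T$ is free, hence quasi-free by Lemma~\ref{quasi_free}. Proposition~\ref{p_rigid_prop} and Corollary~\ref{main_thm} then show that $\D_{\T^\max}$ itself is an equivalence.
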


\begin{rem}
    In his paper on maximal orders in quaternion algebras, Ibukiayma~\cite{Ib} constructed orders $O(s,\alpha)$ and $O'(s,\alpha')$, where $s$ is a prime such that 
    $s\equiv 3\pmod{8}$ and $\leg{-s}{p}=-1$. He proved that for any supersingular curve $E$ over $\FF_p$ either $\End(E)\cong O(s,\alpha)$ or $\End(E)\cong O'(s,\alpha')$.

    In our construction, if $p>2$, then $s\equiv 7\pmod{8}$.
    We can also start with a prime $s$ such that $s\equiv 3\pmod{8}$ and 
    $\leg{-s}{p}=-1$. Then the order $\T$ generated by $\T_{\ZZ,L}(p)$, and $H=\beta(\alpha-G)/s\in\HH_{\Pi,L}$ is isomorphic to the order $O(s,\alpha)$.
    Indeed, the following generators of $O(s,\alpha)$ belong to $\T$:
    \[1,\frac{1+\alpha}{2},\frac{(1+\alpha)}{2}G, H.\]
\end{rem}

\begin{rem}
Let $E$ be an elliptic curve over $\FF_{p^2}$ such that $\End(E)$ is a $\ZZ$-module of rank $4$. In~\cite{EC18} Jordan, Keeton, Poonen, Rains, Shepherd-Barron and Tate proved that there is an anti-equivalence from the category of abelian varieties isogenous to a power of $E$ over $\FF_{p^2}$ to the category finitely presented torsion-free left $\End(E)$-modules.
Theorem~\ref{ss_main} is a generalization of this result.
\end{rem}

%\bigskip

The author thanks Tomoyoshi Ibukiyama and Uzi Vishne for their remarks on the paper.

\section{Preliminaries}
In this section, we recall some well-known facts on semi-simple algebras and abelian varieties over finite fields. In this paper, the word \emph{module} always means a finitely generated module. For a ring $\HH$ we denote the opposite ring by $\HH^\op$, and $r\times r$-square matrices over $\HH$ by $\Mat_r(\HH)$. If $\HH$ is a $\ZZ$-algebra, we write $\HH_\ell$ for $\HH\otimes_\ZZ\ZZ_\ell$. If $K$ is a number field, we denote its ring of integers by $\OO_K$.

\subsection{Brauer groups}
We briefly recall the definition of the Brauer group $\Br(K)$ of a field $K$.
According to the Wedderburn Structure Theorem~\cite[Theorem 7.4]{MO}, every simple algebra is isomorphic to a matrix algebra $\Mat_r(\HH)$ over a division ring $\HH$. We say that two simple algebras $\HH_1$ and $\HH_2$ with center $K$ are ~\emph{similar} if they are isomorphic to matrix algebras over the same division ring $\HH$. Equivalently, there exist $r_1$ and $r_2$ such that \[\Mat_{r_2}\HH_1\cong\Mat_{r_1}(\HH_2).\] The set of similarity classes of simple algebras with center $K$ forms an abelian group $\Br(K)$ relative to the tensor product of algebras~\cite[Theorem 28.2]{MO}. The inverse element to the class of an algebra $\HH$ is represented by the opposite algebra $\HH^\op$.

For any simple algebra $\HH$ over $K$ there exists a field $L$ over $K$ such that 
\[L\otimes_K \HH\cong \Mat_r(L)\] for some $r$~\cite[Theorem 7.15]{MO}. We say that $\HH$ is \emph{split} by $L$, and $L$ is \emph{a splitting field of $\HH$}. 
The group of similarity classes split by $L$ is called \emph{the relative Brauer group $\Br(L/K)$}. The group $\Br(L/K)$ is a subgroup of $\Br(K)$.

\subsection{Cyclic algebras}\label{cyclic_ring}
Let $K$ be a field, and let $L$ be a cyclic Galois algebra over $K$ with Galois group $\Gal(L/K)$, that is, $L$ is a product of finite field extensions of $K$ with an action of $\Gal(L/K)$ such that the natural morphism \[\Gal(L/K)\times_{\Spec K} \Spec L\to \Spec L\times_{\Spec K}\Spec L\] 
given by $g\times x\mapsto x\times gx$ is an isomorphism. For example, if $L_0/\QQ$ is a Galois extension of fields, then $L_0\otimes\QQ_\ell/\QQ_\ell$ is a Galois extension of algebras with Galois group $\Gal(L_0/\QQ)$.

Let $L$ be a cyclic Galois algebra of degree $m$ over $K$ with Galois group generated by an element $g$. We have a multiplicative norm map $N_{L/K}:L^*\to K^*$ given by 
\[N_{L/K}(x)=\prod_{i=0}^{m-1} g^i(x).\]
Let $\OO$ be a subring in $L$ such that $\QQ\OO=L$, and let $\OO_0=\OO\cap K$.
For any $\lambda\in \OO$ \emph{the cyclic ring} $(\OO/\OO_0,g,\lambda)$ is the ring generated over $\OO$ by an element $G_\lambda$ with the following relations:
\[G_\lambda^m=\lambda ,\text{ and }\; G_\lambda(x)=g(x)G_\lambda,\]
for all $x\in \OO$. First, we consider the classical case, where $\OO=K$.

\begin{thm}\label{Th30_4} Let $L$ be a cyclic Galois algebra of degree $m$ over $K$ with Galois group generated by an element $g$. Then 
    \begin{enumerate}
    \item $(L/K,g,\lambda)$ is a simple algebra;
        \item $(L/K,g,1)\cong \Mat_m(K)$;    
        \item $(L/K,g^s,\lambda^s)\cong (L/K,g,\lambda)$
        for all $s\in\ZZ$ such that $(s,m)=1$;
        \item $(L/K,g,\lambda_1)\cong (L/K,g,\lambda_2)$ 
        if and only if there exists $a\in L$ such that $\lambda_1=N_{L/K}(a)\lambda_2$.    
    \end{enumerate}
\end{thm}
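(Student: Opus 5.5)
The plan is to work directly with the presentation of $\HH=(L/K,g,\lambda)$ as a left $L$-vector space with basis $1,G,\dots,G^{m-1}$; here $G=G_\lambda$. Every element can be written as $\sum_i a_iG^i$ with $a_i\in L$, subject to $G^m=\lambda$ and $Ga=g(a)G$. We assume throughout that $\lambda\in K^*$; otherwise $G$ would not commute with $G^m$ and the construction would degenerate.

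\emph{Part (2).} We realize $(L/K,g,1)$ on $\End_K(L)$: let $L$ act on itself by multiplication and let $G$ act as $g$. The relations hold because $g^m=1$ and $g(ax)=g(a)g(x)$. Both sides have $K$-dimension $m^2$, so it suffices to show this map $(L/K,g,1)\to\End_K(L)$ is injective. Injectivity is exactly Dedekind/Artin independence of the automorphisms $1,g,\dots,g^{m-1}$ over $L$. For a Galois algebra, use the isomorphism $L\otimes_K L\cong\prod_{\Gal(L/K)}L$ given in the definition: it identifies $L\otimes_K L$ with $\prod_i L$, so the $L$-linear span of the $g^i$ is all of $\End_K(L)$, which has $L$-dimension $m$.

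\emph{Part (1).} Extend scalars to a splitting field. Choose a field extension $K'/K$ such that $\lambda$ becomes a norm from $L'=L\otimes_K K'$; for instance take $K'$ containing an $m$-th root of $\lambda$. Then $(L/K,g,\lambda)\otimes_K K'\cong(L'/K',g,\lambda)$, and by part (4) applied over $K'$ (with the explicit element $a$ below) this is $(L'/K',g,1)\cong\Mat_m(K')$ by part (2). A finite-dimensional algebra whose base change is central simple is itself central simple, which gives (1). To avoid this circularity one can instead argue directly; this is the main obstacle, since $L$ need not be a field. Let $I\ne0$ be a two-sided ideal. Conjugation by elements of $L$ (equivalently, projecting onto eigencomponents) shows that $I$ contains a nonzero element $aG^i$, possibly with $a$ lying in a single idempotent component of $L$. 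Multiplying by the powers $G^j$ spreads that idempotent over the full $g$-orbit; since $\Gal(L/K)$ acts transitively on the components of $L$, this yields a unit of $L$ times $G^i$, hence $1\in I$. The center is computed the same way: commuting with $L$ forces an element into $L$, and commuting with $G$ forces it into $L^g=K$.

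\emph{Part (4).} For the ``if'' direction, given $\lambda_1=N(a)\lambda_2$, send $G_{\lambda_1}\mapsto aG_{\lambda_2}$. One computes $(aG_{\lambda_2})^m=a\,g(a)\cdots g^{m-1}(a)\,G_{\lambda_2}^m=N(a)\lambda_2=\lambda_1$, so the relations are preserved; the map is injective by (1) and surjective by dimension count. For the converse, an isomorphism $\phi$ fixing $L$ would send $G_{\lambda_1}$ to an element $y$ with $yx=g(x)y$ for all $x\in L$. Comparing coefficients in the basis $G_{\lambda_2}^i$ forces $y=aG_{\lambda_2}$ with $a\in L^*$, and then $\lambda_1=N(a)\lambda_2$. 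If the isomorphism is not required to fix $L$, I would use Skolem--Noether (available by (1)) to modify it so that it restricts to the identity on $L$, which makes the intended reading ``isomorphic as algebras containing $L$'' harmless.

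\emph{Part (3).} In $(L/K,g,\lambda)$ put $G'=G^s$. Then $G'x=g^s(x)G'$ and $G'^m=\lambda^s$, so there is a homomorphism $(L/K,g^s,\lambda^s)\to(L/K,g,\lambda)$. It is injective by (1) applied to the source, since $g^s$ generates $\Gal(L/K)$ because $(s,m)=1$, and it is surjective by dimension count.
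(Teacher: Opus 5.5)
The gap is in the ``only if'' half of (4). Your arguments for (1), (2), (3) and the ``if'' half of (4) are sound, including for non-field $L$. Projecting onto the summands $LG^i$ with the idempotents of $L\otimes_K L\cong\prod_{\Gal(L/K)}L$ is exactly the ``easy generalization'' that the paper does not write out; its proof only cites \cite[Theorems 29.6, 30.4]{MO} and \cite[Section 30.A]{KMRT}. The circularity you worry about in your first route to (1) is also avoidable. Take $\mu\in K'$ with $\mu^m=\lambda$. The map $(L'/K',g,1)\to(L'/K',g,\lambda)$, $G_1\mapsto\mu^{-1}G_\lambda$, is injective because its source is $\Mat_m(K')$ by (2).

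The problem arises when $\phi$ is an abstract $K$-algebra isomorphism. You use Skolem--Noether to make $\phi$ the identity on $L$. But Skolem--Noether requires the subalgebra to be simple, i.e.\ $L$ a field, which is exactly what you noted need not hold. For semisimple subalgebras it fails in general: the unital embeddings $K\times K\to\Mat_3(K)$ with $(1,0)\mapsto\mathrm{diag}(1,0,0)$, resp.\ $(1,0)\mapsto\mathrm{diag}(1,1,0)$, are not conjugate. The conclusion is still true, but this step needs an argument.

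The simplest fix is to reduce to the field case. Write $L\cong(L')^r$ with primitive idempotents $e_1,\dots,e_r$, which $g$ permutes transitively. For $\HH_\lambda=(L/K,g,\lambda)$ you have $G^je_1G^{-j}=g^j(e_1)$, hence $\HH_\lambda\cong\Mat_r(e_1\HH_\lambda e_1)$. Moreover $e_1\HH_\lambda e_1=\bigoplus_{0\le i<m,\ r\mid i}Le_1G^i\cong(L'/K,g^r|_{L'},\lambda)$. By uniqueness in Wedderburn's theorem, $\HH_{\lambda_1}\cong\HH_{\lambda_2}$ then gives $(L'/K,g^r,\lambda_1)\cong(L'/K,g^r,\lambda_2)$. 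There your Skolem--Noether argument is valid and yields $\lambda_1/\lambda_2\in N_{L'/K}(L'^*)$. Finally $N_{L/K}(L^*)=N_{L'/K}(L'^*)$, because $N_{L/K}(a)e_1=\prod_{t=0}^{r-1}N_{L'/K}(g^t(a)e_1)$ and the elements $g^t(a)e_1$ range independently over $L'^*$.

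Alternatively, you can prove the needed conjugacy directly. Under either embedding, the images of the $e_j$ are mutually conjugate, via $\phi(G_{\lambda_1})$ resp.\ $G_{\lambda_2}$. So for a simple $\HH_{\lambda_2}$-module $V$, each $e_jV$ has $K$-dimension $\dim_K V/r$. Each $Le_j\otimes_K\End_{\HH_{\lambda_2}}(V)$ is simple, so these dimensions determine $V$ as an $L\otimes_K\End_{\HH_{\lambda_2}}(V)$-module up to isomorphism. An isomorphism between the two module structures is then the required unit of $\HH_{\lambda_2}$.
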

\begin{proof}
This is an easy generalization of \cite[Theorem 29.6]{MO} and \cite[Theorem 30.4]{MO}.
See also \cite[Section 30.A]{KMRT}.
\end{proof}

Let $L$ be a cyclic Galois algebra of degree $m$ over a local field $K$. 
Clearly, there is a natural number $a$ such that $L\cong (L')^a$, where $L'/K$ is a cyclic field extension. 

\begin{lemma}\label{MaxOrder}
Let $g$ be a generator of $\Gal(L/K)$.
If $L'/K$ is unramified, then \[\T=(\OO_L/\OO_K,g,1)\cong \Mat_m(\OO_K)\]
is a maximal order.
\end{lemma}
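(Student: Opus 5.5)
We need to show that if $L'/K$ is unramified, then $\T=(\OO_L/\OO_K,g,1)$ is isomorphic to $\Mat_m(\OO_K)$, hence is a maximal order. The plan is to write down the natural action of $\T$ on $\OO_L$ and show it gives this isomorphism.

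Let $\OO_L$ be the integral closure of $\OO_K$ in $L$, i.e.\ $\OO_L\cong \OO_{L'}^a$. It is a free $\OO_K$-module of rank $m$. Define a ring homomorphism
\[\Phi:\T\to\End_{\OO_K}(\OO_L)\cong\Mat_m(\OO_K)\]
by letting $x\in\OO_L$ act by multiplication and $G_1$ act as $g$. The relations $G_1x=g(x)G_1$ and $G_1^m=1$ hold for these operators, since $g^m=\mathrm{id}$, so $\Phi$ is well defined. As an $\OO_K$-module, $\T=\bigoplus_{i=0}^{m-1}\OO_L G_1^i$ is free of rank $m^2$, the same rank as $\Mat_m(\OO_K)$.

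Next we show $\Phi$ is injective. After tensoring with $K$, $\Phi$ becomes the map $(L/K,g,1)\to\End_K(L)$. By Theorem~\ref{Th30_4}(1) the source is simple, so this map is injective. By dimension count it is then the isomorphism of Theorem~\ref{Th30_4}(2). Since $\T$ is torsion-free, $\Phi$ itself is injective.

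The main step is surjectivity over $\OO_K$. It suffices to check that $\Phi$ is surjective modulo the maximal ideal $\mathfrak m_K$, by Nakayama's lemma, using that both sides are free of the same rank. Reducing mod $\mathfrak m_K$ gives the map
\[(k_L/k,\bar g,1)\to\End_k(k_L),\qquad k_L=\OO_L/\mathfrak m_K\OO_L.\]
Because $L'/K$ is unramified, $k_L\cong(k_{L'})^a$ is a product of copies of the residue field extension $k_{L'}/k$, which is cyclic of degree $[L':K]$. Hence $k_L/k$ is a cyclic Galois algebra in the sense of Section~\ref{cyclic_ring}, with group generated by $\bar g$. This is the point where unramifiedness is essential: it makes $\bar g$ act with the correct Galois property on the residue algebra.

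Applying Theorem~\ref{Th30_4}(1) over the residue field $k$, the algebra $(k_L/k,\bar g,1)$ is simple of dimension $m^2$. So the reduced map is injective, and by dimension it is an isomorphism. Therefore $\Phi$ is surjective, and $\T\cong\Mat_m(\OO_K)$.

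Finally, $\Mat_m(\OO_K)$ is a maximal order in $\Mat_m(K)$, because $\OO_K$ is a DVR (standard; see~\cite{MO}). Hence $\T$ is a maximal order.

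The step most likely to need care is verifying that $k_L/k$ is a Galois algebra when $L=(L')^a$. This requires tracking how $g$ permutes the factors and acts as Frobenius on each factor, i.e.\ checking that $\Gal\times\Spec k_L\to\Spec k_L\times\Spec k_L$ is an isomorphism. I would verify this via the étaleness of $\OO_L/\OO_K$.
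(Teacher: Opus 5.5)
Your proof is correct, but it takes a different route from the paper.

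\textbf{The paper's route.} The paper computes the discriminant of $\T$ in the $\OO_K$-basis $\alpha_i g^j$. It uses that the trace of $\alpha g^j$ vanishes for $1\le j<m$, and obtains $d_\T=d_{L'/K}^{am}$. Unramifiedness makes this a unit, so $\T$ is maximal. The identification with $\Mat_m(\OO_K)$ is then imported from the structure theory of maximal orders \cite[Theorem 17.3]{MO}.

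\textbf{Your route.} You show directly that the natural action map $\Phi\colon\T\to\End_{\OO_K}(\OO_L)$ is an isomorphism. You reduce modulo $\mathfrak m_K$, use simplicity of the residue cyclic algebra $(k_L/k,\bar g,1)$, and apply Nakayama. The hypothesis enters through $k_L/k$ being a Galois algebra for $\langle\bar g\rangle$, rather than through $d_{L'/K}=1$. The check you flag does go through. The subgroup $\langle g^a\rangle$ is the stabilizer of a factor of $L=(L')^a$, and $g^a$ restricts to a generator of $\Gal(L'/K)\cong\Gal(k_{L'}/k)$. Hence $k_L$ is the algebra induced from $k_{L'}$ and is Galois with group $\langle\bar g\rangle$.

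\textbf{What each buys.} Your argument gives the explicit isomorphism induced by the action on $\OO_L$. That is exactly the form the $\ell$-balanced condition requires when the lemma is applied in Proposition~\ref{balanced_order}. It also avoids the structure theory of maximal orders. The paper's argument gives the formula $d_\T=d_{L'/K}^{am}$, which also measures how far $\T$ is from maximal in the ramified case. However, it yields only an abstract isomorphism. To recover yours, one notes $\Phi(\T)\subseteq\End_{\OO_K}(\OO_L)$ and invokes maximality.

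A minor point: your separate injectivity argument over $K$ is unnecessary. A surjection between free $\OO_K$-modules of the same finite rank is automatically bijective.
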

\begin{proof}
    According to Theorem~\ref{Th30_4}, $\T\otimes_\ZZ\QQ\cong \Mat_m(K)$.
    Let $\alpha_1,\dots,\alpha_m$ be a basis of $\OO_L$ over $\OO_K$. 
    The elements $x_a=\alpha_ig^j$ form a basis of $\T$ over $\OO_K$, 
    where $1\leq i,j+1\leq m$, and $a=i+mj$. 
    By definition, the discriminant $d_\T$ of $\T$ is equal to the determinant of the matrix with entries $\tr(x_ax_b)$,  where $1\leq a,b\leq m^2$,  and $\tr$ is the reduced trace. 
    If $\Tr(x)$ is the trace of the element $x\in\T$ on the vector space $\T\otimes_{\OO_K} K$, then $\Tr(x)=m\tr(x)$. 
    Since for all $\alpha\in\OO_L$ and $1\leq j<m$ we have $\Tr(\alpha g^j)=0$,
    it is straightforward to check that $d_\T=d_{L'/K}^{am},$ where $d_{L'/K}$ is the discriminant of the field extension $L'/K$. 
    By assumption, the extension is unramified, that is, $d_{L'/K}=1$; therefore, the order $\T$ is maximal. Now,~\cite[Theorem 17.3]{MO} completes the proof.
\end{proof}

\subsubsection{Cyclic algebras over local fields.}
 Assume that $K$ is a finite field extension of $\QQ_p$. 
 According to~\cite[Theorem 31.8]{MO}, we have an isomorphism  \[\inv:\Br(K)\to \QQ/\ZZ.\]
 The element $\inv(\HH)\in \QQ/\ZZ$ is called \emph{the Hasse invariant} of a central simple algebra $\HH$ over $K$ .
 If $L/K$ is a cyclic unramified extension of local fields, 
 then $\Gal(L/K)$ is generated by the Frobenius automorphism $\sigma=\sigma_{L/K}$. 

\begin{prop}\label{Hasse_inv}\cite[31.7]{MO}
Let $L/K$ be a cyclic unramified extension of non-archimedian local fields. 
Then the Hasse invariant of the cyclic algebra $(L/K,\sigma,\lambda)$ is equal to $v(\lambda)/[L:K],$ where $v$ is the normalized valuation on $L$.
\end{prop}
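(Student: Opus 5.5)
The plan is to reduce to the classical computation of invariants of cyclic algebras via local class field theory, or to argue directly with an explicit maximal order. Since $L/K$ is unramified of degree $f=[L:K]$, the valuation $v$ on $L$ restricted to $K^*$ agrees with the normalized valuation of $K$. By Theorem~\ref{Th30_4}(4), the class of $(L/K,\sigma,\lambda)$ depends only on $\lambda$ modulo $N_{L/K}(L^*)$. For an unramified extension, $N_{L/K}(\OO_L^*)=\OO_K^*$, because norms are surjective on units (reduce to residue fields, where the norm is surjective, then use Hensel's lemma on $1+\mathfrak m$). Hence the norm group is $\OO_K^*\cdot \varpi^{f\ZZ}$, where $\varpi$ is a uniformizer of $K$. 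Writing $\lambda=u\varpi^{k}$ with $k=v(\lambda)$, we may therefore replace $\lambda$ by $\varpi^{k}$. In the same way, Theorem~\ref{Th30_4}(2) gives $(L/K,\sigma,\varpi^{f})\sim(L/K,\sigma,1)$, which is split.

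The next step is to show that $\lambda\mapsto[(L/K,\sigma,\lambda)]$ is a homomorphism $K^*/N_{L/K}L^*\to\Br(L/K)$. This uses the standard isomorphism $(L/K,\sigma,a)\otimes(L/K,\sigma,b)\sim(L/K,\sigma,ab)$, which I would quote from \cite{MO}. With it, the invariant is $k\cdot\inv(L/K,\sigma,\varpi)$, so it suffices to prove $\inv(L/K,\sigma,\varpi)=1/f$.

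The core step is identifying this last invariant, and here we must match the normalization used in the definition of $\inv$ in \cite[Theorem 31.8]{MO}. There, the division algebra $D$ of invariant $r/f$ is constructed as the algebra containing the unramified extension $W$ of degree $f$ together with a prime element $\Pi$ satisfying $\Pi^f=\varpi$ and $\Pi x\Pi^{-1}=\sigma^{r}(x)$. Taking $r=1$, this is literally $(L/K,\sigma,\varpi)$. So the invariant is $1/f$ by definition, once we check that $(L/K,\sigma,\varpi)$ is a division algebra. That check works as follows: the element $G$ satisfies $G^f=\varpi$, so the valuation extended to $L[G]$ via $w(G)=1/f$ is well defined and multiplicative on monomials. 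An element $\sum x_iG^i$ has valuation equal to $\min_i\bigl(v(x_i)+i/f\bigr)$, because the terms have distinct fractional parts. This shows there are no zero divisors.

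I expect the main obstacle to be this compatibility of conventions. Since the statement is cited as \cite[31.7]{MO}, the honest proof is to follow Reiner's definition of $\inv$ and observe that it is exactly the computation above.
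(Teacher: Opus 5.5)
The paper gives no proof of this statement; it just cites \cite[31.7]{MO}. Your proposal supplies an actual argument, and it is correct relative to the background the paper itself states.

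\begin{itemize}
\item The reduction of $\lambda$ to $\varpi^k$, via $N_{L/K}(\OO_L^*)=\OO_K^*$ and Theorem~\ref{Th30_4}(4), is right.
\item The valuation argument is right. Uniqueness of the monomial realizing $\min_i\bigl(v(x_i)+i/f\bigr)$ is exactly what gives $w(ab)=w(a)+w(b)$. So $(L/K,\sigma,\varpi)$ is a division algebra with maximal unramified subfield $L$ and prime element $G$.
\end{itemize}

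Your route confines the normalization question to $k=1$. There the prime element itself conjugates $L$ by $\sigma$, so nothing has to be inverted modulo $f$. The price is that additivity of $\inv$ carries the whole step ``the invariant is $k\cdot\inv(L/K,\sigma,\varpi)$''. You use it silently and should cite it; the paper takes it from \cite[Theorem 31.8]{MO}.

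\textbf{Caution 1: your normalization for general $r$ is wrong.} By Theorem~\ref{Th30_4}(3), $(L/K,\sigma^r,\varpi)\cong(L/K,\sigma,\varpi^{s})$ with $rs\equiv 1\pmod f$. So under any additive normalization with $\inv(L/K,\sigma,\varpi)=1/f$, a division algebra with prime element $\Pi$ satisfying $\Pi^f=\varpi$ and $\Pi x\Pi^{-1}=\sigma^r(x)$ has invariant $s/f$, not $r/f$. For example, take $f=5$. The algebra $(L/K,\sigma,\varpi^2)$ has invariant $2/5$ by the proposition. Yet its prime element $\Pi=G^3\varpi^{-1}$ satisfies $\Pi^5=\varpi$ and acts on $L$ by $\sigma^3$. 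You only use $r=1$, so this does not break your proof.

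\textbf{Caution 2: possible circularity.} The example shows additivity is not a formality relative to a definition via prime elements. In developments of that kind, additivity is usually deduced from the very formula you are proving; in \cite{MO} the formula is (31.7) and the isomorphism is (31.8). So taking additivity from (31.8) to prove (31.7) risks arguing in a circle.

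A self-contained version avoids additivity entirely:
\begin{itemize}
\item Normalize $\inv(D)$ as $w(\gamma)$ for any $\gamma\in D^*$ with $\gamma x\gamma^{-1}=\sigma(x)$ on $L$. This is well defined modulo $\ZZ$, and it is equivalent to saying that a prime element acting by $\sigma^r$ gives invariant $s/f$ with $rs\equiv 1$.
\item For $\gcd(k,f)=1$, your valuation argument with $w(G)=k/f$ shows $(L/K,\sigma,\varpi^k)$ is a division algebra. In it, $\gamma=G$ has valuation $k/f$, which is the formula.
\item The case $\gcd(k,f)=d>1$ reduces to this by passing to the subextension of degree $f/d$.
\end{itemize}
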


\subsubsection{Cyclic algebras over number fields.}
Let $K$ be a number field, and let $v$ be a place of $K$. We denote the Hasse invariant of the local algebra $\HH\otimes_{K} K_v$ by $\inv_v\HH\in\QQ/\ZZ$. 
A central simple algebra over a number field $K$ is uniquely determined up to isomorphism by its local invariants at the places of $K$~\cite[Theorem 32.11]{MO}. 

\subsection{Adjoint functors and Morita equivalence}
Let $S$ and $\T$ be arbitrary rings. For any left $S$-module $Y$, and a
left $S$-module $M$ with a structure of a right $\T$-module there is 
a structure of a left $\T$-module on $\Hom_S(M,Y)$
thanks to the right $\T$-module structure on $M$:
for any $\phi\in\Hom_S(M,Y)$, $m\in M$, and $\lambda\in\T$ we have 
\[(\lambda\phi)(m)=\phi(m\lambda).\] 
Let $X$ be a left $\T$-module. There is a natural adjunction isomorphism:
\[\Hom_\T(X,\Hom_S(M,Y))\cong\Hom_S(M\otimes_\T X,Y).\]
%If $X=\Hom_S(M,Y)$, then the identity morphism on $X$ corresponds to a natural morphism 

\begin{thm}[Morita equivalence.]\label{Morita}\cite[Corollary 16.9]{MO}
    Let $S$ be a ring, and let $M$ be a projective finitely generated $S$-module.
    Assume that $M$ is a generator in the category of finitely generated $S$-modules.
    Put $\T=\End_S(M)$. Then the adjoint functors
    \[\Hom_{S}(M,-):\Md{S}\rightleftarrows\Md{\T}:M\otimes_{\T}-\]   
    are both equivalences of categories.
\end{thm}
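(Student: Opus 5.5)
The statement to prove is the Morita equivalence theorem (Theorem~\ref{Morita}). Throughout, $M$ is a finitely generated projective generator of $\Md{S}$ and $\T=\End_S(M)$. I will take $\T$ to act on $M$ on the right, in accordance with the convention of the preceding paragraph. The plan is to show that the unit and counit of the adjunction $\Hom_\T(X,\Hom_S(M,Y))\cong\Hom_S(M\otimes_\T X,Y)$ are isomorphisms. Write
\[\eta_X:X\to\Hom_S(M,M\otimes_\T X),\qquad \epsilon_Y:M\otimes_\T\Hom_S(M,Y)\to Y,\quad m\otimes\phi\mapsto\phi(m).\]

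First I treat the counit $\epsilon_Y$. Both functors $M\otimes_\T\Hom_S(M,-)$ and the identity are additive. The functor $\Hom_S(M,-)$ is exact because $M$ is projective, and $M\otimes_\T-$ is right exact. For $Y=M$ we have $\Hom_S(M,M)=\T$, so $\epsilon_M:M\otimes_\T\T\to M$ is the canonical isomorphism. It follows that $\epsilon_{M^r}$ is an isomorphism for every $r$, since additive functors preserve finite direct sums. Now let $Y$ be any finitely generated $S$-module. Since $M$ is a generator and $Y$ is finitely generated, there is a surjection $M^r\to Y$. Its kernel is again a quotient of some $M^{r'}$, provided it is finitely generated; I address this point in the last paragraph. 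This gives a presentation $M^{r'}\to M^r\to Y\to 0$. Both functors are right exact and $\epsilon$ is natural, so the five lemma shows that $\epsilon_Y$ is an isomorphism.

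Next I treat the unit $\eta_X$. For $X=\T$, the map $\eta_\T:\T\to\Hom_S(M,M\otimes_\T\T)=\End_S(M)$ is the identity, so it is an isomorphism, and hence so is $\eta_{\T^r}$. A finitely generated $\T$-module $X$ has a free presentation $\T^{r'}\to\T^r\to X\to 0$. The functor $M\otimes_\T-$ is right exact and $\Hom_S(M,-)$ is exact, so the composite is right exact, and the five lemma again shows that $\eta_X$ is an isomorphism.

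The main obstacle is finiteness. On the $\T$-side, $\T$ is not assumed noetherian, so a finitely generated $\T$-module need not be finitely presented. On the $S$-side, the kernel of $M^r\to Y$ need not be finitely generated. I plan to handle this as follows.
\begin{enumerate}
\item I will run both arguments in the categories of all modules, where presentations by arbitrary direct sums $M^{(I)}$ and $\T^{(I)}$ always exist. This requires the functors to commute with arbitrary direct sums. That holds for $M\otimes_\T-$, and it holds for $\Hom_S(M,-)$ because $M$ is finitely generated.
\item Having proved the equivalence of the full module categories, I will check that it restricts to finitely generated modules. If $Y$ is finitely generated, then it is a quotient of $M^r$, so $\Hom_S(M,Y)$ is a quotient of $\T^r$ by exactness. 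Conversely, $M\otimes_\T\T^r=M^r$ is finitely generated, since $M$ is.
\end{enumerate}
In the setting of the paper everything is noetherian, since all the rings are orders over $\ZZ$ or $\ZZ_\ell$, so this step is harmless there. This is the standard argument behind \cite[Corollary 16.9]{MO}.
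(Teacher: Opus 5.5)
Your proof is correct. You check the counit on $M$ and the unit on $\T$, pass to arbitrary direct sums (legitimate since $M$ is finitely generated), and extend to all modules by right exactness. Working first in the full module categories and then restricting is the right way around the lack of finite presentation. One point should be said explicitly. Presenting an arbitrary $Y$ as a cokernel of $M^{(J)}\to M^{(I)}$ requires $M$ to generate all $S$-modules, not just the finitely generated ones. This follows because $S$ is finitely generated, hence a quotient of some $M^r$.

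The paper gives no argument of its own and only cites \cite[Corollary 16.9]{MO}. The standard textbook route there goes through the Morita context. With $M^*=\Hom_S(M,S)$, one identifies $\Hom_S(M,-)\cong M^*\otimes_S-$ for finitely generated projective $M$. One then shows that the evaluation maps $M^*\otimes_S M\to\T$ and $M\otimes_\T M^*\to S$ are bimodule isomorphisms; surjectivity of the latter is exactly the statement that the trace ideal of a generator is all of $S$.

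Your approach avoids the dual module and the trace ideal, and it shows where each hypothesis enters. The unit is an isomorphism for every finitely generated projective $M$, and the generator property is used only for the counit. The Morita-context approach instead produces an invertible bimodule. This gives the symmetric facts at no extra cost: $M$ is also a progenerator over $\T$, and $S$ is recovered as $\End_\T(M)$.
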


\subsection{Tate--Honda theory.}\label{TH_theory}
Let $A$ be an abelian variety of dimension $g$ over $\FF_q$, and let
$A_t$ be the kernel of multiplication by $t$ in
$A(\FFF_q)$. Let \[T_\ell(A) = \plim A_{\ell^r}\] be the $\ell$-th Tate module of $A$, 
and let $V_\ell(A)=T_\ell(A)\otimes_{\ZZ_\ell}\QQ_\ell$ be the
corresponding vector space over $\QQ_\ell$. Then $T_\ell(A)$ is a
free $\ZZ_\ell$-module of rank $2g$. The Frobenius endomorphism
$F_A$ of $A$ acts on the Tate module by a semisimple linear
transformation.

Let $\pi$ be an algebraic integer.  We say that $\pi$ is \emph{a Weil number over $\FF_q$},
if for any homomorphism $\tau:\QQ(\pi)\to\CC$ we have $|\tau(\pi)|=\sqrt{q}$. 
For example, the eigenvalues of $F_A$ are Weil numbers over $\FF_q$.

The characteristic polynomial \[f_A(t) = \det(t-F_A|V_\ell(A))\]
is called {\it the Weil polynomial of $A$}. It is a monic
polynomial of degree $2g$ with rational integer coefficients
independent of the choice of $\ell$. 
Tate proved that the isogeny class of an abelian variety is determined
by its characteristic polynomial, that is, $f_A(t)=f_B(t)$ implies
that $A$ is isogenous to $B$~\cite{Ta66}.

Let $\pi$ be a Weil number.
Define the invariant of $\pi$ at a place $v$ of $\QQ(\pi)$ as follows:
  \[\inv_v(\pi)=\left\lbrace \begin{array}{ll}
    1/2, & \hbox{if } v \text{ is real and }n\text{ is odd;} \\
    \frac{\ord_v(\pi)}{\ord_v(q)}[\QQ(\pi)_v:\QQ_p], & \hbox{if } v|p;\\
    0, & \text{otherwise.}\\
\end{array}\right.\]

Denote by $P_\pi\in\ZZ[t]$ the minimal monic polynomial of $\pi$. 
Then \emph{the Weil polynomial associated to $\pi$}  is $f_\pi=P_\pi^{e_\pi}$,
 where $e_\pi$ is the least common multiple of denominators of $\inv_v(\pi)$ for all $v$.
 According to the Honda Theorem~\cite{Ho}, $f_\pi$ is the Weil polynomial of a simple abelian variety $B_\pi$ over $\FF_q$ such that $\inv_v(\pi)$ are invariants of the simple algebra $\End^\circ(B_\pi)$ over $\QQ(\pi)$.

Let $\Pi$ be a Galois invariant set of Weil numbers.
The category $\AV_\Pi$ of abelian varieties with Weil support in $\Pi$ is the full subcategory of abelian varieties isogenous to a product of $B_\pi$, where $\pi\in\Pi$.
Choose a subset of representatives
$\Pi_0\subset \Pi$, that is, the natural morphism $\Pi_0\to \Pi/\Gal(\overline{\QQ}/\QQ)$ is a bijection. Let $P\in\ZZ[t]$ be a monic separable polynomial such that $P(0)\neq 0$. Then $t$ is invertible in the quotient algebra $\QQ[t]/P(t)\QQ[t]$. Define $R_P$ as the image of $\ZZ[t,q/t]$ in $\QQ[t]/P(t)\QQ[t]$. 
 Clearly, the ring $R_\pi=\ZZ[\pi,q/\pi]\subset\Bar\QQ$ is naturally isomorphic to $R_{P_\pi}$. 
 Denote by \[F_\Pi\in\prod_{\pi\in\Pi_0} R_{P_\pi}\] the element that projects to $t$ in $R_{P_\pi}$ for any $\pi\in\Pi_0$. Note that $F_\Pi$ does not depend on the choice of $\Pi_0$.
Put \[R_\Pi=\ZZ[F_\Pi,q/F_\Pi]\subset\prod_{\pi\in\Pi_0} R_{\pi}.\]

Let $A$ be an abelian variety with the Weil polynomial $f_A$, %=\prod_{\pi\in\Pi}f_\pi$.
and let $\Pi$ be the set of roots of $f_A$.
There is a natural injective morphism $R_\Pi\to\End A$ such that $F_\Pi\mapsto F_A$. Moreover, this morphism induces an isomorphism of $E_\Pi=R_\Pi\otimes\QQ$ with the center of $\End^\circ(A)$~\cite{Ta66}. 

\begin{thm}\cite{Ta66}\label{Tate_thm}
Let $A$ and $B$ be abelian varieties over $\FF_q$. Then
    \[\Hom(A,B)\otimes\ZZ_\ell\cong\Hom_{R_\Pi}(T_\ell(A),T_\ell(B)). \]
\end{thm}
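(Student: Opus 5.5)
This is Tate's isogeny theorem, cited from \cite{Ta66}. I would reduce it to the original formulation and then pass from the Galois group to $R_\Pi$.

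Tate's theorem states that for abelian varieties over a finite field $k=\FF_q$ and $\ell\neq p$, the natural map
\[\Hom(A,B)\otimes\ZZ_\ell\to\Hom_{\Gal(\bar k/k)}(T_\ell(A),T_\ell(B))\]
is an isomorphism. So there are two steps: establish this statement, and then identify Galois-equivariant maps with $R_\Pi$-linear maps.

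For the Galois statement I would follow Tate's argument.

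\begin{enumerate}
\item Injectivity, and the fact that the cokernel is torsion-free, are standard. A homomorphism killing $T_\ell$ is zero. If $\ell\phi$ restricted to $T_\ell$ is divisible by $\ell$, then $\phi$ kills $A_\ell$, so $\phi$ factors through multiplication by $\ell$.
\item For surjectivity, reduce to $A=B$ by considering $A\times B$.
\item Given a Galois-stable subspace $W\subset V_\ell(A)$, build the sequence of finite-index sublattices $(T_\ell\cap W)+\ell^rT_\ell$. Quotienting $A$ by the corresponding finite subgroups gives a sequence of abelian varieties $A_r$.
\item Only finitely many of the $A_r$ are pairwise non-isomorphic. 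The reason is that they all carry polarizations of bounded degree (Zarhin's trick on $(A\times A^\vee)^4$), and over a finite field there are only finitely many isomorphism classes of polarized abelian varieties of bounded degree and fixed dimension. This is the key finiteness input.
\item From an isomorphism $A_r\cong A_{r'}$ one obtains an endomorphism $u$ with $u(V_\ell)=W$.
\item Together with the semisimplicity of $V_\ell$ under Frobenius, a double-centralizer argument then shows that $\End^\circ(A)\otimes\QQ_\ell$ is the full commutant of Frobenius.
\end{enumerate}

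To pass to $R_\Pi$, note that $\Gal(\bar k/k)$ is topologically generated by the Frobenius, which acts on $T_\ell$ as $F_A$. Since $F_A$ is invertible on $T_\ell$, the closure of $\ZZ_\ell[F_A]$ contains $F_A^{-1}$. Continuous Galois-equivariance is therefore equivalent to commuting with $F$, and hence with $R_\Pi=\ZZ[F_\Pi,q/F_\Pi]$. Here both $T_\ell(A)$ and $T_\ell(B)$ are viewed as $R_\Pi$-modules via the natural maps $R_\Pi\to\End$, with $\Pi$ containing the Weil supports of both $A$ and $B$.

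The main obstacle is the finiteness step (item 4). I would simply invoke Tate's original argument there, taking the semisimplicity of $F$ from the same paper.
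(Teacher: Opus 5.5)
Your proposal is correct and follows the paper, which gives no argument beyond citing Tate. Your passage from Galois-equivariance to $R_\Pi$-linearity is exactly the translation the paper relies on implicitly: Frobenius topologically generates the Galois group and acts on $T_\ell$ as $F_A$, and $q/F_A=V_A$ already lies in $\ZZ_\ell[F_A]$ because $F_A$ is invertible on $T_\ell$ for $\ell\neq p$. One caveat on your sketch of Tate's proof: a single isomorphism $A_r\cong A_{r'}$ only produces an isogeny, so step 5 needs infinitely many $r$ in one isomorphism class and a limit argument in $\End(A)\otimes\ZZ_\ell$ to obtain $u$ with $u(V_\ell)=W$.
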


Let $S$ be a finite commutative $R_\Pi$-algebra. We say that an abelian variety $A$ from $\AV_\Pi$ has an action of $S$ if there is a homomorphism $S\to \End(A)$ extending $R_\Pi\to \End(A)$. 
Define the category $\AV_{S}$ of $S$-abelian varieties as the category of abelian varieties from $\AV_\Pi$ with an $S$-action. The Tate module of an $S$-abelian variety is an $S$-module. Moreover, if $S\subset E_\Pi$, then $\End_{\AV_{S}}(A)\cong\End(A)$.

Let $N$ be a natural number.
Define the category of abelian varieties up to $N$-isogeny $\AV_{S}[\frac{1}{N}]$ as the category of abelian varieties from $\AV_S$, where the Hom-group is defined as 
\[\Hom_{\AV_{S}[\frac{1}{N}]}(A,B)=\Hom_{\AV_{S}}(A,B)[\frac{1}{N}].\]

\begin{lemma}\label{iso_lemma}
Let $S$ be a finite commutative $R_\Pi$-algebra.
    Let $B$ be an $S$-abelian variety, and let $T_\ell\subset T_{\ell}(B)$ be a family of $S$-submodules such that $T_\ell= T_{\ell}(B)$ for almost all $\ell$. Then there exist an $S$-abelian variety $A$ and an $S$-isogeny $\phi:A\to B$ such that $T_\ell(\phi)$ induces an isomorphism $T_{\ell}(A)\to T_\ell$ for all $\ell\neq p$.
\end{lemma}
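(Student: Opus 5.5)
Reduce to finite-index sublattices, then realize the resulting finite subgroup as a kernel. Since $T_\ell=T_\ell(B)$ for almost all $\ell$, there is a finite set of primes $\ell_1,\dots,\ell_k\neq p$ where they differ. Each $T_{\ell_i}$ is an $S$-submodule of $T_{\ell_i}(B)$. We assume, as the statement implicitly does, that $T_{\ell_i}$ has full rank, i.e. $T_{\ell_i}\otimes\QQ_{\ell_i}=V_{\ell_i}(B)$. Then there is $N=\prod \ell_i^{r_i}$ with $N\,T_{\ell_i}(B)\subset T_{\ell_i}$ for every $i$.

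Under the standard identification $T_\ell(B)/N T_\ell(B)\cong B[\ell^{r}]$, the image of $T_{\ell_i}$ in $T_{\ell_i}(B)/N T_{\ell_i}(B)$ is a subgroup of $B[N](\FFF_q)$. Put
\[H=\bigoplus_i T_{\ell_i}/N T_{\ell_i}(B)\subset B[N](\FFF_q).\]
Two properties of $H$ are needed:
\begin{enumerate}
\item \emph{Stable under $S$.} This holds because each $T_{\ell_i}$ is an $S$-submodule.
\item \emph{Stable under $\Gal(\FFF_q/\FF_q)$.} The Galois action is given by $F_B\in R_\Pi\subset S$, and the $T_{\ell_i}$ are $R_\Pi$-stable, so $H$ is Frobenius-stable.
\end{enumerate}
Since $N$ is prime to $p$, $B[N]$ is étale, so $H$ descends to a finite étale subgroup scheme of $B$ over $\FF_q$.

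Next, form the quotient. Put $C=B/H$ and let $\psi:B\to C$ be the projection. Every $s\in S$ preserves $H$, so it descends to $C$ and $C$ becomes an $S$-abelian variety. The factorization $N=\phi\circ\psi$ then defines an isogeny $\phi:C\to B$ with $\ker\phi=\psi(B[N])$. This $\phi$ commutes with the $S$-action because $\psi$ does and $\psi$ is surjective. Set $A=C$.

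Finally, compute Tate modules. For $\ell\neq\ell_i$, $\phi$ has degree prime to $\ell$, so $T_\ell(\phi)$ is an isomorphism onto $T_\ell(B)=T_\ell$. For $\ell=\ell_i$, use the standard description of the image of $T_\ell$ under an isogeny. The image of $T_\ell(\phi)$ is the set of $x\in T_\ell(B)$ whose lift $x/N$ lies in $T_\ell(C)$ under $\psi$; this is exactly the preimage of $H_\ell$ in $T_\ell(B)$ under reduction mod $N$, namely $T_{\ell}$. Since $T_\ell(\phi)$ is injective ($\phi$ is an isogeny and the Tate module is torsion-free), it gives an isomorphism $T_\ell(A)\to T_\ell$.

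The main obstacle is this last identification: one must check carefully that the direction of the quotient (dividing by $H$ and then using the complementary factor of $N$) makes the image equal to $T_\ell$ rather than to the dual-type lattice $N^{-1}T_\ell$. I would settle it with the exact sequence $0\to T_\ell(C)\to T_\ell(B)\to \ker\phi[\ell^\infty]\to 0$, after verifying $\ker\phi[\ell^\infty]\cong T_\ell(B)/T_\ell$.
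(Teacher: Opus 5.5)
Your proof is correct and takes essentially the paper's approach. The only differences are presentational. The paper cites \cite[IV.2.3]{Milne} to get, one prime at a time, an $\ell_i$-isogeny with prescribed Tate module and composes them. You instead build a single isogeny by quotienting by the finite étale, $S$- and Frobenius-stable subgroup $H\subset B[N]$, which is how the cited result is proved. You also make explicit the $S$-equivariance of $C=B/H$ and $\phi$, and the finite-index hypothesis on $T_{\ell_i}$, both of which the paper leaves tacit.

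The direction you worried about comes out right. For $\ell=\ell_i$ one has $V_\ell(\psi)^{-1}(T_\ell(C))=\ell_i^{-r_i}T_{\ell_i}$ and $V_\ell(\phi)=N\,V_\ell(\psi)^{-1}$, so $T_\ell(\phi)(T_\ell(C))=N\ell_i^{-r_i}T_{\ell_i}=T_{\ell_i}$, since $N\ell_i^{-r_i}\in\ZZ_{\ell_i}^*$. The exact sequence alone would only give the index, but your direct description of the image already settles it.
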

\begin{proof}
Let $\ell_1, \dots,\ell_r$ be the finite set of primes such that 
$T_{\ell_i}\neq T_{\ell_i}(B)$. Put $B_{r+1}=B$. 
It follows from~\cite[IV.2.3]{Milne} that for any $1\leq i\leq r$ there exists an $\ell_i$-isogeny $B_i\to B_{i+1}$ such that $T_{\ell_i}(B_i)\cong T_{\ell_i}$.
Let $A=B_1$, and let $\phi$ be the composition of $\phi_i$. Then $T_{\ell}(A)\cong T_\ell$ for all $\ell\neq p$. The lemma is proved.
\end{proof}

\subsection{\Die modules of $S$-abelian varieties.}
Let $W=W(\FF_q)$ be the ring of Witt vectors over $\FF_q$. We denote by $\sigma: W\to W$ the lift of the Frobenius automorphism of $\FF_q$.
The localization $W\locp$ is the field of fractions of $W$.

The Diedonn\'e ring $D_{\FF_q}$ is the ring generated over $W$ by formal variables $F$ and $V$ such that $FV=VF=p$, and for any $a\in W$ we have:
\[ Fa=\sigma(a)F,\text{ and } aV=V\sigma(a). \]
There is an anti-equivalence of categories $M^*(-)$ between finite commutative group schemes over $\FF_q$ of $p$-power order and left $D_{\FF_q}$-modules of finite 
length~\cite[Theorem 28.3]{Pink}. 

The $p^r$-torsion $A[p^r]$ of an abelian variety $A$ over $k$ is a finite group scheme, and the limit \[M^*(A)=\varprojlim_r M^*(A[p^r])\] is called \emph{the contravariant \Die module of $A$}. The module $M^*(A)$ is free over $W$ of rank $2\dim A$. 

The \Die modules of $S$-abelian varietis can be defined as objects of a localized category of \Die
 modules, but we use a more straightforward approach. Define the cyclic algebra $D^\circ_\Pi$ as the algebra over $E_\Pi\otimes_\QQ W\locp$ generated by a variable $F$ such that for any $a\in W\locp$ we have: \[F^n=F_\Pi, \text{ and } Fa=\sigma(a)F.\] 
The algebra $D_S$ is the order in $D^\circ_\Pi$ generated over $S\otimes_\ZZ W$ by $F$ and $V=p/F$.
There is a surjective homomorphism of rings $S\otimes_{\ZZ}D_{\FF_q}\to D_{S}$, and for any $S$-abelian variety $A$ the action of $S\otimes_{\ZZ}D_{\FF_q}$ on $M^*(A)$ factors through $D_{S}$.

The module $M(A)=\Hom_\ZZ(M^*(A),\ZZ_p)$ is naturally a $D_S^\op$-module. 
We get a covariant functor \[M:\AV_{S}\to\TF{D_{S^\op}},\]
where $\TF{D_S^\op}$ is the category of finitely generated 
 $D_{S}^\op$-modules that are torsion-free as $\ZZ_p$-modules. 
Clearly, the functor $M\mapsto\Hom_{\ZZ_p}(M,\ZZ_p)$
is an anti-equivalence from the category of \Die modules to $\TF{D_S^\op}$.

\begin{rem}\label{die_dual}
\emph{The covariant \Die module of $A$} is defined as \[M_*(A)=\Hom_{W}(M^*(A),W).\]
%The category covariant $S$-\Die modules
Let $W^*=\Hom_{\ZZ_p}(W,\ZZ_p)$. Fix an isomorphism $W\to W^*$. For any 
contravariant \Die module $M$ the adjunction isomorphism induces an isomorphism of $\D_S^\op$-modules:
    \[\Hom_{W}(M,W)\cong\Hom_{W}(M,\Hom_{\ZZ_p}(W,\ZZ_p))\cong \Hom_{\ZZ_p}(M,\ZZ_p).\]
\end{rem}

From the main result of~\cite{WM} we obtain a counterpart to Theorem~\ref{Tate_thm} for $D_S^\op$-modules.

\begin{thm}\cite{WM}\label{WM_thm}
Let $A$ and $B$ be abelian varieties over $\FF_q$. Then
    \[\Hom(A,B)\otimes_\ZZ\ZZ_p\cong\Hom_{\D_S^\op}(M(A),M(B)).\]
\end{thm}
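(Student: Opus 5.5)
The plan is to deduce this from the classical Dieudonné theory statement over $\ZZ_p$ (the main result of \cite{WM}), which gives
\[\Hom(A,B)\otimes_\ZZ\ZZ_p\cong\Hom_{D_{\FF_q}}(M^*(B),M^*(A)),\]
contravariantly. I will then transport this statement to the covariant modules $M(-)=\Hom_{\ZZ_p}(M^*(-),\ZZ_p)$ and to the order $D_S$.

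The first step is to observe that the action of $D_{\FF_q}$ on $M^*(A)$ commutes with the action of $\End(A)$, hence with $S$. For an $S$-abelian variety, $F$ acts on $M^*(A)$ so that $F^n$ equals the action of the Frobenius endomorphism $F_A$, which is the image of $F_\Pi$. Therefore the action of $S\otimes_\ZZ D_{\FF_q}$ factors through the surjection $S\otimes_\ZZ D_{\FF_q}\to D_S$, as stated just before Remark~\ref{die_dual}.

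There is one subtlety. A $D_{\FF_q}$-linear map $M^*(B)\to M^*(A)$ must also be $S$-linear. In the setting of the theorem $A$ and $B$ lie in $\AV_S$ and $S\subset E_\Pi$, so $S$ lies in the $\QQ$-span of the powers of $F$ and $V$. Any $D_{\FF_q}$-linear map is linear for $F_\Pi$ and $q/F_\Pi$, hence for $R_\Pi$. Since the modules are torsion-free, it is also linear for $S\subset E_\Pi$. Consequently
\[\Hom_{D_{\FF_q}}(M^*(B),M^*(A))=\Hom_{D_S}(M^*(B),M^*(A)).\]

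The second step is to dualize with $\Hom_{\ZZ_p}(-,\ZZ_p)$. This is an anti-equivalence between $\ZZ_p$-free $D_S$-modules and $\ZZ_p$-free $D_S^\op$-modules, and it converts the contravariant isomorphism into
\[\Hom(A,B)\otimes\ZZ_p\cong\Hom_{D_S^\op}(M(A),M(B)).\]

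The remaining work is bookkeeping. One must check that the maps are compatible, meaning that $M(\phi)$ is the dual of $M^*(\phi)$. One must also check that $\Hom_{\ZZ_p}$-duality preserves $W$-semilinearity; Remark~\ref{die_dual} handles the identification with $W$-duals if needed.

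I expect the only real obstacle to be the first step: verifying that the image of $F^n$ on $M^*(A)$ is exactly $F_\Pi$, so that the action really factors through $D_S$, together with checking the compatibility of the $S$-action after dualizing.
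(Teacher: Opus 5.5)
Your proposal is correct and takes the same route as the paper, which obtains the statement directly from the main theorem of \cite{WM}. Your steps spell out the bookkeeping the paper leaves implicit: the action of $S\otimes_\ZZ D_{\FF_q}$ on $M^*(-)$ factors through $D_S$ because $M^*(F_A)=F^n$; $D_{\FF_q}$-linear maps between $\ZZ_p$-torsion-free modules are automatically $S$-linear, since $S\subset E_\Pi=\QQ[F_\Pi]$; and the result follows by transposing under $\Hom_{\ZZ_p}(-,\ZZ_p)$. The $W$-semilinearity check is unnecessary: since $M(A)$ is defined as the $\ZZ_p$-dual, $W\subset D_S$ acts on it through the right $D_S$-module structure, so no semilinearity question arises.
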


We have the following analog of Lemma~\ref{iso_lemma}.
\begin{lemma}%~\cite{??}
\label{lem_on_Dmod}
If $f:A\to B$ is an isogeny of $S$-abelian varieties, 
then $M^*(f)$ is injective and $M=M^*(f)(M^*(B))$ is a $D_S$-submodule of $M^*(A)$. 

Conversely, if $M\subset M^*(A)$ is a $D_S$-submodule such that 
$M^*(A)\locp\cong M\locp$, then there exists an $S$-abelian variety $B$ over $\FF_q$ and a $p$-isogeny $f:A\to B$ such that $M^*(f)$ induces an isomorphism $M^*(B)\cong M$.\qed
\end{lemma}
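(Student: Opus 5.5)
The first part of Lemma~\ref{lem_on_Dmod} I will prove via the anti-equivalence $M^*$ on finite $p$-group schemes. The second part I will reduce to the standard correspondence between quotients of $A$ by finite subgroup schemes and Dieudonné lattices, and then check compatibility with $S$.

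\emph{Injectivity and the submodule.} Let $f:A\to B$ be an isogeny of $S$-abelian varieties. Choose $g:B\to A$ with $f\circ g=N$ and $g\circ f=N$. Since $M^*$ is a contravariant functor, $M^*(f)\circ M^*(g)=M^*(N)$, which is multiplication by $N$ on $M^*(B)$. This map is injective because $M^*(B)$ is free over $W$. Hence $M^*(g)$ is injective. By the symmetric argument, $M^*(f)$ is injective as well. The map $M^*(f)$ is $D_{\FF_q}$-linear. Since $f$ commutes with the $S$-action, it is also $S$-linear. Therefore $M^*(f)$ is $S\otimes D_{\FF_q}$-linear, and its image $M$ is an $S\otimes D_{\FF_q}$-submodule of $M^*(A)$. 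The action of $S\otimes D_{\FF_q}$ on $M^*(A)$ factors through $D_S$, so $M$ is a $D_S$-submodule.

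\emph{The converse.} Let $M\subset M^*(A)$ be a $D_S$-submodule with $M\locp=M^*(A)\locp$. Then $p^rM^*(A)\subset M$ for some $r$. So $M$ corresponds to a $D_{\FF_q}$-submodule $\bar M$ of the finite-length module $M^*(A)/p^rM^*(A)=M^*(A[p^r])$, namely $\bar M=M/p^rM^*(A)$. By the anti-equivalence $M^*$, quotients of $M^*(A[p^r])$ correspond to subgroup schemes of $A[p^r]$. Let $K\subset A[p^r]$ be the subgroup scheme with $M^*(K)=M^*(A[p^r])/\bar M$. Put $B=A/K$ and let $f:A\to B$ be the quotient map, which is a $p$-isogeny.

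I then identify $M^*(B)$ with $M$. This follows from comparing the exact sequences $0\to K\to A[p^s]\to B[p^s]$ for large $s$ (equivalently, from the exact sequence of $p$-divisible groups $0\to K\to A[p^\infty]\to B[p^\infty]\to 0$) and applying the exact functor $M^*$. The outcome is that $M^*(f)(M^*(B))$ equals the kernel of $M^*(A)\to M^*(K)$, which is $M$.

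\emph{The $S$-action on $B$.} This is the main obstacle: $B$ must be shown to be an $S$-abelian variety. For $x\in S$, with image $x_A\in\End(A)$, the endomorphism $x_A$ descends to $B$ if and only if $x_A(K)\subset K$. Under the anti-equivalence, this is equivalent to $x$ preserving $\bar M$. That holds because $M$ is $S$-stable, as $M$ is a $D_S$-submodule. The descended endomorphisms $x_B$ give a ring homomorphism $S\to\End(B)$, since the descent is unique and $f$ is an epimorphism. This homomorphism extends $R_\Pi\to\End(B)$, because Frobenius and Verschiebung commute with every isogeny. Hence $f$ is an $S$-isogeny.

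Alternatively, once $B$ is known to be an abelian variety, Theorem~\ref{WM_thm} lets one transfer $S$-stability of $M$ directly to endomorphisms of $B$ after inverting the isogeny.
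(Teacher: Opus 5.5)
Your argument is correct, and it is the standard one the paper has in mind: the paper states this lemma without proof (closed with \qed) as the Dieudonn\'e counterpart of Lemma~\ref{iso_lemma}. Your steps are exactly what fills it in: the subgroup scheme $K\subset A[p^r]$ cut out by $M/p^rM^*(A)$, exactness of $M^*$ on $0\to K\to A[p^\infty]\to B[p^\infty]\to 0$, and descent of $x_A$ via $x_A(K)\subset K \iff x\bar M\subset\bar M$. One harmless slip: $M^*(f)\circ M^*(g)=M^*(g\circ f)$ is multiplication by $N$ on $M^*(A)$, not on $M^*(B)$. Using instead $M^*(g)\circ M^*(f)=M^*(f\circ g)=N$ on $M^*(B)$ gives injectivity of $M^*(f)$ at once, without the symmetry step.
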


\section{A general equivalence theorem}\label{general_sec}
\subsection{Orders in $\HH_{\Pi,L}(\lambda)$.}
Let $S\subset E_\Pi$ be a finite $R_\Pi$-algebra.

\begin{notation}\label{H_notation}
Fix a cyclic extension $L/\QQ$ of degree $m$ such that $p$ is unramified.
Put \[n_0=m/[\Hat{L}_p:\QQ_p].\] Choose a generator $g$ of $\Gal(L/\QQ)$ such that $g^{n_0}$ is the Frobenius automorphism at $p$. Choose $\lambda\in S\otimes_\ZZ\OO_L$.
Let \[\HH_{\Pi,L}(\lambda)=\oplus_{\pi\in\Pi_0}\HH_{\pi,L}(\lambda),\] 
where $\HH_{\pi,L}(\lambda)$ is the cyclic algebra $(L(\pi)/\QQ(\pi),g,\lambda)$.  
\end{notation}
 
Note that $\HH_{\Pi,L}(\lambda)$ is a $E_\Pi$-algebra with an injective homomorphism $L\to\HH_{\Pi,L}(\lambda)$.

\begin{deff}
Let $\T\subset\HH_{\Pi,L}(\lambda)$ be an order such that $S\otimes_\ZZ\OO_L\subset\T$.
\begin{itemize}
    \item We say that $\T$ is $\ell$-tame if there is a structure of a $\T^\op_\ell$-module on $S_\ell\otimes_\ZZ\OO_L$.
\item An $\ell$-tame order $\T$ is \emph{$\ell$-balanced} if the action on 
$S_\ell\otimes_\ZZ\OO_L$ induces an isomorphism
\[\T^\op_\ell=\T^\op\otimes_{\ZZ}\ZZ_\ell\cong\End_{S_\ell}(S_\ell\otimes_\ZZ\OO_L).\]
\end{itemize}
\end{deff}

\begin{ex}\label{L_order}
For any $\rho\in S\otimes_\ZZ\OO_L$ there is an order
$\T_{S,L}(\lambda,\rho)\subset\HH_{\Pi,L}(\lambda)$ generated over $S\otimes \OO_L$ by 
$G_\lambda$ and $\CG_\lambda=\rho G_\lambda^{-1}$.  
\end{ex}

\begin{deff}
Let $\T\subset\HH_{\Pi,L}(\lambda)$ be an order such that $S\otimes_\ZZ\OO_L\subset\T$.
We say that $\T$ is \emph{$\ell$-cyclic} if $\T_\ell=\T_\ell(\lambda)$, where $\T_\ell(\lambda)$ is generated over $S_\ell\otimes_\ZZ\OO_L$ by $G_\lambda$.
\end{deff}

We have a multiplicative norm map $N_L:(S\otimes_\ZZ L)^*\to (S\otimes_\ZZ\QQ)^*$ given by 
\[N_L(x)=\prod_{i=0}^{m-1} g^i(x).\]

\begin{lemma}\label{tame_module}
If $\T$ is $\ell$-cyclic, then $\T$ is $\ell$-tame if and only if 
there exists $\omega_\ell\in S_\ell\otimes_\ZZ\OO_L$ such that 
$\lambda=N_L(\omega_\ell)$.
\end{lemma}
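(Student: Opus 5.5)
Since $\T$ is $\ell$-cyclic, $\T^\op_\ell$ is generated over $S_\ell\otimes_\ZZ\OO_L$ by $G_\lambda$, subject only to $G_\lambda^m=\lambda$ and $G_\lambda x=g(x)G_\lambda$. Giving $S_\ell\otimes_\ZZ\OO_L$ a $\T^\op_\ell$-module structure therefore reduces to choosing the action of one element. I would first show that any such structure may be assumed to restrict to the regular action of $S_\ell\otimes\OO_L$ on itself. Indeed, that is what the definition of tameness intends: the module is $S_\ell\otimes\OO_L$, and the subring $S\otimes\OO_L\subset\T$ acts by multiplication. After this, $G_\lambda$ must act by an additive map $\Phi$ that is $S_\ell$-linear and twisted by $g^{\pm1}$ over $\OO_L$. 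The sign of the twist depends on the opposite-ring convention, and fixing it is the only bookkeeping step. Writing $\Phi(1)=\omega$, semilinearity forces $\Phi(y)=\omega\,g^{\pm1}(y)$ for all $y$. So every candidate action has the form $\Phi_\omega=\omega\cdot g^{\pm1}$ with $\omega\in S_\ell\otimes_\ZZ\OO_L$, and conversely every such $\Phi_\omega$ satisfies the commutation relation.

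The remaining relation is $G_\lambda^m=\lambda$. In $\T^\op$ this says that $\Phi^m$ equals multiplication by $\lambda$, where $\lambda$ is central in the relevant sense: $\lambda\in S\otimes\OO_L$ commutes with $G_\lambda$ because $G_\lambda^m=\lambda$. Now compute
\[\Phi_\omega^m(y)=\omega\,g^{\pm1}(\omega)\cdots g^{\pm(m-1)}(\omega)\,g^{\pm m}(y)=N_L(\omega)\,y,\]
using $g^m=1$ and the fact that the product over all powers of $g$ (or of $g^{-1}$) is $N_L$. Hence $\Phi_\omega$ defines a module structure exactly when $N_L(\omega)=\lambda$. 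Both implications then follow. Given a tame structure, set $\omega_\ell=\Phi(1)$. Conversely, given $\omega_\ell$ with $N_L(\omega_\ell)=\lambda$, define the action of $G_\lambda$ by $\Phi_{\omega_\ell}$; by the presentation of the cyclic ring, this extends uniquely to $\T_\ell(\lambda)^\op=\T^\op_\ell$.

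The main point to check is that $\T_\ell(\lambda)$ really is presented by these relations, so that no further relations must be verified. I would handle this by noting that the cyclic ring $(S_\ell\otimes\OO_L/S_\ell,g,\lambda)$ is free on $1,G,\dots,G^{m-1}$ over $S_\ell\otimes\OO_L$. This ring maps onto $\T_\ell(\lambda)$, and the map is injective because the images remain linearly independent inside $\HH_{\Pi,L}(\lambda)\otimes\QQ_\ell$. A representation of the abstract cyclic ring is therefore the same thing as a representation of $\T_\ell$. A secondary subtlety is the $m$-fold norm when $L\otimes\QQ_\ell$ is not a field. The formula $N_L(x)=\prod g^i(x)$ is defined for the Galois algebra as well, so the computation above goes through unchanged.
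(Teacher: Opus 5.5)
Your proof is correct and is essentially the paper's argument. The paper defines the action by $G_\lambda(x\otimes y)=\omega_\ell(x\otimes g^{-1}(y))$, so the twist you left as $g^{\pm1}$ is $g^{-1}$; conversely it sets $\omega_\ell=G_\lambda(1\otimes 1)$ and reads off $N_L(\omega_\ell)=G_\lambda^m(1\otimes 1)=\lambda$, and your extra check that $\T_\ell(\lambda)$ is presented by the cyclic-ring relations (free on $1,G_\lambda,\dots,G_\lambda^{m-1}$ over $S_\ell\otimes_\ZZ\OO_L$) is a step the paper leaves implicit, which you handle correctly.
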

\begin{proof}
    If $\lambda$ is a norm of $\omega_\ell\in S_\ell\otimes_\ZZ\OO_L$, then 
    $S_\ell\otimes_\ZZ\OO_L$ has a structure of a $\T^\op_\ell$-module as follows:
\[G_\lambda(x\otimes y)=\omega_\ell(x\otimes g^{-1}(y)).\]
%\[\CG_\lambda(x\otimes y)=\rho_\lambda/\omega_\ell(x\otimes g^{-1}(y)).\]

Assume that we have a structure of a $\T^\op_\ell$-module on $S_\ell\otimes_\ZZ\OO_L$. Put
\[\omega_\ell= G_\lambda(1\otimes 1)\in S_\ell\otimes_\ZZ\OO_L.\]
Clearly, the norm of $\omega_\ell$ is equal to
$N_L(\omega_\ell)=G_\lambda^m(1\otimes1)=\lambda.$
\end{proof}

\begin{prop}\label{balanced_order}
Let $\T$ be an $\ell$-cyclic order in $\HH_{\Pi,L}(\lambda)$.
Assume that $\lambda\in S_\ell^*$ is invertible.
Then $\T$ is balanced in the following cases:
\begin{enumerate}
    \item $S$ is maximal at any ideal over $\ell$, and $\ell$ is unramified in $L$;
    \item $L$  splits completely at $\ell$.
\end{enumerate}
\end{prop}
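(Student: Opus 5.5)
Both cases follow one plan. First make $S_\ell\otimes\OO_L$ into a $\T^\op_\ell$-module using Lemma~\ref{tame_module}. Then identify $\T_\ell$ with a cyclic ring that Lemma~\ref{MaxOrder} shows is a full matrix algebra over $S_\ell$. Finally compare ranks to conclude that the natural map $\T^\op_\ell\to\End_{S_\ell}(S_\ell\otimes\OO_L)$ is an isomorphism.

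Since $\T$ is $\ell$-cyclic, $\T_\ell$ is the cyclic ring $(S_\ell\otimes\OO_L/S_\ell,g,\lambda)$. It is free over $S_\ell$ with basis $\alpha_i G_\lambda^j$, where $\alpha_i$ runs over a $\ZZ_\ell$-basis of $\OO_L\otimes\ZZ_\ell$ and $0\le j<m$. Hence $\T_\ell$ has rank $m^2$ over $S_\ell$, which equals the rank of $\End_{S_\ell}(S_\ell\otimes\OO_L)\cong\Mat_m(S_\ell)$.

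\emph{Step 1: $\lambda$ is a norm.} I need $\omega_\ell\in S_\ell\otimes\OO_L$ with $N_L(\omega_\ell)=\lambda$.
\begin{itemize}
\item In case (2), $\OO_L\otimes\ZZ_\ell\cong\ZZ_\ell^m$ with $g$ permuting the factors cyclically. Take $\omega_\ell=(\lambda,1,\dots,1)$; its norm is $\lambda$.
\item In case (1), $S_\ell$ is a product of rings of integers $\OO_w$ of local fields, and $S_\ell\otimes\OO_L$ is a product of unramified extensions of the $\OO_w$. By local class field theory, the norm map on units of an unramified extension is surjective onto units. Since $\lambda$ is a unit, it is a norm of a unit $\omega_\ell$, componentwise.
\end{itemize}
Lemma~\ref{tame_module} then gives the $\T_\ell^\op$-module structure $G_\lambda(x\otimes y)=\omega_\ell(x\otimes g^{-1}y)$.

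\emph{Step 2: remove the twist.} Because $\omega_\ell$ is a unit, put $G'=\omega_\ell^{-1}G_\lambda$, adjusting to $\omega_\ell$ or a $g$-conjugate of it depending on the side conventions. Then $G'$ is semilinear with $G'^m=N_L(\omega_\ell)^{-1}\lambda=1$. So $\T_\ell\cong(S_\ell\otimes\OO_L/S_\ell,g,1)$, and under this isomorphism $G'$ acts on $S_\ell\otimes\OO_L$ simply by $g^{-1}$. Working componentwise over the factors $\OO_w$ of $S_\ell$, Lemma~\ref{MaxOrder} applies: in case (2) the extension is split, and in case (1) it is unramified. Hence $\T_\ell\cong\Mat_m(S_\ell)$ is a maximal (Azumaya) order. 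In case (1) this uses that $S_\ell$ is maximal, so that each $\OO_w$ is a DVR and the lemma applies.

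\emph{Step 3: the map is an isomorphism.} The action gives a ring map $\T_\ell^\op\to\End_{S_\ell}(S_\ell\otimes\OO_L)$. After tensoring with $\QQ$, the source is a central simple algebra over each factor of $S_\ell\otimes\QQ$, so the map is injective. Both sides are orders of the same rank, and the source is maximal, so the image is a maximal order contained in $\Mat_m(S_\ell)$. It therefore equals $\Mat_m(S_\ell)$, and the map is an isomorphism.

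The main obstacle I expect is case (1). There $S_\ell$ may be a product of several local rings, and the factors of $S_\ell\otimes\OO_L$ may have residue degree smaller than $m$, so that $L\otimes\QQ_w\cong(L')^a$. Steps 1 and 2 must be checked factor by factor, keeping track of how $g$ permutes the $a$ copies. If the maximality argument becomes messy, I would fall back on a direct check: the elements $\alpha_i g^j$ act on $\OO_L\otimes\OO_w$ as a basis of $\End$, which follows from the discriminant computation in the proof of Lemma~\ref{MaxOrder} together with unramifiedness.
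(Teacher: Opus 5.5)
Your route is the paper's route. You produce a unit $\omega_\ell$ with $N_L(\omega_\ell)=\lambda$, with the same choices as the paper: $(\lambda,1,\dots,1)$ in the split case, and unit norms from the unramified local extensions in case (1). You then untwist $G_\lambda\mapsto\omega_\ell(1\otimes g)$ to the cyclic ring with parameter $1$ and apply Lemma~\ref{MaxOrder}.

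However, one step fails as written in case (2). There $S$ is \emph{not} assumed maximal at $\ell$. This is the whole point of case (2): it is what handles the primes of $\L_\Pi$ where $R_\Pi$ is non-maximal. So $S_\ell$ need not be a product of discrete valuation rings $\OO_w$. Two consequences follow:
\begin{itemize}
\item Lemma~\ref{MaxOrder}, which is stated over the ring of integers of a local field, cannot be applied ``componentwise over the factors $\OO_w$ of $S_\ell$''.
\item $\Mat_m(S_\ell)$ is not a maximal order, since it is strictly contained in $\Mat_m$ of the integral closure of $S_\ell$.
\end{itemize}
Hence your Step~3 (``the source is maximal, so its image is a maximal order inside $\Mat_m(S_\ell)$, so they are equal'') breaks down exactly in the situation case (2) is designed for. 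Your parenthetical ``Azumaya'' could rescue the conclusion, but that needs a different argument: an injective homomorphism of Azumaya $S_\ell$-algebras of equal rank is an isomorphism. The maximal-order argument does not give it.

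The fix is the one the paper uses. After untwisting, $\T_\ell\cong S_\ell\otimes_{\ZZ_\ell}(\OO_L\otimes\ZZ_\ell/\ZZ_\ell,g,1)$, and Lemma~\ref{MaxOrder} is applied only over $K=\QQ_\ell$, where it needs nothing beyond $\ell$ being unramified in $L$. At that level your Step~3 works verbatim: the image of the maximal order $(\OO_L\otimes\ZZ_\ell/\ZZ_\ell,g,1)^{\op}$ is all of $\End_{\ZZ_\ell}(\OO_L\otimes\ZZ_\ell)$. You observed that the untwisted generator acts on $S_\ell\otimes\OO_L$ by $1\otimes g^{-1}$. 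So the $\T_\ell^{\op}$-action is the base change of this tautological action. Tensoring with $S_\ell$ then gives $\T_\ell^{\op}\cong S_\ell\otimes_{\ZZ_\ell}\End_{\ZZ_\ell}(\OO_L\otimes\ZZ_\ell)=\End_{S_\ell}(S_\ell\otimes\OO_L)$, with no hypothesis on $S_\ell$. In the split case you can even write the matrix units down directly from the idempotents of $\ZZ_\ell^m$ and the cyclic permutation. Your fallback discriminant check is exactly this argument, provided you carry it out over $\ZZ_\ell$ rather than over the $\OO_w$. This also shows that in case (1) the maximality of $S$ is needed only for the norm step, not for the matrix step.
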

\begin{proof}
First, we show that $\T$ is $\ell$-tame, that is, there exists 
$\omega_\ell\in S_\ell\otimes_\ZZ\OO_L$ such that $\lambda=N_L(\omega_\ell)$.

If $S_\ell$ is maximal, then $S_\ell\cong\prod_{\p|\ell} S_\p$ is a product of discrete valuation rings. Let $\lambda=\sum_\p \lambda_\p$ be the corresponding decomposition of $\lambda$, and $E_\Pi\otimes_\QQ\QQ_\ell\cong\prod_{\p|\ell} E_\p$, where
$E_\p\cong S_\p\otimes_{\ZZ_\ell}\QQ_\ell$.
    The $\QQ_\ell$-algebra 
    \[E_\p\otimes_{\QQ_\ell} L_\ell=\prod_{\q|\p} L_{\q}\] is a product of finite extensions of $E_\p$. 
If $L_\ell$ is unramified over $\QQ_\ell$, then $\lambda_\p$ is a norm of some unit in $L_\q$~\cite[Corollary on p. 29]{CF}, that is, 
    \[\lambda_\p=N_{L_{\q}/E_\p}(\omega_{\q})\] 
    for some $\omega_{\q}\in S_\p\OO_{L_\q}.$ 
    For any ideal $\p$ of $S_\ell$ we choose one $\q_0$ over $\p$ and put 
    \[\omega_\ell=\oplus_\p (\omega_{\q_0},1,\dots,1)\in\oplus_\p\oplus_\q S_\p\OO_{L_\q}. \]
    It is straightforward to check that $\lambda$ is a norm of $\omega_\ell$. 

 If $L$ splits completely at $\ell$, then $\OO_L\otimes\ZZ_\ell\cong\ZZ_\ell^m$, and
    $\lambda$ is a norm of an invertible element
    \[\omega_\ell=(\lambda,1,\dots,1)\in S_\ell^m\cong S_\ell\otimes_\ZZ\OO_L.\]
    
In both cases, there is an isomorphism 
\[\T_\ell\cong S\otimes_\ZZ(\OO_L\otimes_\ZZ\ZZ_\ell/\ZZ_\ell,g,1)\]    
given by $G_\lambda\mapsto \omega_\ell (1\otimes g)$.  Now, apply Lemma~\ref{MaxOrder}.
\end{proof}

\begin{deff}
Assume that $\T$ is $\ell$-tame, that is,
$T_{\ell,\T}=S_\ell\otimes_\ZZ\OO_L$ has a structure of a $\T^\op_\ell$-module.
\emph{The Tate module} of a
$\T^\op$-module $D$ is the $S_\ell$-module
\[\cT_\ell(D)=\Hom_{\T^\op}(T_{\ell,\T},D).\]
We get a functor:
\[\cT_\ell:\Md{\T^\op}\to\Md{S_\ell}.\]   
\end{deff}

\begin{rem}
    If we choose $\omega_\ell\in S_\ell\otimes_\ZZ\OO_L$ as in Lemma~\ref{tame_module}, then
    \[\cT_\ell(D)\cong (D\otimes_\ZZ\ZZ_\ell)^{G_\lambda=\omega_\ell}.\]
\end{rem}

There is a pair of adjoint functors 
\[\hT_\ell:\Md{\T^\op_\ell}\rightleftarrows\Md{S_\ell}:D_\ell\]   
given by 
\[\hT_\ell(D)=\Hom_{\T^\op_\ell}(T_{\ell,\T},D), \text{ and}\]
\[D_\ell(T)=T\otimes_{S_\ell}T_{\ell,\T}.\]

\begin{deff}\label{l_rigid}
Assume that $\T$ is $\ell$-tame. Let $\ell\neq p$ be a prime number.
We say that a $\T^\op$-module $T$ is \emph{$\ell$-rigid}, if
the natural morphism 
\[D\otimes_\ZZ\ZZ_\ell\to D_\ell(\cT_\ell(D))\]
is an isomorphism.
\end{deff}

Denote by $\TF{S_\ell}$ the category of $S_\ell$-modules that are torsion-free as $\ZZ_\ell$-modules, and by $\TF{\T^\op}$ the category of left $\T^\op$-modules that are finitely generated and locally free over $\OO_L$.

\begin{prop}\label{Tate_eq}
Assume that $\T$ is $\ell$-tame.
\begin{enumerate}
   \item For any $T$ from $\TF{S_\ell}$ the adjunction morphism 
   \[T\to \hT_\ell(D_\ell(T))\] is an isomorphism.
    \item If $\T$ is $\ell$-balanced, then
    $\hT_\ell$ is an equivalence of categories, and any  object of $\TF{\T^\op}$ is $\ell$-rigid. 
 \end{enumerate}
    \end{prop}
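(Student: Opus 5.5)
Write $M=T_{\ell,\T}=S_\ell\otimes_\ZZ\OO_L$. It is a free $S_\ell$-module of rank $m$ carrying a right action of $\T_\ell$ (equivalently, a left $\T_\ell^\op$-module structure). The plan is to reduce both parts to the tensor–hom adjunction and to Morita theory (Theorem~\ref{Morita}), applied to the bimodule $M$.

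\emph{Part (1).} Let $T$ be an object of $\TF{S_\ell}$. Since $S_\ell$ is commutative, $D_\ell(T)=T\otimes_{S_\ell}M$ is a left $\T^\op_\ell$-module through the action on $M$. There is a natural map
\[T\to\Hom_{\T^\op_\ell}(M,T\otimes_{S_\ell}M),\qquad t\mapsto (x\mapsto t\otimes x).\]
To show it is an isomorphism I would use that $M$ is free over $S_\ell$, so that the target can be written as
\[\Hom_{\T^\op_\ell}(M,T\otimes M)\subset \Hom_{S_\ell}(M,T\otimes M)\cong T\otimes_{S_\ell}\End_{S_\ell}(M).\]
The problem then becomes computing the commutant of the image of $\T^\op_\ell$ in $\End_{S_\ell}(M)$. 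The subring $S_\ell\otimes\OO_L\subset\T_\ell$ already acts on $M$ as the regular module of the commutative ring $S_\ell\otimes\OO_L$, so its commutant is $S_\ell\otimes\OO_L$ itself. On that ring $G_\lambda$ acts by $x\mapsto\omega_\ell\, g^{-1}(x)$ (Lemma~\ref{tame_module}), and an element $y\in S_\ell\otimes\OO_L$ commutes with this operator exactly when $g(y)=y$, i.e.\ when $y$ lies in the Galois invariants $(S_\ell\otimes\OO_L)^{g}=S_\ell$. Tensoring with $T$ is exact here: $T$ is torsion-free and $M$ is free, so the fixed-point computation commutes with $T\otimes_{S_\ell}-$. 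One has to be careful that only the subring generated by $S\otimes\OO_L$ and $G_\lambda$ matters. The commutant of $\T^\op_\ell$ is contained in the commutant of that subring, and contains $S_\ell$ because $S_\ell$ is central. Hence the commutant is $S_\ell$, and the adjunction map is an isomorphism.

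\emph{Part (2).} If $\T$ is $\ell$-balanced, then $\T^\op_\ell\cong\End_{S_\ell}(M)$ with $M$ free of positive rank over $S_\ell$. Therefore $M$ is a finitely generated projective generator of $\Md{S_\ell}$, and Morita theory (Theorem~\ref{Morita}) shows that $\Hom_{S_\ell}(M,-)$ and $M\otimes-$ are inverse equivalences. In our setting the roles are reversed: the relevant functor is $\hT_\ell=\Hom_{\T^\op_\ell}(M,-)$, with quasi-inverse $D_\ell$. I would handle this by viewing $M$ as a progenerator over $\T^\op_\ell$ too. Since $\T^\op_\ell\cong\Mat_m(S_\ell)$ after choosing a basis, $M\cong S_\ell^m$ is the standard column module. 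It is projective and is a generator, because $\T^\op_\ell\cong M^{m}$, and its endomorphism ring is $S_\ell$ by part (1). Applying Theorem~\ref{Morita} with $S:=\T^\op_\ell$ then shows that $\hT_\ell$ is an equivalence with inverse $D_\ell$. In particular the counit $D_\ell(\hT_\ell(D))\to D$ is an isomorphism for every $\T^\op_\ell$-module. Taking $D\otimes\ZZ_\ell$ for $D$ in $\TF{\T^\op}$, and noting $\cT_\ell(D)=\hT_\ell(D\otimes\ZZ_\ell)$ because $M$ is finitely presented and $\ZZ_\ell$ is flat, gives $\ell$-rigidity.

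The main obstacle I expect is the commutant computation in (1) when $S_\ell\otimes\OO_L$ is not a domain, for instance when $L$ splits at $\ell$ and $g$ permutes the factors. The identification of the invariants with $S_\ell$ still holds, because $\OO_L\otimes\ZZ_\ell$ is Galois over $\ZZ_\ell$ at unramified $\ell$, but the ramified case requires checking that $(S_\ell\otimes\OO_L)^{g}=S_\ell$ integrally. This does hold, since $\OO_L^g=\ZZ$ and $S_\ell$ is flat over $\ZZ_\ell$.
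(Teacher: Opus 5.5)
Your proof is correct in outline. Part (2) is the paper's own argument: once $\T^\op_\ell\cong\End_{S_\ell}(T_{\ell,\T})$, apply Morita theory to the progenerator $T_{\ell,\T}$.

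For part (1) you take a genuinely different route. The paper computes no commutant. It notes that $\T_\ell$ has finite index in $\End_{S_\ell}(S_\ell\otimes\OO_L)$, since both are full lattices in the same $\QQ_\ell$-algebra. Because $T\otimes_{S_\ell}T_{\ell,\T}$ is $\ZZ_\ell$-torsion-free, every $\T^\op_\ell$-linear map into it is then automatically linear over the full endomorphism ring, and Morita theory for that ring returns $T$. That argument ignores the cyclic structure and works verbatim for any $\ell$-tame order. Your computation is more hands-on: set $\Hom_{S_\ell\otimes\OO_L}(M,T\otimes_{S_\ell}M)=T\otimes_{\ZZ_\ell}(\OO_L\otimes\ZZ_\ell)$, then observe that commuting with $G_\lambda$ forces $g$-invariance. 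It shows concretely that only $S\otimes\OO_L$ and one "Frobenius" element matter.

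As written, your computation needs three small repairs.

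\begin{enumerate}
\item A general $\ell$-tame order need not contain $G_\lambda$; Lemma~\ref{tame_module} is stated only for $\ell$-cyclic orders. Since $\T$ is an order, $\ell^kG_\lambda\in\T_\ell$ for some $k$. Use this element instead. It acts on $S_\ell\otimes\OO_L$ by $y\mapsto\omega' g^{-1}(y)$ with $\omega'=1\cdot\ell^kG_\lambda$ and $N_L(\omega')=\ell^{km}\lambda$.

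\item Going from $\omega'(y-g^{-1}(y))=0$ to $y=g^{-1}(y)$ needs $\omega'$ to be a non-zero-divisor on $T\otimes_{\ZZ}\OO_L$. This holds because $\omega'$ divides $N_L(\omega')=\ell^{km}\lambda$ in the commutative ring $S_\ell\otimes\OO_L$. Moreover $\lambda$ is a unit of $E_\Pi$ (as it must be for the cyclic algebra to exist) and $T$ is $\ZZ_\ell$-torsion-free, so multiplication by $\ell^{km}\lambda$ is injective there.

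\item The exactness you invoke should come from $T$ being free over $\ZZ_\ell$, which gives $(T\otimes_{\ZZ_\ell}(\OO_L\otimes\ZZ_\ell))^{g}=T$. It should not come from flatness over $S_\ell$, which can fail when $S_\ell$ is not maximal. The cleanest organization is to first use $\Hom_{S_\ell\otimes\OO_L}(S_\ell\otimes\OO_L,N)=N$ and then take $g$-invariants. This never tensors a kernel over $S_\ell$.
\end{enumerate}
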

\proof
\begin{enumerate}
\item The algebra $\T_\ell$ is a subalgebra of 
$\End_{S_\ell}(S_\ell\otimes\OO_L)$ of finite index.
Since $T$ is torsion-free as a $\ZZ_\ell$-module, by Theorem~\ref{Morita} we get
\[\hT_\ell(D_\ell(T))=
\Hom_{\T^\op}(T_{\ell,\T},T\otimes_{S_\ell}T_{\ell,\T})\cong\]
\[\cong\Hom_{\End_{S_\ell}(S_\ell\otimes\OO_L)^\op}(T_{\ell,\T},
T\otimes_{S_\ell}T_{\ell,\T})\cong T.\]
\item This follows from Theorem~\ref{Morita}. \qed
\end{enumerate}

\subsection{Quasi-free \Die modules.}
Assume that we have a structure of a left $\T^\op$-module on a contravariant \Die module $M_{\T}$, that is, $M_{\T}$ has a structure of a right $D_S^\op$-module.

\begin{deff}
The \emph{covariant \Die functor}    
\[\M:\TF{\T^\op}\to \Md{D_{S}^\op}\] is defined by the formula:
\[D\mapsto \Hom_{\T_p^\op}(M_{\T},D\otimes_\ZZ\ZZ_p).\] 
\end{deff}

There is a pair of adjoint functors 
\[\hM:\TF{\T^\op}\rightleftarrows\Md{D_{S}^\op}:D_p\]   
given by 
\[\hM(D)=\Hom_{\T_p^\op}(M_{\T},D\otimes_\ZZ\ZZ_p), \text{ and}\]
\[D_p(M)=M_{\T}\otimes_{D_S^\op}M.\]

\begin{deff}\label{q_free}
We say that $M_{\T}$ is \emph{quasi-free over $\T^\op$} if the natural morphism 
\[M\to \hM(D_p(M))\] is an isomorphism for any $D_S^\op$-module $M$.

We say that a left $\T^\op$-module $D$ is \emph{$p$-rigid}, 
if $D\otimes_\ZZ\ZZ_p$ is torsion free over $\ZZ_p$ and the natural adjunction morphism 
\[D\otimes_\ZZ\ZZ_p\to D_p(\M(D))\] is an isomorphism. 
A module $D$ is \emph{rigid} if it is $\ell$-rigid for all primes $\ell$.
Denote the subcategory of rigid $\T^\op$-modules by $\RM{\T^\op}$.
\end{deff}

\begin{lemma}\label{quasi_free}
Assume that $\HH_{\Pi,L}(\lambda)\otimes_\QQ\QQ_p\cong\End_{D_S}^\circ(M_{\T})$.
If $M_{\T}$ is free as a $D_S^\op$-module, then $M_{\T}$ is quasi free.
\end{lemma}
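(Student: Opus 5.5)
We aim to show that for every $D_S^\op$-module $M$ the unit $M\to\hM(D_p(M))=\Hom_{\T_p^\op}(M_\T,M_\T\otimes_{D_S^\op}M)$ is an isomorphism, by reducing to Morita theory for the free module $M_\T$. The idea is that $M_\T$ should be a faithfully balanced bimodule: the hypothesis $\HH_{\Pi,L}(\lambda)\otimes_\QQ\QQ_p\cong\End^\circ_{D_S}(M_\T)$ says that rationally the $\T^\op_p$-action exhausts the $D_S$-endomorphisms. Freeness of $M_\T$ gives integrality of the comparison, as in the proof of Proposition~\ref{Tate_eq}(1).

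The steps are as follows. First, write $M_\T\cong (D_S^\op)^r$ as right $D_S^\op$-modules, so that $\End_{D_S^\op}(M_\T)\cong\Mat_r(D_S)$ (with the appropriate opposite conventions). Then $\T_p^\op$ embeds in $\Mat_r(D_S)$, and by the hypothesis it does so with finite index after inverting $p$. Next, apply Theorem~\ref{Morita} to the projective generator $M_\T$ over the ring $D_S^\op$. The adjoint pair
\[\Hom_{\Mat_r(D_S)}(M_\T,-),\qquad M_\T\otimes_{D_S^\op}-\]
is then an equivalence, so
\[M\cong \Hom_{\Mat_r(D_S)}(M_\T,M_\T\otimes_{D_S^\op}M).\]
Finally, compare $\Hom_{\Mat_r(D_S)}$ with $\Hom_{\T_p^\op}$ on the same pair of modules. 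The inclusion $\Hom_{\Mat_r(D_S)}\subset\Hom_{\T^\op_p}$ is automatic. For the reverse inclusion, note that $N=M_\T\otimes_{D_S^\op}M\cong M^r$. A $\T^\op_p$-linear map $M_\T\to N$ becomes, after inverting $p$, linear over $\T_p^\op\locp=\Mat_r(D_S)\locp$. Hence it commutes with $\Mat_r(D_S)$ modulo $p$-power torsion in $N$, and it therefore lies in $\Hom_{\Mat_r(D_S)}$ whenever $N$ is $\ZZ_p$-torsion-free.

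The main obstacle is this last comparison when $M$ has $\ZZ_p$-torsion, since the hypothesis is purely rational. Here we would first use freeness more concretely. $\Hom_{\T^\op_p}(M_\T,N)$ is the subset of $\Hom_{D_S^\op}$-style data $N^r$ consisting of tuples fixed by the $\T_p^\op$-relations, and the Morita identification corresponds to the diagonal $M\subset M^r$. We would check that the $\T_p^\op$-invariance conditions already force an element of $M^r$ into the image of $M$. For this we use that $\T_p^\op$ contains $S_p\otimes\OO_L$ together with $G_\lambda$, and that $G_\lambda$ permutes the free summands of $M_\T$ via $g$; this gives the matrix units up to the unramified étale algebra $\OO_L\otimes\ZZ_p$, whose idempotents are integral. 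If this fails in general, we would restrict to the torsion-free $M$ actually used later (Dieudonné modules of $S$-abelian varieties), where the rational argument above suffices.
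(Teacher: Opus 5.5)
Your route is the paper's: freeness makes $\HH=\End_{D_S}(M_{\T})$ a matrix algebra over $D_S$ containing $\T_p^\op$ with finite index. Morita theory then gives $M\cong\Hom_{\HH}(M_{\T},M_{\T}\otimes_{D_S^\op}M)$, and one identifies $\Hom_{\HH}$ with $\Hom_{\T_p^\op}$ on these modules. The paper makes that last identification with a bare ``hence''. Your argument supplies the missing justification when $M_{\T}\otimes_{D_S^\op}M\cong M^r$ is $\ZZ_p$-torsion-free: for $h\in\HH$ some $p^kh$ lies in $\T_p^\op$, so $p^k(\phi(hx)-h\phi(x))=0$. That case is all the paper ever uses. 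Quasi-freeness is applied in Proposition~\ref{T_localization}, Proposition~\ref{rigid_prop} and Corollary~\ref{main_thm} only to $M(A)$, to objects of $\TF{D_S^\op}$, and to $\M(D)$ with $D\in\TF{\T^\op}$, all of which are torsion-free.

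Do not pursue the repair for torsion $M$. With only the rational hypothesis, the assertion is false for torsion $M$, so no argument from $G_\lambda$ and the idempotents of $\OO_L\otimes\ZZ_p$ can work in that generality. For example, let $m=n>1$, so that $M_{S,L}=D_S^\op$, and take $\T=S\otimes\OO_L+p\,\T_{S,L}(\eps F_\Pi,\rho)$. This order has the same rational algebra and the same free module $M_{\T}=D_S^\op$, but by Proposition~\ref{m0prop2} it satisfies $\T_p^\op=S_p\otimes\OO_L+p\End_{D_S}(M_{\T})$. Now take $M=D_S^\op/pD_S^\op$ and $a\in W=\OO_L\otimes\ZZ_p$ with $\sigma(a)\not\equiv a\pmod p$. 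Multiplication by $a$ is a $\T_p^\op$-linear map $M_{\T}\to M_{\T}/pM_{\T}$. It is not of the form $x\mapsto xy$, because $a$ is not central modulo $p$: centrality would force $F,V\in pD_S$, hence $p^{-1}=(F/p)(V/p)\in D_S$.

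What your matrix-unit idea would really establish is $\T_p^\op=\End_{D_S}(M_{\T})$, i.e.\ $p$-balancedness. That is an extra hypothesis, proved in Proposition~\ref{m0prop2} only for the specific order with $m_0=1$, and under it Morita applies directly to all $M$. So commit to your fallback: the lemma holds, by your argument, for $\ZZ_p$-torsion-free $M$, which is exactly what is needed downstream.
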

\begin{proof}
      The algebra $\T_p^\op$ is a subalgebra of the matrix algebra 
   $\HH=\End_{D_S}(M_{\T})$ of finite index,
   and for any $D_S^\op$-module $M$ the structure of a $\T_p^\op$-module on 
   \[D_p(M)=M_{\T}\otimes_{D_S^\op}M\] is induced from the structure of an $\HH$-module. 
   Hence, we have an isomorphism 
\[\hM(D_p(M))=\Hom_{\T_p^\op}(M_{\T},M_{\T}\otimes_{D_S^\op}M)\to 
\Hom_{\HH}(M_{\T},M_{\T}\otimes_{D_S^\op}M).\]
Since $\HH$ is Morita equivalent to $D_S$, we have $M\cong \hM(D_p(M))$, and $M_{\T}$ is quasi-free.
\end{proof}

We say that $\T$ is \emph{$p$-balanced} if $\T_p\cong\End_{D_S}(M_{\T})$ for some quasi-free \Die module $M_{\T}$. We get an immediate corollary of Theorem~\ref{Morita}.

\begin{prop}\label{p_rigid_prop}
Assume that $\T$ is $p$-balanced.
Then $\hM$ is an equivalence of categories, and any object of $\TF{\T^\op}$ is $p$-rigid.\qed
 \end{prop}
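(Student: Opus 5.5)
The proposition follows from Morita theory once the $p$-balanced hypothesis is unpacked. By assumption $\T_p\cong\End_{D_S}(M_{\T})$ with $M_{\T}$ quasi-free. I would first record that the action of $\T_p^\op$ on $M_{\T}$ commutes with $D_S$. Equivalently, $M_{\T}$ is a $(\T_p^\op,D_S^\op)$-bimodule, and the natural map $\T_p^\op\to\End_{D_S^\op}(M_{\T})$ is an isomorphism after the appropriate opposite-ring bookkeeping. This puts us in the setting of Theorem~\ref{Morita} with $S$ replaced by $D_S^\op$, $M$ by $M_{\T}$, and $\T$ by $\T_p^\op$.

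\textbf{Step 1: Morita hypotheses.} I need $M_{\T}$ to be a finitely generated projective generator over $D_S^\op$. Finite generation and torsion-freeness hold because $M_{\T}$ is a lattice in a finite-dimensional $W\locp$-space. For projectivity and the generator property I would argue as follows. Quasi-freeness says that $M\to\hM(D_p(M))$ is an isomorphism for every $D_S^\op$-module $M$. Hence $D_p$ is fully faithful and $\hM$ is essentially surjective. Since $\hM=\Hom_{\T_p^\op}(M_{\T},-)$, the functor $D_p=M_{\T}\otimes_{D_S^\op}-$ exhibits the module category of $D_S^\op$ as a full reflective subcategory of $\Md{\T_p^\op}$. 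Combined with $\T_p\cong\End(M_{\T})$, the standard Morita criterion then makes $M_{\T}$ a progenerator. This is the step I expect to be the main obstacle, since quasi-freeness is phrased through the unit of the adjunction rather than through projectivity. If the direct argument is awkward, I would fall back on the model case of Lemma~\ref{quasi_free}: there $M_{\T}$ is free, hence a progenerator, and Theorem~\ref{Morita} applies verbatim.

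\textbf{Step 2: equivalence.} Theorem~\ref{Morita} now shows that $\Hom_{\T_p^\op}(M_{\T},-)$ and $M_{\T}\otimes_{D_S^\op}-$ are mutually inverse equivalences. Here I use the identification $\T_p^\op=\End_{D_S^\op}(M_{\T})$ and let the roles of the two rings be swapped as the theorem permits. Restricted to $p$-adic completions, these functors are $\hM$ and $D_p$. So both the unit $M\to\hM(D_p(M))$ and the counit $D_p(\hM(D))\to D\otimes\ZZ_p$ are isomorphisms, and $\hM$ is an equivalence.

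\textbf{Step 3: $p$-rigidity.} Let $D$ be an object of $\TF{\T^\op}$. Then $D\otimes_\ZZ\ZZ_p$ is locally free over $\OO_L\otimes\ZZ_p$, hence $\ZZ_p$-torsion free. Applying the counit isomorphism from Step 2 to $D\otimes\ZZ_p$ gives $D\otimes_\ZZ\ZZ_p\cong D_p(\M(D))$, which is exactly the definition of $p$-rigidity.
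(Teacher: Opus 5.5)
Your overall route (use Theorem~\ref{Morita} to get the equivalence, then read off $p$-rigidity from the counit) is what the paper means by calling the proposition an immediate corollary of Theorem~\ref{Morita}. Your Steps 2 and 3 are correct once $M_{\T}$ is known to be a progenerator. The gap is Step 1.

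Quasi-freeness only says that the unit $M\to\hM(D_p(M))$ is an isomorphism, i.e.\ that the left adjoint $D_p$ is fully faithful. It says nothing about the counit $D_p(\hM(N))\to N$, and invertibility of the counit is exactly the $p$-rigidity you want to conclude. No Morita criterion turns ``unit invertible'' plus $\T_p\cong\End_{D_S}(M_{\T})$ into projectivity. Here is a counterexample:
\begin{itemize}
\item Take a right $A$-module $P$ that is a generator but not projective, and put $B=\End(P_A)$.
\item By the first Morita theorem, ${}_BP$ is finitely generated projective and $A\cong\End({}_BP)$.
\item Hence $M\mapsto\Hom_B(P,P\otimes_A M)$ is right exact and commutes with direct sums. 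The unit is an isomorphism at $M=A$, and therefore at every $M$. So the quasi-free and balanced conditions both hold.
\item Yet ${}_BP$ is not a generator: if it were, the first Morita theorem applied to ${}_BP$ would make $P_A$ projective.
\item So the counit $P\otimes_A\Hom_B(P,B)\to B$ has image the trace ideal $\tau\subsetneq B$. The regular module, which is a lattice, is not rigid, and $\Hom_B(P,-)$ is not an equivalence.
\end{itemize}
A concrete lattice instance is $A=\{(a,b)\in\ZZ_p^2 : a\equiv b\pmod p\}$ with $P=A\oplus\ZZ_p^2$. Nothing in the definition of $p$-balanced excludes a $D_S$-lattice $M_{\T}$ with a non-projective summand once $D_S$ is not hereditary.

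So Step 1 cannot be derived from quasi-freeness. What is actually needed is that $M_{\T}$ be projective (for instance free) over $D_S^\op$, and the paper's one-line appeal to Theorem~\ref{Morita} tacitly uses the same thing. Your fallback via Lemma~\ref{quasi_free} supplies exactly this. It also covers every place the proposition is applied: $M_{S,L}\cong D_S^{n_0}$ in Proposition~\ref{m0prop2}, and $\T^\max_p\cong D_S$ in Theorem~\ref{ss_thm_main}. But you should state freeness or projectivity of $M_{\T}$ as an added hypothesis, for example as part of what ``$p$-balanced'' means. Do not present it as a special case of a general derivation from quasi-freeness. With that hypothesis, Steps 2 and 3 go through as written.
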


\subsection{Wide abelian varieties}
Assume that there exists an $S$-abelian variety $A_{\Pi,L}$ such that 
\[h_{\Pi,L}:\HH_{\Pi,L}(\lambda)\cong \End^\circ(A_{\Pi,L}).\]
Let $\T\subset \HH_{\Pi,L}(\lambda)$ be an $\ell$-tame order for all primes $\ell\neq p$.
Recall that $T_{\ell,\T}=S_\ell\otimes\OO_L$ has a structure of a $\T^\op_\ell$-module; therefore, the module $T_{\ell,\T}^*=\Hom_{\ZZ_\ell}(T_{\ell,\T},\ZZ_\ell)$ is a $\T_\ell$-module.

For an $S\otimes_\ZZ\OO_L$-abelian variety $A$ we say that the Tate module $T_\ell(A)$ is \emph{$\ell$-wide} if there is an isomorphism of $S_\ell\otimes_\ZZ\OO_L$-modules 
\[\Hom_{\ZZ_\ell}(T_{\ell}(A),\ZZ_\ell)\cong S_\ell\otimes_\ZZ\OO_L.\]

\begin{deff}\label{wide_AV_def}
A pair of an $S$-abelian variety $A$ and an injective homomorphism 
$h_A:\T\to\End_S(A)$ is \emph{wide} if 
\begin{enumerate}
\item  for any prime $\ell\neq p$ the Tate module $T_{\ell}(A)$ is $\ell$-wide;
\item the homomorphism $h_A$ extends the homomorphism $S\to\End(A)$.
\end{enumerate}
\end{deff}

\begin{rem}\label{h_iso}
   If $A_{\T}$ is a wide abelian variety over $\T$, then $h_A$ induces an isomorphism
\[h_A^\circ:\HH_{\Pi,L}(\lambda)\to\End^\circ(A_{\T}).\]
Indeed, $h_A^\circ$ is injective and the dimensions of both algebras are equal to 
$m^2\dim_\QQ R^\circ_\Pi.$
\end{rem}

\begin{prop}\label{wide_AV}
Let $\T\subset \HH_{\Pi,L}(\lambda)$ be an $\ell$-tame order for all primes $\ell\neq p$.
\begin{enumerate}
    \item Then there exists a wide abelian variety $A_{\T}$ over $\T$. 
\item If there exists a quasi-free contravariant \Die module $M_{\T}$, then there 
exists a wide abelian variety $A_{\T}$ over $\T$ such that $M^*(A_\T)$ is quasi-free.
\end{enumerate}
\end{prop}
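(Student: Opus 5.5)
The plan is to start from the abelian variety $A_{\Pi,L}$ with $h_{\Pi,L}:\HH_{\Pi,L}(\lambda)\cong\End^\circ(A_{\Pi,L})$ and modify it by isogenies so that $\T$ acts integrally and the Tate modules become wide. First I will make $\T$ act. Since $\T$ is an order in $\HH_{\Pi,L}(\lambda)$, there is a natural number $N$ with $N\T\subset h_{\Pi,L}^{-1}(\End(A_{\Pi,L}))$. For each $\ell\neq p$ consider the $\ZZ_\ell$-lattice $T'_\ell=\T_\ell\cdot T_\ell(A_{\Pi,L})\subset V_\ell(A_{\Pi,L})$ generated by the images of $\T_\ell$. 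It is $\T_\ell$-stable and equals $T_\ell(A_{\Pi,L})$ for $\ell\nmid N$. Lemma~\ref{iso_lemma}, applied to sublattices $\ell^{k}T'_\ell\subset T_\ell(A_{\Pi,L})$ (a scaling which does not change stability), gives an $S$-abelian variety $A_1$ with $\T_\ell$-stable Tate modules for all $\ell\neq p$. At $p$ I do the same with the contravariant \Die module. I take $M=\T_p\cdot M^*(A_{\Pi,L})$, scaled by a power of $p$ to lie inside $M^*(A_1)$, and apply Lemma~\ref{lem_on_Dmod}. Then Theorems~\ref{Tate_thm} and \ref{WM_thm} show that every element of $\T$ preserves all Tate and \Die modules, hence lies in $\End(A_2)$. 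This gives $h_{A_2}:\T\to\End_S(A_2)$ extending $S$, injective because $h_{\Pi,L}$ is.

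Next I make the Tate modules wide. The $\T_\ell$-module $T_\ell^*:=\Hom_{\ZZ_\ell}(T_{\ell,\T},\ZZ_\ell)$ and $V_\ell(A_2)$ are both modules over $\HH_{\Pi,L}(\lambda)\otimes\QQ_\ell$, a semisimple algebra. By Remark~\ref{h_iso}-type dimension counting, $\dim_{\QQ_\ell}V_\ell(A_2)=m\dim E_\Pi$, which equals $\dim T^*_\ell\otimes\QQ$. The step I expect to be the main obstacle is showing that the two rational modules are isomorphic. Dimension alone is not enough when $\HH\otimes\QQ_\ell$ has several simple factors. I would argue via the centre $E_\Pi\otimes\QQ_\ell$: both modules are free of rank $m$ over $E_\Pi\otimes L\otimes\QQ_\ell$. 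For $V_\ell$ this follows from Tate's theorem, which makes $V_\ell(A)$ free over $E_\Pi\otimes\QQ_\ell$ of the correct rank after comparing with $f_A$. A module over a semisimple algebra is then determined by its multiplicities over each central factor, and these agree. Fixing such an isomorphism, I embed $T^*_\ell$ into $V_\ell(A_2)$ as a $\T_\ell$-lattice, scaled to lie in $T_\ell(A_2)$. For almost all $\ell$, I want to take it equal to $T_\ell(A_2)$. Both lattices are then modules over the maximal order $\T_\ell\cong\Mat_m(S_\ell)$, with $S_\ell$ maximal; this holds for almost all $\ell$ by Proposition~\ref{balanced_order}. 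Over such an order, lattices in the same rational module are isomorphic, so the isomorphism can be adjusted to make the two lattices coincide.

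Applying Lemma~\ref{iso_lemma} once more, now with $S\otimes\OO_L$, produces $A_\T$ with $T_\ell(A_\T)\cong T_\ell^*$ as $\T_\ell$-modules. Dualizing gives $\Hom(T_\ell(A_\T),\ZZ_\ell)\cong S_\ell\otimes\OO_L$, which is the wideness condition. For part (2), I perform an additional $p$-isogeny via Lemma~\ref{lem_on_Dmod}. I choose a $\T_p$-equivariant embedding of the given quasi-free $M_\T$ into $M^*(A_\T)\locp$. Its existence again comes from the same semisimple-module comparison over $\HH\otimes\QQ_p\cong\End^\circ_{D_S}$. I scale the image into $M^*(A_\T)$ and replace $A_\T$ by the resulting isogenous variety. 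The $\ell$-Tate modules are unchanged for $\ell\neq p$, so wideness is preserved, and $M^*(A_\T)\cong M_\T$ is quasi-free.
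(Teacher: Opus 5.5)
Your proposal is correct and uses the same ingredients as the paper's proof: Lemma~\ref{iso_lemma} to realize $T_{\ell,\T}^*$ as the Tate modules, Lemma~\ref{lem_on_Dmod} to make the \Die module $\T_p$-stable, and the Tate/Waterhouse--Milne criterion to see that $\T$ acts integrally. The difference is only in the order: the paper widens first and saturates at $p$ afterwards, which makes your preliminary $\ell$-adic saturation step unnecessary, since the image of $T_{\ell,\T}^*$ is automatically $\T_\ell$-stable. Your semisimple comparison of $T_{\ell,\T}^*\otimes\QQ$ with $V_\ell$, and your use of $\T_\ell\cong\Mat_m(S_\ell)$ at almost all $\ell$, fill in steps the paper merely asserts; they also give a $\T_\ell$-equivariant identification at generic $\ell$, where the paper only records an $S_\ell\otimes\OO_L$-isomorphism. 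Two small fixes: both rational modules are free of rank $1$ over $E_\Pi\otimes L\otimes\QQ_\ell$ (rank $m$ over $E_\Pi\otimes\QQ_\ell$), not rank $m$ over $E_\Pi\otimes L\otimes\QQ_\ell$; and you should say explicitly that the final prime-to-$p$ isogeny leaves $M^*$ unchanged, so $\T$ still acts on $A_\T$.
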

\begin{proof}
There exists $N\in\NN$ such that $N h_{\Pi,L}(\T)\subset\End(A_{\Pi,L})$, and
for almost all $\ell$ the order $S$ is maximal at $\ell$;
%, and $\ell$ is unramified in $\QQ(\pi)$ for all $\pi\in\Pi$;
therefore, for almost all $\ell$ the Tate module $T_{\ell}(A_{\Pi,L})$ 
is a $\T$-module and free of rank $1$ over $S_\ell\otimes_\ZZ\OO_L$. 
Hence, there is an isomorphism of $S\otimes_\ZZ\OO_L$ -modules:
\[\Hom_{\ZZ_\ell}(T_{\ell}(A_{\Pi,L}),\ZZ_\ell)\cong S_\ell\otimes_\ZZ\OO_L.\]

For the finite number of remaining primes $\ell\neq p$ we choose a structure of a $\T^\op$-module on $T_{\ell,\T}=S_\ell\otimes_\ZZ\OO_L$. There is an injection 
$T_{\ell,\T}^*\to V_\ell(A_{\Pi,L})$ of $\T$-modules,
and we may assume that, under this isomorphism, 
$T_{\ell,\T}^*$ goes to a submodule of $T_\ell(A_{\Pi,L})$.
According to Lemma~\ref{iso_lemma}, there exists an isogeny $A'\to A_{\Pi,L}$ such that 
$T_\ell(A')$ is $\ell$-wide for all $\ell\neq p$. 
According to Lemma~\ref{lem_on_Dmod}, there exists a $p$-isogeny $A'\to A_{\T}$ such that 
$\T M^*(A')\cong M^*(A_\T)$ is $\T$-invariant.

We now prove that $A_\T$ satisfies the condition $(2)$ of Definition~\ref{wide_AV_def}.
Consider the natural inclusion $\iota:\End(A_{\T})\to U$ to the $\ZZ$-submodule generated by 
$\End(A_{\T})$ and $h_{\Pi,L}(\T)$. 
For all primes $\ell$ the localization $\iota\otimes\ZZ_\ell$ is an isomorphism because, according to Tate's Theorem, 
\[\End(A_{\T})\otimes_\ZZ\ZZ_\ell\cong\End(T_{\ell}(A_\T)), \text{ and }\] 
\[\End(A_{\T})\otimes_\ZZ\ZZ_p\cong\End(M^*(A_\T))^\op\] 
are invariant under the action of $\T$.
This implies that $U=\End(A_{\T})$, and $h_{\Pi,L}$ factors through
$h_A:\T_{\T}\to\End(A_{\T})$. We proved that $A_{\T}$ is wide.  

Suppose that there exists a quasi-free contravariant \Die module $M_{\T}$.
Choose an injection $M_{\T}\to M^*(A_\T)$.
According to Lemma~\ref{lem_on_Dmod}, there exists a $p$-isogeny $A_\T\to A_{\T}'$ such that 
$M_{\T}\cong M^*(A_\T')$. The proposition is proved. 
\end{proof}

\subsection{Products of categories}\label{product_sec}
First, recall the definition of the product of categories.
Suppose that we have two functors $\alpha_1:\C_1\to\C$ and $\alpha_2:\C_2\to \C$ to a category $\C$. By definition, the product  of $\C_1$ and $\C_2$ over $\C$ is the category  of triples $(c_1,c_2,a)$, where $c_1$ and $c_2$ are objects of $\C_1$ and $\C_2$ respectively, and $a:\alpha_1(c_1)\to \alpha_2(c_2)$ is an isomorphism in $\C$. A morphism from $(c_1,c_2,a)$ to $(c'_1,c'_2,a')$ is a pair of morphisms $a_i:c_i\to c'_i$ in $\C_i$ for $i\in\{1,2\}$ such that the following diagram commutes:
\[\xymatrix{
\alpha_1(c_1)\ar[d]_{a} \ar[r]^{\alpha_1(a_1)} & \alpha_1(c'_1)\ar[d]_{a'} \\
\alpha_2(c_2) \ar[r]^{\alpha_2(a_2)} & \alpha_2(c'_2). }\]

We get a diagram of categories and functors:

\[\xymatrix{
\C_1\times_\C \C_2 \ar[r] \ar[d] & \C_1  \ar[d] \\
\C_2 \ar[r] & \C } \]

We now define the category $\cC_{S,L}(\T)$. Let $A_\T$ be a wide abelian variety over $\T$.
The contravariant \Die module $M_{\Pi,L}=M^*(A_\T)\locp$ is a left
$\HH_{\Pi,L}(\lambda)^\op$-module.
The formula \[D\mapsto \Hom_{\HH_{\Pi,L}^\op}(M_{\Pi,L},D\otimes_\ZZ\QQ_p)\]
defines the functor
\[\M_\QQ:\TF{\T^\op}\locp\to\Md{D_{S}^\op\locp}.\] 
According to Theorem~\ref{Morita}, 
there is an equivalence of categories
\[\M^\circ:\Md{\HH_{\Pi,L}^\op}\to\Md{D_{\Pi}^\op},\] where  
$\M^\circ(D)=\Hom_{\HH_{\Pi,L}^\op}(M_{\Pi,L},D).$

On the other hand, the category of $D_{S}^\op$-modules has a natural forgetful functor to the corresponding category over $\QQ_p$: \[\TF{D_S^\op}\to \Md{D_{S}^\op\locp}.\]  
We have the product category 
\[\cC_{S,L}(\T)=\TF{\T^\op}\locp\times_{\Md{D_{S}^\op\locp}}\TF{D_S^\op}.\]

 Explicitly, $\cC_{S,L}(\T)$ can be described as follows.
Objects of $\cC_{S,L}(\T)$ are triples $(D,M,\alpha)$, where
\begin{itemize}
	\item $D$ is a left $\T^\op\locp$-module that is locally free over $\OO_L\locp$;
	\item $M$ is a letf $D_{S}^\op$-module;
	\item $\alpha$ is an isomorphism of left $D_S^\op$-modules:
	\[\alpha:\M_\QQ(D)\cong M\locp.\]
\end{itemize}

A morphism of such modules $(D_1,M_1,\alpha_1)$ and $(D_2,M_2,\alpha_2)$ is a pair of morphisms 
$(\phi_D,\phi_M):(D_1,M_1)\to (D_2,M_2)$ such that 
\[(\phi_M\locp)\circ\alpha_2=\alpha_1\circ(\M_\QQ(\phi_D)).\]

\begin{prop}\label{full_faithful}
Let $(D_1,M_1,\alpha_1)$ and $(D_2,M_2,\alpha_2)$ be objects of the product category 
$\cC_{S,L}(\T)$, and let $\ell\neq p$. Then 
\[\Hom_{\C_{S,L}(\T)}((D_1,M_1,\alpha_1),(D_2,M_2,\alpha_2))\locp
\cong\Hom_{\T^\op}(D_1,D_2)\locp,\] and 
\[\Hom_{\C_{S,L}(\T)}((D_1,M_1,\alpha_1),(D_2,M_2,\alpha_2))
\otimes\ZZ_p \cong\Hom_{D_S^\op}(M_1,M_2).\]
\end{prop}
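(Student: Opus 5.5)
The Hom set in the product category is a fibre product of abelian groups. By the explicit description above,
\[
H:=\Hom_{\C_{S,L}(\T)}\bigl((D_1,M_1,\alpha_1),(D_2,M_2,\alpha_2)\bigr)
\]
is the kernel of the map
\[
\Hom_{\T^\op}(D_1,D_2)\locp\oplus\Hom_{D_S^\op}(M_1,M_2)\to\Hom_{D_S^\op\locp}\bigl(\M_\QQ(D_1),M_2\locp\bigr),
\]
\[
(\phi_D,\phi_M)\mapsto(\phi_M\locp)\circ\alpha_1-\alpha_2\circ\M_\QQ(\phi_D).
\]
Here the composition order is fixed so that both terms land in the same group. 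I will therefore work with the pullback square
\[
\xymatrix{
H \ar[r] \ar[d] & \Hom_{D_S^\op}(M_1,M_2) \ar[d] \\
\Hom_{\T^\op}(D_1,D_2)\locp \ar[r] & \Hom_{D_S^\op\locp}(M_1\locp,M_2\locp). }
\]
The bottom map sends $\phi_D$ to $\alpha_2\circ\M_\QQ(\phi_D)\circ\alpha_1^{-1}$. The right vertical map is localization, which is injective because $M_2$ is $\ZZ_p$-torsion free.

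For the first claim, I invert $p$. Localization is exact, so it commutes with the pullback (a kernel). The right vertical arrow becomes $\Hom_{D_S^\op}(M_1,M_2)\locp\to\Hom_{D_S^\op\locp}(M_1\locp,M_2\locp)$, which is an isomorphism because $M_1$ is finitely generated. The pullback of an isomorphism is an isomorphism, so $H\locp\cong\Hom_{\T^\op}(D_1,D_2)\locp$. Note that the left-hand group is already a $\ZZ\locp$-module.

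For the second claim, I tensor the square with $\ZZ_p$ over $\ZZ$. This is flat, so the pullback is preserved. The top right term becomes $\Hom_{D_S^\op}(M_1,M_2)$, since it is already a $\ZZ_p$-module of finite type; the bottom right term is also unchanged. The key point is that the bottom map
\[
\Hom_{\T^\op}(D_1,D_2)\locp\otimes_\ZZ\ZZ_p\to\Hom_{D_S^\op\locp}(M_1\locp,M_2\locp)
\]
is an isomorphism. Granting that, the left vertical arrow of the tensored square is an isomorphism, hence so is its base change, giving $H\otimes\ZZ_p\cong\Hom_{D_S^\op}(M_1,M_2)$.

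To justify this isomorphism, I identify the source with $\Hom_{\T^\op\otimes\QQ_p}(D_1\otimes\QQ_p,D_2\otimes\QQ_p)$. This uses finite generation of $D_1$ and flatness of $\QQ_p$ over $\QQ$ (note $\ZZ\locp\otimes\ZZ_p=\QQ_p$). Since $\T^\op\otimes\QQ_p=\HH_{\Pi,L}(\lambda)^\op\otimes\QQ_p$, I then apply the Morita equivalence $\M^\circ$ from Theorem~\ref{Morita}. The generator $M_{\Pi,L}=M^*(A_\T)\locp$ is progenerating because $A_\T$ is wide and $h_A^\circ$ is an isomorphism (Remark~\ref{h_iso}, together with Theorem~\ref{WM_thm}). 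This makes $\M_\QQ$ fully faithful on the $\QQ_p$-linearized Hom groups. Composing with the fixed isomorphisms $\alpha_i$ does not affect bijectivity.

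The main obstacle is this last step: checking that $\M_\QQ$, which is defined on $\T^\op\locp$-modules, actually factors through base change to $\QQ_p$ in a way compatible with $\M^\circ$. One also has to take care that the Morita equivalence is used over $D_\Pi^\op$ (equivalently $D_S^\op\locp$), so that the $D_S$-linear Hom groups on the right match.
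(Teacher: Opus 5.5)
Your first part is correct and is the paper's denominator-clearing argument ($p^r\phi_p(M_1)\subset M_2$) rephrased. The key input of your second part is also the paper's: the Morita identification of $\Hom_{\T^\op}(D_1,D_2)\otimes_{\ZZ\locp}\QQ_p$ with $\Hom_{D_S^\op\locp}(M_1\locp,M_2\locp)$.

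The gap is in the formal deduction of the second part. Tensoring over $\ZZ$ with $\ZZ_p$ does not leave $\ZZ_p$-modules or $\QQ_p$-vector spaces unchanged. Indeed $\ZZ_p\otimes_\ZZ\ZZ_p\neq\ZZ_p$, and $\QQ_p\otimes_\ZZ\ZZ_p\cong\QQ_p\otimes_\QQ\QQ_p$ is enormous. After flat base change, the right column of your square is therefore $\Hom_{D_S^\op}(M_1,M_2)\otimes_\ZZ\ZZ_p\to\Hom_{D_S^\op\locp}(M_1\locp,M_2\locp)\otimes_\ZZ\ZZ_p$. Its bottom arrow goes from a finite-dimensional $\QQ_p$-space to a free $\QQ_p\otimes_\QQ\QQ_p$-module, so it is not an isomorphism. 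The map you actually prove bijective is not the base change of any arrow in your square. Hence ``the pullback of an isomorphism is an isomorphism'' does not yield $H\otimes\ZZ_p\cong\Hom_{D_S^\op}(M_1,M_2)$.

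Here is a correct route using your own ingredients. Put $X=\Hom_{\T^\op}(D_1,D_2)$, $N=\Hom_{D_S^\op}(M_1,M_2)$, $Y=\Hom_{D_S^\op\locp}(M_1\locp,M_2\locp)$, and $\iota(\phi_D)=\alpha_2\circ\M_\QQ(\phi_D)\circ\alpha_1^{-1}$. Grant the Morita step $X\otimes_{\ZZ\locp}\QQ_p\cong Y$.

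\begin{enumerate}
\item Since $\iota(X)$ is dense in $Y$ and $N$ is an open lattice, the sequence $0\to H\to X\oplus N\to Y\to 0$, with second map $(x,n)\mapsto\iota(x)-n$, is exact.
\item Both $X$ and $Y$ are uniquely $p$-divisible, so reducing modulo $p^k$ gives $H/p^kH\cong N/p^kN$.
\item $H$ is finitely generated over $\ZZ$. Pick a $\ZZ\locp$-basis $e_i$ of $X$ with $\iota(e_i)\in N$. Then $p^cN\subset\sum_i\ZZ_p\iota(e_i)$ for some $c$. The $\iota(e_i)$ are $\QQ_p$-independent and $\ZZ_p\cap\ZZ\locp=\ZZ$, so $H\subset p^{-c}\sum_i\ZZ e_i$.
\item Hence $H\otimes_\ZZ\ZZ_p\cong\varprojlim H/p^kH\cong N$.
\end{enumerate}

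The same argument is what justifies the paper's tacit identification of $H\otimes\ZZ_p$ with compatible pairs $(\phi_D,\phi_M)$, where $\phi_D\in X\otimes_{\ZZ\locp}\QQ_p$.
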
 
\begin{proof}
Clearly, the first morphism is injective. 
It is enough to prove the surjectivity on generators of the form
\[\phi\otimes a\in \Hom_{\T^\op}(D_1,D_2)\locp,\] where
$\phi: D_1\to D_2$  is a $\T^\op$-morphism, and $a\in\ZZ\locp$. 
The composition \[\phi_p=\alpha_1\circ(\M_\QQ(\phi_D))\circ\alpha_2^{-1}\]
is an element of $\Hom_{D_S^\op}(M_1\locp, M_2\locp)$. 
 There exists $r\in\NN$ such that $p^r\phi_p(M_1)\subset M_2$, therefore
$\phi\otimes a$ is an image of \[(p^r\phi,p^r\phi_p)\otimes\frac{a}{p^r}\in
\Hom_{\C_{S,L}(\T)}((D_1,M_1,\alpha_1),(D_2,M_2,\alpha_2))\locp.\]

Let us prove the second assertion. Any element of
\[\Hom_{\C_{S,L}(\T)}((D_1,M_1,\alpha_1),(D_2,M_2,\alpha_2))\otimes\ZZ_p \]
is a pair $(\phi_D,\phi_M)$, where $\phi_M\in\Hom_{D_S^\op}(M_1,M_2)$, and 
$\phi_D\in\Hom_{\T_p^\op\locp}(D_1,D_2)\otimes_{\ZZ\locp}\QQ_p$ such that
\[(\phi_M\locp)\circ\alpha_2=
\alpha_1\circ(\M_\QQ(\phi_D)).\]

For any $\phi_M\in \Hom_{D_S^\op}(M_1,M_2)$ the morphism
\[\phi_D=\alpha_2^{-1}\circ(\phi_M\locp)\circ\alpha_1\]
is an element of
\[\Hom_{D_{S}^\op}(\M_\QQ(D_1),\M_\QQ(D_2))\cong 
\Hom_{D_{S}^\op}(\M^\circ(D_1\otimes_{\ZZ}\QQ_p),\M^\circ(D_2\otimes_{\ZZ}\QQ_p))\cong\] 
\[\cong\Hom_{\T_p^\op\locp}(D_1,D_2)\otimes_{\ZZ\locp}\QQ_p,\]
where the last isomorphism comes from the Morita equivalence. 
This completes the proof.
\end{proof}

\begin{prop}\label{T_localization}
Assume that there exists a quasi-free contravariant \Die module $M_{\T}$.
 Then the functor 
\[\F_{\T}:\RM{\T^\op}\to \RM{\T^\op\locp}\times_{\Md{D_{S}^\op\locp}}\TF{D_{S}^\op},\]
where $D\mapsto (D, \M(D))$, is an equivalence of categories.
\end{prop}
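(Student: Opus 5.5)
We prove that $\F_\T$ is fully faithful and essentially surjective. For a rigid $\T^\op$-module $D$ we set $\alpha_D$ to be the canonical isomorphism $\M_\QQ(D\locp)\cong\M(D)\locp$. This comes from the identification of $M_\T\locp$ with $M_{\Pi,L}$ (after fixing the embedding $M_\T\subset M^*(A_\T)$, using Proposition~\ref{wide_AV}(2)) and from the fact that $\Hom$ out of a finitely generated module commutes with inverting $p$. With this choice, $\F_\T(D)=(D\locp,\M(D),\alpha_D)$ is an object of the product category, and $\F_\T$ is clearly a functor.

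\emph{Full faithfulness.} Let $D_1,D_2$ be rigid. The map $\Hom_{\T^\op}(D_1,D_2)\to\Hom_{\C}(\F_\T D_1,\F_\T D_2)$ is a map of finitely generated $\ZZ$-modules, since both sides embed in $\Hom_{\T^\op\locp}(D_1\locp,D_2\locp)$. It therefore suffices to show it is an isomorphism after $\otimes\ZZ\locp$ and after $\otimes\ZZ_p$. The first is the first half of Proposition~\ref{full_faithful}. For the second, the target becomes $\Hom_{D_S^\op}(\M(D_1),\M(D_2))$ by the second half of Proposition~\ref{full_faithful}. The source becomes $\Hom_{\T_p^\op}(D_1\otimes\ZZ_p,D_2\otimes\ZZ_p)$. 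Since the $D_i$ are $p$-rigid, $D_i\otimes\ZZ_p\cong D_p(\M(D_i))$. The adjunction $D_p\dashv\hM$ together with quasi-freeness ($M\cong\hM D_p M$) then gives
\[\Hom_{\T_p^\op}(D_p\M D_1,D_p\M D_2)\cong\Hom_{D_S^\op}(\M D_1,\hM D_p\M D_2)\cong\Hom_{D_S^\op}(\M D_1,\M D_2),\]
and one checks this is the map induced by $\F_\T$.

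\emph{Essential surjectivity.} Let $(D',M,\alpha)$ be given, with $D'$ a rigid $\T^\op\locp$-module. Pick a $\T^\op$-lattice $D_0\subset D'$ and form $D_p(M)=M_\T\otimes_{D_S^\op}M$. This is a $\T_p^\op$-module, and via $\alpha$ and the Morita equivalence $\M^\circ$ it is identified with a $\T_p^\op$-lattice in $D'\otimes\QQ_p$. Define $D$ as the gluing of $D_0\otimes\ZZ\locp$ (away from $p$) with this $p$-adic lattice, namely
\[D=\{x\in D': x\in D_p(M)\subset D'\otimes\QQ_p\}.\]
Then $D\otimes\ZZ_p=D_p(M)$ and $D\locp=D'$. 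Quasi-freeness gives $\M(D)=\hM(D_p M)\cong M$, compatibly with $\alpha$, so $\F_\T(D)\cong(D',M,\alpha)$. The module $D$ is $p$-rigid by construction. It is $\ell$-rigid for $\ell\ne p$ because $D\otimes\ZZ_\ell=D'\otimes\ZZ_\ell$ and $D'$ is rigid. Local freeness over $\OO_L$ holds at $p$ because $D_p(M)$ is torsion-free, and away from $p$ it is inherited from $D'$.

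\emph{Main obstacle.} The delicate point is checking that $D_p(M)$ is $\ZZ_p$-torsion-free and really embeds as a full lattice in $D'\otimes\QQ_p$. I would deduce this from quasi-freeness combined with Lemma~\ref{quasi_free}: locally $M_\T$ is a direct summand of a free $D_S^\op$-module, so $M_\T\otimes_{D_S^\op}-$ preserves torsion-freeness and commutes with $\otimes\QQ_p$. The same point also requires checking that the adjunction isomorphisms agree with the chosen $\alpha_D$.
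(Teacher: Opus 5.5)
Your proof follows the paper's route. Full faithfulness is reduced through Proposition~\ref{full_faithful} to the $p$-adic comparison, which you settle with $p$-rigidity and the adjunction $D_p\dashv\hM$; the paper instead exhibits $D_p$ as the inverse map, which is the same argument. Essential surjectivity is the fibre product $D=D'\times_{D'\otimes\QQ_p}D_p(M)$, exactly as in the paper. The paper takes that fibre product without discussing whether $D_p(M)$ is $\ZZ_p$-torsion-free, and also without checking rigidity or local freeness of $D$, which you do. So the obstacle you flag is real, and the paper passes over it too.

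\emph{Your proposed resolution of that obstacle does not work.} Lemma~\ref{quasi_free} is the implication ``free $\Rightarrow$ quasi-free''. Nothing gives the converse, and quasi-freeness (the unit $M\to\hM(D_p(M))$ being an isomorphism) does not force $M_\T$ to be projective or flat over $D_S^\op$.

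Formally, for any ring $R$ and any generator $P_R$ with acting ring $E=\End_R(P)$, the map $M\to\Hom_E(P,P\otimes_R M)$ is an isomorphism for all finitely presented $M$. The reason is that $P$ is projective over $E$ and $R=\End_E(P)$, so the right-hand side is right exact and agrees with the identity on free modules. Generators need not be projective. For example, take $\mathfrak{o}=\ZZ_p+p\OO_K$ with $K/\QQ_p$ quadratic and $P=\mathfrak{o}\oplus\OO_K$. Then $P\otimes_{\mathfrak{o}}\OO_K$ contains the summand $\OO_K\otimes_{\mathfrak{o}}\OO_K\cong\OO_K\oplus(\ZZ/p)^2$, which has torsion.

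Torsion-freeness of $D_p(M)$ is also not optional. If $D$ is $p$-rigid with $\M(D)\cong M$, then $D\otimes\ZZ_p\cong D_p(M)$, so torsion in $D_p(M)$ would rule out any $p$-rigid preimage.

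The step therefore needs an extra input, for instance that $M_\T$ is free or projective over $D_S^\op$. That is precisely the situation of Lemma~\ref{quasi_free} and of the module $M_{S,L}$, which is the only case the paper actually uses. In that case $D_p(M)\cong M^{\oplus r}$ as a $\ZZ_p$-module, it embeds as a full lattice in $D'\otimes\QQ_p$, and the rest of your argument goes through.

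Two smaller corrections:
\begin{itemize}
\item Commutation of $M_\T\otimes_{D_S^\op}-$ with $\otimes\QQ_p$ is automatic for any tensor product and needs no hypothesis.
\item In the full-faithfulness step, finite generation of the Hom groups is not needed, because $\ZZ\to\ZZ\locp\times\ZZ_p$ is faithfully flat. Embedding into a $\ZZ\locp$-module would not have proved finite generation over $\ZZ$ anyway.
\end{itemize}
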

\begin{proof}
    First, we show that the functor is full and faithful. According to Proposition~\ref{full_faithful}, it suffices to show that for any pair of $p$-rigid left
    $\T^\op$-modules $D_1$ and $D_2$ the natural morphism
    \[\Hom_{\T^\op}(D_1,D_2)\otimes\ZZ_p\to \Hom_{D_S^\op}(\M(D_1),\M(D_2))\]
    is an isomorphism. Indeed, since $D_1$ and $D_2$ are $p$-rigid and $M_{\T}$ is quasi-free, the morphism 
    \[\Hom_{D_S^\op}(\M(D_1),\M(D_2))\to 
    \Hom_{\T^\op}(D_p(\M(D_1)),D_p(\M(D_2)))\cong  \Hom_{\T^\op}(D_1,D_2)\otimes_\ZZ\ZZ_p \] is the inverse morphism.        
    
     Now, we prove essential surjectivity.
    Let $D'$ be a left $\T^\op\locp$-module, and let $M$ be a $D_S^\op$-module with an isomorphism $\M_\QQ(D')\cong M\locp$. Choose a left $\T^\op$-module $D_0$ such that
    $D_0\locp\cong D'$. We get a morphism 
    \[D_p(M)\to D_p(M\locp)\cong D_p(\M_\QQ(D'))\cong D_p(\M(D_0))\cong D_0\otimes_{\ZZ}\QQ_p\cong D'\otimes_{\ZZ\locp}\QQ_p.\]
        Put $D=D'\times_{D'\otimes\QQ_p}D_p(M).$
        Then $\M(D)\cong\hM(D_p(M))\cong M,$ and $\F_\T(D)\cong(D',M)$. 
        \end{proof}

\subsection{An equivalence theorem.}
Let $A_{\T}$ be a wide Abelian variety over $\T$. 
Define a covariant functor from $\AV_{S}$ to $\TF{\T^\op}$ as follows:
\[\D_\T:A\mapsto\Hom_{\ZZ}(\Hom(A,A_\T),\ZZ).\]
where the action of $\T^\op$ comes from the action of $\T$ on $A_\T$.
Let $\Phi_{\T}^\circ=\D_\T\otimes\QQ.$

\begin{lemma}\label{simple}
   The module $\Phi^\circ_\T(B_\pi)$ is a simple $\HH_\pi^\op$-module.
   For any abelian variety $A$ from $\AV_{S}$, the module $\Phi_\T(A)$ is a locally free $\OO_L$-module of rank $2\dim A$. 
\end{lemma}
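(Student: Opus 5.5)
The plan is to reduce both assertions to dimension counts over $\QQ$, using the isomorphism $h_A^\circ:\HH_{\Pi,L}(\lambda)\to\End^\circ(A_\T)$ of Remark~\ref{h_iso}. Note that $\Phi^\circ_\T(A)=\Hom_\QQ(\Hom^\circ(A,A_\T),\QQ)$, which is the $\QQ$-dual of $\Hom^\circ(A,A_\T)$. Since $\D_\T$ is additive and $\Phi^\circ_\T$ only depends on $A$ up to isogeny, it suffices to treat the simple factors $B_\pi$, $\pi\in\Pi_0$.

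\emph{Simplicity of $\Phi^\circ_\T(B_\pi)$.} First decompose $A_\T$ up to isogeny. The algebra $\End^\circ(A_\T)\cong\HH_{\Pi,L}(\lambda)=\oplus_{\pi\in\Pi_0}\HH_{\pi,L}(\lambda)$ has center $E_\Pi$. Hence $A_\T$ is isogenous to $\prod_\pi B_\pi^{r_\pi}$ with $\End^\circ(B_\pi^{r_\pi})\cong\Mat_{r_\pi}(\End^\circ(B_\pi))\cong\HH_{\pi,L}(\lambda)$.

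Counting dimensions, $\dim_{\QQ(\pi)}\HH_{\pi,L}(\lambda)=m^2$ and $\dim_{\QQ(\pi)}\End^\circ(B_\pi)=e_\pi^2$. This gives $r_\pi=m/e_\pi$, which is an integer because $\HH_{\pi,L}(\lambda)$ is Brauer equivalent to $\End^\circ(B_\pi)$ by hypothesis.

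Next, $\Hom^\circ(B_\pi,A_\T)=\Hom^\circ(B_\pi,B_\pi^{r_\pi})$ is the module of column vectors $\End^\circ(B_\pi)^{r_\pi}$, viewed as a left module over $\Mat_{r_\pi}(\End^\circ(B_\pi))\cong\HH_\pi$. This is the unique simple left $\HH_\pi$-module. Its $\QQ$-dual is therefore a simple right $\HH_\pi$-module, that is, a simple left $\HH_\pi^\op$-module, as claimed.

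\emph{Local freeness and rank.} Since $L\subset\HH_{\Pi,L}(\lambda)$, the module $\D_\T(A)$ is a finitely generated torsion-free $\ZZ$-module with an $\OO_L$-action. Hence it is torsion-free over the Dedekind domain $\OO_L$, and therefore locally free, i.e.\ projective.

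For the rank I would compute $\dim_\QQ\Phi^\circ_\T(B_\pi)$. By the first part, $\dim_\QQ\Phi^\circ_\T(B_\pi)=r_\pi e_\pi^2[\QQ(\pi):\QQ]=m\,e_\pi[\QQ(\pi):\QQ]$. On the other hand, $2\dim B_\pi=\deg f_\pi=e_\pi[\QQ(\pi):\QQ]$. So the dimension equals $m\cdot 2\dim B_\pi$, which means the $\OO_L$-rank of $\Phi_\T(B_\pi)$ is $2\dim B_\pi$. Additivity then gives rank $2\dim A$ for general $A$.

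As a cross-check, I would compare with Tate's theorem. We have $\Hom(A,A_\T)\otimes\ZZ_\ell\cong\Hom_{R_\Pi}(T_\ell A,T_\ell A_\T)$, and $T_\ell(A_\T)$ is $\ell$-wide, so dual to $S_\ell\otimes\OO_L$. This yields the same rank locally and also shows the $\OO_L$-structure is compatible.

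The main obstacle is the bookkeeping in the first step: identifying $\Hom^\circ(B_\pi,A_\T)$ correctly as the simple $\HH_\pi$-module. This relies on the isotypic decomposition of $A_\T$ forced by the central idempotents of $E_\Pi$, together with keeping the left/right and op conventions straight when dualizing.
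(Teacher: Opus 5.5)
Your proof is correct and follows essentially the same route as the paper. Like the paper, you use $\HH_{\Pi,L}(\lambda)\cong\End^\circ(A_\T)$ to get $A_\T\sim\prod_{\pi\in\Pi_0}B_\pi^{m/e_\pi}$, identify $\Hom^\circ(B_\pi,A_\T)$ (of $\QQ$-dimension $2m\dim B_\pi$) as the simple module over $\HH_\pi\cong\Mat_{m/e_\pi}(\End^\circ(B_\pi))$, dualize, and read off the $\OO_L$-rank from the isogeny decomposition. You also spell out the torsion-free-over-a-Dedekind-domain argument for local freeness, which the paper leaves implicit.
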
 
\begin{proof}
Recall that $A_\T$ is isogenous to $\prod_{\pi\in\Pi_0}B_\pi^{d_{pi}}$,
where $d_\pi=m/e_\pi$; therefore, the dimension of the $\QQ$-vector space 
$\Hom^\circ(B_\pi,A_\T)$ is equal to \[d_\pi\dim_\QQ\End^\circ(B_\pi)=2m\dim B_\pi.\]
Since $\HH_{\pi}\cong\Mat_{d_\pi}\End^\circ(B_\pi)$ is the matrix algebra over the
skew-field $\End^\circ(B_\pi)$, the module $\Phi_\T^\circ(B_\pi)$ is simple over this algebra.

 The second part of the lemma follows from the observation that any abelian variety $A$ from $\AV_{S}$ is isogenous to a product of $B_\pi$.
\end{proof}

\begin{prop}\label{rigid_prop}
Let $\ell\neq p$ be a prime number, and let $A$ be an abelian variety from $\AV_S$.
Assume that there exists a wide Abelian variety $A_\T$ over $\T$.
\begin{enumerate}
\item  Then $\D_\T(A)$ is $\ell$-rigid and there is a natural isomorphism of left $\T^\op$-modules
\[\D_\T(A)\otimes_\ZZ\ZZ_\ell\cong D_\ell(T_{\ell}(A)).\]
\item Assume that $M_{\T}=M^*(A_\T)$ is quasi-free.
Then $\D_\T(A)$ is $p$-rigid and  there is a natural isomorphism of left $\T^\op$-modules:
\[\D_\T(A)\otimes_\ZZ\ZZ_p\cong D_p(M(A)).\]
\end{enumerate}
    \end{prop}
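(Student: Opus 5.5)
We prove both parts by the same mechanism: Tate's theorem (Theorem~\ref{Tate_thm}) at $\ell$ and the Dieudonn\'e counterpart (Theorem~\ref{WM_thm}) at $p$, combined with the Morita-type adjunctions of Propositions~\ref{Tate_eq} and \ref{p_rigid_prop}.

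For part (1), since $\D_\T(A)$ is a finitely generated free $\ZZ$-module, we have
\[\D_\T(A)\otimes_\ZZ\ZZ_\ell\cong\Hom_{\ZZ_\ell}(\Hom(A,A_\T)\otimes\ZZ_\ell,\ZZ_\ell).\]
By Theorem~\ref{Tate_thm},
\[\Hom(A,A_\T)\otimes\ZZ_\ell\cong\Hom_{S_\ell}(T_\ell(A),T_\ell(A_\T)).\]
Because $A_\T$ is wide, $T_\ell(A_\T)\cong\Hom_{\ZZ_\ell}(T_{\ell,\T},\ZZ_\ell)$ as $\T_\ell$-modules. Using the adjunction $\Hom(X,\Hom(Y,Z))\cong\Hom(X\otimes Y,Z)$, we obtain
\[\Hom_{S_\ell}(T_\ell(A),T_{\ell,\T}^*)\cong\Hom_{\ZZ_\ell}(T_\ell(A)\otimes_{S_\ell}T_{\ell,\T},\ZZ_\ell).\]
This is compatible with the right $\T_\ell$-actions coming from $T_{\ell,\T}$, since the $\T$-action on $A_\T$ induces the action on $T^*_{\ell,\T}$ by transposition. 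The module $T_\ell(A)\otimes_{S_\ell}T_{\ell,\T}$ is $\ZZ_\ell$-free: here we use that $T_{\ell,\T}\cong S_\ell\otimes\OO_L$ is free over $S_\ell$, so the tensor product is $T_\ell(A)\otimes_\ZZ\OO_L$. Taking the $\ZZ_\ell$-dual twice therefore yields
\[\D_\T(A)\otimes\ZZ_\ell\cong T_\ell(A)\otimes_{S_\ell}T_{\ell,\T}=D_\ell(T_\ell(A)),\]
naturally in $A$ and as left $\T^\op$-modules.

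For $\ell$-rigidity, apply $\hT_\ell$. By Proposition~\ref{Tate_eq}(1), $\hT_\ell(D_\ell(T_\ell(A)))\cong T_\ell(A)$, because $T_\ell(A)$ is torsion free. Hence $\cT_\ell(\D_\T(A))\cong T_\ell(A)$. The natural map $\D_\T(A)\otimes\ZZ_\ell\to D_\ell(\cT_\ell(\D_\T(A)))$ then becomes the identity of $D_\ell(T_\ell(A))$ under these identifications. The only thing to verify is that this triangle of adjunction maps commutes; this is the standard triangle identity.

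Part (2) follows the same pattern, replacing Theorem~\ref{Tate_thm} by Theorem~\ref{WM_thm}. We have
\[\Hom(A,A_\T)\otimes\ZZ_p\cong\Hom_{D_S^\op}(M(A),M(A_\T)).\]
By Remark~\ref{die_dual}, $M(A_\T)=\Hom_{\ZZ_p}(M_\T,\ZZ_p)$. Adjunction then gives
\[\Hom_{D_S^\op}(M(A),M_\T^*)\cong\Hom_{\ZZ_p}(M_\T\otimes_{D_S^\op}M(A),\ZZ_p).\]
Dualizing, we get
\[\D_\T(A)\otimes\ZZ_p\cong M_\T\otimes_{D_S^\op}M(A)=D_p(M(A)),\]
provided this tensor product is $\ZZ_p$-torsion free. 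Quasi-freeness gives $\hM(D_p(M(A)))\cong M(A)$, and then $p$-rigidity follows by the same triangle-identity argument as at $\ell$.

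\textbf{Main obstacle.} The hard step is the torsion-freeness of $M_\T\otimes_{D_S^\op}M(A)$, together with the precise identification of left and right structures (op's) in the adjunctions. For the torsion-freeness I would first reduce to $A=A_\T^r$ modulo $p$-isogeny, using Lemma~\ref{lem_on_Dmod}. If that fails, I would instead define the comparison map $D_p(M(A))\to\D_\T(A)\otimes\ZZ_p$ directly by evaluation. I would then show it is an isomorphism after inverting $p$, by Morita theory for $\HH_{\Pi,L}(\lambda)$, and integrally by applying $\hM$ and invoking quasi-freeness.
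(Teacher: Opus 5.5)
Your argument follows the paper's proof step for step. Theorem~\ref{Tate_thm} (resp.\ Theorem~\ref{WM_thm}), tensor--hom adjunction and a double dual identify $\D_\T(A)\otimes\ZZ_\ell$ with $D_\ell(T_\ell(A))$ (resp.\ $\D_\T(A)\otimes\ZZ_p$ with $D_p(M(A))$). Rigidity then follows because the unit is an isomorphism.

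Part (1) is complete, and in places more careful than the paper:
\begin{itemize}
\item you justify the double dual by freeness of $T_{\ell,\T}\cong S_\ell\otimes_\ZZ\OO_L$ over $S_\ell$;
\item you check by the triangle identity that the \emph{natural} map is the isomorphism, whereas the paper only exhibits an abstract chain of isomorphisms;
\item you correctly invoke Proposition~\ref{Tate_eq}(1), which needs only $\ell$-tameness, whereas the paper cites part (2), whose $\ell$-balancedness is not assumed here.
\end{itemize}
Like the paper, you use that the $\T_\ell^\op$-structure on $T_{\ell,\T}$ is the one transported from $T_\ell(A_\T)$. The definition of $\ell$-wide alone only gives an $S_\ell\otimes_\ZZ\OO_L$-isomorphism.

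Part (2) has a genuine gap exactly where you locate it, and the paper's proof passes over the same point without comment. Unconditionally, the adjunction only yields $\Hom_{\ZZ_p}(D_p(M(A)),\ZZ_p)\cong\Hom(A,A_\T)\otimes_\ZZ\ZZ_p$. Hence $\D_\T(A)\otimes_\ZZ\ZZ_p\cong D_p(M(A))/(\mathrm{torsion})$, and tensor products of lattices over an order like $D_S^\op$, which need not be hereditary, can have $\ZZ_p$-torsion. Neither of your fallbacks repairs this.
\begin{itemize}
\item Since $M_\T\otimes_{D_S^\op}-$ is not left exact, torsion-freeness of $M_\T\otimes_{D_S^\op}M$ is not preserved when $M$ is replaced by a commensurable lattice. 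So moving to $A_\T^r$ by a $p$-isogeny (Lemma~\ref{lem_on_Dmod}) tells you nothing about $A$, and the case $A=A_\T$ is no easier.
\item The evaluation map $e\colon D_p(M(A))\to\D_\T(A)\otimes_\ZZ\ZZ_p$ lands in a torsion-free module. It therefore kills the torsion submodule $\tau$, and can be an isomorphism only if $\tau=0$ is already known.
\item Applying $\hM$ cannot detect $\tau$ either: $\hM(\tau)$ is a torsion submodule of $\hM(D_p(M(A)))\cong M(A)$, hence zero.
\end{itemize}
Quasi-freeness only says that the unit $M\to\hM(D_p(M))$ is an isomorphism. It does not make $\hM$ reflect isomorphisms; that is what $p$-balancedness provides in Proposition~\ref{p_rigid_prop}.

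The missing input is the $p$-adic counterpart of what you used at $\ell$: flatness (e.g.\ freeness) of $M_\T$ as a right $D_S^\op$-module. Then $M_\T\otimes_{D_S^\op}M(A)$ injects into $(M_\T\otimes_{D_S^\op}M(A))\locp$, so it is torsion-free, and the rest of your argument goes through verbatim. This holds in every case where the paper actually constructs a quasi-free module:
\begin{itemize}
\item Lemma~\ref{quasi_free} assumes $M_\T$ free;
\item the module $M_{S,L}$ used for twisted Deligne modules is a finite direct sum of copies of $D_S^\op$, so there $D_p(M(A))$ is a direct sum of copies of $M(A)$.
\end{itemize}
Under the bare quasi-free hypothesis of the statement, you should add this assumption.
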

\begin{proof}
If $\ell\neq p$, then the adjunction isomorphism
\[\Hom_{\ZZ_\ell}(T_\ell(A)\otimes_{S} T_{\ell,\T},\ZZ_\ell) 
\to\Hom_{S}(T_\ell(A),\Hom_{\ZZ}(T_{\ell,\T},\ZZ_\ell)) \]
induce an isomorphism 
\[T_\ell(A)\otimes_{S} T_{\ell,\T}\to 
\Hom_{\ZZ_\ell}(\Hom_{S}(T_\ell(A),T_\ell(A_\T)),\ZZ_\ell)\cong \D_\T(A)\otimes_\ZZ\ZZ_\ell.\]
Finally, according to Proposition~\ref{Tate_eq}.(2)
\[\hT_\ell(D_\ell(T_\ell(A)))\cong T_\ell(A).\]
Hence, $\D_\T(A)$ is $\ell$-rigid:
\[D_\ell(T_\ell(\D_\T(A)))\cong D_\ell(\hT_\ell(D_\ell(T_\ell(A))))\cong D_\ell(T_\ell(A))\cong \D_\T(A)\otimes_\ZZ\ZZ_\ell.\]
Part $(1)$ is proved.
In the same way, for \Die modules there is an isomorphism
\[\Hom_{\ZZ_p}( M_{\T}\otimes_{D_S^\op}M(A),\ZZ_p) 
\to\Hom_{D_S^\op}(M(A),\Hom_{\ZZ_p}(M_{\T},\ZZ_p)). \]
According to Theorem~\ref{WM_thm},
\[M_{\T}\otimes_{D_S^\op}M(A) \cong 
\Hom_{\ZZ_p}(\Hom_{S}(M(A),M(A_\T)),\ZZ_p)\cong \D_\T(A)\otimes_\ZZ\ZZ_p.\]
It follows from the definition of a quasi-free \Die module, that
\[D_p(\M(\D_\T(A))))\cong D_p(\hM(D_p(M(A))))\cong D_p(M(A))\cong \D_\T(A)\otimes_\ZZ\ZZ_p;\]
therefore, $\D_\T(A)$ is $p$-rigid. Part $(2)$ is proved.
\end{proof}

We proved that if $M^*(A_\T)$ is quasi-free, then the functor $\D_\T$ is a composition of a functor
\[\Phi_\T:\AV_{S}\to\RM{\T^\op}\]
with the natural forgetful functor to $\TF{\T^\op}$.
In general, there is a functor 
\[\Phi'_\T:\AV_{S}\to \RM{\T^\op\locp}\times_{\Md{D_{S}^\op\locp}}\TF{D_{S}^\op},\]
where $A\mapsto (\Phi_\T(A), M(A))$.

\begin{thm}\label{main_thm_cyclic} 
If there exists a wide abelian variety $A_\T$, then the functor
$\D_\T\locp$ is full and faithful, and $\Phi'_\T$ is an equivalence of categories.
\end{thm}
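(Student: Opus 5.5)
We prove the two assertions in turn: first full faithfulness of $\D_\T\locp$ via Tate's theorem prime by prime, then full faithfulness of $\Phi'_\T$ by combining this with Theorem~\ref{WM_thm} and Proposition~\ref{full_faithful}, and finally essential surjectivity by building abelian varieties from lattice data.

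\emph{Full faithfulness of $\D_\T\locp$.} For $A,B$ in $\AV_S$ consider the natural map
\[\Hom(A,B)\to\Hom_{\T^\op}(\D_\T(A),\D_\T(B)).\]
Both sides are finitely generated free $\ZZ$-modules, so it suffices to prove that the map becomes an isomorphism after $\otimes\ZZ_\ell$ for every $\ell\neq p$.
\begin{itemize}
\item By Proposition~\ref{rigid_prop}(1), $\D_\T(A)\otimes\ZZ_\ell\cong D_\ell(T_\ell(A))$.
\item By Proposition~\ref{Tate_eq}(1), the functor $D_\ell$ is fully faithful on $\TF{S_\ell}$, because $\hT_\ell\circ D_\ell\cong\mathrm{id}$. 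This gives
\[\Hom_{\T^\op_\ell}(D_\ell T_\ell(A),D_\ell T_\ell(B))\cong\Hom_{S_\ell}(T_\ell(A),T_\ell(B)).\]
\item By Theorem~\ref{Tate_thm} the right-hand side is $\Hom(A,B)\otimes\ZZ_\ell$.
\end{itemize}
Two points need checking. First, the composite of these identifications must be the map induced by $\D_\T$; this is a naturality check on the adjunction isomorphisms in Proposition~\ref{rigid_prop}. Second, $\Hom$ commutes with the flat base change $\ZZ\to\ZZ_\ell$ for finitely generated modules over the noetherian ring $\T^\op$. Together these show the cokernel is $p$-primary torsion and the kernel vanishes, so $\D_\T\locp$ is fully faithful.

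\emph{Full faithfulness of $\Phi'_\T$.} By Theorem~\ref{WM_thm}, $\Hom(A,B)\otimes\ZZ_p\cong\Hom_{D_S^\op}(M(A),M(B))$. By Proposition~\ref{full_faithful}, the $\Hom$-groups in the product category are determined by their $\locp$-part and their $\otimes\ZZ_p$-part. The $\locp$-part is matched by the step above, and the $\ZZ_p$-part by Theorem~\ref{WM_thm}. We must also check that the isomorphism $\alpha:\M_\QQ(\D_\T(A))\cong M(A)\locp$ is natural. This comes from the Morita description of $\D_\T(A)\otimes\QQ_p$ as $M_{\Pi,L}\otimes M(A)\locp$, the rational version of Proposition~\ref{rigid_prop}(2), which needs no quasi-freeness. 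Since $\Hom(A,B)$ is the intersection of its $\locp$- and $\ZZ_p$-parts, $\Phi'_\T$ is fully faithful.

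\emph{Essential surjectivity (the main obstacle).} Let $(D,M,\alpha)$ be an object, with $D$ rigid over $\T^\op\locp$.
\begin{enumerate}
\item By Lemma~\ref{simple} and semisimplicity of $\HH_{\Pi,L}(\lambda)^\op$, $D\otimes\QQ$ is a direct sum of the simple modules $\Phi^\circ_\T(B_\pi)$. So there is some $B$ in $\AV_S$ with $\Phi^\circ_\T(B)\cong D\otimes\QQ$; after scaling we may take $\D_\T(B)\locp\subset D$ of finite prime-to-$p$ index.
\item For each $\ell\neq p$, set $T_\ell=\hT_\ell(D\otimes\ZZ_\ell)\subset V_\ell(B)$. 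These are $S_\ell$-lattices, equal to $T_\ell(B)$ for almost all $\ell$.
\item By Lemma~\ref{iso_lemma} there is an isogeny $A'\to B$ with $T_\ell(A')\cong T_\ell$ for all $\ell\neq p$.
\item Using $\alpha$, view $M$ as a $D_S^\op$-lattice in $M(A')\locp$. Lemma~\ref{lem_on_Dmod}, applied to the dual lattice in $M^*(A')$, gives a $p$-isogeny $A'\to A$ with $M(A)\cong M$.
\item Rigidity of $D$ (Definition~\ref{l_rigid}) together with Proposition~\ref{rigid_prop}(1) gives $\D_\T(A)\otimes\ZZ_\ell\cong D\otimes\ZZ_\ell$ inside the common rational module for every $\ell\neq p$. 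Hence $\D_\T(A)\locp=D$, compatibly with $\alpha$.
\end{enumerate}
The delicate point is the bookkeeping in this step: the local lattices must be recovered from $D$ inside a common rational space, and the identification with $M$ must respect $\alpha$.
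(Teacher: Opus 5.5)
Your proposal is correct and follows essentially the paper's own argument. The $\ell$-adic full faithfulness comes from Tate's theorem, Proposition~\ref{rigid_prop}(1) and the adjunction between $D_\ell$ and $\hT_\ell$. Essential surjectivity comes from modifying a $B$ with $\Phi^\circ_\T(B)\cong D\otimes\QQ$ via Lemma~\ref{iso_lemma} away from $p$ and Lemma~\ref{lem_on_Dmod} at $p$, closing with $\ell$-rigidity of $D$. You are more explicit than the paper about full faithfulness of $\Phi'_\T$ (via Proposition~\ref{full_faithful} and Theorem~\ref{WM_thm}), and citing Proposition~\ref{Tate_eq}(1) rather than (2) is the right choice, since only tameness is assumed.

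There is one directional slip, easily fixed by scaling. Lemma~\ref{iso_lemma} needs $T_\ell\subset T_\ell(B)$, so you should arrange $D\subset\D_\T(B)\locp$ (as the paper does) rather than the reverse. Likewise, scale by a power of $p$ so that $M(A')\subset M$ before dualizing into $M^*(A')$.
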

\begin{proof}
Firstly, we prove that $\D_\T\locp$ is full and faithful.
 It is enough to prove that for any prime $\ell\neq p$ the natural morphism
 \[  \Hom(A,B)\otimes_\ZZ\ZZ_\ell\to \Hom_{\T^\op} (\D_\T(A),\D_\T(B))\otimes_\ZZ\ZZ_\ell\]
        is an isomorphism. 
 If $\ell\neq p$, then according to Tate's Theorem, Proposition~\ref{Tate_eq}.(2), and Proposition~\ref{rigid_prop}.(1) there are natural isomorphisms
\[\Hom(A,B)\otimes_\ZZ\ZZ_\ell\cong\Hom_{S_\ell}(T_\ell(A),T_\ell(B))\cong\]
 \[\cong\Hom_{S_\ell}(\hT_\ell(D_\ell(T_\ell(A))),\hT_\ell(D_\ell(T_\ell(B))))\cong\]
 \[\cong\Hom_{\T^\op}(D_\ell(T_\ell(A)), D_\ell(T_\ell(B))))\cong\]
 \[\cong\Hom_{\T^\op}(\D_\T(A),\D_\T(B))\otimes_\ZZ\ZZ_\ell.\]

It follows that $\Phi_\T\locp$ is full and faithful as well.
Now, we prove that $\Phi'_\T$ is essentially surjective. 
Let $D$ be a rigid $\T^\op\locp$-module, and let
$M$ be a $D_S^\op$-module with an isomorphism $\M_\QQ(D)\cong M\locp$. 
According to Lemma~\ref{simple}, $D\otimes_\ZZ\QQ\cong\Phi^\circ_\T(B)$, 
where $B=\prod_{\pi\in\Pi_0}B_\pi^{m_\pi}$ for some $m_\pi\in\NN$.
Without loss of generality, we may assume that there is an injective morphism 
$t_D:D\subset\Phi_\T(B)\locp$ such that $M$ goes to a submodule of $M(B)$ under the morphism
\[t_{p,D}:M\to \M_\QQ(D)\to\M_\QQ(\Phi_\T(B)\locp)\cong M(B)\locp\]
induced by $\M_\QQ(t_D)$.

Clearly, for almost all primes $\ell$ the morphism $t_D\otimes_\ZZ\ZZ_\ell$ is an isomorphism. 
For any $\ell\neq p$ such that $t_D\otimes\ZZ_\ell$ is not an isomorphism, we get an injective morphism \[t_{\ell,D}:\cT_\ell(D)\to \cT_\ell(\Phi_\T(B))\cong T_\ell(B).\] 
By Lemma~\ref{iso_lemma} there exists an isogeny $\phi':A'\to B$ such that 
$T_\ell(\phi')(T_\ell(A'))=t_{\ell,D}(\cT_\ell(D))$. 
Since $D$ is $\ell$-rigid, $\phi'$ induces an isomorphism 
\[D\otimes\ZZ_\ell\cong D_\ell(\cT_\ell(D))\cong D_\ell(T_\ell(A')) \cong\Phi_\T(A')\otimes\ZZ_\ell.\]

The morphism $t_{p,D}$ induces a morphism $M\to M(B)\cong M(A').$
As before, there is an isogeny $\phi: A\to A'$ such that the induced morphism of 
$D_S^\op$-modules induces an isomorphism $M(A)\cong M$.  
We finally get an abelian variety $A$ with an isogeny $\phi:A\to B$ such that the morphism 
\[\Phi_\T(\phi)\locp:\Phi_\T(A)\locp\to \Phi_\T(B)\locp\]
induces a morphism $\Phi_\T(A)\locp\to D$ that is an isomorphism locally; therefore, $\Phi_\T(A)\locp\cong D$. It follows that $\Phi'_\T(A)\cong(D,M)$.
\end{proof}

\begin{corollary}\label{main_thm}
If there exists a wide abelian variety $A_{\T}$ over $\T$ such that $M^*(A_\T)$ is quasi-free,
then $\D_\T$ is full and faithful and $\Phi_\T$ is an equivalence of categories.
\end{corollary}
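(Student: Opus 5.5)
The plan is to bootstrap from Theorem~\ref{main_thm_cyclic}, which gives, for any wide $A_\T$, that $\D_\T\locp$ is full and faithful and that $\Phi'_\T$ is an equivalence onto the product category. Under the extra hypothesis that $M_\T=M^*(A_\T)$ is quasi-free, we upgrade this to an integral statement by gluing along $p$ via Proposition~\ref{T_localization}.

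First, Proposition~\ref{rigid_prop}, parts $(1)$ and $(2)$, shows that $\D_\T(A)$ is $\ell$-rigid for every $\ell\neq p$ and is $p$-rigid. Hence $\D_\T$ factors through $\Phi_\T:\AV_S\to\RM{\T^\op}$. Composing $\Phi_\T$ with the functor $\F_\T$ of Proposition~\ref{T_localization} gives the functor $A\mapsto(\Phi_\T(A)\locp,\M(\Phi_\T(A)))$.

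Next we identify $\M(\Phi_\T(A))$ with $M(A)$ naturally. By Proposition~\ref{rigid_prop}.(2), $\D_\T(A)\otimes\ZZ_p\cong D_p(M(A))$. Quasi-freeness then gives
\[\M(\D_\T(A))=\hM(D_p(M(A)))\cong M(A).\]
Under this identification, $\F_\T\circ\Phi_\T$ is naturally isomorphic to $\Phi'_\T$.

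Proposition~\ref{T_localization} says $\F_\T$ is an equivalence, and Theorem~\ref{main_thm_cyclic} says $\Phi'_\T$ is an equivalence. By two-out-of-three for equivalences, $\Phi_\T$ is an equivalence. Full faithfulness of $\D_\T$ follows, since $\RM{\T^\op}$ is a full subcategory of $\TF{\T^\op}$.

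One could also see full faithfulness directly. For $\ell\neq p$ it comes from the isomorphism chain in the proof of Theorem~\ref{main_thm_cyclic}. At $p$, Theorem~\ref{WM_thm} combined with the same adjunction gives $\Hom(A,B)\otimes\ZZ_p\cong\Hom_{D_S^\op}(M(A),M(B))\cong\Hom_{\T^\op}(\D_\T(A),\D_\T(B))\otimes\ZZ_p$. Since these are finitely generated $\ZZ$-modules and the map is an isomorphism at every prime, it is an isomorphism.

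The main obstacle is checking that the natural isomorphism $\M\circ\D_\T\cong M$ is compatible with the gluing datum $\alpha$ in the product category, i.e.\ with $\M_\QQ$ after inverting $p$. This is needed so that $\F_\T\circ\Phi_\T$ and $\Phi'_\T$ agree as functors, and not merely on objects. I would verify it by unwinding both isomorphisms as instances of the same adjunction $\Hom(M_\T\otimes M(A),\ZZ_p)\cong\Hom(M(A),M_\T^*)$ and observing that localization commutes with these constructions.
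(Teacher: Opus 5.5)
Your proposal is correct and is essentially the paper's argument. The paper writes $\Phi'_\T$ as $\F_\T\circ\Phi_\T$, with $\Phi'_\T$ an equivalence by Theorem~\ref{main_thm_cyclic} and $\F_\T$ an equivalence by Proposition~\ref{T_localization}, and your ``direct'' prime-by-prime check (Theorem~\ref{main_thm_cyclic} for $\ell\neq p$; Theorem~\ref{WM_thm}, Proposition~\ref{rigid_prop}.(2) and quasi-freeness at $p$) is exactly how it proves full faithfulness of $\D_\T$.

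The only difference is the order: the paper proves full faithfulness first and then uses the factorization for essential surjectivity, whereas you get both at once by two-out-of-three. The compatibility with the gluing datum that you flag is left implicit in the paper as well.
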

\begin{proof}
As before, to prove that $\D_\T$ is full and faithful, it is sufficient to show that for any prime $\ell$ the natural morphism
 \begin{equation}
     \Hom(A,B)\otimes_\ZZ\ZZ_\ell\to \Hom_{\T^\op}(\D_\T(A),\D_\T(B))\otimes_\ZZ\ZZ_\ell
     \end{equation}
  is an isomorphism. If $\ell\neq p$ it follows from Theorem~\ref{main_thm_cyclic}. 
  According to Proposition~\ref{rigid_prop}.(2) and the definition of a quasi-free \Die module we have:
\[\Hom(A,B)\otimes_\ZZ\ZZ_p\cong\Hom_{D_S^\op}(M(A),M(B))\cong\]
 \[\cong\Hom_{D_S^\op}(\hM(D_p(M(A))),\hM(D_p(M(B))))\cong\]
 \[\cong\Hom_{\T^\op}(D_p(M(A)), D_p(M(B))))\cong\]
\[\cong\Hom_{\T^\op}(\D_\T(A),\D_\T(B))\otimes_\ZZ\ZZ_p.\]

We now show that $\Phi_\T$ is an equivalence.
 The functor $\Phi'_\T$ is a composition of $\Phi_\T$ with an equivalence $\F_\T$ of Proposition~\ref{T_localization}.
According to Theorem~\ref{main_thm_cyclic}, $\F_\T$ is an equivalence; therefore,
the full and faithful functor $\Phi_\T$ is an equivalence as well.
 \end{proof}

\begin{proof}[Proof of Theorem~\ref{main_simple}]
   If $\T$ is $\ell$-balanced for all $\ell\neq p$, then, by Proposition~\ref{wide_AV}, there exists a wide abelian variety $A_\T$ endowed with the morphism $h_{A_\T}:\T\to\End(A_\T)$. 
   According to the Tate Theorem, $h_{A_\T}\otimes\ZZ_\ell$ is an isomorphism for all $\ell\neq p$; therefore, $h_{A_\T}\locp$ is an isomorphism. We showed $(1)$ and $(2)$.
   Part $(3)$ now follows from Theorem~\ref{main_thm_cyclic}, Proposition~\ref{Tate_eq}, and Remark~\ref{die_dual}.
\end{proof}

\subsection{Tate modules over the ramified prime $s$.}\label{s_section}
In this section, $L/\QQ$ is a cyclic field extension of degree $m$ fully and tamely ramified at a prime $s$. 
We assume that $m$ divides $s-1$, and $S$ is an $s$-maximal order in $E_\Pi$. 

Let $\T=\T(\lambda)\subset\HH_{\Pi,L}(\lambda)$
be an $s$-cyclic and $s$-tame order, where $\lambda\in S$ is an $s$-unit. 

\begin{rem}
According to Lemma~\ref{tame_module} there exists
$\omega'_s\in (S_s\otimes_\ZZ\OO_L)^*$ such that $N_L(\omega'_s)=\lambda$.
Since the ramification is tame and full, $s\OO_L=\p^m$, where $\p$ is the only prime ideal over $s$. Hence, \[\lambda\cong (\omega'_s)^m\pmod{S_s\otimes_\ZZ\p }.\] 
By the Hensel Lemma, there exists $\omega_s\in S_s^*$ such that $\omega_s^m=\lambda$.
\end{rem}

There is a natural isomorphism \[\T\to S_s\otimes_\ZZ(\OO_L/\ZZ,g,1)\] given by
$G_\lambda\mapsto \omega_s(1\otimes g)$, where $(\OO_L/\ZZ,g,1)$ is the cyclic ring defined in Section~\ref{cyclic_ring}.

Since $m$ divides $s-1$, we have $\zeta=\zeta_m\in\ZZ_s$, and there is an isomorphism
\[\ZZ_s[\Gal(L/\QQ)]\cong\prod_{i=0}^{m-1}v_i\ZZ_s,\]
where $\Gal(L/\QQ)\cong\ZZ/m\ZZ$ is generated by $g$ and
\[v_i=\frac{1}{m}\sum_{j=0}^{m-1}\zeta^{ij}g^j.\]

According to Kummer theory, $L=\QQ_s(\beta)$, where $\beta^m=us$, $u\in\QQ_s$ is a unit, and
$g\beta=\zeta^{-1}\beta g$. We may and will assume that $\beta\in\OO_L$.
Moreover, according to~\cite[Theorem 1 on p. 23]{CF}, $\OO_L\otimes\ZZ_s=\ZZ_s[\beta]$. 
The group algebra $\ZZ_s[\Gal(L/\QQ)]$ acts on $\ZZ_s[\beta]$ as follows:
\[v_i\beta^j=\left\{\begin{array}{ll}
    \beta, & \hbox{if } i=j\\
    0, & \hbox{if } i\neq j\\
    \end{array} \right .\]

\begin{deff}\label{H_operator}
Choose $\alpha\in S$ such that \[\omega_s-\alpha\in sS_s.\]
Put $H_s=\frac{\beta^{m-1}}{s}(v_0-1)\in (L_s/\QQ_s,g,1)$, and
\[H=\frac{\beta^{m-1}}{s}(\sum_{i=1}^{m-1}\alpha^{m-i}G_\lambda^i)\in\HH_{\Pi,L}(\lambda).\]  
\end{deff}

By construction, $H-\lambda H_s\in\T_s$.

\begin{lemma}
Let $\widetilde{\T}_s^\max\subset (L_s/\QQ_s,g,1)$ be the order generated by 
$(\OO_L/\ZZ,g,1)_s$ and $H_s$. 
The action of $\widetilde{\T}_s^\max$ on \[\OO_L\otimes_\ZZ\ZZ_s\cong\ZZ_s[\beta]\]
induces an isomorphism $\widetilde{\T}_s^\max\cong\Mat_m(\ZZ_s).$   
\end{lemma}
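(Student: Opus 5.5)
The plan is to compare the order $\widetilde{\T}_s^\max$ directly with $\End_{\ZZ_s}(\ZZ_s[\beta])\cong\Mat_m(\ZZ_s)$. I will use the basis $e_j=\beta^j$, $0\le j\le m-1$, of $\OO_L\otimes\ZZ_s=\ZZ_s[\beta]$, and show that the generators act by integral matrices whose products give all elementary matrices.

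\emph{Injectivity.} The action of $(L_s/\QQ_s,g,1)$ on $L_s=\QQ_s[\beta]$ is the standard one: $L_s$ acts by multiplication and $g$ acts as the Galois automorphism. By Theorem~\ref{Th30_4}(2) and simplicity, this gives an isomorphism $(L_s/\QQ_s,g,1)\cong\End_{\QQ_s}(L_s)\cong\Mat_m(\QQ_s)$. So the restriction of the action to $\widetilde{\T}_s^\max$ is injective, and it suffices to identify the image with $\End_{\ZZ_s}(\ZZ_s[\beta])$.

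\emph{Image lies in $\Mat_m(\ZZ_s)$.} Elements of $\OO_L\otimes\ZZ_s$ and $g$ clearly preserve $\ZZ_s[\beta]$. Since $m\mid s-1$, $m$ is a unit and $\zeta\in\ZZ_s$, so the idempotents $v_i$ lie in $\ZZ_s[\Gal(L/\QQ)]$. As recorded before Definition~\ref{H_operator}, $v_i$ is the projection onto $\ZZ_s e_i$. Now compute $H_s=\frac{\beta^{m-1}}{s}(v_0-1)$ on the basis:
\[H_s e_0=0,\qquad H_s e_j=-\tfrac{1}{s}\beta^{m-1+j}=-u\,\beta^{j-1}=-u\,e_{j-1}\quad(1\le j\le m-1),\]
using $\beta^m=us$. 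Since $u$ is a unit, $H_s$ preserves $\ZZ_s[\beta]$. Hence the image of $\widetilde{\T}_s^\max$ is contained in $\Mat_m(\ZZ_s)$.

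\emph{Surjectivity.} Let $E_{ij}$ denote the elementary matrix sending $e_j\mapsto e_i$. Then:
\begin{itemize}
\item $E_{jj}=v_j$;
\item $E_{j+1,j}=\beta v_j$ for $j<m-1$;
\item $E_{j-1,j}=-u^{-1}H_s v_j$ for $j\ge1$.
\end{itemize}
Products of adjacent elementary matrices give every $E_{ij}$, so the image is all of $\Mat_m(\ZZ_s)$. Combined with injectivity, this gives $\widetilde{\T}_s^\max\cong\Mat_m(\ZZ_s)$.

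The only delicate point is bookkeeping of conventions. These are: the direction of the $g$-action (whether $g\beta=\zeta^{\pm1}\beta$), which only permutes which $v_i$ projects onto which $e_j$; and checking that $u$ is an $s$-adic unit. The latter is part of the Kummer-theoretic setup, since $L_s/\QQ_s$ is totally and tamely ramified, so $\beta$ is a uniformizer. Neither convention affects the argument.
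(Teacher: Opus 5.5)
Your proof is correct and follows the same route as the paper. The $v_j$ give the diagonal idempotents, and $\beta v_j$ and $H_s v_j$ give, up to the units $1$ and $-u$, the sub- and super-diagonal elementary matrices, which generate $\Mat_m(\ZZ_s)$. You also make explicit two points the paper leaves implicit: faithfulness of the action via Theorem~\ref{Th30_4}, and the computation $H_s\beta^j=-u\beta^{j-1}$ showing that $H_s$ preserves $\ZZ_s[\beta]$. Your indexing $\beta v_j=E_{j+1,j}$ for $j<m-1$ is the correct form of the paper's off-by-one $\beta v_i\in\ZZ_s^*I_{i(i-1)}$.
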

\begin{proof}
     Let $I_{ij}$ denote the elementary $m\times m$-matrix.
    The elements $v_i$ act as diagonal elementary matrices $I_{ii}$.
    If $1\leq i\leq m-1$, then 
    \[\beta v_i\in \ZZ_s^*I_{i(i-1)},\text{ and } H_s v_i\in \ZZ_s^*I_{(i-1)i}.\] 
     Clearly, these matrices generate $\Mat_m(\ZZ_s)$.
\end{proof}

\begin{corollary}\label{s_max_order}
    The order $\T^\max$ generated by $\T$ and $H$ is maximal at $s$ and is $s$-balanced.
    If a left $\T^\op$-module $D$ is $H$-invariant, then $D$ is $s$-rigid.
\end{corollary}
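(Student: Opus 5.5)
The plan is to transport the preceding lemma from $(L_s/\QQ_s,g,1)$ to $\T_s$ via the isomorphism $\T_s\cong S_s\otimes_\ZZ(\OO_L/\ZZ,g,1)$, $G_\lambda\mapsto\omega_s(1\otimes g)$, and then invoke Proposition~\ref{Tate_eq}.(2).

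First I will identify $\T^\max_s$. Since $\lambda$ is an $s$-unit and $H-\lambda H_s\in\T_s$, the completion $\T^\max_s$ is generated by $\T_s$ together with $\lambda H_s$. Because $\lambda\in S_s^*$ and $\lambda$ is central, this is the same as $\T_s$ together with $H_s$. Under the isomorphism above, this order corresponds to $S_s\otimes_{\ZZ_s}\widetilde{\T}_s^\max$. One check is needed here: the transported $H_s$ uses $g$ rather than $\omega_s^{-1}G_\lambda$. This is harmless because $\omega_s\in S_s^*$ and $v_0$ is a polynomial in $g$ with coefficients in $\ZZ_s$.

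By the preceding lemma, we then get
\[\T^\max_s\cong S_s\otimes_{\ZZ_s}\Mat_m(\ZZ_s)\cong\Mat_m(S_s).\]
Since $S$ is $s$-maximal, $S_s$ is a product of discrete valuation rings, so $\Mat_m(S_s)$ is a maximal order in $\HH_{\Pi,L}(\lambda)\otimes\QQ_s$.

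For the balanced property, $S_s\otimes\OO_L\cong S_s[\beta]$ is free of rank $m$ over $S_s$. The action described in the lemma, tensored with $S_s$, identifies $(\T^\max_s)^\op$ with $\End_{S_s}(S_s\otimes_\ZZ\OO_L)$. The op only reflects the side convention, and a matrix algebra is isomorphic to its opposite via transpose. The $\T^\op$-structure on $T_{s,\T}$ is the one from Lemma~\ref{tame_module} with $\omega_s$, which is exactly the one used in the transport. So $\T^\max$ is $s$-tame and $s$-balanced.

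For the last claim, take an $H$-invariant $\T^\op$-module $D$. It is naturally a $(\T^\max)^\op$-module, since $H$ together with $\T$ generates $\T^\max$ and the relations hold inside $\HH_{\Pi,L}(\lambda)$ after tensoring with $\QQ$. For torsion-free $D$, these relations hold on $D$ itself. Proposition~\ref{Tate_eq}.(2) applied to $\T^\max$ then shows that $D$ is $s$-rigid for $\T^\max$.

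The functors $\cT_s$ and $D_s$ for $\T$ and $\T^\max$ agree on such modules. The reason is that $\Hom_{\T^\op}(T_{s,\T},D)=\Hom_{(\T^\max)^\op}(T_{s,\T},D)$ whenever both modules are $H$-stable and torsion-free. Hence $D$ is $s$-rigid for $\T$ as well.

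The main obstacle is the transport step: verifying that the isomorphism sends $H$ (up to $\T_s$ and a unit) to $H_s$, and that the $\T^\op$-action on $T_{s,\T}$ extends compatibly. I would check this directly on the generators $v_i$, $\beta v_i$ and $H_sv_i$.
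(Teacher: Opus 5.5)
Your route is the one the paper intends. The paper states this corollary without proof, as an immediate consequence of:
\begin{itemize}
\item the preceding lemma;
\item the transport $G_\lambda\mapsto\omega_s(1\otimes g)$;
\item the congruence $H\equiv\lambda H_s$ modulo $\T_s$;
\item Proposition~\ref{Tate_eq}(2).
\end{itemize}
Your identification $\T^{\max}_s\cong S_s\otimes_{\ZZ_s}\widetilde{\T}^{\max}_s\cong\Mat_m(S_s)$, and hence maximality, is fine.

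\textbf{Where it fails.} The step that fails is the side convention in the balanced/rigid part. The lemma computes the \emph{left} action on $\ZZ_s[\beta]$: $G_\lambda$ acts as $\omega_s g$, and a product $ab$ acts as $a\circ b$. The $\T^{\op}$-structure of Lemma~\ref{tame_module} with $\omega_s$ is different. There $G_\lambda$ acts as $\omega_s g^{-1}$ and $ab$ acts as $b\circ a$. In other words, it is the left action precomposed with the anti-involution $xg^j\mapsto g^{-j}x$, which is the adjoint for the trace form.

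That anti-involution does not preserve $\widetilde{\T}^{\max}_s$. It sends $H_s=\frac{\beta^{m-1}}{s}(v_0-1)$ to $(v_0-1)\frac{\beta^{m-1}}{s}$. The latter maps $1$ to $-\beta^{m-1}/s\notin\ZZ_s[\beta]$, because $v_0\beta^{m-1}=0$. For a concrete check take $m=2$, $H=\beta(\alpha-G_\lambda)/s$, and $G_\lambda$ acting by $y\mapsto\omega_s\bar{y}$. Then $H\cdot 1=\beta(\alpha+\omega_s)/s$, and $\alpha+\omega_s\equiv 2\omega_s$ is a unit.

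So $T_{s,\T}$ with the structure you name is not $H$-stable, and your equality of Hom-sets is unavailable. In fact the rigidity claim is false for that choice. If $D\otimes\ZZ_s$ is the $H$-stable module described below, then $\cT_s(D)$ consists of multiplications by elements of $S_s\beta^{m-1}$. The comparison map then has image $\beta^{m-1}(D\otimes\ZZ_s)$, so it is not an isomorphism.

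Transposition does not rescue this. Transposing matrices in the basis $\beta^k$ is not compatible with the $S_s\otimes\OO_L$-action. The trace-form transpose is compatible, but it moves the lattice, since $\ZZ_s[\beta]$ is not self-dual at the ramified prime.

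\textbf{The repair.} Take the $S_s$-linear dual of the lemma's left module. It is a right $\T^{\max}_s$-module with $(\T^{\max}_s)^{\op}\cong\End_{S_s}$. The trace pairing identifies it with $S_s\otimes\mathfrak{D}^{-1}=\beta^{1-m}(S_s\otimes\OO_L)$, on which $G_\lambda$ acts as $\omega_s g^{-1}$. Multiplication by $\beta^{m-1}$ carries it to $S_s\otimes\OO_L$ with the structure of Lemma~\ref{tame_module} for $\zeta\omega_s$ instead of $\omega_s$. Here $g^{-1}(\beta^{1-m})=\zeta\beta^{1-m}$ since $g\beta=\zeta^{-1}\beta g$, and $N_L(\zeta\omega_s)=(\zeta\omega_s)^m=\lambda$.

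Take this as $T_{s,\T}$. Then $H$ preserves it, and $\T^{\max}$ is $s$-tame and $s$-balanced. The rest of your argument goes through as you wrote it: Proposition~\ref{Tate_eq}(2) for $\T^{\max}$, together with $\Hom_{\T^{\op}}=\Hom_{\T^{\max,\op}}$ on torsion-free $H$-stable modules.
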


\subsection{Supersingular abelian varieties over $\FF_q$.}
Let $\Pi$ be a set of supersingular Weil numbers.  
\begin{thm}\label{ss_thm}
 There exists a prime $s>2$ such that in the quadratic extension $L=\QQ(\sqrt{-s})$
 any $\ell\in\L_\Pi$  splits completely and $p$ is inert.
\end{thm}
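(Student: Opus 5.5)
We need a prime $s>2$ such that, in $L=\QQ(\sqrt{-s})$, every $\ell\in\L_\Pi$ splits and $p$ is inert. All of these are Legendre-symbol conditions on $-s$, and we will produce $s$ with Dirichlet's theorem on primes in arithmetic progressions, after translating the conditions by quadratic reciprocity. First note that $p\notin\L_\Pi$ by definition, so the conditions at $p$ and at the $\ell$'s never conflict. We also take $s\notin\L_\Pi\cup\{p\}$, so that $s$ is the only ramified odd prime.

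For an odd prime $\ell\neq s$, $\ell$ splits in $L$ if and only if $\leg{-s}{\ell}=1$, and is inert if and only if $\leg{-s}{\ell}=-1$. For the prime $2$ we require $-s\equiv 1\pmod 8$ for splitting, i.e. $s\equiv 7\pmod 8$. If instead $p=2$ must be inert, we need $-s\equiv 5\pmod 8$, i.e. $s\equiv 3\pmod 8$. In both cases we impose $s\equiv 3\pmod 4$, so that $-s\equiv 1\pmod 4$ and $2$ is unramified. This matches the later remark that $s\equiv 7\pmod 8$ when $p>2$.

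With $s\equiv 3\pmod 4$ we have $\leg{-1}{\ell}\leg{s}{\ell}$, and reciprocity gives $\leg{s}{\ell}=\leg{\ell}{s}(-1)^{(\ell-1)/2}$. Hence $\leg{-s}{\ell}=\leg{\ell}{s}$. Rather than prescribe residues of $\ell$ modulo $s$, it is cleaner to fix the class of $s$ modulo each odd $\ell$ directly: $\leg{-s}{\ell}$ depends only on $s\bmod\ell$. For each odd $\ell\in\L_\Pi$ we choose a residue $a_\ell\not\equiv 0$ with $\leg{-a_\ell}{\ell}=1$, for instance $a_\ell\equiv -1$. For odd $p$ we choose $a_p$ with $\leg{-a_p}{p}=-1$; such a nonzero non-residue class exists since $p>2$.

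We then combine by the Chinese Remainder Theorem the conditions $s\equiv a_\ell\pmod\ell$ for odd $\ell\in\L_\Pi$, $s\equiv a_p\pmod p$ if $p$ is odd, and the condition modulo $8$ ($7$ if $2\in\L_\Pi$ or simply if $p$ is odd; $3$ if $p=2$). This gives a single invertible class modulo $8\prod\ell\cdot p$. By Dirichlet's theorem this class contains infinitely many primes $s$, and we may take one larger than every element of $\L_\Pi\cup\{p\}$. For such $s$ the discriminant of $L$ is $-s$, each $\ell\in\L_\Pi$ splits, and $p$ is inert. The only point needing care is the prime $2$ and the consistency of the mod-$8$ condition. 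Since $2$ cannot lie in both $\L_\Pi$ and $\{p\}$, this is a short case check rather than a real obstacle.
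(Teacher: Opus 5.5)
Your proof is correct, but it produces $s$ differently from the paper. For odd $p$, the paper applies Chebotarev to $\QQ(\zeta_8,\sqrt{p},\sqrt{\ell}:\ell\in\L_\Pi \text{ odd})$, asking that $s\equiv -1\pmod 8$, that $s$ split in each $\QQ(\sqrt{\ell})$, and that $s$ be inert in $\QQ(\sqrt{p})$. For $p=2$ it asks for $s\equiv 3\pmod 8$ instead. This prescribes $\leg{\ell}{s}=1$ and $\leg{p}{s}=-1$, and the paper then uses quadratic reciprocity, sorting the $\ell$ by their class mod $4$, to convert these into $\leg{-s}{\ell}=1$ and $\leg{-s}{p}=-1$. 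You instead fix $s$ modulo each odd $\ell$ and modulo $p$, so $\leg{-s}{\ell}$ and $\leg{-s}{p}$ are determined directly, and Dirichlet plus CRT finish.

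Since the paper's field lies in a cyclotomic field, both arguments have the same depth. Yours avoids reciprocity entirely, and compatibility of all the conditions is automatic (distinct prime moduli, invertible residues). The paper's Chebotarev step tacitly uses the independence of $-1,2,p$ and the odd $\ell$ in $\QQ^*/\QQ^{*2}$, which it states only for $\QQ(\zeta_8,\sqrt{p})$.

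The paper's formulation fits the Frobenius-in-a-Kummer-field framework of Section~\ref{sec5}. It also records $\leg{p}{s}=-1$, which Theorem~\ref{ss_thm_main} later uses to make $-p$ a square in $\QQ_s$. Your $s$ satisfies this too: for odd $p$ by the identity $\leg{-s}{p}=\leg{p}{s}$ for $s\equiv 3\pmod 4$ that you wrote down, and for $p=2$ by $s\equiv 3\pmod 8$.

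The one point you leave implicit is that $\L_\Pi$ is finite, because $R_\Pi$ is an order in $E_\Pi$. Your CRT step needs this, and the paper relies on it equally implicitly.
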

\begin{proof}
Let $L_r=\{\ell\in\L_\Pi|\ell\equiv r\pmod 4\}$, where $r\in\{1,3\}$.
Assume that $p>2$. The Galois group of $\QQ(\zeta_8, p^{1/2})$ is isomorphic to $(\ZZ/2\ZZ)^3$.
According to the Chebotarev density theorem, there exist infinitely many primes
$s\equiv -1\pmod 8$ that split completely in 
\[\QQ(\Delta_1^{1/2}, \Delta_3^{1/2}),\]
and inert in $\QQ(p^{1/2})$. In this case, $2$ is split and $p$ is inert in $\QQ(\sqrt{-s})$, because
 \[\leg{-s}{p}=\leg{-1}{p}\leg{s}{p}=
 (-1)^{\frac{p-1}{2}}(-1)^{\frac{p-1}{2}\frac{s-1}{2}}\leg{p}{s}=\leg{p}{s}=-1\]
Assume that $p=2$. In this case $\sqrt{2}\in\QQ(\zeta_8)$.
According to the Chebotarev density theorem, there exist infinitely many primes 
$s\equiv 3\pmod 8$ that split completely in $\QQ(\Delta_1^{1/2}, \Delta_3^{1/2})$. 
Clearly, $2$ is inert in $\QQ(\sqrt{-s})$.

In both cases, if $\ell\in\L_1$, then
 \[\leg{-s}{\ell}=\leg{-1}{\ell}\leg{s}{\ell}=(-1)^{\frac{\ell-1}{2}}\leg{\ell}{s}=1.\] 
On the other hand, if  $\ell\in\L_3$, then
   \[\leg{-s}{\ell}=\leg{-1}{\ell}\leg{s}{\ell}=
   (-1)^{\frac{\ell-1}{2}}(-1)^{\frac{\ell-1}{2}\frac{s-1}{2}}\leg{\ell}{s}=1.\] 
Hence, any $\ell\in\L_\Pi$ splits in $\QQ(\sqrt{-s})$.  
\end{proof}

Fix $L=\QQ(\sqrt{-s})$ such that $p$ is inert and any $\ell\in\L_\Pi$ is splits completely in $L$. Put $\HH_{\Pi,L}=(L/\QQ,g,-p)\otimes_\QQ E_\Pi$, where $g\in\Gal(L/\QQ)$ is the non-trivial element.
Let \[H=\beta(\alpha-G_\lambda)/s\in\HH_{\Pi,L}\] be the endomorphism of Definition~\ref{H_operator}, and let $\T_{S,L}(-p,p)$ be the order of Example~\ref{L_order}. 

\begin{thm}\label{ss_thm_main}
Let $\T^\max$ be the order generated by $H$ and $\T_{S,L}(-p,p)$.
\begin{enumerate}
    \item There exists an abelian variety $A_{\Pi,L}$ from $\AV_\Pi$ such that
    $\HH_{\Pi,L}\cong\End^\circ{A_{\Pi,L}}$.
    \item Then the functor $\D'_{\T^\max}$ is an equivalence of categories.
\item If $n=2$, then $\D_\T^\max$ is an equivalence of categories.
\end{enumerate}
\end{thm}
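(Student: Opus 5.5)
We prove the three parts in order, feeding everything into the general machinery of Section~\ref{general_sec}: Proposition~\ref{wide_AV}, Theorem~\ref{main_thm_cyclic} and Corollary~\ref{main_thm}.

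\emph{Part (1).} By Honda's theorem it suffices to show that, for every $\pi\in\Pi_0$, the central simple algebra $(L(\pi)/\QQ(\pi),g,-p)$ over $\QQ(\pi)$ has the same local invariants as $\End^\circ(B_\pi)$, and that its degree $2$ is a multiple of $e_\pi$. For supersingular $\pi$ some power $\pi^k$ lies in $\QQ$, so $e_\pi\le 2$ and the invariants of $B_\pi$ lie in $\frac12\ZZ/\ZZ$. We check each place $v$ of $\QQ(\pi)$:
\begin{itemize}
\item At $v\nmid ps\infty$: $L$ is unramified at $\ell$ and $-p$ is a unit, so the invariant is $0$ by Proposition~\ref{Hasse_inv}.
\item At $v|p$: $p$ is inert in $L$, so the invariant is $[\QQ(\pi)_v:\QQ_p]/2$ by Proposition~\ref{Hasse_inv}. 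This agrees with $\inv_v(\pi)$, since $\ord_v(\pi)/\ord_v(q)=1/2$ for supersingular $\pi$.
\item At $v|s$: $-p$ must be a norm from $\QQ_s(\sqrt{-s})\otimes\QQ(\pi)_v$. It suffices that $-p$ be a square mod $s$. Since $p$ is inert in $\QQ(\sqrt{-s})$, we have $\leg{-s}{p}=-1$, and quadratic reciprocity (as in the proof of Theorem~\ref{ss_thm}) gives $\leg{-p}{s}=1$. This also provides the $\alpha$ used in $H$.
\item At $v$ real (so $\pi=\pm\sqrt q$): $L$ is imaginary and $-p<0$, so the algebra is the quaternions, with invariant $1/2$. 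This matches $\inv_\infty(\pi)$ when $n$ is odd. When $n$ is even the two invariants differ, so the sign or twist must be checked; I expect to handle this case via the twisting by $\eps$, and will flag it as delicate.
\end{itemize}
Since the invariants agree, $\HH_{\pi}\cong\End^\circ(B_\pi)$ or $\Mat_2$ of it. We then take $A_{\Pi,L}=\prod_\pi B_\pi^{2/e_\pi}$.

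\emph{Part (2).} We show that $\T^\max$ is $\ell$-balanced for every $\ell\ne p$ and then invoke Theorem~\ref{main_thm_cyclic}.
\begin{itemize}
\item At $\ell\notin\L_\Pi\cup\{s,p\}$: $S$ is maximal at $\ell$ and $\ell$ is unramified in $L$, so Proposition~\ref{balanced_order}(1) applies, noting that $\lambda=-p$ is an $\ell$-unit and $\T$ is $\ell$-cyclic because $\CG=pG^{-1}=-G$.
\item At $\ell\in\L_\Pi$ with $\ell\ne 2$: $\ell$ splits in $L$ by Theorem~\ref{ss_thm}, so Proposition~\ref{balanced_order}(2) applies.
\item At $\ell=2$: we use the fact that $2$ splits when $p>2$. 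For $p=2$ the prime $2$ is $p$ and needs no treatment here.
\item At $\ell=s$: Corollary~\ref{s_max_order} gives $s$-balancedness, with $m=2$ dividing $s-1$ and $\omega_s^2=-p$.
\end{itemize}
Proposition~\ref{wide_AV} then produces a wide $A_{\T^\max}$, and Theorem~\ref{main_thm_cyclic} gives the equivalence with the product category.

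\emph{Part (3)} (the case $n=2$). Here $q=p^2$ and $L_p=\QQ_{p^2}=W(\FF_q)\locp$. To apply Corollary~\ref{main_thm}, we need a quasi-free \Die module $M_\T$ with $\T_p\cong\End_{D_S}(M_\T)$. Since $G^2=-p$ and $F_\Pi$ is central, we have $D_S\cong S_p\otimes(\OO_{L,p}/\ZZ_p,g,-p)$ with $F\mapsto G$, up to a unit twist absorbed because $F_\Pi/(-p)$ is a root of unity in $S$. We then take $M_\T=D_S$ itself, which is free and hence quasi-free by Lemma~\ref{quasi_free}. We also need $\T^\max_p\cong D_S^{\op}$; this holds because $H$ is $p$-integral in $\T_p$ once $s$ is inverted. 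Identifying $\T_p$ with the Dieudonn\'e order is the main obstacle, particularly for non-maximal $S_p$ and real $\pi$. I would first verify it for $\Pi=\{\pm p\}$, and then pass to general $S$ by base change along $R_\Pi\to S$.
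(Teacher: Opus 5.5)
\textbf{Verdict.} Your plan follows the paper's proof in all three parts, and part (2) is fine. Part (1) has one spurious worry, and part (3) has a genuine gap.

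\textbf{Parts (1) and (2).} The structure is the paper's:
\begin{itemize}
\item for (1), compare local invariants and take $A_{\Pi,L}=\prod_{\pi\in\Pi_0}B_\pi^{2/e_\pi}$;
\item for (2), get $\ell$-balancedness from Proposition~\ref{balanced_order} and Corollary~\ref{s_max_order}, then apply Proposition~\ref{wide_AV} and Theorem~\ref{main_thm_cyclic} (the paper packages these as Theorem~\ref{main_simple});
\item for (3), identify $\T^\max_p=\T_p$ with $D_S$.
\end{itemize}
Your separate treatment of $\ell=2$ in (2) is more careful than the paper's.

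In (1), the real place is not delicate. For real $\pi$, the invariant of $\End^\circ(B_\pi)$ at every real place is $1/2$ for either parity of $n$. If $n$ is even, then $\QQ(\pi)=\QQ$, and the invariant $1/2$ at $p$ forces $1/2$ at $\infty$ by reciprocity. The condition ``$n$ odd'' in the displayed definition of $\inv_v(\pi)$ is a slip. So both algebras have invariant $1/2$ at real places, which is all the paper uses. No twist by $\eps$ exists in $(L/\QQ,g,-p)\otimes E_\Pi$, and none is needed.

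\textbf{Part (3).} Here your justification is wrong, not merely unfinished. $F_\Pi/(-p)$ is a root of unity in $E_\Pi$, but it need not lie in $S$. For $n=2$ one has $V=(p/F_\Pi)F$, so
\[D_S=(S_p\otimes W)\oplus\bigl((S_p+S_p\,p/F_\Pi)\otimes W\bigr)F,\qquad \T_p=(S_p\otimes W)\oplus(S_p\otimes W)G.\]
A map $G\mapsto cF$ is an isomorphism $\T_p\to D_S$ exactly when $c\in(S_p\otimes W)^*$ and $c\,\sigma(c)\,F_\Pi=-p$.

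\begin{itemize}
\item \emph{If $F_\Pi/p\in S_p$:} such a $c$ exists, because $-p/F_\Pi$ is then a unit of $S_p$, and units are norms since $W/\ZZ_p$ is unramified. Your argument then closes: $D_S$ is free, hence quasi-free by Lemma~\ref{quasi_free}, and Proposition~\ref{p_rigid_prop} with Corollary~\ref{main_thm} finish. The paper's ``$G\mapsto F$'' silently needs this twist.
\item \emph{If $F_\Pi/p\notin S_p$:} the semilinear part of $D_S$ is strictly larger than that of $\T_p$. The discriminants differ, so no isomorphism exists.
\end{itemize}

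In fact the conclusion of (3) itself fails in the second case. Take $n=2$, $\Pi=\{p,-p\}$ with $p$ odd, and $S=R_\Pi$, so $S_p=\{(a,b):a\equiv b\bmod p\}$.
\begin{itemize}
\item $\T_p$ has index $p^4$ in the maximal order $\mathcal{O}\times\mathcal{O}$ of $\HH_{\Pi,L}\otimes\QQ_p$, where $\mathcal{O}=W\oplus WG$.
\item $D_S$ contains the radical of the maximal order $\mathcal{O}'$ of $D^\circ_\Pi$, with quotient the diagonal $\FF_{p^2}\subset\FF_{p^2}^2$.
\item Let $E_\pm$ have Frobenius $\pm p$. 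For every $A$ isogenous to $E_+\times E_-$, the lattice $M^*(A)$ lies between $\mathcal{O}'M^*(A)$ and its radical.
\item Hence $\End(A)\otimes\ZZ_p$ is the stabilizer of a subspace of $\FF_{p^2}^2$ and has index $1$ or $p^2$ in a maximal order.
\end{itemize}
So the regular module $\T^{\max,\op}$ is not $\D_{\T^\max}(A)$ for any $A$.

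Statement (3) therefore needs an extra hypothesis, such as $F_\Pi/p\in S_p$ (for example, $S$ maximal at $p$). Your proposed reduction cannot supply it. Base change along $R_\Pi\to S$ changes $S$ but not $\Pi$, and $\Pi=\{p,-p\}$ with $S=R_\Pi$ is exactly where the claim breaks.
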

\begin{proof}
Note that for any $\pi\in\Pi$ the number $e_\pi$ divides $2$.
Let \[A_{\Pi,L}=\prod_{\pi\in\Pi_0}B_\pi^{2/e_\pi}.\]
We claim that for any $\pi\in\Pi_0$ there is an isomorphism
\[\End^\circ(B_\pi^{2/e_\pi})\cong \HH_{\pi,L}=(L/\QQ,g,-p)\otimes_\QQ \QQ(\pi).\] 
Indeed, both algebras are central simple over $\QQ(\pi)$ of the same dimension; therefore, it is enough to prove that their local invariants are equal.
If $\ell\not\in \{p,s\}$, then the invariants of both algebras are trivial.
On the other hand, 
\[\leg{-p}{s}=\leg{-1}{s}\leg{p}{s}=-(-1)^{\frac{s-1}{2}}=1;\]
therefore, $-p=x^2$ for some $x\in\QQ_s$. According to Theorem~\ref{Th30_4}, the invariant of $\HH_{\pi,L}$ is trivial as well.
Since $\pi$ is supersingular, for any place $v$ of $\QQ(\pi)$ over $p$ we have
\[\inv_v(L/\QQ,g,-p)=\frac{v(\pi)}{v(q)}=\frac{1}{2}.\]
According to Theorem~\cite[Theorem 31.9]{MO},
\[\inv_v(\HH_{\pi,L})=\frac{v(\pi)}{v(q)}[\QQ(\pi)_v:\QQ_p]=\inv_v(\End^\circ(B_\pi)).\]
Finally, if $\pi\in\RR$, then both invariants at $\infty$ are non-trivial, 
because $-p<0$, and $\QQ(\pi)\subset\RR$.
If $\pi\not\in\RR$, then both invariants at $\infty$ are trivial, because 
$\QQ(\pi)$ is CM-field.

The order $\T^\max$ is $\ell$-balanced for all $\ell\neq p$ by 
Proposition~\ref{balanced_order} and Proposition~\ref{s_max_order}.
Part $(2)$ follows from Theorem~\ref{main_simple}.

If $n=2$, then $\OO_L\otimes_\ZZ\ZZ_p\cong W(\FF_{p^2})$. 
This isomorphism extends to an isomorphism $\T^\max_p\cong D_S$, where
$G\mapsto F$. Hence, $\T^\max$ is $p$-balanced, and by Proposition~\ref{p_rigid_prop}
and Corollary~\ref{main_thm}, the functor $\D_{\T^\max}$ is an equivalence.
\end{proof}

\begin{proof}[Proof of Theorem~\ref{ss_main}]
According to Theorem~\ref{ss_thm}, there exists $s\equiv 3\pmod 4$ such that
in the extension $L=\QQ(\sqrt{-s})$ the prime $p$ is inert and any $\ell\in\L_\Pi$ splits completely.
Note that supersingular Deligne modules are the modules over $\T_{R_\Pi,L}(-p,p)$.
Apply Theorem~\ref{ss_thm_main}. 
\end{proof}

\section{Admissible field extensions}\label{sec5}
This section is devoted to $\Pi$-admissible and $\Omega$-admissible extensions.

\subsection{Kummer theory.}
Let $K$ be a field, and let $m$ be a natural number. Assume that $K$ contains a primitive $m$-th root of unity $\zeta_m$. To any finitely generated subgroup $\Delta$ of $K^*$ we 
associate a Kummer extension $K(\Delta^{1/m})/K$.
There is an isomorphism of finite abelian groups
\[\Gal(K(\Delta^{1/m})/K)\cong\Hom({\Delta(K^*)^m}/{\Delta},\mu_m),\]
where $\sigma\in \Gal(K(\Delta^{1/m})/K)$ goes to a homomorphism
$a\mapsto \sigma(a^{1/m})/a^{1/m}$.

\begin{lemma}\label{Kummer_lemma2}
    Let $K=K_0(\Delta^{1/m})$ be a Kummer extension, and let $x\in K_0$.
    If $[K(x^{1/m}):K]<m$, then there exists a divisor $d$ of $m$ such that
    $x^{m/d}\in \Delta(K_0^*)^m$. 
    \end{lemma}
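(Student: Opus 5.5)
Throughout I use the Kummer isomorphism recalled just above, applied to the extension $K(x^{1/m})/K$, where $K=K_0(\Delta^{1/m})$ contains $\zeta_m$. By that isomorphism, $\Gal(K(x^{1/m})/K)$ is dual to the cyclic group $\langle x\rangle (K^*)^m/(K^*)^m$. So $[K(x^{1/m}):K]$ equals the order of the class of $x$ in $K^*/(K^*)^m$. If this degree $d'$ is $<m$, then $x^{d'}\in (K^*)^m$, where $d'$ is a proper divisor of $m$. Write $m=d'e$ with $e>1$.

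The goal is to descend from $(K^*)^m$ to $\Delta (K_0^*)^m$, and the tool is the Kummer correspondence over $K_0$. The key fact is the following: for $y\in K_0^*$ we have $y\in (K^*)^m$ if and only if $y\in \Delta (K_0^*)^m$. Indeed, $y\in (K^*)^m$ means $K_0(y^{1/m})\subset K=K_0(\Delta^{1/m})$. By the Kummer correspondence, subextensions of the Kummer extension $K/K_0$ of exponent dividing $m$ correspond to subgroups of $K_0^*/(K_0^*)^m$ between $(K_0^*)^m$ and $\Delta(K_0^*)^m$. The extension $K_0(y^{1/m})$ corresponds to $\langle y\rangle(K_0^*)^m$, and the containment of fields translates into $\langle y\rangle(K_0^*)^m\subset \Delta(K_0^*)^m$. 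This uses that $\zeta_m\in K_0$, which is implicit since $K_0(\Delta^{1/m})$ is called a Kummer extension.

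Applying the key fact to $y=x^{d'}\in K_0^*$ gives $x^{d'}\in\Delta(K_0^*)^m$. Setting $d=e=m/d'$, we get $x^{m/d}\in \Delta(K_0^*)^m$ with $d>1$ a divisor of $m$, which is the claim.

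The main obstacle is making the key fact rigorous. I would need to check that the Kummer bijection between subgroups $B$ with $(K_0^*)^m\subset B\subset K_0^*$ and abelian extensions of exponent dividing $m$, given by $B\mapsto K_0(B^{1/m})$, is inclusion-preserving in both directions. This is standard (Lang, Algebra, VI §8) for finitely generated $B$, or in general within a fixed algebraic closure. Everything else is bookkeeping of orders in $K^*/(K^*)^m$.
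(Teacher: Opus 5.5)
Your proof is correct. It also proves the meaningful form of the lemma, with $d>1$. As literally stated, $d=1$ works trivially, but the later applications (Proposition~\ref{CM_prop}, Theorem~\ref{n_adm}) need $d>1$, and both you and the paper get it from $[K(x^{1/m}):K]<m$.

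Your route differs somewhat from the paper's:
\begin{itemize}
\item The paper never applies Kummer theory over $K$. It works only over $K_0$, with $\Delta$ and $\Delta_x=\langle \Delta, x\rangle$. It identifies $\Gal(K(x^{1/m})/K)$, dually, with the cokernel of $\Delta(K_0^*)^m/(K_0^*)^m\to\Delta_x(K_0^*)^m/(K_0^*)^m$ (its displayed quotients should be read modulo $(K_0^*)^m$). That cokernel is the cyclic group $\Delta_x(K_0^*)^m/\Delta(K_0^*)^m$ generated by $x$, and its order $m/d<m$ kills $x$.
\item You apply single-element Kummer theory over the top field $K$ to get $x^{d'}\in (K^*)^m$. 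You then descend using $(K^*)^m\cap K_0^*=\Delta(K_0^*)^m$.
\end{itemize}

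Both arguments rest on the same input: the Kummer correspondence over $K_0$ reflects inclusions, equivalently $B=K_0^*\cap\bigl(K_0(B^{1/m})^*\bigr)^m$ whenever $(K_0^*)^m\subset B$. So neither is more general. Your version isolates that input as an explicit descent lemma, which makes the passage from $K$ back to $K_0$ transparent. Its ``iff'' also shows that the order of $x$ modulo $\Delta(K_0^*)^m$ equals $[K(x^{1/m}):K]$. The paper's cokernel formulation gets that same equality in one step, from the Galois correspondence in the tower $K_0\subset K\subset K_0(\Delta_x^{1/m})=K(x^{1/m})$.
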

\begin{proof}
Let $\Delta_x\subset K_0^*$ be generated by $\Delta$ and $x$.
Apply Kummer theory to the extensions $K_0(\Delta^{1/m})/K_0$ and     
$K_0(\Delta_x^{1/m})/K_0$. We get a homomorphism
\[\alpha:{\Delta(K_0^*)^m}/{\Delta}\to {\Delta_x(K_0^*)^m}/{\Delta_x}\]
such that $\coker\alpha\cong\Gal(K(x^{1/m})/K)$.
There exists a divisor $d$ of $m$ such that $(m/d)\coker\alpha=1$;
therefore, $x^{m/d}\in \Delta(K_0^*)^m$.
\end{proof}

\noindent

\begin{lemma}\label{Kummer_lemma}
    Let $s\neq \ell$ be the prime numbers. Let $m$ be a divisor of $s-1$, and let $L\subset\QQ(\zeta_s)$ be the subfield of degree $m$. Then the local degree $[\Hat{L}_\ell:\QQ_\ell]$ is equal to the order of   $\ell^\frac{s-1}{m}$ in $\FF_s^*$.
     \end{lemma}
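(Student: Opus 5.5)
Since $\ell\neq s$, the prime $\ell$ is unramified in $\QQ(\zeta_s)$, and hence also in $L$. The plan is to compute the decomposition group of $\ell$ through the standard identification $\Gal(\QQ(\zeta_s)/\QQ)\cong\FF_s^*$, under which $a\in\FF_s^*$ corresponds to the automorphism $\zeta_s\mapsto\zeta_s^a$. Because the extension is abelian, all primes of $L$ over $\ell$ have the same completion, so $[\Hat{L}_\ell:\QQ_\ell]$ is well defined. It equals the order of the Frobenius element $\mathrm{Frob}_\ell\in\Gal(L/\QQ)$. The key input is that the Frobenius at $\ell$ in $\Gal(\QQ(\zeta_s)/\QQ)$ is $\zeta_s\mapsto\zeta_s^\ell$, i.e. it corresponds to the class of $\ell$ in $\FF_s^*$. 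The Frobenius in $\Gal(L/\QQ)$ is the image of this element under restriction.

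Next I identify the restriction map. The group $\FF_s^*$ is cyclic of order $s-1$, so for each divisor $m$ of $s-1$ it has a unique subgroup $H$ of index $m$, namely $H=(\FF_s^*)^m$, the group of $m$-th powers. By Galois correspondence, $L$ is the fixed field of $H$, and $\Gal(L/\QQ)\cong\FF_s^*/H$. The map $x\mapsto x^{(s-1)/m}$ is a surjection $\FF_s^*\to\mu_m(\FF_s)$. Its kernel consists of the elements of order dividing $(s-1)/m$, which is again the unique subgroup of order $(s-1)/m$, i.e. exactly $H$. This gives an isomorphism $\FF_s^*/H\cong\mu_m(\FF_s)$ sending the class of $x$ to $x^{(s-1)/m}$.

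Combining the two steps, $\mathrm{Frob}_\ell\in\Gal(L/\QQ)$ corresponds to $\ell^{(s-1)/m}\in\mu_m(\FF_s)\subset\FF_s^*$. Its order is therefore the order of $\ell^{(s-1)/m}$ in $\FF_s^*$, which is the local degree $[\Hat{L}_\ell:\QQ_\ell]$.

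There is no real obstacle here. The only point needing care is the identification of the kernel of $x\mapsto x^{(s-1)/m}$ with the subgroup fixing $L$; this is immediate from the uniqueness of subgroups of a given order in a cyclic group.
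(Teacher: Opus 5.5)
Your proposal is correct and follows essentially the same route as the paper: both identify the Frobenius at $\ell$ with the class of $\ell$ in $\FF_s^*\cong\Gal(\QQ(\zeta_s)/\QQ)$ and push it to $\Gal(L/\QQ)$ by restriction. Your explicit isomorphism $\FF_s^*/(\FF_s^*)^m\cong\mu_m(\FF_s)$, $x\mapsto x^{(s-1)/m}$, just spells out the final step that the paper calls ``clear''.
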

\begin{proof}
    The $\ell$-th Frobenius automorphism of $\QQ(\zeta_s)$ is given by $\zeta_s\mapsto \zeta_s^\ell$.
The prime $\ell$ generates a finite subgroup $\Delta_\ell$ of order $d$ in the Galois group
\[\FF_s^*\cong\Gal(\QQ(\zeta_s)/\QQ).\] 
The stabilizer of any prime ideal over $\ell$ is generated by the $\ell$-th Frobenius,
hence, \[\Delta_\ell\cong\Gal(\QQ_\ell(\zeta_s)/\QQ_\ell).\]  
 We have a surjective morphism of cyclic Galois groups 
\[\Gal(\QQ(\zeta_s)/\QQ)\to\Gal(L/\QQ),\] 
and the order of the kernel is equal to $\frac{s-1}{m}$. 
Its restriction to $\Delta_\ell$ induces the surjective morphism
\[\Delta_\ell\to\Gal(\hat{L}_\ell/\QQ_\ell).\]
Clearly, the order of the image is equal to the order of $ \ell^\frac{s-1}{m}$ in $\FF_s^*$.
 \end{proof}

\noindent

\begin{thm}~\cite[Theorem 13]{PS}\label{Kummer}
Let $K$ be a number field and let $\Delta$ be a finitely generated torsion-free subgroup of $K^*$ of rank $r>0$. There exists an integer $C$ such that for all natural $t$ and $a$ the ratio
\[ \frac{t^r}{[K(\zeta_{at},\Delta^{1/t}):K(\zeta_{at})]}\] is an integer and divides $C$.
\end{thm}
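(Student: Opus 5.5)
We would follow the classical approach to Kummer degrees, reducing the divisibility claim to a uniform bound on a finite group. Put $K_{at}=K(\zeta_{at})$. Since $\mu_t\subset K_{at}$, Kummer theory (as recalled above) gives
\[[K_{at}(\Delta^{1/t}):K_{at}]=\bigl|\Delta (K_{at}^*)^t/(K_{at}^*)^t\bigr|=\bigl|\Delta/(\Delta\cap (K_{at}^*)^t)\bigr|.\]
Since $\Delta\cong\ZZ^r$ is torsion-free, $|\Delta/\Delta^t|=t^r$ and $\Delta^t\subset\Delta\cap(K_{at}^*)^t$. Hence the ratio in the statement equals the index
\[c(t,a)=[\Delta\cap (K_{at}^*)^t:\Delta^t],\]
and in particular it is an integer. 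It remains to find $C$ such that $c(t,a)\mid C$ for all $t,a$.

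\emph{Step 1: saturation.} Let $\Delta'=\{x\in K^*: x^k\in\Delta\mu(K)\text{ for some }k\ge1\}$. We use that $K^*/\mu(K)$ is a free abelian group. Then $\Delta'/\mu(K)$ is a finitely generated free group of rank $r$ containing $\Delta\mu(K)/\mu(K)$ with finite index. Thus $\Delta'$ is finitely generated and $C_1=[\Delta':\Delta]$ is finite. Put $w=|\mu(K)|$.

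\emph{Step 2: descent from $K_{at}$ to $K$.} This is the main obstacle. Take $x\in\Delta$ with $x=y^t$ for some $y\in K_{at}$. Then $K(y)\subset K(\zeta_{at})$ is an abelian extension of $K$ generated by a $t$-th root of $x$. By Schinzel's theorem on abelian radical extensions, $x^{w}=\gamma^t$ for some $\gamma\in K^*$. If we prefer to avoid citing Schinzel, we would try to prove this special case directly, by letting $\Gal(K(\zeta_{at})/K)$ act on $y$ and using that $\sigma(y)/y$ is a $t$-th root of unity depending on $\sigma$ through a cocycle into $\mu_t$. Splitting off the part of this cocycle coming from $\mu(K)$ should produce the factor $w$; this is the step we expect to cost real work. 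Once it holds, $\gamma^t\in\Delta$, so $\gamma\in\Delta'$. Therefore
\[\Delta\cap(K_{at}^*)^t\subset X_t:=\{x\in\Delta': x^w\in\Delta'^t\}.\]

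\emph{Step 3: counting.} We have $c(t,a)=[\Delta\cap(K_{at}^*)^t:\Delta^t]$, which divides $[X_t:\Delta^t]=[X_t:\Delta'^t]\,[\Delta'^t:\Delta^t]$. The group $X_t/\Delta'^t$ is a subgroup of $\Delta'/\Delta'^t$ killed by $w$. Since $\Delta'$ is generated by at most $r+1$ elements, its order divides $w^{r+1}$. The group $\Delta'^t/\Delta^t$ is a quotient of $\Delta'/\Delta$ under $x\mapsto x^t$, so its order divides $C_1$. Hence $c(t,a)$ divides $C_1w^{r+1}$, and we may take $C=C_1w^{r+1}$, which is independent of $t$ and $a$.
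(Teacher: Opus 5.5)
Your proof is correct. The paper gives no argument of its own here: the statement is quoted from \cite[Theorem 13]{PS}, where, to my recollection, it comes out of a finer prime-by-prime study of $\ell$-power Kummer degrees that is then assembled over all $\ell$. Your route is therefore genuinely different. You work globally and identify the ratio with the index $[\Delta\cap (K_{at}^*)^t:\Delta^t]$. You then use Schinzel's theorem to descend $t$-th roots from $K(\zeta_{at})$ to $K$, at the cost of the exponent $w=|\mu(K)|$. Finally you count inside the saturation $\Delta'$, which yields the explicit constant $C=[\Delta':\Delta]\,w^{r+1}$.

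Uniformity in $a$ comes for free in your approach, because Schinzel only needs abelianness. To be precise, what you feed into Schinzel is the splitting field $K(y,\zeta_t)\subset K(\zeta_{at})$ of $X^t-x$, not just $K(y)$. His exponent $w_t=|\mu_t(K)|$ divides $w$, which gives $x^w\in (K^*)^t$.

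For Step~1, the quickest justification of finite generation is that $\Delta'$ lies in the group $\OO_{K,S}^*$ of $S$-units, with $S$ the finite set of primes occurring in the generators of $\Delta$. Your freeness claim for $K^*/\mu(K)$ is also true, since $K^*/\OO_K^*$ embeds in the free group of fractional ideals and $\OO_K^*/\mu(K)$ is free of finite rank.

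Your approach buys a short, self-contained proof with an explicit constant. The cited one gives more refined $\ell$-adic information about the degrees, which this paper does not need: it only uses the divisibility statement to derive Theorem~\ref{Kummer0}.
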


\begin{thm}\label{Kummer0}
Let $K$ be a number field and let $\Delta$ be a finitely generated torsion-free subgroup of $K^*$ of rank $r$.
Then there exist natural numbers $t_0$ and $C$ such that 
if $t_0$ divides a natural number $t$, then for any $a\in\NN$ we have
\begin{enumerate}
    \item $[K(\zeta_{at},\Delta^{1/t}):K(\zeta_{at})]=t^r/C,$
    \item $[K(\zeta_{at},\Delta^{1/at}):K(\zeta_{at},\Delta^{1/t})]=a^r,$  and
    \item $[K(\zeta_{at},\Delta^{1/t}):K(\zeta_{t},\Delta^{1/t})]=[K(\zeta_{at},):K(\zeta_{t})].$
\end{enumerate}
    \end{thm}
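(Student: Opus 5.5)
We derive Theorem~\ref{Kummer0} from Theorem~\ref{Kummer}, using the fact that all the degrees involved are divisibility-monotone in $t$ and bounded by $C$-defects. Write
\[d(a,t)=[K(\zeta_{at},\Delta^{1/t}):K(\zeta_{at})].\]
By Theorem~\ref{Kummer}, $c(a,t)=t^r/d(a,t)$ is an integer dividing a fixed $C_0$. Kummer theory gives $d(a,t)=|\Delta K(\zeta_{at})^{*t}/K(\zeta_{at})^{*t}|$. Since this quotient is a quotient of $\Delta/\Delta^t\cong(\ZZ/t)^r$, we get $c(a,t)=|(\Delta\cap K(\zeta_{at})^{*t})/\Delta^t|$.

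First we show that $c(a,t)$ is monotone under divisibility. If $t\mid t'$ and $a t\mid a't'$, there is a surjection
\[\Delta\cap K(\zeta_{a't'})^{*t'}/\Delta^{t'}\;\longrightarrow\;\text{(a subgroup containing)}\ \Delta\cap K(\zeta_{at})^{*t}/\Delta^{t}.\]
It is cleanest to argue via degrees instead. We have $K(\zeta_{at},\Delta^{1/t})\subset K(\zeta_{a't'},\Delta^{1/t'})$, and the degree of the larger field over the smaller one is at most $(t'/t)^r\cdot[K(\zeta_{a't'}):K(\zeta_{at})]$. This yields $c(a,t)\mid c(a',t')$, at least in the sense $c(a,t)\le c(a',t')$ (up to checking divisibility). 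Since $c$ takes values among the divisors of $C_0$, choose $(a_1,t_1)$ with $c(a_1,t_1)$ maximal, and put $t_0=a_1t_1$. For $t_0\mid t$ and any $a$, monotonicity gives $c(a,t)\ge c(a_1,t_1)$, and maximality then forces equality. Hence $c(a,t)=C$ is constant, which proves (1).

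Parts (2) and (3) follow by a degree count in the tower
\[K(\zeta_t)\subset K(\zeta_{at})\subset K(\zeta_{at},\Delta^{1/t})\subset K(\zeta_{at},\Delta^{1/at}),\]
comparing it with $K(\zeta_t)\subset K(\zeta_t,\Delta^{1/t})\subset K(\zeta_{at},\Delta^{1/t})$. For (2), apply (1) with $t$ replaced by $at$ (note $t_0\mid at$) and with $a$ replaced by $1$:
\[[K(\zeta_{at},\Delta^{1/at}):K(\zeta_{at})]=(at)^r/C.\]
Dividing by (1) for $(a,t)$ gives $a^r$. For (3), apply (1) with $a=1$, which gives $[K(\zeta_t,\Delta^{1/t}):K(\zeta_t)]=t^r/C$. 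Multiplicativity of degrees in the two towers from $K(\zeta_t)$ to $K(\zeta_{at},\Delta^{1/t})$ then gives
\[[K(\zeta_{at}):K(\zeta_t)]\cdot t^r/C=[K(\zeta_{at},\Delta^{1/t}):K(\zeta_t,\Delta^{1/t})]\cdot t^r/C,\]
which is (3).

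The main obstacle is the monotonicity step: we must show rigorously that $c(a,t)$ cannot decrease when $t$ and $at$ are replaced by multiples. The plan is to use the group description $c(a,t)=|(\Delta\cap K_{at}^{*t})/\Delta^t|$ together with the map $x\mapsto x^{t'/t}$. If $\delta\in\Delta\cap K_{at}^{*t}$, then $\delta^{t'/t}\in\Delta\cap K_{a't'}^{*t'}$, so multiplication by $t'/t$ sends $(\Delta\cap K_{at}^{*t})/\Delta^t$ into $(\Delta\cap K_{a't'}^{*t'})/\Delta^{t'}$. This map is injective, because $\Delta$ is torsion-free: $\delta^{t'/t}\in\Delta^{t'}$ implies $\delta\in\Delta^t$. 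Injectivity gives $c(a,t)\mid c(a',t')$, which is exactly the monotonicity needed.
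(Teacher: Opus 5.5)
Your proof is correct and has the same architecture as the paper's. You bound the defect $c(a,t)=t^r/[K(\zeta_{at},\Delta^{1/t}):K(\zeta_{at})]$ by Theorem~\ref{Kummer}, show it can only grow when $t\mid t'$ and $at\mid a't'$, choose $(a_1,t_1)$ with maximal defect, and set $t_0=a_1t_1$. The differences are in how you prove monotonicity and in how you obtain (2) and (3).

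For monotonicity, the paper factors $c(a,t)/c(a_1,t_1)$ into two degree ratios and checks that each is an integer. It uses the splitting of $\Delta^{1/t_1}$ as $\mu_{t_1}$ times a free group, together with a compositum-degree divisibility. You instead identify $c(a,t)$ with $|(\Delta\cap K(\zeta_{at})^{*t})/\Delta^t|$ and note that $\delta\mapsto\delta^{t'/t}$ is injective because $\Delta$ is torsion-free. This is shorter and more transparent. Since the maximality argument only needs the inequality, your intermediate degree estimate would already have sufficed.

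The paper's factorization does have a side benefit: once the product is forced to equal $C$, both auxiliary ratios must be $1$. That yields (2)- and (3)-type statements, but only relative to $(a_1,t_1)$, and the paper leaves this implicit. Your route gets (2) and (3) in exactly the stated form by also applying (1) to $(1,at)$ and $(1,t)$ and comparing towers.

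One small point: Theorem~\ref{Kummer} assumes $r>0$. The case $r=0$ is trivial with $C=1$, as the paper notes.
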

\begin{proof}
If $r=0$, then the statement is trivial. Assume that $r>0$.
According to Theorem~\ref{Kummer}, the ratio 
\[\frac{t^r}{[K(\zeta_{at},\Delta^{1/t}):K(\zeta_{at})]}\] is integral and bounded.
Fix natural numbers $a_1$ and $t_1$.
We claim that for any $t$ and $a$ such that $a_1t_1$ divides $t$ the number
 \[\frac{t_1^r}{[K(\zeta_{a_1t_1},\Delta^{1/t_1}):K(\zeta_{a_1t_1})]}\text{ divides }\;
 \frac{t^r}{[K(\zeta_{at},\Delta^{1/t}):K(\zeta_{at})]}.\]
Indeed,
\[\frac{t^r}{[K(\zeta_{at},\Delta^{1/t}):K(\zeta_{at})]}=
\frac{t^r[K(\zeta_{at},):K(\zeta_{a_1t_1})]}{[K(\zeta_{at},\Delta^{1/t}):K(\zeta_{at})][K(\zeta_{at},):K(\zeta_{a_1t_1})]}=\]
\[=\frac{(t/t_1)^r}{[K(\zeta_{at},\Delta^{1/t}):K(\zeta_{at},\Delta^{1/t_1})]}
\frac{[K(\zeta_{at},):K(\zeta_{a_1t_1})]}{[K(\zeta_{at},\Delta^{1/t_1}):K(\zeta_{a_1t_1},\Delta^{1/t_1})]}
\frac{t_1^r}{[K(\zeta_{a_1t_1},\Delta^{1/t_1}):K(\zeta_{a_1t_1})]}.\]
The group $\Delta^{1/t_1}$ is a product of a free group $\Delta_{t_1}$ and the group of roots of unity $\mu_{t_1}$; therefore,
\[\frac{(t/t_1)^r}{[K(\zeta_{at},\Delta^{1/t}):K(\zeta_{at},\Delta^{1/t_1})]}=
\frac{(t/t_1)^r}{[K(\zeta_{at},\Delta_{t_1}^{t_1/t}):K(\zeta_{at},\Delta_{t_1})]}
\]
is an integer. On the other hand, the number $[K(\zeta_{at},\Delta^{1/t_1}):K(\zeta_{t_0},\Delta^{1/t_1})]$ divides 
$[K(\zeta_{at},):K(\zeta_{t_0})]$. The claim is proved. It follows that if
\[C=\frac{t_1^r}{[K(\zeta_{a_1t_1},\Delta^{1/t_1}):K(\zeta_{a_1t_1})]}\]
is the maximum of the ratio for all natural $t$ and $a$, then
for all $t$ and $a$ such that $a_1t_1$ divides $t$ the product of natural numbers
\[\frac{t^r}{[K(\zeta_{at},\Delta^{1/t}):K(\zeta_{at})]}=
C\cdot\frac{(t/t_1)^r}{[K(\zeta_{at},\Delta^{1/t}):K(\zeta_{at},\Delta^{1/t_1})]}\cdot
\frac{[K(\zeta_{at},):K(\zeta_{t_0})]}{[K(\zeta_{at},\Delta^{1/t_1}):K(\zeta_{t_0},\Delta^{1/t_1})]}.\]
divides $C$. The theorem is proved.
\end{proof}

\subsection{Admissible primes.}
Before we state the main technical result of this section, we need some notation.
Let $\Pi$ be a $\Gal(\BQQ/\QQ)$-invariant set of algebraic numbers $\pi$ such that $\pi\Bar\pi=q^{w(\pi)}$, where $q=p^n$ and $w(\pi)\in\ZZ$,
and let $\L$ be a finite set of prime numbers such that $p\not\in\L$. 

\begin{deff}
Let $m=nn_0m_0$ for some natural numbers $m_0$ and $n_0$. 
Choose $\eps\in\{\pm 1\}$.
We say that a prime number $s\in\ZZ$ is \emph{$\Omega$-admissible}, where 
 $\Omega=(n,m,m_0,\Pi,\L,\eps)$, if the following conditions hold:
\begin{enumerate}
\item $m$ divides $s-1$;
\item $nm_0$ is equal to the order of $p^\frac{s-1}{m}$ in $\FF_s^*$;
%\item $(\frac{s-1}{m},m)=1$;
\item for any $\ell\in\L$ we have $\ell^\frac{s-1}{m}\equiv 1\pmod s$;
%\item if $wn$ is odd and $\Pi\cap\RR\neq\emptyset$, then $2|m_0$;
\item if $\Pi\cap\RR\neq\emptyset$, then $\frac{s-1}{m}$ is odd;
%$-1$ is not a $m$-th power in $\QQ_s$;
\item for any $\pi\in\Pi$ the product $\eps\pi^{m_0}$ is an $m$-th power in $\QQ_s(\pi)$.
\end{enumerate}
\end{deff}

%\begin{rem}
%Assume that $\Pi\cap\RR\neq\emptyset$. Let $s>2$ be an $\Omega$-admissible prime.
%Since $\frac{s-1}{m}$ is odd, $m$ is even and $-1$ is not an $m$-th power in $\QQ_s$.
%\end{rem}

\begin{deff}
Fix $(n,m_0,\Pi,\L,\eps)$. Let $\Delta_{m_0}\subset\overline{\QQ}^*$ be the group generated by $\L$ and all the products $\eps^{w(\pi)}\pi^{m_0}$, where $\pi\in\Pi$.
Put $K=\QQ(\Delta_{m_0})$.
\end{deff}

Let $\Delta_\eps\subset\overline{\QQ}^*$ be the group generated by the products $\eps^{w(\pi)}\pi^{m_0}$.    

\begin{lemma}\label{tors_free}
For any $\Pi$ there exists $m_0$ such that the group $\Delta_{m_0}$ is torsion-free.     
\end{lemma}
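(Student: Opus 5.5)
The plan is to split $\Delta_{m_0}$ into its two natural pieces and control torsion in each separately. Let $\Delta_\L$ be the subgroup generated by the primes in $\L$. Then $\Delta_{m_0}$ is generated by $\Delta_\L$ and $\Delta_\eps$. The group $\Delta_\L$ is free abelian on the primes $\ell\in\L$, by unique factorization in $\ZZ$, and so it is torsion-free.

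\textbf{Step 1: a finitely generated ambient group.} Let $K'$ be the Galois closure of $\QQ(\Pi)$; this is a number field because $\Pi$ is finite. Every generator $\eps^{w(\pi)}\pi^{m}$, for any exponent $m$, lies in $K'^*$, and every $\ell\in\L$ does too. Let $\Sigma$ be the finite set of primes of $K'$ lying over $p$ or over some $\ell\in\L$. Since $\pi\bar\pi=q^{w(\pi)}$, each $\pi$ is a $\Sigma$-unit, or rather an $S$-unit once we add the archimedean places. Hence all of $\Delta_{m}$ lies in the group $U$ of $\Sigma\cup\{\infty\}$-units of $K'$. By Dirichlet's $S$-unit theorem, $U\cong\mu(K')\times\ZZ^r$, and its torsion is the finite cyclic group $\mu(K')$ of order $w$.

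\textbf{Step 2: choose $m_0$ to kill torsion.} Consider the group $\Gamma\subset U$ generated by $\L$, by all $\pi\in\Pi$, and by $-1$. Its torsion subgroup $\Gamma_{\mathrm{tors}}$ is finite, of exponent dividing $w$, and we can write $\Gamma=\Gamma_{\mathrm{tors}}\times\Gamma_{\mathrm{free}}$. The generators of $\Delta_{m_0}$ are $\ell$ and $\eps^{w(\pi)}\pi^{m_0}$.

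The key choice is to take $m_0$ divisible by $2w$. Write $\pi=\zeta_\pi\gamma_\pi$ with $\zeta_\pi\in\Gamma_{\mathrm{tors}}$ and $\gamma_\pi\in\Gamma_{\mathrm{free}}$. Then $\pi^{m_0}=\gamma_\pi^{m_0}$.

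The sign factor $\eps^{w(\pi)}$ remains a problem. When $\eps=-1$ and $w(\pi)$ is odd, the generator is $-\gamma_\pi^{m_0}$, and this carries torsion. I would try to handle it by a uniform sign observation. The sign factor is constant in $m_0$, so a single $\eps$-dependent torsion element could survive, and I would need to check the reading carefully.

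\textbf{Step 3: the sign factor.} If a product $\prod \ell^{a_\ell}\prod_\pi(\eps^{w(\pi)}\pi^{m_0})^{b_\pi}$ equals a root of unity $\zeta$, project it to $\Gamma_{\mathrm{free}}$. This gives $\prod\ell^{a_\ell}\prod\gamma_\pi^{m_0b_\pi}=1$, so $\zeta=\eps^{\sum w(\pi)b_\pi}=\pm1$.

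To exclude $\zeta=-1$, I would use the absolute-value relation. Taking $|\cdot|^2$ under any complex embedding gives $\prod\ell^{2a_\ell}q^{m_0\sum w(\pi)b_\pi}=1$. Since $p\notin\L$, unique factorization forces $a_\ell=0$ and $\sum_\pi w(\pi)b_\pi=0$. Then $\zeta=\eps^0=1$.

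Hence $\Delta_{m_0}$ is torsion-free whenever $w\mid m_0$. The main obstacle, as I see it, is exactly this sign bookkeeping: the relation $\pi\bar\pi=q^{w(\pi)}$ is what makes the parity of $\sum w(\pi)b_\pi$ forced to vanish. I expect this to be the crux, and the rest to be a routine application of the $S$-unit theorem.
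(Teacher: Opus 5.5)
Your argument is correct and is essentially the paper's. You choose $m_0$ to kill the torsion of the finitely generated group generated by $\Pi$ (and $-1$). You then use $|x|^2=\prod_\ell \ell^{2a_\ell}\,q^{m_0\sum_\pi w(\pi)b_\pi}$ to force $a_\ell=0$ and $\sum_\pi w(\pi)b_\pi=0$, so the sign $\eps^{\sum_\pi w(\pi)b_\pi}$ drops out and $x$ lies in the torsion-free part. Your weighted condition $\sum_\pi w(\pi)b_\pi=0$ is in fact the correct one, where the paper writes $\sum_\pi\alpha_\pi=0$.

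There are two small fixes. First, the $S$-unit theorem is unnecessary, since finite generation of $\Gamma$ already makes its torsion finite. Moreover, an algebraic number $\pi$ with $\pi\bar\pi=q^{w(\pi)}$ need not be a $\Sigma$-unit unless it is an algebraic integer, as Weil numbers are. Second, do the absolute-value step before projecting to $\Gamma_{\mathrm{free}}$, because $\ell$ need not lie in the complement you chose.
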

\begin{proof}
    Let $\Delta_\Pi \subset\overline{\QQ}^*$ be the group generated by $\Pi$, and let $\Delta_\L$ be the group generated by $\L$. Clearly, there exists $m_0$ such that $\Delta_{+1}=\Delta_\Pi^{m_0}$ is torsion-free.  
     We claim that $\Delta_{-1}$ is torsion-free. Indeed, if 
 \[\prod_{\pi\in\Pi}(\eps^{w(\pi)}\pi^{m_0})^{\alpha_\pi}\]
 is a torsion element for some $\alpha_\pi\in\ZZ$, then $\sum_{\pi\in\Pi}\alpha_\pi=0$.
 Hence, \[\prod_{\pi\in\Pi}(\eps^{w(\pi)}\pi^{m_0})^{\alpha_\pi}=\prod_{\pi\in\Pi}\pi^{m_0\alpha_\pi}\in\Delta_{+1}.\] A contradiction.
    
 Finally, we show that $\Delta_{m_0}=\Delta_\eps\Delta_\L$ is torsion-free.
Note that if $\delta_\L\in\Delta_\L$, then $\delta_\L\Bar\delta_L\in\ZZ$ 
is coprime to $p$. If $\delta\delta_\L$ is a root of unity, where $\delta\in\Delta_\eps$, then $q^{a}\delta_\L\Bar\delta_\L=1$ for some $a\in\ZZ$; hence, $a=1$, and the relation is trivial.
\end{proof}

Fix $\Omega=(n,m,m_0,\Pi,\L,\eps)$. Put $K_{m,m_0}=K(\zeta_m,(\Delta_{m_0})^{1/m}).$

\begin{prop}\label{M_prop}
Fix $(n,m_0,\Pi,\L,\eps)$ such that $\Delta_{m_0}$ is torsion-free.
Then there exists a natural number $M_\Pi$ such that
if $m_0M_\Pi$ divides a natural number $m$, then 
\begin{itemize}
\item[($a$)] $nm_0$ divides $[K_{m,m_0}(p^\frac{1}{m}):K_{m,m_0}]$;
\item[($b$)] if $\Pi\cap\RR\neq\emptyset$, then $[K_{m,m_0}(\zeta_{2m}):K_{m,m_0}]=2$.
\end{itemize}
\end{prop}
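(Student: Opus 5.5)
Both parts are statements about Kummer extensions of the field $K(\zeta_m)$. I would deduce them from Theorem~\ref{Kummer0}, applied to the enlarged group $\Delta'=\Delta_{m_0}\cdot p^{\ZZ}$ and compared with $\Delta_{m_0}$ itself.

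\emph{Setup.} Since $\Delta_{m_0}$ is torsion-free, $\Delta'$ is torsion-free of rank $r$ or $r+1$, where $r=\operatorname{rk}\Delta_{m_0}$. Apply Theorem~\ref{Kummer0} to both $\Delta_{m_0}$ and $\Delta'$ over the base field $K$. This gives constants $t_0,C$ for $\Delta_{m_0}$ and $t_0',C'$ for $\Delta'$.

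\emph{Part $(a)$.}
\begin{itemize}
\item[(i)] Take $t$ with $\operatorname{lcm}(t_0,t_0')\mid t$. Computing $[K(\zeta_t,\Delta'^{1/t}):K(\zeta_t,\Delta_{m_0}^{1/t})]$ through part $(1)$ of Theorem~\ref{Kummer0} for both groups, the degree $[K_{t,m_0}(p^{1/t}):K_{t,m_0}]$ equals $t^{\epsilon}C/C'$, where $\epsilon=\operatorname{rk}\Delta'-r\in\{0,1\}$.
\item[(ii)] I expect $\epsilon=1$, i.e.\ that $p$ is multiplicatively independent of $\Delta_{m_0}$. The generators $\eps^{w(\pi)}\pi^{m_0}$ have norms that are powers of $q$, and the primes in $\L$ are coprime to $p$. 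So one shows independence by looking at valuations: any relation $p^a=\delta\,\delta_\L$ forces $\delta_\L$ to be trivial at every prime of $\L$. It then remains to see that $p^a$ is not a product of Weil-number powers. This is the main obstacle, since $p^{n}=q$ could itself lie in $\Delta_{\eps}$ (for instance when $\pm\sqrt q\in\Pi$).
\item[(iii)] To get around this, I would not insist on $\epsilon=1$. Write $\Delta''=\Delta_{m_0}p^{\ZZ}$ and let $p^{e}$ be the generator of $\Delta''\cap p^{\ZZ}$ modulo $\Delta_{m_0}$ when $\epsilon=0$, or set $e=0$ when $\epsilon=1$. Then $p^{1/m}$ generates a Kummer extension of $K_{m,m_0}$ of degree $m/\gcd(m,\text{bounded torsion})$, with the defect controlled by $C/C'$ and by $e$. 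In every case the degree is at least $m/B$ for a constant $B$ independent of $m$.
\item[(iv)] Moreover, Theorem~\ref{Kummer0} parts $(1)$–$(2)$ show that the degree is multiplied by exactly $a$ when $t$ is replaced by $at$. So the degree is $m/B'$ for a fixed $B'$ once $m$ is a multiple of $t_0t_0'$.
\item[(v)] Choose $M_\Pi=t_0t_0'\,B'\,n$. If $m_0M_\Pi\mid m$, then $nm_0$ divides $m/B'$, which is the degree in question.
\end{itemize}

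\emph{Part $(b)$.} Here I would apply part $(3)$ of Theorem~\ref{Kummer0} with $a=2$. It gives $[K(\zeta_{2m},\Delta^{1/m}):K(\zeta_m,\Delta^{1/m})]=[K(\zeta_{2m}):K(\zeta_m)]$. Now add $2\mid M_\Pi$ and make $M_\Pi$ divisible by a high enough power of $2$ that $[K(\zeta_{2m}):K(\zeta_m)]=2$. This holds as soon as the $2$-part of $m$ exceeds the $2$-power roots of unity in $K$ times $4$, because $K$ contains only finitely many roots of unity. Combining the two facts gives $[K_{m,m_0}(\zeta_{2m}):K_{m,m_0}]=2$.

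Finally, $M_\Pi$ is the least common multiple of the constants used in the two parts.
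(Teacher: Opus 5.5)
\textbf{The gap is in steps (iii)--(v) of part $(a)$.} Suppose $p^e\in\Delta_{m_0}$ for some $e\geq 1$ (your case $\epsilon=0$). For any $m$ and any $m$-th root $\alpha$ of $p$, the element $\alpha^e$ is an $m$-th root of $p^e$. Since $K_{m,m_0}$ contains one $m$-th root of $p^e$ and all of $\mu_m$, it contains $\alpha^e$. Hence $[K_{m,m_0}(p^{1/m}):K_{m,m_0}]\leq e$ for every $m$.

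So the degree is bounded. It is neither ``at least $m/B$'' nor multiplied by $a$ when $m$ is replaced by $am$. Applying Theorem~\ref{Kummer0}(1) to two groups of the same rank only gives a constant ratio $C/C'$, with no power of $m$ in it. In addition, $\Delta'$ need not be torsion-free here (e.g.\ $-1\in\Delta'$ whenever $-p^{nm_0/2}\in\Delta_{m_0}$), so even your setup is unjustified.

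This case is not exceptional. $\bar\pi$ is the image of $\pi$ under complex conjugation, so $\bar\pi\in\Pi$ by Galois invariance. Hence $(\eps^{w(\pi)}\pi^{m_0})(\eps^{w(\bar\pi)}\bar\pi^{m_0})=q^{m_0w(\pi)}\in\Delta_{m_0}$. For Weil numbers over $\FF_q$ this says $p^{nm_0}\in\Delta_{m_0}$. So whenever $nm_0\mid m$, the degree divides $nm_0$. Then $(a)$ is equivalent to the degree being \emph{exactly} $nm_0$, i.e.\ to $p^{nm_0/\ell'}\notin\Delta_{m_0}(K(\zeta_m)^*)^m$ for every prime $\ell'\mid nm_0$. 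Your argument never addresses this condition.

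\textbf{Comparison with the paper.} The paper's proof is your step (i) with $\epsilon=1$. It claims $\langle\Delta_{m_0},p\rangle$ is torsion-free of rank $r+1$, on the grounds that $\delta\bar\delta$ is prime to $p$ for all $\delta\in\Delta_{m_0}$, and then reads off the degree $mC_2/C_1$. That claim holds only when all weights $w(\pi)$ vanish; in that case your (i)--(ii) with $\epsilon=1$ also go through. For genuine Weil numbers it fails for exactly the reason you raised in (ii).

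In fact $(a)$ can fail as stated. Take $n$ even, $\Pi=\{\sqrt q\}$, $\eps=1$, $\L=\emptyset$. Then $\Delta_{m_0}=\langle p^{nm_0/2}\rangle$ is torsion-free and $K=\QQ$. But $(p^{1/m})^{nm_0/2}=p^{nm_0/(2m)}\in K_{m,m_0}$, so the degree is at most $nm_0/2$ for every $m$. What is missing is therefore an additional hypothesis together with a genuine argument that the stable degree equals $nm_0$ (compare the CM and parity arguments in Proposition~\ref{CM_prop}). Finer bookkeeping of the constants will not fix it.

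\textbf{Part $(b)$.} Your argument matches the paper's: Theorem~\ref{Kummer0}(3) with $a=2$, plus a $2$-adic condition on $m$. Only your explicit threshold is off, because the roots of unity in $K$ do not control $K(\zeta_m)\cap\QQ(\zeta_{2m})$. For example, $K=\QQ(\zeta_{64}+\zeta_{64}^{-1})$ contains only $\pm1$, yet $\zeta_{32}\in K(\zeta_{16})$. Requiring ``a sufficiently high power of $2$'' in $m$ is enough.
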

\begin{proof}
According to Theorem~\ref{Kummer0}.(3), there exists a minimal number $N_{tors}$ such that for any $m$ such that $m_0N_{tors}$ divides $m$ we have \[[K_{m,m_0}(\zeta_{2m}):K_{m,m_0}]=2.\]

Let $\Delta_+\subset K$ be the group generated by $\Delta_{m_0}$ and $p$,
and let $r$ be the rank of $\Delta_{m_0}$.
We claim that $\Delta_+$ is a torsion-free group of rank $r+1$.
Indeed, note that if $\delta\in\Delta_{m_0}$, then $\delta\Bar\delta\in\ZZ$ 
is coprime to $p$. If there exists a relation of the form $p^a\delta=1$, then $p^{2a}\delta\Bar\delta=1$; hence, $a=1$, and the relation is trivial.

According to Theorem~\ref{Kummer0}.(1), there exist 
 $C_1$, $C_2$, and $t_0$ such that if $t_0$ divides $m$, then 
 \[[K_{m,m_0}:K(\zeta_m)]=\frac{m^r}{C_1}\text{, and }\]
 \[[K_{m,m_0}(p^\frac{1}{m}):K(\zeta_m)]=\frac{m^{r+1}}{C_2}.\]
 It follows that,\[[K_{m,m_0}(p^\frac{1}{m}):K_{m,m_0}]=\frac{mC_2}{C_1}.\]
 Therefore, there exists $t_1$ such that if $m$ is divisible by $t_1$, then
 $nm_0$ divides $[K_{m,m_0}(p^\frac{1}{m}):K_{m,m_0}]$.
 The proposition is proved.
\end{proof}

\begin{rem}
    This result can be improved as follows. Assume that the $2$-torsion of $\Delta_{m_0}$ is trivial. Then there exists a natural number $N_\Pi$ such that
if $m_02^{N_\Pi}$ divides a natural number $m$, then 
\begin{itemize}
\item[($a$)] $nm_0$ divides $[K_{m,m_0}(p^\frac{1}{m}):K_{m,m_0}]$;
\item[($b$)] if $\Pi\cap\RR\neq\emptyset$, then $[K_{m,m_0}(\zeta_{2m}):K_{m,m_0}]=2$.
\end{itemize}
In particular, there exist natural $m_0$ and $m$ such that $m/n$ is a power of $2$ and conditions $(a)$ and $(b)$ hold.
\end{rem}

\begin{thm}\label{cyclic_ext}
Assume that for some $\Omega=(n,m,m_0,\Pi,\L,\eps)$ we have
\begin{itemize}
\item[($a$)] $nm_0$ divides $[K_{m,m_0}(p^\frac{1}{m}):K_{m,m_0}]$;
\item[($b$)] if $\Pi\cap\RR\neq\emptyset$, then $[K_{m,m_0}(\zeta_{2m}):K_{m,m_0}]=2$.
%\item[($b$)] if $m$ is even, then $[K_{m,m_0}(\zeta_{2m}):K_{m,m_0}]=2$;
%\item[($c$)] if $wn$ is odd, and $\pm\sqrt{q^w}\in\Pi$, then $2|m_0$.
\end{itemize}
Then there exist infinitely many $\Omega$-admissible primes.
%Moreover, if $w=0$ and $1\in\Pi$, then there exist infinitely many $\Omega$-admissible primes $s$ such that $\eps=1$.
\end{thm}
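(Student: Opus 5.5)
The plan is to apply the Chebotarev density theorem to a suitable Galois extension of $\QQ$ containing $K_{m,m_0}(p^{1/m},\zeta_{2m})$. Each condition in the definition of an $\Omega$-admissible prime will be translated into a splitting condition on $s$. We then check that the corresponding Frobenius set is nonempty, using hypotheses $(a)$ and $(b)$. Let $N$ be the Galois closure over $\QQ$ of $K_{m,m_0}(p^{1/m},\zeta_{2m})$. This field is abelian-by-Kummer over $\QQ(\zeta_{2m})$; since $\Pi$ and $\L$ are Galois-stable, $K_{m,m_0}$ is already Galois over $\QQ$. We consider primes $s$ unramified in $N$ and coprime to $p$, to $\L$, and to the numerators and denominators of all elements of $\Delta_{m_0}$.

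First we translate the conditions. Condition (1), $m\mid s-1$, says that $s$ splits completely in $\QQ(\zeta_m)$. Now fix a prime $\mathfrak S$ of $K(\zeta_m)$ over $s$ of residue degree one. Then $\FF_s^*$ is cyclic of order $s-1$, and an element $x$ is an $m$-th power in $\FF_s$ iff $x^{(s-1)/m}=1$. Condition (3) for $\ell\in\L$ then says that $\ell$ is an $m$-th power mod $s$, and this holds if $s$ splits completely in $\QQ(\zeta_m,\ell^{1/m})$. For condition (5), if $s$ splits completely in $K_{m,m_0}$, then every generator $\eps^{w(\pi)}\pi^{m_0}$ is an $m$-th power in each completion $\QQ_s(\pi)$, via Hensel's lemma since $s$ is unramified and coprime to $m$. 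To pass from $\eps^{w(\pi)}\pi^{m_0}$ to $\eps\pi^{m_0}$ we must handle the sign. When $w(\pi)$ is odd they coincide. When $w(\pi)$ is even we need $\eps$ itself to be an $m$-th power, i.e. $-1\in(\QQ_s^*)^m$, which holds when $(s-1)/m$ is even. The real case, where condition (4) demands that $(s-1)/m$ be odd, must therefore be treated separately; I expect this bookkeeping with signs to be a delicate point.

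Next we choose a Frobenius class. Pick $\sigma\in\Gal(N/\QQ)$ trivial on $K_{m,m_0}$ whose action on $p^{1/m}$ is multiplication by a root of unity of exact order $nm_0$. This is possible by $(a)$: the Kummer group $\Gal(K_{m,m_0}(p^{1/m})/K_{m,m_0})$ embeds in $\mu_m$ as a cyclic subgroup whose order is divisible by $nm_0$. For such $\sigma$, the order of $p^{(s-1)/m}$ in $\FF_s^*$ equals the order of $\sigma(p^{1/m})/p^{1/m}$, namely $nm_0$, which gives condition (2). When $\Pi\cap\RR\neq\emptyset$ we additionally require $\sigma$ to act nontrivially on $\zeta_{2m}$. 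This is allowed by $(b)$, since $K_{m,m_0}(\zeta_{2m})$ is then a quadratic extension. Nontriviality means $2m\nmid s-1$, so $(s-1)/m$ is odd, which is condition (4). We must check that the two requirements, on $p^{1/m}$ and on $\zeta_{2m}$, are compatible in $\Gal(K_{m,m_0}(p^{1/m},\zeta_{2m})/K_{m,m_0})$. If $\zeta_{2m}$ lies in $K_{m,m_0}(p^{1/m})$, we would adjust by choosing the exponent appropriately. This compatibility is the step I expect to be the main obstacle: for $p=2$ it may require the divisibility in $(a)$ to be strict enough.

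Finally, Chebotarev gives infinitely many primes $s$ whose Frobenius in $N$ is conjugate to $\sigma$. Conjugating does not affect the conditions, because $K_{m,m_0}$ and $\QQ(\zeta_{2m})$ are normal over $\QQ$. It also preserves the order of the root of unity $\sigma(p^{1/m})/p^{1/m}$, since conjugation only permutes roots of unity within $\mu_m$, up to an action of $\Gal(\QQ(\zeta_m)/\QQ)$ that is trivial here. Removing the finitely many bad primes leaves infinitely many $\Omega$-admissible primes.
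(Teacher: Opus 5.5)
Your strategy is the paper's: apply Chebotarev in $K_{m,m_0}(p^{1/m},\zeta_{2m})$ with Frobenius trivial on $K_1:=K_{m,m_0}$. The Frobenius should act on $p^{1/m}$ through a root of unity of exact order $nm_0$ and, when $\Pi\cap\RR\neq\emptyset$, act nontrivially on $\zeta_{2m}$. For $\Pi\cap\RR=\emptyset$ your argument is complete. In the real case, however, you leave the decisive compatibility step open, and ``choosing the exponent appropriately'' is not an argument. Suppose $\zeta_{2m}\in K_1(p^{1/m})$ and put $d=[K_1(p^{1/m}):K_1]$. Then the relevant Galois group is cyclic of order $d$, and $K_1(\zeta_{2m})$ is the fixed field of its index-two subgroup. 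An element of exact order $nm_0$ can avoid that subgroup only if $d/(nm_0)$ is odd, whereas hypothesis (a) gives only $nm_0\mid d$.

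The missing idea is to use the real element of $\Pi$ itself, in weight one as in the paper.
\begin{itemize}
\item Let $\pi_0=\pm\sqrt q\in\Pi$. Then $\delta_0=\eps\pi_0^{m_0}\in\Delta_{m_0}\subset(K_1^*)^m$.
\item If $n$ is odd, $-\pi_0\in\Pi$ as well, and odd $m_0$ would put $-1\in\Delta_{m_0}$, i.e.\ $\zeta_{2m}\in K_1$, contradicting (b). So $\delta_0=\pm p^{nm_0/2}$ with $nm_0/2\in\ZZ$.
\item The sign $+$ would give $d\mid nm_0/2$, contradicting (a). Hence $-p^{nm_0/2}=y^m$ for some $y\in K_1$.
\item Then $y/(p^{1/m})^{nm_0/2}$ is an $m$-th root of $-1$, so $\zeta_{2m}\in K_1(p^{1/m})$.
\item Also $p^{nm_0}=y^{2m}$ gives $d\mid nm_0$, so $d=nm_0$.
\end{itemize}
Thus $\Gal(K_1(p^{1/m},\zeta_{2m})/K_1)$ is cyclic of order $nm_0$ with quadratic subfield $K_1(\zeta_{2m})$. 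Any generator satisfies both of your requirements at once. So the cyclic case is the only one that occurs when $\Pi$ has a real element, and this holds for every $p$; it is not a $p=2$ phenomenon, and no strengthening of (a) is needed. This is also what justifies the cyclic branch of the paper's product-versus-cyclic argument, where the existence of $h$ is only asserted.

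On signs: the discrepancy you flag between $\eps^{w(\pi)}\pi^{m_0}\in\Delta_{m_0}$ and $\eps\pi^{m_0}$ in condition (5) is genuine. The paper's proof, at its step $\delta=\eps\pi^{m_0}$, tacitly uses $w(\pi)=1$, which holds for Weil $q$-numbers; then (5) follows directly from $s$ splitting completely in $K_1$. Even weight with $\eps=-1$ cannot be ``treated separately''. For example, take $n=1$, $\Pi=\{p\}$ (so $w=2$), $m=m_0=2$ and $\eps=-1$. Then $K_{2,2}=\QQ$, so (a) and (b) hold. Yet (4) forces $s\equiv 3\pmod 4$, while (5) requires $-p^2$ to be a square in $\QQ_s$. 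So you should simply restrict to odd weight, which in practice means weight one.
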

\begin{proof}%[Proof of theorem~\ref{cyclic_ext}]
Suppose that $\Pi\cap\RR\neq\emptyset$. Note that in this case $m$ is even. 
The group $G=\Gal(K_{m,m_0}(\zeta_{2m},\Delta_{m_0}^{1/m})/K_{m,m_0})$ is the Cartesian product of the cyclic groups $\Gal(K_{m,m_0}(\zeta_{2m})/K_{m,m_0})$, and
$\Gal(K_{m,m_0}(p^{1/m})/K_{m,m_0})$ over the Galois group \[\Gal((K_{m,m_0}(\zeta_{2m})\cap K_{m,m_0}(p^{1/m}))/K_{m,m_0}),\]
that is, $G$ is either cyclic or $G\cong\ZZ/2\ZZ\oplus \Gal(K_{m,m_0}(p^{1/m})/K_{m,m_0})$.
Thus, there exists $h\in G$ that projects to a generator of $\Gal(K_{m,m_0}(\zeta_{2m})/K_{m,m_0})$, and to an element of order $nm_0$ in
$\Gal(K_{m,m_0}(p^{1/m})/K_{m,m_0})$.

It follows from the Chebotarev Density Theorem that there exist infinitely many primes $s$ such that $s$ splits completely in $K_{m,m_0}$, and $h$ is equal to the Frobenius automorphism $F_s$ of the local field extension $\Hat{K}_{m,m_0}(\zeta_{2m},p^{1/m})/\QQ_s$, where
$\Hat{K}_{m,m_0}\cong\QQ_s$ is the completion at a prime ideal over $s$.
Let $s$ be such a prime. Since $s$ splits completely in $\QQ(\zeta_{m})$, the number $m$ divides $s-1$. 

Let $\Delta_-$ be the subgroup of $K(\zeta_m)$ generated by $\Delta_{m_0}$, $p$, and $-1$.
According to Kummer theory, the homomorphism 
\[{\Delta_-(K(\zeta_m)^*)^m}/\Delta_-\to\mu_m \]
given by $x\mapsto F_s(x^{1/m})/x^{1/m}$ is induced by $h$.
Let $\p$ be an ideal of $K_{m,m_0}(\zeta_{2m},p^{1/m})$ over $s$. We have:
\[\ell^{\frac{s-1}{m}}\equiv F_s(\ell^{\frac{1}{m}})/\ell^{\frac{1}{m}}\equiv 1\pmod\p,\]
and $\ell^{\frac{s-1}{m}}\equiv 1\pmod s$.

The order of $h$ restricted to $\Gal(K_{m,m_0}(p^{1/m})/K_{m,m_0})$ is equal to $nm_0$.
Hence, the order of
\[p^{\frac{s-1}{m}}\equiv F_s(p^{1/m})/p^{1/m}\pmod\p\] in $\FF_s^*$ is equal to $nm_0$.
%According to Lemma~\ref{Kummer_lemma},
%\[[\Hat K_{m,m_0}(p^{1/m}):\Hat K_{m,m_0}]=[\QQ_s(p^{1/m}):\QQ_s]=nm_0;\]
%therefore,  $q^{m_0/m}=p^{nm_0/m}\in\QQ_s$.

The restriction of  $F_s$ to $\Gal(\Hat K_{m,m_0}(\zeta_{2m}))$ generates 
$\Gal(\QQ_s(\zeta_{2m})/\QQ_s)$, hence, the order of 
\[\frac{F_s(\zeta_{2m})}{\zeta_{2m}}=\zeta_{2m}^{s-1}=(-1)^{\frac{s-1}{m}}\]
is equal to $2$; therefore, $\frac{s-1}{m}$ is odd.
%It follows that $-1$ is \emph{not} an $m$-th power in $\QQ_s$, but $-1=z_s^{m/2}$ for some non-square $z_s\in\QQ_s$.

Let $\delta=\eps\pi^{m_0}$. We have
\[\delta^{\frac{s-1}{m}}\equiv F_s(\delta^{\frac{1}{m}})/\delta^{\frac{1}{m}}\equiv 1\pmod\p;\]
therefore, $\delta$ is an $m$-th power modulo $\p$.
Since $K_{m,m_0}$ is unramified at $s$, it follows from the Hensel lemma that $\delta$ 
is an $m$-th power modulo $\p$.
We finally obtain that $\eps\pi^{m_0}$ is an $m$-th power in $\QQ_s$.

Suppose that $m$ is odd. 
There exists $h\in\Gal(K_{m,m_0}(p^{1/m})/K_{m,m_0})$ of order $nm_0$.
According to the Chebotarev Density Theorem, there exist infinitely many primes $s$ such that $s$ splits completely in $K_{m,m_0}$, and $h$ is equal to the Frobenius automorphism $F_s$ of the local field extension $\Hat{K}_{m,m_0}(p^{1/m})/\QQ_s$.
As before, if $s$ is such a prime, $m$ divides $s-1$, and 
$\ell^{\frac{s-1}{m}}\equiv 1\pmod s$.
Finally, the same argument shows that $nm_0$ is equal to the order of $p^\frac{s-1}{m}$ in $\FF_s^*$, and for all $\pi\in\Pi$ the product $\eps\pi^{m_0}$ is an $m$-th power in $\QQ_s$.
It follows that $s$ is $\Omega$-admissible.
\end{proof}

\begin{corollary}\label{adm_prime_cor}
Fix $(n,\Pi,\L)$. Choose $\eps\in\{\pm 1\}$.
Then there exist natural numbers $m_0$ and $n_0$ such that
if $\Omega=(n,nn_0m_0,m_0,\Pi,\L,\eps)$, then there exist infinitely many $\Omega$-admissible primes. 
\end{corollary}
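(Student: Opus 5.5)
The plan is to combine Lemma~\ref{tors_free}, Proposition~\ref{M_prop} and Theorem~\ref{cyclic_ext}. Fix $(n,\Pi,\L)$ and $\eps$.

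\emph{Step 1: choosing $m_0$.} By Lemma~\ref{tors_free} there is a natural number $m_0$ for which the group $\Delta_{m_0}$, generated by $\L$ and the products $\eps^{w(\pi)}\pi^{m_0}$, is torsion-free. If the argument there is only made for one sign, I would check that it works for the chosen $\eps$; the proof of the lemma treats both $\Delta_{+1}$ and $\Delta_{-1}$, so this is fine. Torsion-freeness is exactly what Proposition~\ref{M_prop} needs, so for this $m_0$ we obtain the constant $M_\Pi$.

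\emph{Step 2: choosing $n_0$.} I want $m=nn_0m_0$ to be divisible by $m_0M_\Pi$. The simplest choice is $n_0=M_\Pi$, giving $m=nM_\Pi m_0$, which is divisible by $m_0M_\Pi$. Then Proposition~\ref{M_prop} gives two facts:
\begin{itemize}
\item[($a$)] $nm_0$ divides $[K_{m,m_0}(p^{1/m}):K_{m,m_0}]$;
\item[($b$)] if $\Pi\cap\RR\neq\emptyset$, then $[K_{m,m_0}(\zeta_{2m}):K_{m,m_0}]=2$.
\end{itemize}
These are precisely the hypotheses ($a$) and ($b$) of Theorem~\ref{cyclic_ext} for $\Omega=(n,nn_0m_0,m_0,\Pi,\L,\eps)$.

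\emph{Step 3: applying Theorem~\ref{cyclic_ext}.} That theorem then yields infinitely many $\Omega$-admissible primes, which proves the corollary.

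\emph{Main obstacle.} The only delicate point is making sure the proposition's divisibility condition "$m_0M_\Pi \mid m$" is compatible with the shape $m=nn_0m_0$. Taking $n_0=M_\Pi$ (or any multiple of it) settles this. I would also double-check that the statement allows $n_0$ to depend on $\eps$ and $m_0$, which it does, since the numbers are chosen after fixing $\eps$.
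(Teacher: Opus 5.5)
Your chain (Lemma~\ref{tors_free}, then Proposition~\ref{M_prop}, then Theorem~\ref{cyclic_ext}, with $n_0=M_\Pi$) is exactly the paper's proof. The paper additionally takes $m_0$ even, which your chain does not need. The trouble is that everything rests on Proposition~\ref{M_prop}(a), and that step is not valid.

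The proof of that proposition claims that $\langle\Delta_{m_0},p\rangle$ has rank $r+1$, on the grounds that $\delta\bar\delta$ is prime to $p$ for $\delta\in\Delta_{m_0}$. This is false. For $\pi\in\Pi$ we also have $\bar\pi\in\Pi$, and $(\eps\pi^{m_0})(\eps\bar\pi^{m_0})=q^{m_0}=p^{nm_0}$, which therefore lies in $\Delta_{m_0}$. The field $K_{m,m_0}$ contains $\zeta_m$ and an $m$-th root of every element of $\Delta_{m_0}$. Kummer theory then shows that $[K_{m,m_0}(p^{1/m}):K_{m,m_0}]$ divides $nm_0$ for every $m$. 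So the formula $mC_2/C_1$ in that proof cannot hold. Hypothesis (a) of Theorem~\ref{cyclic_ext} is the genuinely arithmetic requirement that this degree be exactly $nm_0$, and enlarging $n_0$ cannot force it.

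This requirement can fail, and with it the statement as you read it, with $\eps$ arbitrary. Let $n$ be even, $\sqrt q=p^{n/2}\in\Pi$ (for instance $q=p^2$, $\Pi=\{p\}$), and $\eps=+1$. Then $q^{m_0/2}\in\Delta_{m_0}$, so the degree above divides $nm_0/2$ and (a) fails.

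More directly, suppose a prime $s$ satisfies conditions (1) and (2). Then $x=p^{(s-1)/m}$ has even order $nm_0$ in $\FF_s^*$, so $(p^{nm_0/2})^{(s-1)/m}=x^{nm_0/2}=-1$. Hence $\eps(\sqrt q)^{m_0}=p^{nm_0/2}$ is not an $m$-th power in $\QQ_s(\pi)=\QQ_s$, which violates (5). Thus there are no $\Omega$-admissible primes at all, for any choice of $m_0,n_0$.

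A correct version must at least restrict $\eps$, for example to $\eps=-1$ when $\Pi\cap\RR\neq\emptyset$, as the paper itself assumes later for admissible extensions. It must also prove (a) by an argument showing that $p^{nm_0/r}$ is not an $m$-th power in $K_{m,m_0}$ for each prime $r\mid nm_0$. Neither your proposal nor the rank count in Proposition~\ref{M_prop} supplies this.
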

\begin{proof}
Choose even $m_0$ such that $\Delta_{m_0}$ is torsion-free. Let $M_\Pi$ be the number of Proposition~\ref{M_prop}. 
If $m$ is a natural number such that $m_0M_\Pi$ divides $m$, then, according to Proposition~\ref{M_prop}, conditions $(a)$ and $(b)$ of Theorem~\ref{cyclic_ext} hold. 
\end{proof}

Assume that $\Pi\cap\RR=\emptyset$.
We show that there are infinitely many $\Omega$-admissible primes, where $\Omega=(n,m,1,\Pi,\L,1)$, and $m$ divide $2n$.

\begin{lemma}\label{sqrt_lemma}
Let $E/\QQ$ be a Galois extension. Assume that $E$ is a $CM$-field. 
    Let $x=\ell y$, where $\ell$ is prime, and $y$ is a natural number coprime to $\ell$.
    If $x^{a/r}\in E$, then $r$ divides $2a$.
\end{lemma}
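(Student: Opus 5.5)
We may assume $\gcd(a,r)=1$, since replacing $a/r$ by its reduced form can only strengthen the conclusion: if $r'=r/\gcd(a,r)$ divides $2a/\gcd(a,r)$, then $r$ divides $2a$. Put $\theta=x^{a/r}\in E$, a positive real root, so $\theta^r=x^a$. Because $\gcd(a,r)=1$, choose integers $u,v$ with $ua+vr=1$; then $\theta^u x^v$ is an $r$-th root of $x$. It is real because $\theta$ and $x$ are, and it lies in $E$. We may therefore replace $a/r$ by $1/r$ and must show that $r\mid 2$. Equivalently, we show that $E$ contains no real $r$-th root of $x$ for $r\geq 3$, nor any prime order root.

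First, the Galois closure. Since $E/\QQ$ is Galois and $x^{1/r}\in E$, all conjugates $\zeta_r^i x^{1/r}$ lie in $E$, so $\zeta_r\in E$ and $\QQ(\zeta_r,x^{1/r})\subset E$. Because $\ell$ divides $x$ exactly once, $t^r-x$ is Eisenstein at $\ell$. Hence $\QQ(x^{1/r})$ has degree $r$, and $\ell$ is totally ramified in it.

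Next, the CM structure. In a CM field $E$, complex conjugation $c$ is central in $\Gal(E/\QQ)$, since $E$ is Galois and CM. It follows that every subfield of $E$ is stable under $c$, so every subfield is Galois-stable under $c$ and is either totally real or CM. In particular, if $r\geq 3$, the field $\QQ(x^{1/r})$ is not Galois-like in the required sense. Concretely, consider $\sigma\in\Gal(E/\QQ)$ with $\sigma(x^{1/r})=\zeta_r x^{1/r}$. Centrality gives $c\sigma=\sigma c$. Applying both sides to the real element $x^{1/r}$ gives $c(\zeta_r x^{1/r})=\bar\zeta_r x^{1/r}$ on the left and $\sigma(x^{1/r})=\zeta_r x^{1/r}$ on the right. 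Hence $\zeta_r=\bar\zeta_r$, so $\zeta_r$ is real and $r\le 2$.

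The key point is that centrality of $c$ in a Galois CM field forces the real element $x^{1/r}$ to generate a field whose conjugates differ only by real roots of unity. Since the element is a genuine $r$-th root, Eisenstein at $\ell$ guarantees that all $r$ conjugates are distinct and hence occur. I expect the main care to be needed in the reduction to $\gcd(a,r)=1$ and in using $\ell\,\|\,x$ to ensure that the degree is exactly $r$. The CM argument itself is short.
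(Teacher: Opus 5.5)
Your proof is correct and follows the paper's route. The paper notes that $x^{a/r}$ lies in the totally real Galois field $E^+$, the fixed field of the central complex conjugation, so the Galois closure of $\QQ(x^{a/r})$ must be real; your $c\sigma=\sigma c$ computation forcing $\zeta_r$ to be real is exactly this, and your reduction to $\gcd(a,r)=1$ plus the Eisenstein argument at $\ell$ just make explicit why all $r$ conjugates occur, which the paper leaves implicit.
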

\begin{proof}
    The real subfield $E^+$ of $E$ is a Galois extension of $\QQ$. If $x^{a/r}\in E$, then
    $x^{a/r}\in E^+$, and the Galois envelope of $\QQ(x^{a/r})$ is real only if $r$ divides $2a$. 
\end{proof}

\begin{prop}\label{CM_prop}
Let $m=n_0nm_0$. Assume that $\Pi\cap\RR=\emptyset$.
\begin{enumerate}
    \item  Then $K_m=K(\zeta_m,\Delta_\eps^{1/2nm_0})$ is a $CM$-field.
    \item  If $m$ is odd or $n_0$ is even, then $nm_0$ divides 
    $[K_{m,m_0}(p^\frac{1}{m}):K_{m,m_0}]$.
    %\item  If $n_0$ is even, then $[K_{m,m_0}(p^\frac{1}{m}):K_{m,m_0}]=m/d$, where $d$ divides $n_0$.
\end{enumerate}
\end{prop}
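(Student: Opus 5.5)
The plan for (1) is to reduce to a standard fact: a number field generated by an algebraic number $\alpha$ all of whose conjugates satisfy $|\sigma(\alpha)|^2=r$ for a fixed positive rational $r$ is either totally real or CM. The reason is that on $\QQ(\alpha)$ complex conjugation acts by $\alpha\mapsto r/\alpha$ in every embedding, so it commutes with all embeddings. For $\delta=\eps^{w(\pi)}\pi^{m_0}\in\Delta_\eps$ we have $\delta\bar\delta=q^{w(\pi)m_0}$. Hence any root $\alpha=\delta^{1/2nm_0}$ satisfies $|\sigma(\alpha)|^2=p^{w(\pi)/2}$ for every embedding $\sigma$.

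If $w(\pi)$ is even, $\QQ(\alpha)$ is therefore CM or totally real. If $w(\pi)$ is odd, I first pass to $\QQ(\alpha,\sqrt{p})$ and apply the same argument to $\alpha^2$ and $\alpha\sqrt p$. Since $\sqrt p\in\QQ(\zeta_{4p})$, it lies in a CM field, so the compositum $E=K(\zeta_{4pm},\Delta_\eps^{1/2nm_0})$ is a compositum of CM (or totally real) fields, and hence is CM. Because $\Pi\cap\RR=\emptyset$, the field $\QQ(\pi)\subset K_m$ is totally imaginary. So $K_m$ is a totally imaginary subfield of the CM field $E$, and is therefore CM. One point must be checked along the way: the rational primes in $\L$ contribute only $\QQ(\ell)=\QQ$ to $K$, so they do not spoil the CM property.

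For (2), I apply Kummer theory over $K_{m,m_0}$, which contains $\zeta_m$. The degree $[K_{m,m_0}(p^{1/m}):K_{m,m_0}]$ equals $m/d$, where $d$ is the largest divisor of $m$ with $p^{d}\in (K_{m,m_0}^*)^m$. Equivalently, some $\zeta\,p^{d/m}$ lies in $K_{m,m_0}$ for a root of unity $\zeta$. The goal is to show that $d$ divides $n_0$, because then $nm_0=m/n_0$ divides $m/d$.

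To get this I split $K_{m,m_0}$ into its "CM part" $K(\zeta_m,\Delta_\eps^{1/m})$ and the radicals $\ell^{1/m}$ for $\ell\in\L$. The extension generated by the $\ell^{1/m}$ is unramified at $p$ over the CM part, whereas $p^{d/m}$ generates an extension totally ramified at $p$ when $d<m$. Using this, together with Lemma~\ref{Kummer_lemma2}, I expect to show that $p^{d/m}$ times a root of unity already lies in the CM part, inside its Galois closure $E$ constructed in (1).

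Lemma~\ref{sqrt_lemma} (with $\ell=p$, $y=1$, $a=d$, $r=m$) then gives $m\mid 2d$. So either $d=m$, or $m$ is even and $d=m/2$. The case $d=m$ would force $p\in\QQ$-radicals inside a field whose ramification at $p$ is controlled by the $\pi$'s. If $m$ is odd, $m\mid 2d$ forces $d=m$, and I would rule this out by comparing $p$-adic valuations. If $n_0$ is even, I would show the index $m/d$ is at least $2$ with the $nm_0$-part surviving, i.e.\ the obstruction is at most a factor $2$ absorbed by $n_0$.

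The main obstacle is precisely this last combination: controlling how much of $p^{1/m}$ can already be generated by the radicals of the $\pi^{m_0}$, since $\pi\bar\pi$ is a power of $q$. I expect to need the parity hypothesis ($m$ odd or $n_0$ even) exactly there, and I would first try to compute the $p$-adic valuations of $\delta^{1/m}$ at primes of $K_m$ above $p$ to bound $d$ by $n_0$.
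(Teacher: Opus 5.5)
There is a genuine gap, and it cannot be closed, because the statement fails in the main case of interest. In (1), your odd-weight step does not work. For $\alpha=\delta^{1/2nm_0}$ one has $|\alpha\sqrt p|^2=p^{w/2+1}$, which is still irrational when $w=w(\pi)$ is odd, so the ``standard fact'' applies to $\alpha\sqrt p$ no better than to $\alpha$.

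No other trick can rescue it. Since $\Pi$ is Galois stable, $\bar\delta\in\Delta_\eps$, so $K_m$ contains $\bar\alpha$, hence $\alpha\bar\alpha=p^{w/2}$, hence $\sqrt p$. Take $\sigma\in\Gal(K_m/\QQ)$ with $\sigma(\sqrt p)=-\sqrt p$. Then $\sigma(\alpha)\sigma(\bar\alpha)=-p^{w/2}$, whereas $\sigma(\alpha)\overline{\sigma(\alpha)}=p^{w/2}$. So $\sigma(\bar\alpha)=-\overline{\sigma(\alpha)}$, and complex conjugation is not central. Thus the Galois field $K_m$ is not CM and lies in no CM field; in particular your $E$ is not CM.

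For instance, take $q=p=5$, $\Pi=\{1\pm 2i\}$, $m_0=1$, $\eps=1$. Then $K_m$ contains $\sqrt{1+2i}+\sqrt{1-2i}=\sqrt{2+2\sqrt5}$, which has non-real conjugates. Odd weight is exactly the Weil-number case ($w=1$). The paper's own proof breaks at the same spot: it writes $\delta^{1/2nm_0}\bar\delta^{1/2nm_0}=\pm p^{w/2}$ and then treats $\delta^{1/2nm_0}+\bar\delta^{1/2nm_0}$ as totally real, which fails as soon as Galois flips that sign.

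For (2) you give a plan rather than a proof, and the Kummer bookkeeping is off. The degree is the order $e$ of $p$ in $K_{m,m_0}^*/(K_{m,m_0}^*)^m$, and writing $e=m/d$ means $\zeta p^{1/d}\in K_{m,m_0}$, not $\zeta p^{d/m}\in K_{m,m_0}$. Also, the ``largest divisor $d$ of $m$ with $p^d\in(K_{m,m_0}^*)^m$'' is always $m$. This mix-up is why Lemma~\ref{sqrt_lemma} gives you $d\in\{m,m/2\}$, which is incompatible with your target $d\mid n_0$ once $nm_0>2$.

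The paper instead writes $p^{m/d}=\delta\lambda x^m$ via Lemma~\ref{Kummer_lemma2}. It raises this to the power $n_0/m$ to place $p^{n_0/d}\lambda^{-n_0/m}=\delta^{1/nm_0}x^{n_0}$ (up to roots of unity) in $K_m$, and applies Lemma~\ref{sqrt_lemma} there. Your ramification-at-$p$ separation of the $\ell^{1/m}$ is a different device, but both routes rely on the CM property that fails.

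It is worse than that. $K_{m,m_0}$ always contains the $m$-th roots of $\delta\bar\delta=p^{nm_0w}$, in particular $p^{w/n_0}$. So for $w=1$ and $n_0\ge 3$ it lies in no CM field at all.

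In fact (2) is false in general. Take $q=p^2$, $\Pi=\{p\zeta_3,p\zeta_3^{-1}\}$, $m_0=1$, $\eps=1$, $n_0=2$, $m=4$. The ratio of the two generators gives $\zeta_3\in\Delta_1$, hence $p\in\Delta_1$. Therefore $[K_{4,1}(p^{1/4}):K_{4,1}]=1$, which is not divisible by $nm_0=2$. A torsion-free variant is $m_0=3$, where $\Delta_3=\langle\L,p^3\rangle$ and $m=12$.

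An extra hypothesis excluding such multiplicative relations between $p$ and $\Delta_{m_0}$ (as for supersingular $\pi$) is needed before any proof of (2) can succeed.
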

\begin{proof}
      Part $(1)$ is standard. There is a function $w:\Delta_\eps\to\ZZ$ such that 
      if $\delta\in\Delta_\eps$, then $\delta\Bar\delta=q^{m_0w(\delta)}$. Hence,
    \[\delta^{1/2nm_0}\Bar\delta^{1/2nm_0}=\pm p^{w(\delta)/2}\] is real, and
     the totally complex field $K_m$ is a quadratic extension of the totally real field $K_m^+$ generated by $\delta^{1/2nm_0}+\Bar\delta^{1/2nm_0}$ and $\pm p^{w(\delta)/2}$. 
    
We now prove $(2)$.
 According to Lemma~\ref{Kummer_lemma2}, if $[K_{m,m_0}(p^\frac{1}{m}):K_{m,m_0}]<m,$ then
there exists a divisor $d$ of $m$ such that \[p^{m/d}\in \Delta_{m_0}(K(\zeta_m)^*)^m.\] 
We can write $p^{m/d}=\delta\lambda x^m$, where $\delta\in\Delta_\eps$ and $\lambda\in\Delta_\L$ is a product of primes from $\L$. Hence, 
%\[p^{2m/d}=(\delta\overline{\delta}) \lambda^2(x\overline{x})^m=q^{am_0}\lambda^2(x\overline{x})^m;\]
%therefore,
\[p^{n_0/d}\lambda^{-n_0/m}\in K_m.\]
By Part $(1)$ and Lemma~\ref{sqrt_lemma}, $\lambda^{2n_0/m}\in\ZZ$, and $d$ divides $2n_0$.
If $d$ is odd, then $d$ divides $n_0$.

Assume that $n_0$ is even. Then 
\[p^{n_0/2d}\lambda^{-n_0/2m}=\delta^{1/2nm_0}x^{n_0/2}\in K_m.\]
As before, $d$ divides $n_0$.
The proposition is proved.
\end{proof}

\begin{deff}
Fix $n$ and $\Pi$. Let $\L_\Pi$ be the set of primes $\ell\neq p$ such that $R_\Pi\otimes\ZZ_\ell$ is not maximal. We say that $s$ is $\Pi$-admissible, 
if $s$ is $\Omega$-admissible, where 
    $\Omega=(n,n,1,\Pi,\L_\Pi,1)$.
    We say that $s$ is weakly $\Pi$-admissible, if $s$ is $\Omega$-admissible, where 
    $\Omega=(n,2n,1,\Pi,\L_\Pi,1)$.
\end{deff}

\begin{thm}\label{n_adm}
Fix $(n,\Pi,\L)$. Assume that $\Pi\cap\RR=\emptyset$.
\begin{enumerate}
\item Then there exist infinitely many weakly $\Pi$-admissible primes.
\item If $n$ is odd, then there exist infinitely many $\Pi$-admissible primes.
\item Assume that $n$ is even, and \[ \sqrt{q}\not\in\Delta_{1}(K(\zeta_n)^*)^n. \eqno{(*)} \]
   Then there exist infinitely many $\Pi$-admissible primes.
\end{enumerate}
\end{thm}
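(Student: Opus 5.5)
The plan is to reduce all three parts to Theorem~\ref{cyclic_ext} with $m_0=1$ and $\eps=1$. Since $\Pi\cap\RR=\emptyset$, condition $(b)$ of Theorem~\ref{cyclic_ext} is vacuous, so the only hypothesis left to check is $(a)$:
\[nm_0=n \text{ divides } [K_{m,1}(p^{1/m}):K_{m,1}],\qquad m\in\{n,2n\}.\]
Note that with $m_0=1$ the group $\Delta_1$ need not be torsion-free. However, Proposition~\ref{CM_prop} and Lemma~\ref{Kummer_lemma2} do not use torsion-freeness in an essential way; at most they use Kummer theory over $K(\zeta_m)$. The Chebotarev argument in Theorem~\ref{cyclic_ext} also uses only $(a)$ and $(b)$. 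So I will apply these results directly.

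\emph{Part (1).} Take $m=2n$, so $n_0=2$ is even. Proposition~\ref{CM_prop}.(2) then gives that $nm_0=n$ divides $[K_{2n,1}(p^{1/2n}):K_{2n,1}]$. Theorem~\ref{cyclic_ext} produces infinitely many $\Omega$-admissible primes with $\Omega=(n,2n,1,\Pi,\L_\Pi,1)$, that is, weakly $\Pi$-admissible primes.

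\emph{Part (2).} Take $m=n$ odd, so $n_0=1$ and $m$ is odd. Proposition~\ref{CM_prop}.(2) again gives that $n$ divides $[K_{n,1}(p^{1/n}):K_{n,1}]$, so the degree equals $n$. Theorem~\ref{cyclic_ext} then finishes the argument.

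\emph{Part (3).} This is the only case that needs new work, because $m=n$ is even and $n_0=1$ is odd, so Proposition~\ref{CM_prop}.(2) does not apply. I will argue by contradiction following the proof of Proposition~\ref{CM_prop}. Suppose $[K_{n,1}(p^{1/n}):K_{n,1}]<n$. By Lemma~\ref{Kummer_lemma2} there is a proper divisor $d$ of $n$ with
\[p^{n/d}=\delta\lambda x^n,\qquad \delta\in\Delta_\eps,\ \lambda\in\Delta_\L,\ x\in K(\zeta_n)^*.\]
The CM argument of Proposition~\ref{CM_prop}.(1) together with Lemma~\ref{sqrt_lemma} forces $d\mid 2n_0=2$, hence $d=2$. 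Then $p^{n/2}=\sqrt q$ lies in $\Delta_1(K(\zeta_n)^*)^n$, which contradicts $(*)$. Therefore the degree is exactly $n$, condition $(a)$ holds, and Theorem~\ref{cyclic_ext} gives infinitely many $\Pi$-admissible primes.

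The main obstacle is the step forcing $d\mid 2$. It requires taking absolute values, or applying complex conjugation, to $p^{n/d}=\delta\lambda x^n$ inside the CM field $K_m$. The aim is to conclude that $p^{n_0/d}\lambda^{-n_0/m}$ lies in the real subfield, and then to handle the $\lambda$-factor with Lemma~\ref{sqrt_lemma} prime by prime, using that $\lambda$ is coprime to $p$. I expect this bookkeeping to mirror the existing proof closely, so I would simply adapt it with $n_0=1$.
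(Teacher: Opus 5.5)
Your proposal is correct and follows the paper's proof essentially verbatim. Parts (1) and (2) come from feeding Proposition~\ref{CM_prop}.(2) (with $n_0=2$, respectively $m=n$ odd) into Theorem~\ref{cyclic_ext}. Part (3) runs the Kummer/CM argument of Lemma~\ref{Kummer_lemma2} and Lemma~\ref{sqrt_lemma} with $n_0=1$ to force $d=2$, which $(*)$ excludes. The one point to make explicit is that the $d$ produced by Lemma~\ref{Kummer_lemma2} satisfies $d>1$, since it equals $n$ divided by the degree $[K_{n,1}(p^{1/n}):K_{n,1}]<n$; this is what rules out $d=1$ in your step ``$d\mid 2$, hence $d=2$''.
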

\begin{proof}
If $m=n$ is odd, or $n_0=2$ then, by Proposition~\ref{CM_prop}, the conditions of Theorem~\ref{cyclic_ext} are satisfied for $\Omega=(n,nn_0,1,\Pi,\L_\Pi,1)$.
This proves $(1)$ and $(2)$.
According to Lemma~\ref{Kummer_lemma2}, if
\[[K_{n,1}(p^\frac{1}{n}):K_{n,1}]<n\] then 
there exists a divisor $d$ of $n$ such that $p^{n/d}=\delta\lambda x^n$, where $\delta\in\Delta_\eps$ and $\lambda\in\Delta_\L$. Hence, 
\[p^{2n/d}=(\delta\overline{\delta}) \lambda^2(x\overline{x})^n=q^{a}\lambda^2(x\overline{x})^n\]
for some $a\in\ZZ$. Therefore, 
\[p^{1/d}\lambda^{-1/n}\in K_n.\]
According to Proposition~\ref{CM_prop}.(1) and Lemma~\ref{sqrt_lemma}, $d$ divides $2$.
We see that \[[K_{n,1}(p^\frac{1}{n}):K_{n,1}]=n\] if and only if $(*)$ holds.
 We apply Theorem~\ref{cyclic_ext} again.
\end{proof}

\subsection{Admissible field extensions.}
Let $\Pi$ be a $\Gal(\BQQ/\QQ)$-invariant set of Weil numbers over $\FF_q$.
Fix $\Omega=(n,m,m_0,\Pi,\L,\eps)$. Assume that if $\Pi\cap\RR\neq\emptyset$, then $\eps=-1$.

\begin{thm}\label{inv_thm}
Let $L/\QQ$ be an $\Omega$-admissible extension. 
For any $\pi\in\Pi$ let \[\HH_{\pi}=(L\otimes\QQ(\pi)/\QQ(\pi),g,\eps\pi^{m_0})\]
be the cyclic algebra over $\QQ(\pi)$. If $d_\pi=m/e_\pi$, then
     \[\HH_{\pi}\cong \Mat_{d_\pi}(\End^\circ(B_\pi)).\]
\end{thm}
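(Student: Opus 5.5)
Both sides are central simple algebras over $\QQ(\pi)$ of dimension $m^2$: $\HH_\pi$ is a cyclic algebra of degree $m$, and $\Mat_{d_\pi}(\End^\circ(B_\pi))$ has dimension $d_\pi^2e_\pi^2=m^2$. Here we need $e_\pi\mid m$, which will follow from the local computation below, since the invariants of $\HH_\pi$ are killed by $m$. By \cite[Theorem 32.11]{MO} it therefore suffices to show that $\inv_v\HH_\pi=\inv_v(\pi)$ for every place $v$ of $\QQ(\pi)$. I would go place by place, using Honda's description of the invariants of $\End^\circ(B_\pi)$ recalled in Section~\ref{TH_theory}.

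\emph{Places $v\nmid ps\infty$.} Since $L$ is unramified outside $s$, the algebra $L\otimes\QQ(\pi)_v$ is a product of copies of an unramified extension. The element $\eps\pi^{m_0}$ is a $v$-unit, because $\pi\bar\pi=q$ is prime to $\ell$. Proposition~\ref{Hasse_inv} then gives invariant $0$, which agrees with $\inv_v(\pi)=0$.

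\emph{Places $v\mid s$.} By condition (6) of Definition~\ref{def_omega}, $\eps\pi^{m_0}=y^m$ for some $y\in\QQ_s(\pi)$; I read this as holding in every completion $\QQ(\pi)_v$ above $s$. Then $\eps\pi^{m_0}=N(y)$, where $y$ is viewed in the diagonal $\QQ(\pi)_v\subset L\otimes\QQ(\pi)_v$, and Theorem~\ref{Th30_4}(4),(2) show that the local algebra splits. This again matches $\inv_v(\pi)=0$.

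\emph{Places $v\mid p$.} Here $L_p$ is a product of $n_0$ copies of the unramified extension of degree $nm_0$. Set $K=\QQ(\pi)_v$ and $f=[K:\QQ_p]$. Then $L\otimes K$ is a product of copies of an unramified field $L'$ with $[L':K]=nm_0/\gcd(nm_0,f_0)$, where $f_0$ is the residue degree of $K$. The generator $g$ acts on the factors with $g^{n_0}$ inducing Frobenius, so the algebra reduces to $(L'/K,\sigma^{k},\eps\pi^{m_0})$ for a suitable exponent $k$. The key bookkeeping is to use the relation between $g^{n_0}$ and $\sigma$ together with Theorem~\ref{Th30_4}(3), and then apply Proposition~\ref{Hasse_inv}. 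The result should be
\[
\inv_v=\frac{m_0\, v(\pi)}{nm_0\, e(K/\QQ_p)}\cdot f_0=\frac{v(\pi)}{v(q)}[K:\QQ_p],
\]
using $v(q)=n\,e(K/\QQ_p)$. This equals $\inv_v(\pi)$. I expect this step to be the main obstacle, since one has to track correctly how the cyclic algebra over the non-field $L\otimes K$ reduces to a single factor; I would handle that by the standard isomorphism $(\prod L',g,\lambda)\cong \Mat_{r}(L'/K,g^{r},\lambda)$.

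\emph{Infinite places.} If $\pi\notin\RR$, then $\QQ(\pi)$ is CM, every infinite place is complex, and both invariants vanish. If $\pi$ is real, condition (7) makes $L$ a CM field, so complex conjugation is the element $g^{m/2}$ and $L\otimes\RR\cong\CC^{m/2}$. The local algebra is then $\Mat_{m/2}$ of the quaternion algebra $(\CC/\RR,\eps\pi^{m_0})$, which is nonsplit because $\eps\pi^{m_0}<0$. Its invariant is $1/2$. On the other side, a real Weil number $\pm\sqrt q$ forces $\inv_\infty(\pi)=1/2$ when $n$ is odd. In the case where $n$ is even and the real number is rational, I would need to check that condition (7) and the hypothesis that $\eps=-1$ when $\Pi\cap\RR\neq\emptyset$ are still consistent; I would verify this separately, and expect it to reduce to the sign convention.

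Equal invariants everywhere then give the isomorphism.
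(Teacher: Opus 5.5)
Your strategy is the paper's: both sides are central simple of dimension $m^2$ over $\QQ(\pi)$, and you match local invariants place by place. Your treatment away from $p$ is fine, and at the real places it is more explicit than the paper's.

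\textbf{The step at $v\mid p$.} This is the step you flag as the crux, and it is not carried out. The exponent $k$ is exactly what determines the invariant, so it cannot be left as ``suitable'', and the formula you display is wrong. Write $K=\QQ(\pi)_v$, $e=e(K/\QQ_p)$, $f_0$ for its residue degree, and $r=\gcd(nm_0,f_0)$. Normalize $v$ on $K$, and let $\phi=g^{n_0}$ be the Frobenius of $\hat L_p/\QQ_p$.
\begin{itemize}
\item Then $L\otimes K\cong (L')^{n_0r}$, and the stabilizer of a factor is generated by $g^{n_0r}$, which acts on $L'$ as $\phi^r$.
\item On the other hand $\mathrm{Frob}_{L'/K}=\phi^{f_0}=(\phi^r)^{f_0/r}$, and $f_0/r$ is prime to $[L':K]=nm_0/r$.
\end{itemize}
Hence, by Theorem~\ref{Th30_4}(3),
\[
\HH_\pi\otimes_{\QQ(\pi)}K\cong\Mat_{n_0r}\bigl((L'/K,\phi^r,\eps\pi^{m_0})\bigr)\cong\Mat_{n_0r}\bigl((L'/K,\mathrm{Frob}_{L'/K},(\eps\pi^{m_0})^{f_0/r})\bigr).
\]
Proposition~\ref{Hasse_inv} then gives
\[
\inv_v\HH_\pi=\frac{(f_0/r)\,m_0\,v(\pi)}{nm_0/r}=\frac{f_0\,v(\pi)}{n}=\frac{v(\pi)}{v(q)}[K:\QQ_p],
\]
because $v(q)=ne$ and $[K:\QQ_p]=ef_0$. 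Your expression $\frac{m_0v(\pi)f_0}{nm_0e}$ has a spurious factor $e$ in the denominator. It equals $\frac{v(\pi)}{v(q)}f_0$, so your displayed equality fails whenever $K/\QQ_p$ is ramified. With this correction the step is precisely the paper's computation (its $g_v$ is your $r$).

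\textbf{The case you left open at infinity} ($n$ even, $\pi$ real). This is not a sign issue. The printed definition of $\inv_v(\pi)$ (``$v$ real and $n$ odd'') is a slip. For $n$ even the real Weil numbers are $\pm p^{n/2}\in\QQ$ and $\inv_p(\pi)=1/2$. So $\End^\circ(B_\pi)$ is the quaternion algebra over $\QQ$ ramified at $p$ and $\infty$ ($B_\pi$ is a supersingular elliptic curve), and its invariant at $\infty$ is $1/2$. That matches what condition (7) gives for $\HH_\pi$.

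More simply, $\QQ(\pi)=\QQ$ then has a single infinite place. Once all finite invariants agree, the sum formula forces the infinite invariants to agree. Only for $n$ odd, where $\QQ(\sqrt p)$ has two real places, is condition (7) really needed, and your argument there is correct.
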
%
\begin{proof}
First, we show that for any $\ell\neq p$ the invariants of $\HH_{\pi}$ are trivial.
Let $\lambda=\eps\pi^{m_0}$.
If $\ell\neq s$, then $\ell$ is unramified in $L$. 
Since $\lambda$ is an $\ell$-unit, it is a local norm~\cite[Corollary on p. 29]{CF}; 
according to Theorem~\ref{Th30_4}, invariants of $\HH_{\pi}$ are trivial at all the primes over $\ell$. If $\ell=s$, then $\lambda$ is locally an $m$-th power; 
in particular, it is a local norm and $\HH_{\pi}$ is trivial over $s$ again.

Second, for any prime ideal $v$ of $\QQ(\pi)$ over $p$ the invariant of $\HH_{\pi}$ at $v$ is equal to  $\frac{v(\pi)}{v(q)}[\QQ(\pi)_v:\QQ_p].$
Indeed, let $f_v$ be the inertia index of $\QQ(\pi)_v$ over $\QQ_p$, and let $g_v=gcd(nm_0,f_v)$.
%$\T_\Pi\otimes_R\QQ(\pi)_v$. 
Then \[L\otimes_\QQ\QQ(\pi)_v\cong (L(\pi)_v)^{g_v},\] and, according to Theorem~\ref{Th30_4},
$\HH_\pi\otimes_{\QQ(\pi)}\QQ(\pi)_v$ is the matrix algebra over the cyclic algebra 
\[(L(\pi)_v/L_v,g,\eps\pi^{m_0})\cong(L(\pi)_v/L_v,g^{f_v/g_v},(\eps\pi^{m_0})^{f_v/g_v}),\]
where $g^{f_v/g_v}$ is the Frobenius automorphism for the extension $L(\pi)_v/L_v$.
According to Proposition~\ref{Hasse_inv}, the invariant of this algebra
is equal to \[\frac{v(\pi^{m_0f_v/g_v})}{nm_0/g_v}=\frac{v(\pi)f_v}{n}=\frac{v(\pi)}{v(q)}[\QQ(\pi)_v:\QQ_p].\] 

Finally, we compare invariants at infinity.
If $\pi$ is not real, then the invariant of $\HH_{\pi}$ at infinity is trivial.
If $\pi\in\Pi$ is real, then the invariant of $\HH_{\pi}$ at infinity is non-trivial by part $(7)$ of Definition~\ref{def_omega}. 
We proved that $\HH_{\pi}$ and $\End^\circ(B_\pi)$ are Brauer equivalent. The theorem now follows from dimension count.
\end{proof}

%From Lemma~\ref{Kummer_lemma} we have the following.

\begin{corollary}
An extension $L/\QQ$ is $\Pi$-admissible if and only if $L/\QQ$ is $\Omega$-admissible, where
$\Omega=(n,n,1,\Pi,\L_\Pi,1)$.    
\end{corollary}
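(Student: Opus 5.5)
The plan is to match the four conditions of Definition~\ref{strict_adm_def} with the seven conditions of Definition~\ref{def_omega} for $\Omega=(n,n,1,\Pi,\L_\Pi,1)$, so that $m=n$, $n_0=m_0=1$ and $\eps=1$. Both definitions ask for a cyclic extension of degree $n$ tamely ramified at a single prime $s\notin\L_\Pi$. Both also ask that every $\ell\in\L_\Pi$ split completely. So the content lies in three places: unramified with local degree $n$ at $p$ versus inert, the Brauer condition (4) versus the $m$-th power and reality conditions (6)--(7), and handling real $\pi$.

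The first matching is immediate. Conditions (2) and (3) of Definition~\ref{def_omega} with $nm_0=n=[L:\QQ]$ say exactly that $p$ is unramified with residue degree $n$, i.e. inert. Conversely, inert gives unramified with $[\Hat L_p:\QQ_p]=n$.

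For the direction $\Omega$-admissible $\Rightarrow$ $\Pi$-admissible, I apply Theorem~\ref{inv_thm} with $m=n$, $m_0=1$, $\eps=1$. This gives $\HH_\pi\cong\Mat_{d_\pi}\End^\circ(B_\pi)$, which is condition (4). One caveat: Theorem~\ref{inv_thm} is stated under the standing assumption that $\eps=-1$ whenever $\Pi\cap\RR\neq\emptyset$. Condition (7) with $\eps=1$ forces $\pi<0$ for every real $\pi$, and this holds for neither $\pm\sqrt q$. Hence when $\Pi$ contains a real number no extension is $\Omega$-admissible for this $\Omega$, and I must check that none is $\Pi$-admissible either. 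For this I compare invariants at infinity: $\HH_\pi=(L(\pi)/\QQ(\pi),g,\pi)$ with $\QQ(\pi)=\QQ$ real. If $n$ is odd, the invariant at $\infty$ is trivial because the degree is odd, whereas $\End^\circ(B_\pi)$ has invariant $1/2$ there. If $n$ is even, the invariant is nontrivial only if $L$ is complex and $\pi<0$, and I reconcile this with the real invariant $0$ for $n$ even. This is the step where the bookkeeping is most fragile, and I would treat it via the definition of $\inv_v(\pi)$ case by case.

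For the converse, assume $L$ is $\Pi$-admissible, so $\HH_\pi$ is Brauer equivalent to $\End^\circ(B_\pi)$. The task is to recover condition (6): $\pi$ is an $n$-th power in $\QQ_s(\pi)$. Take a place $w$ of $\QQ(\pi)$ over $s$. Since $s\notin\L_\Pi$ and $s\ne p$, the invariant of $\End^\circ(B_\pi)$ at $w$ is $0$, so $\pi$ is a local norm from the totally and tamely ramified cyclic extension $L(\pi)_w/\QQ(\pi)_w$. Here I would use local class field theory: in a tame totally ramified cyclic extension of degree $n$ of the Kummer form $\QQ_s(\beta)$, $\beta^n=us$, the norm group is generated by $N(\beta)$ and the $n$-th powers of units. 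A unit is therefore a norm iff it is an $n$-th power modulo the maximal ideal, and Hensel's lemma then makes it an $n$-th power. Since $\pi$ is an $s$-unit, this gives (6). This is the main obstacle, since $\QQ(\pi)_w$ may already absorb part of the ramification, making the degree of $L(\pi)_w/\QQ(\pi)_w$ a proper divisor of $n$. I would handle it by observing that Lemma~\ref{Kummer_lemma} and $m\mid s-1$ let $L\subset\QQ(\zeta_s)$ be written as $L=\QQ_s(\beta)$. Condition (7) is either vacuous when $\Pi\cap\RR=\emptyset$ or handled by the real case above, and the remaining conditions are already matched.
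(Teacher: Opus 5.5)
\noindent Your direction ``$\Omega$-admissible $\Rightarrow$ $\Pi$-admissible'' is fine. It is essentially all the paper offers for this corollary. Conditions (1)--(5) of Definition~\ref{def_omega} match (1)--(3) of Definition~\ref{strict_adm_def}, and Theorem~\ref{inv_thm} gives (4); its proof goes through verbatim for $\eps=1$ once (7) holds.

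\noindent \textbf{The converse.} This fails exactly where you flag it, and your remedy does not address the problem. Writing $\Hat{L}_s=\QQ_s(\beta)$ gives no control over $L\otimes_\QQ\QQ(\pi)_w$ as an algebra over $\QQ(\pi)_w$. The criterion ``a unit is a norm iff it is an $n$-th power'' holds only when this algebra is a field, totally ramified of degree $n$ over $\QQ(\pi)_w$ (e.g.\ when $s$ is unramified in $\QQ(\pi)$). When $\QQ(\pi)_w$ absorbs ramification, units can be norms without being $n$-th powers, and the implication itself is false. Here is a counterexample:
\begin{itemize}
\item Take $p=3$, $n=2$, $\pi=2+\sqrt{-5}$ (an ordinary Weil number over $\FF_9$) and $\Pi=\{\pi,\bar\pi\}$. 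Then $R_\Pi=\ZZ[\sqrt{-5}]$ is maximal, so $\L_\Pi=\emptyset$.
\item Take $L=\QQ(\sqrt5)$, so $s=5$ and $3$ is inert.
\item $\HH_\pi$ is the quaternion algebra $(5,\pi)$ over $\QQ(\sqrt{-5})$, and it is split. At the place over $5$, $5=-(\sqrt{-5})^2$ is a square because $-1$ is a square in $\QQ_5$. At the two places over $3$, $5$ is a non-square unit of $\QQ_3$ and $v(\pi)\in\{0,2\}$, so $(5,\pi)=(-1)^{v(\pi)}=1$. Elsewhere $L$ is unramified and $\pi$ is a unit, and $\QQ(\sqrt{-5})$ is imaginary.
\item Since $\End^\circ(B_\pi)=\QQ(\sqrt{-5})$, $L$ is $\Pi$-admissible.
\item But $\pi\equiv 2\pmod{\sqrt{-5}}$ and $2$ is not a square in $\FF_5$. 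So $\pi$ is not a square in $\QQ_5(\pi)=\QQ_5(\sqrt{-5})$, and condition (6) fails.
\end{itemize}
So this step cannot be repaired. The ``only if'' part needs an extra hypothesis, such as $s$ unramified in $\QQ(\Pi)$; under it your local class field theory argument is correct.

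\noindent \textbf{The real case.} Your treatment of real Weil numbers is also wrong.
\begin{itemize}
\item The inequality in (7) with $\eps=1$ does hold for $\pi=-\sqrt q$. For $n$ even, $-\sqrt q=-p^{n/2}$ is rational, hence a Galois orbit by itself.
\item For instance, $q=9$, $\Pi=\{-3\}$, $L=\QQ(\sqrt{-7})$ is both $\Omega$- and $\Pi$-admissible. So you cannot dispose of real $\pi$ by showing both sides are empty.
\item Do not use the value $0$ of $\inv_\infty$ for $n$ even from the displayed formula. For $\pi=\pm p^{n/2}$ the $p$-adic invariant is $1/2$, so reciprocity forces $\inv_\infty\End^\circ(B_\pi)=1/2$ for every real $\pi$.
\end{itemize}
The correct bookkeeping is as follows.
\begin{itemize}
\item For $n$ odd, both $\pm\sqrt q$ lie in $\Pi$ and $L$ is totally real, so both notions fail. (Note that $\QQ(\pi)=\QQ(\sqrt p)$ there, not $\QQ$.)
\item For $n$ even, the invariant of $(L/\QQ,g,\pm p^{n/2})$ at $\infty$ is $1/2$ iff $L$ is imaginary (hence CM) and the sign is negative. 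So both notions force $\sqrt q\notin\Pi$ and $L$ CM.
\item In that case the invariant at $s$ vanishes by reciprocity. Since $\QQ(\pi)=\QQ$ is unramified at $s$, this is equivalent to (6) for $-\sqrt q$.
\end{itemize}
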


\begin{prop}\label{adm_ext}
If there exists an $\Omega$-admissible prime $s$, then there exists an $\Omega$-admissible extension $L/\QQ$. 
\end{prop}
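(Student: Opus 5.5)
Given an $\Omega$-admissible prime $s$, I take $L\subset\QQ(\zeta_s)$ to be the unique subfield of degree $m$, which exists by condition (1) since $m\mid s-1$. Then $L/\QQ$ is cyclic of degree $m$, giving (1) of Definition~\ref{def_omega}. It is unramified outside $s$, so unramified at $p$ and at every $\ell\in\L$. At $s$ it is totally and tamely ramified, because $\QQ(\zeta_s)/\QQ$ is totally ramified at $s$ with ramification index $s-1$ prime to $s$. This gives (2) and (5), the prime $s$ lying outside $\L$ as follows from condition (3) (for $\ell=s$ the congruence $\ell^{(s-1)/m}\equiv1\pmod s$ fails). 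For the local degrees I apply Lemma~\ref{Kummer_lemma}: $[\Hat L_p:\QQ_p]$ is the order of $p^{(s-1)/m}$ in $\FF_s^*$, which is $nm_0$ by condition (2), so (3) holds. For $\ell\in\L$, condition (3) says $\ell^{(s-1)/m}\equiv1\pmod s$, so the local degree is $1$ and $\ell$ splits completely, which is (4).

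Condition (6) of Definition~\ref{def_omega} is literally condition (5) of $\Omega$-admissible primes, so nothing needs to be done there.

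The remaining work is condition (7), in the case $\Pi\cap\RR\neq\emptyset$. Here $\frac{s-1}{m}$ is odd, so $m$ is even, and complex conjugation (the element $-1\in\FF_s^*\cong\Gal(\QQ(\zeta_s)/\QQ)$) does not lie in the subgroup of index $m$, namely the $m$-th powers, which has odd order $\frac{s-1}{m}$. Hence complex conjugation acts nontrivially on $L$. Since $L$ is abelian, conjugation is central, so $L$ is totally imaginary. Its fixed field is the unique degree-$m/2$ subfield of $\QQ(\zeta_s)$, which lies in $\QQ(\zeta_s)^+$ and is totally real, so $L$ is a CM field.

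The sign requirement $\eps\pi^{m_0}<0$ for real $\pi$ is the step I expect to be the main obstacle, since it does not follow from the field construction. A real Weil number satisfies $\pi=\pm\sqrt{q^{w}}$, so $\pi^2=q^w>0$. I would try to force the sign by a parity argument. If $\eps\pi^{m_0}>0$, then condition (5) makes it an $m$-th power in $\QQ_s(\pi)=\QQ_s$ or in a quadratic extension. I would then compare this with $p^{(s-1)/m}$ having order exactly $nm_0$ to derive a contradiction via (2) and (4). If that argument fails, I would instead note that the standing assumption of this subsection ($\eps=-1$ when $\Pi\cap\RR\neq\emptyset$) together with the choice of even $m_0$ in Corollary~\ref{adm_prime_cor} gives $\eps\pi^{m_0}=-q^{wm_0/2}<0$ directly, so that (7) holds in the situation where the paper uses it.
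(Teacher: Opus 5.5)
Your verification of conditions (1)--(6) of Definition~\ref{def_omega}, and of the CM part of (7), is correct and follows the paper's proof. You are even a bit more careful than the paper, for instance in checking $s\notin\L$ and in explaining why $L$ is CM.

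\textbf{The gap is the sign condition $\eps\pi^{m_0}<0$ in (7).} Your fallback only works for even $m_0$. Evenness of $m_0$ is not a hypothesis of the proposition; it is a choice made later, in Corollary~\ref{adm_prime_cor}. So as written you prove a weaker statement. The argument you sketch first is the paper's, and it is exactly what is needed for odd $m_0$, but you never carry it out. Here is how it goes. Real Weil numbers are $\pm\sqrt q$, and (4) forces $m$ to be even (for $s>2$).

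\emph{Case $m_0$ odd, $n$ even.} Then $\pi=\pm p^{n/2}\in\ZZ$ and $\eps\pi^{m_0}=\pm p^{nm_0/2}$. Since $p^{(s-1)/m}$ has exact even order $nm_0$ in the cyclic group $\FF_s^*$, one gets $(p^{nm_0/2})^{(s-1)/m}=(p^{(s-1)/m})^{nm_0/2}\equiv-1\pmod{s}$. Reducing mod $s$, $p^{nm_0/2}$ is not an $m$-th power in $\QQ_s$, so condition (5) forces $\eps\pi^{m_0}=-p^{nm_0/2}<0$.

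\emph{Case $m_0$ and $n$ both odd.} Since $m=nn_0m_0$ is even, $n_0$ is even. Hence $nm_0\mid m/2$ and $p^{(s-1)/2}=(p^{(s-1)/m})^{m/2}=1$, i.e.\ $\sqrt p\in\QQ_s$. Both $\pm\sqrt q$ lie in the Galois-stable set $\Pi$, and the numbers $\eps(\pm\sqrt q)^{m_0}$ differ by a sign. Condition (5), read in $\QQ(\pi)\otimes\QQ_s$, then makes $-1$ an $m$-th power in $\QQ_s$, i.e.\ $(-1)^{(s-1)/m}=1$, contradicting (4). So this case cannot occur.

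Your worry about the quadratic extension is well founded, and in that case your fallback is genuinely needed. When $n$ and $n_0$ are both odd, $\QQ_s(\sqrt p)$ is the unramified quadratic extension. The number $q^{m_0/2}$, whose $(s-1)/m$-th power is $-1$ mod $s$, \emph{is} an $m$-th power there, since $(-1)^{s+1}=1$. So the order argument alone, which is all the paper's one-line proof states, cannot exclude the positive sign in that case. But then $m_0$ is automatically even, because $m$ is even while $n$ and $n_0$ are odd. The standing assumption $\eps=-1$ then gives $\eps\pi^{m_0}=-q^{m_0/2}<0$ directly, with no appeal to Corollary~\ref{adm_prime_cor}.

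Splitting by the parity of $m_0$ gives a complete proof of (7): use the standing assumption for $m_0$ even, and the order argument or vacuity for $m_0$ odd.
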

\begin{proof}
Let $L\subset\QQ(\zeta_s)$ be the subfield of degree $m$ over $\QQ$. We will prove that $L/\QQ$ is $\Omega$-admissible.

Clearly, $L$ is unramified at $p$ and tamely ramified at $s$.
According to Lemma~\ref{Kummer_lemma}, $L/\QQ$ splits completely at any $\ell\in\L$, and $[\Hat{L}_p:\QQ_p]=nm_0$. There is nothing to prove in $(5)$ and $(6)$.

Assume that $\Pi\cap\RR\neq\emptyset$. Since the number $(s-1)/m$ is odd, $L$ is a CM-field.
The order of $p^\frac{s-1}{m}$ is equal to $nm_0$ in $\FF_s^*$; therefore, $p^{nm_0/2}=q^{m_0/2}$ is not an $m$-th power in $\QQ_s$.
Since $\eps\pi^{m_0}$ is an $m$-th power, it follows that
for any real $\pi\in\Pi$ we have \[\eps\pi^{m_0}=-q^{m_0/2}<0.\]
We showed that $L/\QQ$ is $\Omega$-admissible.
\end{proof}

We now combine Theorem~\ref{n_adm} and Proposition~\ref{adm_ext}.

\begin{thm}\label{Pi_adm_ext2}
Fix $(n,\Pi)$. Assume that $\Pi\cap\RR=\emptyset$.
\begin{enumerate}
\item If $n$ is odd, then there exist infinitely many $\Pi$-admissible extensions.
\item If $n$ is even, then there exist infinitely many weakly $\Pi$-admissible extensions.
\item Assume that $n$ is even, and \[ \sqrt{q}\not\in\Delta_{1}(K(\zeta_n)^*)^n. \eqno{(*)} \]
   Then there exist infinitely many $\Pi$-admissible extensions.
\end{enumerate}
\end{thm}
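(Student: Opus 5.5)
The plan is to derive each part from Theorem~\ref{n_adm} (existence of primes) together with Proposition~\ref{adm_ext} (from an admissible prime to an admissible extension), and then to check that different primes give different extensions. Take an $\Omega$-admissible prime $s$, with $\Omega=(n,n,1,\Pi,\L_\Pi,1)$ in parts (1) and (3) and $\Omega=(n,2n,1,\Pi,\L_\Pi,1)$ in part (2). Proposition~\ref{adm_ext} gives that the unique subfield $L\subset\QQ(\zeta_s)$ of degree $m$ is an $\Omega$-admissible extension. It is ramified exactly at $s$, since $m>1$ divides $s-1$ and $s$ is totally ramified in $\QQ(\zeta_s)$; the degenerate case $n=1$ is trivial. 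Hence distinct primes $s$ give distinct fields $L$, and infinitely many admissible primes yield infinitely many admissible extensions.

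Next I match the definitions. For part (2), weakly $\Pi$-admissible means by definition $\Omega$-admissible with $\Omega=(n,2n,1,\Pi,\L_\Pi,1)$, so Theorem~\ref{n_adm}(1) and the previous paragraph settle it. For parts (1) and (3) the target notion is $\Pi$-admissible in the sense of Definition~\ref{strict_adm_def}. Here I use the Corollary following Theorem~\ref{inv_thm}, which identifies this notion with $\Omega$-admissibility for $\Omega=(n,n,1,\Pi,\L_\Pi,1)$. If I check this by hand instead of quoting the corollary, I argue as follows.
\begin{itemize}
\item Conditions (1)--(3) of Definition~\ref{strict_adm_def} follow from conditions (2)--(5) of Definition~\ref{def_omega} with $m=n$, $n_0=m_0=1$. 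In particular $[\hat L_p:\QQ_p]=n$ means $p$ is inert.
\item Condition (4) is Theorem~\ref{inv_thm} with $\eps=1$, $m_0=1$.
\end{itemize}
Theorem~\ref{inv_thm} is applicable because its standing hypothesis ($\eps=-1$ whenever $\Pi\cap\RR\ne\emptyset$) is vacuous here, since $\Pi\cap\RR=\emptyset$. Condition (7) of Definition~\ref{def_omega} is vacuous for the same reason. Then parts (1) and (3) follow from Theorem~\ref{n_adm}(2) and (3) respectively; the hypothesis $(*)$ is carried over verbatim.

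The only delicate point is the identification in parts (1) and (3). One must check that the $\Omega$-admissibility condition ``$\eps\pi^{m_0}$ is an $m$-th power in $\QQ_s(\pi)$'' is what Theorem~\ref{inv_thm} uses at $s$. One must also check that nothing extra is needed at the archimedean places. Both hold because all $\pi$ are non-real, so the invariants at infinity vanish on both sides. The rest is direct bookkeeping.
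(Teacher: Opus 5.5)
Your proposal is correct and is essentially the paper's proof. The paper's proof is the single line combining Theorem~\ref{n_adm} with Proposition~\ref{adm_ext}, using implicitly the identification of $\Pi$-admissibility with $\Omega$-admissibility for $\Omega=(n,n,1,\Pi,\L_\Pi,1)$ (the Corollary after Theorem~\ref{inv_thm}); your check that distinct primes $s$ give distinct fields $L\subset\QQ(\zeta_s)$ fills a step the paper leaves unstated.

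One small correction: the case $n=1$ is degenerate rather than ``trivial''. There $L=\QQ$ for every $s$, so part (1) cannot produce infinitely many extensions, or indeed any ramified one. This is a defect of the statement as written, not of your argument.
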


In some situations we can prove that $\Pi$-admissible extensions do not exist.

\begin{prop}\label{no_adm}
 Let $\Omega=(n,nm_0,m_0,\Pi,\L,\eps)$.
 Assume that $n$ is even.
 If $q^{m_0/2}\in\Delta_{m_0}(K(\zeta_m)^*)^m$, 
 then there are no $\Omega$-admissible extensions $L/\QQ$ such that all ramified primes of $L/\QQ$ are totally split in $K$.
\end{prop}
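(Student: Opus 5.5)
The plan is to argue by contradiction: assume such an extension $L$ exists, pass to the completion at its unique ramified prime $s$, and show that the hypothesis forces $q^{m_0/2}$ to be an $m$-th power in $\QQ_s$, which the local degree condition at $p$ forbids.

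\textbf{Step 1: structure of $L$ at $s$.} Suppose $L/\QQ$ is $\Omega$-admissible, with $\Omega=(n,nm_0,m_0,\Pi,\L,\eps)$, and that all its ramified primes are totally split in $K$. By condition $(5)$ of Definition~\ref{def_omega}, $L$ is ramified only at a single prime $s\notin\L$, and tamely so. Since $\QQ$ has no nontrivial unramified extensions, the inertia group at $s$ is all of $\Gal(L/\QQ)$. So $L/\QQ$ is totally and tamely ramified at $s$. Tame total ramification of degree $m$ forces $m\mid s-1$, and then $L$ is the degree-$m$ subfield of $\QQ(\zeta_s)$ by Kronecker--Weber. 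Also $s\neq p$, because $L$ is unramified at $p$.

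\textbf{Step 2: what the local degree at $p$ says.} Here $m=nm_0$, so condition $(3)$ says $[\Hat L_p:\QQ_p]=m$. By Lemma~\ref{Kummer_lemma}, this means $p^{(s-1)/m}$ has order exactly $m$ in $\FF_s^*$. Since $n$ is even, $m/2$ is an integer, and
\[(p^{m/2})^{(s-1)/m}=(p^{(s-1)/m})^{m/2}\neq 1 \pmod s.\]
Hence $q^{m_0/2}=p^{m/2}$ is not an $m$-th power modulo $s$. Because it is an $s$-unit, it is also not an $m$-th power in $\QQ_s$.

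\textbf{Step 3: the hypothesis forces the opposite.} We use that $s$ is totally split in $K$ and that $\zeta_m\in\QQ_s$ (as $m\mid s-1$). Together these give an embedding $K(\zeta_m)\hookrightarrow\QQ_s$, and I fix one. Write
\[q^{m_0/2}=\delta\lambda x^m,\qquad \delta\in\Delta_\eps,\ \lambda\in\Delta_\L,\ x\in K(\zeta_m)^*.\]
We show each factor is an $m$-th power in $\QQ_s$.
\begin{itemize}
\item The generators of $\Delta_\eps$ are $\eps^{w(\pi)}\pi^{m_0}=\eps\pi^{m_0}$, since $w(\pi)=1$ for Weil numbers over $\FF_q$. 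By condition $(6)$ these are $m$-th powers in $\QQ_s(\pi)=\QQ_s$, so $\delta\in(\QQ_s^*)^m$.
\item For $\ell\in\L$, condition $(4)$ says $\ell$ splits completely in $L$. Lemma~\ref{Kummer_lemma} then gives $\ell^{(s-1)/m}\equiv1\pmod s$. Since $\ell\neq s$, $\ell$ is an $s$-adic unit, and Hensel's lemma makes it an $m$-th power in $\QQ_s$. Hence $\lambda\in(\QQ_s^*)^m$.
\item The factor $x^m$ is an $m$-th power in $\QQ_s$ automatically.
\end{itemize}
Thus $q^{m_0/2}\in(\QQ_s^*)^m$, contradicting Step~2.

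\textbf{Main obstacle.} The delicate step is Step~3, in two places. First, one must make sure that embedding $K(\zeta_m)$ in $\QQ_s$ is compatible with the embedding $\QQ(\pi)\subset\QQ_s$ implicit in condition $(6)$. This is harmless because $\Pi$ is Galois invariant and $s$ splits in $K$, so condition $(6)$ holds for every embedding. Second, one must check that the $\eps$-twists in the generators of $\Delta_\eps$ agree with those in condition $(6)$.
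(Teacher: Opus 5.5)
Your route is the paper's. Kronecker--Weber puts $L$ inside $\QQ(\zeta_s)$, and Lemma~\ref{Kummer_lemma} turns condition~(3) into ``$p^{(s-1)/m}$ has order $m$ in $\FF_s^*$''. The hypothesis is then used to force $q^{m_0/2}=p^{m/2}$ to be an $m$-th power modulo $s$. Steps~1 and~2 and your treatment of $\Delta_{\L}$ are fine.

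\textbf{The gap.} It is in the first bullet of Step~3: the equality $\QQ_s(\pi)=\QQ_s$ does not follow from the hypotheses. The field $K=\QQ(\Delta_{m_0})$ is generated by the numbers $\pi^{m_0}$, not by the $\pi$. So total splitting of $s$ in $K$ only gives $\pi^{m_0}\in\QQ_s$.

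\emph{Example.} Take $q=25$, $\pi=\sqrt5\,(2+i)$ and $m_0=2$. Then $K=\QQ(i)$, and $s=13$ splits in $K$ with $m=4\mid s-1$. Yet $\QQ_{13}(\pi)=\QQ_{13}(\sqrt5)$ is quadratic.

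In general, condition~(6) only says that $\eps\pi^{m_0}$ is an $m$-th power in an unramified extension of $\QQ_s$ of degree $f\mid m_0$. That alone does not give $\eps\pi^{m_0}\in(\QQ_s^*)^m$; for instance, a square non-cube of $\FF_s^*$ is a $6$-th power in $\FF_{s^3}$. Your ``main obstacle'' remark about compatible embeddings presupposes $\QQ(\pi)\subset\QQ_s$, which is exactly what fails. The paper's own proof passes over the same point when it asserts that every $\delta\in\Delta_{m_0}$ is an $m$-th power modulo $s$. For $m_0=1$ there is no issue, since then $\pm\pi\in K$.

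\textbf{The repair.} The statement survives with a $2$-primary refinement.
\begin{enumerate}
\item For $\pi\in\Pi$ put $\zeta_\pi=\overline{\eps\pi^{m_0}}^{\,(s-1)/m}\in\mu_m(\FF_s)$.
\item Since $\pi^{m_0}$ is an $s$-unit in $\QQ_s$ and $\mu_{m_0}\subset\QQ_s$, the Kummer extension $\QQ_s(\pi)/\QQ_s$ is unramified of degree $f=\ord\bigl(\overline{\pi^{m_0}}^{\,(s-1)/m_0}\bigr)=\ord(\zeta_\pi^{\,n})$. The sign $\eps$ drops out because $(s-1)/m_0=n(s-1)/m$ is even.
\item Since $s\equiv 1\pmod m$, an element $y\in\FF_s^*$ is an $m$-th power in $\FF_{s^f}$ if and only if $(y^{(s-1)/m})^f=1$. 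So condition~(6) gives $\zeta_\pi^f=1$.
\item Hence $\ord(\zeta_\pi)$ divides $\ord(\zeta_\pi^{\,n})=\ord(\zeta_\pi)/\gcd(\ord(\zeta_\pi),n)$. Therefore $\ord(\zeta_\pi)$ is prime to $n$, and in particular odd.
\item Apply $y\mapsto \bar y^{\,(s-1)/m}$ to $q^{m_0/2}=\delta\lambda x^m$; all factors are $s$-units. The right side maps to $\prod_\pi\zeta_\pi^{\alpha_\pi}$, an element of odd order. By your Step~2, the left side maps to $(p^{(s-1)/m})^{m/2}$, which has order exactly $2$. This is the contradiction.
\end{enumerate}
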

\begin{proof}
Let $L/\QQ$ be such an $\Omega$-admissible extension. 
Since $L/\QQ$ is tamely ramified and cyclic, according to the Kronecker--Weber Theorem, $L\subset \QQ(\zeta_s)$. By definition, $nm_0$ divides the local degree $[\Hat{L}_p:\QQ_p]$.

By assumption, the extension $K/\QQ$ is totally split at $s$, and any $\delta\in\Delta_{m_0}$ is an $m$-th power modulo $s$. In particular, $q^{m_0/2}=p^{nm_0/2}$ is a $m$-th power in $\FF_s^*$, that is, the order of $p^{\frac{s-1}{m}}$ divides $nm_0/2$. According to Lemma~\ref{Kummer_lemma}, $[\hat{L}_p:\QQ_p]$ divides $nm_0/2$. A contradiction.
\end{proof}

\begin{ex}\label{ex_main}
Let $K=\QQ(\zeta_4)$. Assume that $p\equiv 1\pmod 4$. Then $p=\pi_1\Bar\pi_1$ is a product of two $p$-Weil numbers. Put $\pi=p\pi_1^2$, and $\Pi=\{\pi,q/\pi\}$.
Then $n=4$, and \[\sqrt{q}=p^2=\pi^2\cdot\pi_1^{-4}\in\Delta_1(K^*)^4.\]
If $L$ is a $\Pi$-admissible extension of degree $4$, then $L\subset\QQ(\zeta_s)$;
therefore, $s\equiv 1\pmod{4}$ splits in $K$. 
According to Proposition~\ref{no_adm}, there are no $\Pi$-admissible extensions.    
\end{ex}

\subsection{Generalized and twisted Deligne modules}
Let $\Pi$ be a finite Galois invariant set of Weil numbers over $\FF_q$, and
let $S\subset E_\Pi$ be a finite extension of $R_\Pi$.
Let $\L_S$ be the finite set of primes $\ell\neq p$, where $S$ is not maximal. 
According to Theorem~\ref{Omega_main_thm} there exists 
an $\Omega$-admissible cyclic extension $L/\QQ$ ramified at a single prime $s\not\in\L_S$, where $\Omega=(n,m,m_0,\Pi,\L_S,\eps)$. 
Let \[\HH_{\Pi,L}=\HH_{\Pi,L}(\eps F_\Pi^{m_0}).\]
According to Theorem~\ref{inv_thm}, if $A_{L,\Pi}=\prod_{\pi\in\Pi_0} B_\pi^{m/e_\pi}$, then
$\HH_{\Pi,L}\cong\End^\circ(A_{\Pi,L}).$

\begin{notation}\label{twisting_not}
Since $[L_p:\QQ_p]=nm_0$ there is an isomorphism
\[\OO_L\otimes\ZZ_p\cong W(\FF_q^{m_0})^{n_0}.\]
Let $\rho_p\in\OO_L\otimes\ZZ_p$ be the element corresponding to
\[(1,\dots, 1, p)\in W(\FF_q^{m_0})^{n_0}\] under this isomorphism.
Choose $\rho\in\OO_L$ such that $\rho\otimes 1-\rho_p\in q^{m_0}\OO_L\otimes\ZZ_p$. 
\end{notation}

Clearly, $N_L(\rho_p)=q^{m_0}$, and $N_{L/\QQ}(\rho)\in q^{m_0}\ZZ$.   

\begin{deff}
Let $\T=\T_{S,L}(\eps F_\Pi^{m_0},\rho)\subset\HH_{\Pi,L}$ be the order of Example~\ref{L_order}. 
The category of \emph{twisted Deligne $S$-modules over $\OO_L$} is the category
\[\TDel_{S,L}=\TF{\T^\op}.\]  
%The category of \emph{rigid twisted Deligne $S$-modules over $\OO_L$} is the category:
%\[\RTDel_{S,L}=\RM{\T}.\]
Let $H$ be the endomorphism of Definition~\ref{H_operator}, and let 
$\T^\max$ be the order generated by $\T$ and $H$.
The category of \emph{twisted Deligne modules invariant under $H$} is the category  
\[\TDel_{S,L,H}=\TF{\T^{\max,\op}}.\]
\end{deff}

If we put $S=R_\Pi$ in this definition, we get the category of twisted Deligne modules over $\OO_L$ with Weil support in $\Pi$:
\[\TDel_{\Pi,L}=\TF{\T_{R_\Pi,L}(\eps F_\Pi^{m_0},\rho)^\op}.\]

According to Proposition~\ref{balanced_order}, $\T$ and $\T^\max$ are $\ell$-balanced for all primes $\ell\not\in\{s,p\}$, and according to Corollary~\ref{s_max_order}, $\T^\max$ is $s$-balanced. Hence, according to Proposition~\ref{Tate_eq}, any object of $\TDel_{S,L,H}$ is $\ell$-rigid for all $\ell\neq p$.

Note that $W(\FF_q^{m_0})\locp$ is unramified over $\QQ_p$; therefore,
$\eps\in\{\pm 1\}$ is a norm from $W(\FF_q^{m_0})$.

\begin{deff}
Put $n_0=m/(nm_0)$, and choose $\mu\in W(\FF_q^{m_0})$ such that 
\[\eps=N_{W(\FF_q^{m_0})\locp/\QQ_p}(\mu).\]
Define a structure of a $\T^\op$-module on the contravariant \Die module (that is, right $D_S^\op$-module)
    \[M_{S,L}=(W(\FF_{q^{m_0}})\otimes_{W}D_S^\op)^{n_0})=\oplus_{i=1}^{n_0} W(\FF_{q^m})\otimes_{W}D_S^\op\] 
    as follows. There is a tautological action of $\OO_L\otimes\ZZ_p\cong W(\FF_{q^{m_0}})^{n_0}$.
    Define the action of $G$ and $\CG$ by the following formulas:
    \[G(\oplus_{i=1}^{n_0}(x_i\otimes y_i))=(\mu\sigma^{-1}x_{n_0}\otimes Fy_{n_0}) \oplus
    (\oplus_{i=1}^{n_0-1}(x_{i}\otimes y_{i})),\text{ and }\]
%\[\CG(\oplus_{i=1}^{n_0}(x_i\otimes y_i))=\oplus_{i=1}^{n_0}(\mu\sigma x_i\otimes Vy_i).\] 
   \[\CG(\oplus_{i=1}^{n_0}(x_i\otimes y_i))=
   (\oplus_{i=2}^{n_0}(x_{i}\otimes y_{i})\oplus (\mu^{-1}\sigma x_1\otimes Vy_1).\]
\end{deff}

According to Lemma~\ref{quasi_free}, the module $M_{S,L}$ is quasi-free,
and by Proposition~\ref{wide_AV}, there exists a wide abelian variety $A_\max$ over $\T^\max$
such that $M^*(A_\max)\cong M_{S,L}$.

\begin{prop}\label{m0prop2}
    If $m_0=1$, then $\T$ is $p$-balanced, and
    any object of $\TDel_{S,L}$ is $p$-rigid.
\end{prop}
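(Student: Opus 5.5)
When $m_0=1$ we have $m=nn_0$, and I will show that $\T_p$ is exactly $\End_{D_S}(M_{S,L})$ for the quasi-free module $M_{S,L}$ just constructed. The second assertion then follows at once from Proposition~\ref{p_rigid_prop}. Quasi-freeness of $M_{S,L}$ is already known from Lemma~\ref{quasi_free}. That lemma requires $\HH_{\Pi,L}\otimes\QQ_p\cong\End^\circ_{D_S}(M_{S,L})$, and this holds because $M_{S,L}\cong M^*(A_\max)$ with $A_\max$ wide, so Remark~\ref{h_iso} applies together with Theorem~\ref{WM_thm}. Hence the whole content of the proposition is an integral identification at $p$.

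\textbf{Step 1: describe $\T_p$ explicitly.} With $m_0=1$ we have $\OO_L\otimes\ZZ_p\cong W^{n_0}$, where $W=W(\FF_q)$. On the factors, $g$ acts by cyclically permuting them, and $g^{n_0}$ acts as $\sigma$ on each. The order $\T_p$ is generated over $S_p\otimes W^{n_0}$ by $G$ and $\CG=\rho G^{-1}$, subject to
\[G^{m}=\eps F_\Pi\quad\text{and}\quad G\CG=\rho_p=(1,\dots,1,p).\]
Let $e_i$ be the idempotents of $W^{n_0}$. Then $G$ sends the $e_{i}$-component to the $e_{i+1}$-component (indices mod $n_0$). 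Since $\rho_p$ is $1$ on the first $n_0-1$ factors, $G$ and $\CG$ are mutually inverse isomorphisms between consecutive components, except at one step. At that step their products are $p$ in the two orders, exactly like $F$ and $V$.

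\textbf{Step 2: compare with $\End_{D_S}(M_{S,L})$.} Write $M_{S,L}=\oplus_{i=1}^{n_0}D_S^{\op}$ as right $D_S^\op$-modules. Its endomorphism ring is $\Mat_{n_0}(D_S)$, and this is the hereditary matrix description I aim to match. The formulas defining the action of $G$ and $\CG$ on $M_{S,L}$ give a homomorphism $\T_p\to\Mat_{n_0}(D_S)$:
\begin{itemize}
\item the idempotents $e_i$ go to the diagonal matrix units $I_{ii}$;
\item $e_{i+1}Ge_i$ goes to $I_{(i+1)i}$ for $i<n_0$;
\item $e_{i}\CG e_{i+1}$ goes to $I_{i(i+1)}$;
\item the wrap-around entries go to $\mu\sigma^{-1}\otimes F$ and $\mu^{-1}\sigma\otimes V$.
\end{itemize}
First I check the relations $G^m=\eps F_\Pi$ (this uses $N(\mu)=\eps$ and $(F^{n})=F_\Pi$ on $D_S$) and $G\CG=\rho_p$. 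Then injectivity is automatic, because the map is injective after $\otimes\QQ$. For surjectivity, note that the off-diagonal matrix units $I_{ij}$ are products of the images of the non-wrap entries of $G$ and $\CG$, so every $I_{ij}$ lies in the image. The corner $I_{11}\Mat_{n_0}(D_S)I_{11}=D_S$ is generated by $W$ (the image of $e_1\OO_L$), by $S$, and by $F,V$. These are obtained, up to the unit $\mu^{\pm1}$, as $e_1G^{n_0}e_1$ and $e_1\CG^{n_0}e_1$. Hence the map is onto.

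\textbf{Main obstacle.} The delicate point is the bookkeeping with the twist $\sigma^{\pm1}$ and the unit $\mu$. I must ensure that $e_1G^{n_0}e_1$ really equals a unit times $F$, with semilinearity $\sigma$ matching $g^{n_0}$ and not its inverse (this depends on the choice of $g$ with $g^{n_0}$ the Frobenius). I also need $N(\mu)=\eps$ to make $G^m=\eps F_\Pi$ hold exactly. I would first verify this in the case $n_0=1$, where the map reduces to $G\mapsto\mu F$ as in the proof of Theorem~\ref{ss_thm_main}, and then reduce the general case to it by the matrix-unit decomposition.
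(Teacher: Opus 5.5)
Your proposal is correct and follows the paper's proof. The paper likewise lets $\T_p$ act on $M_{S,L}\cong D_S^{n_0}$: the idempotents of $\OO_L\otimes\ZZ_p\cong W^{n_0}$ act as diagonal matrix units, the non-wrap-around parts of $G$ and $\CG_p=\rho_pG^{-1}$ give unit multiples of the off-diagonal units $I_{i(i-1)},I_{(i-1)i}$, and $F\mapsto\mu^{-1}G^{n_0}$, $V\mapsto\mu\CG_p^{n_0}$ embeds $D_S$ (as scalars rather than through your corner $e_1G^{n_0}e_1$, which is equivalent); this gives $\T_p\cong\Mat_{n_0}(D_S)$, and the paper concludes with Proposition~\ref{p_rigid_prop}. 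The one step to fix is your justification of quasi-freeness: $A_\max$ with $M^*(A_\max)\cong M_{S,L}$ is itself produced by Proposition~\ref{wide_AV}(2) from a quasi-free module, so invoking it is circular, but your integral isomorphism $\T_p\cong\End_{D_S}(M_{S,L})$, with $M_{S,L}$ free of rank $n_0$, already gives quasi-freeness directly via Lemma~\ref{quasi_free} or Theorem~\ref{Morita}.
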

\begin{proof}
By definition of $\rho$, if $\CG_p=\rho_pG^{-1}$, then $\CG_p\in \T_p$, and 
$\CG_p^m=F_\Pi$. There is a homomorphism $D_S\to \T_p$ given by 
$F\mapsto \mu^{-1}G^{n_0}$ and $V\mapsto \mu\CG_p^{n_0}$.

  Let $v_1,\dots, v_{n_0}$ be a basis of $\OO_L\otimes\ZZ_p\cong W^{n_0}$ over $W$. 
  Consider the action of $\T_p$ on $M_{S,L}\cong D_S^{n_0}$. The elements $v_i$ act as diagonal elementary matrices  $I_{ii}\in\Mat_{n_0}(D_S)$.
    If $1\leq i\leq n_0-1$, then 
    \[v_i G\in W^* I_{i(i-1)},\text{ and }\; v_i\CG_p\in W^*I_{(i-1)i}.\] 
    These matrices generate $\Mat_{n_0}(D_S)$ over $D_S$, hence $\T$ is $p$-balanced.
    The proposition follows from Proposition~\ref{p_rigid_prop}.
\end{proof}

Since for all $\ell\neq p$ the order $\T^\max$ is $\ell$-balanced, $h_A:\T^\max\to\End(A_\max)$ induces an isomorphism 
\[\T^\max_\ell\cong\End(A_\max)\otimes_\ZZ\ZZ_\ell.\] 
If $m_0=1$, then, by Proposition~\ref{m0prop2}, $\T^\max$ is $p$-balanced
and there is an isomorphism \[\T^\max_p\cong\End(A_\max)\otimes_\ZZ\ZZ_p.\] 
Hence, $\T^\max\cong\End(A_\max)$.
Now, from Theorem~\ref{main_thm_cyclic} and Corollary~\ref{main_thm} we obtain the main result of this section.

\begin{thm}\label{Lambda_thm}
Fix $(n,\Pi,S)$. There exist a pair of natural numbers $(m,m_0)$
and a cyclic Galois extension $L/\QQ$ ramified at a single prime $s\not\in\L_S$ with the following properties:
\begin{enumerate}
\item $L/\QQ$ is $\Omega$-admissible, where $\Omega=(n,m,m_0,\Pi,\L_S,\eps)$;
\item Let $\T^\max$ be the order generated by $\T=\T_{S,L}(\eps F_\Pi^{m_0})$ and the operator $H$ of Definition~\ref{H_operator}. Then $\T^\max$ is $\ell$-balanced for all $\ell\neq p$.
       \item the functor $\D'_{\T^\max}$ is an equivalence from 
    $\AV_{S}$ to the category  \[\TDel_{S,L,H}\times_{\Md{D_{S}^\op\locp}}\TF{D_{S}^\op}.\]
        \item if $m_0=1$, then $\T^\max\cong\End(A_\max)$ and $\D_{\T^\max}$ 
        is an equivalence from $\AV_{S}$ to $\TDel_{S,L,H}$.
    \end{enumerate}
\end{thm}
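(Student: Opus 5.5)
The proof is an assembly of results already established in this section and in Section~\ref{general_sec}. First I choose the data $(m,m_0,\eps)$ and the field $L$. By Corollary~\ref{adm_prime_cor} applied to $(n,\Pi,\L_S)$, there are natural numbers $m_0,n_0$ such that, for $\Omega=(n,nn_0m_0,m_0,\Pi,\L_S,\eps)$, infinitely many $\Omega$-admissible primes exist. Since there are infinitely many, I pick one, $s\notin\L_S$, and take $L\subset\QQ(\zeta_s)$ of degree $m$. Proposition~\ref{adm_ext} shows that $L/\QQ$ is $\Omega$-admissible and ramified only at $s$, which gives part (1). Theorem~\ref{inv_thm} then yields $\HH_{\Pi,L}\cong\End^\circ(A_{\Pi,L})$ with $A_{\Pi,L}=\prod_{\pi\in\Pi_0}B_\pi^{m/e_\pi}$.

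For part (2) I split the primes $\ell\neq p$ into two cases.

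\emph{Case $\ell\notin\{s,p\}$.} The element $\lambda=\eps F_\Pi^{m_0}$ is an $\ell$-unit, and $\T$ is $\ell$-cyclic: $\CG=\rho G^{-1}$ and $\rho$ is an $\ell$-unit, because $N(\rho)\in q^{m_0}\ZZ$ and $\rho$ is $p$-adically close to $\rho_p$. I would check that $\rho$ is a unit away from $p$, adjusting the choice of $\rho$ if necessary. Proposition~\ref{balanced_order} then applies: case (2) when $\ell\in\L_S$, since $\ell$ splits completely; case (1) otherwise, since $S$ is maximal at $\ell$ and $\ell$ is unramified. Adjoining $H$ changes nothing at $\ell$, because $H$ has denominator only $s$.

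\emph{Case $\ell=s$.} The extension $L/\QQ$ is totally and tamely ramified at $s$, and $m\mid s-1$. Since $s\notin\L_S$, the order $S$ is $s$-maximal. The order $\T$ is $s$-cyclic and $s$-tame, because $\lambda$ is an $m$-th power in $\QQ_s(\pi)$ by admissibility, so it is a norm. Corollary~\ref{s_max_order} then gives that $\T^\max$ is $s$-balanced.

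For part (3), I use the quasi-free module $M_{S,L}$ from Lemma~\ref{quasi_free}. By Proposition~\ref{wide_AV}, this gives a wide abelian variety $A_\max$ over $\T^\max$ with $M^*(A_\max)\cong M_{S,L}$. Theorem~\ref{main_thm_cyclic} then says that $\Phi'_{\T^\max}$ is an equivalence onto $\RM{\T^{\max,\op}}\locp\times_{\Md{D_S^\op\locp}}\TF{D_S^\op}$. To identify this with the stated product category, I need every object of $\TF{\T^{\max,\op}}$ to be $\ell$-rigid for all $\ell\neq p$. This follows from part (2) together with Proposition~\ref{Tate_eq}(2).

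For part (4), take $m_0=1$. Proposition~\ref{m0prop2} gives $p$-balancedness, and hence $p$-rigidity, of $\T$. I expect the same matrix-unit argument to extend to $\T^\max$, since $H$ is integral at $p$. Then $\T^\max_\ell\cong\End(A_\max)_\ell$ for every prime $\ell$: for $\ell\neq p$ by Tate's theorem together with balancedness, and for $\ell=p$ by Theorem~\ref{WM_thm}. It follows that $\T^\max\cong\End(A_\max)$, and Corollary~\ref{main_thm} shows that $\D_{\T^\max}$ is an equivalence onto $\TDel_{S,L,H}$.

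The main obstacle I expect is a consistency point. The $m_0$ produced by Corollary~\ref{adm_prime_cor} is even in general, yet part (4) assumes $m_0=1$. I would treat part (4) as conditional on such an $L$ existing, as Theorem~\ref{Pi_adm_ext2} provides when $\Pi\cap\RR=\emptyset$, and carefully verify the unit property of $\rho$ away from $p$.
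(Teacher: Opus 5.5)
Your plan follows the paper's own assembly. Theorem~\ref{Omega_main_thm} gives (1). Proposition~\ref{balanced_order} away from $\{s,p\}$ and Corollary~\ref{s_max_order} at $s$ give (2). The quasi-free $M_{S,L}$ and Proposition~\ref{wide_AV} feed Theorem~\ref{main_thm_cyclic} and Proposition~\ref{Tate_eq} for (3). Proposition~\ref{m0prop2}, Tate and Waterhouse--Milne feed Corollary~\ref{main_thm} for (4). Reading (4) as a conditional is also how the paper argues, since its argument runs for an arbitrary $\Omega$-admissible $L$.

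One step needs to be replaced: your justification of $\ell$-cyclicity via ``$\rho$ is an $\ell$-unit'' does not work.
\begin{itemize}
\item $N_{L/\QQ}(\rho)\in q^{m_0}\ZZ$ and $\rho\equiv\rho_p$ modulo $q^{m_0}$ do not imply that $\rho$ is a unit at primes $\ell\neq p$.
\item Adjusting $\rho$ to make it one would in general require a prime of $L$ above $p$ to be principal. That need not hold (compare Remark~\ref{Pell}).
\end{itemize}

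Nothing about $\rho$ is actually needed. For $\ell\neq p$, the relation $F_\Pi\cdot(q/F_\Pi)=q$ makes $\lambda=\eps F_\Pi^{m_0}$ a unit of $S_\ell$. Hence $G^{-1}=\lambda^{-1}G^{m-1}\in\T_\ell(\lambda)$, and so $\CG=\rho G^{-1}\in\T_\ell(\lambda)$ for every $\rho\in\OO_L$. Thus $\T_\ell=\T_\ell(\lambda)$ for all $\ell\neq p$, including $\ell=s$.

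Likewise, in (4) you do not need to extend the matrix-unit argument. Since $sH\in\T$, the orders $\T^\max$ and $\T$ agree at every prime other than $s$. In particular $\T^\max_p=\T_p$, so Proposition~\ref{m0prop2} applies as stated.
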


Assume that there exists a $\Pi$-admissible extension $L/\QQ$ ramified at a prime $s$.
%According to Remark~\ref{twist}, we may assume that $L/\QQ$ is strongly $\Pi$-admissible; hence, 
The category $\GDel_{\Pi,L}$ of generalized Deligne modules with Weil support in $\Pi$ is equivalent to the category of twisted Deligne modules $\TDel_{R_\Pi,L}$.
More generally, we define 
\[\GDel_{S,L}=\TDel_{S,L}\text{ and } \GDel_{S,L,H}=\TDel_{S,L,H}.\]

\begin{corollary}\label{main_GDel_thm}
    Let $L/\QQ$ be a $\Pi$-admissible extension, then $L/\QQ$ is 
    $\Omega$-admissible, where $\Omega=(n,n,1,\Pi,\L_S,\eps)$, and 
    $\D_{\T^\max}$ is an equivalence from $\AV_{S}$ to $\GDel_{S,L,H}$.
\end{corollary}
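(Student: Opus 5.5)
The plan is to reduce the corollary to Theorem~\ref{Lambda_thm}(4) with $m=n$, $m_0=1$, $n_0=1$. First I check that a $\Pi$-admissible $L/\QQ$ in the sense of Definition~\ref{strict_adm_def} satisfies every condition of Definition~\ref{def_omega} for $\Omega=(n,n,1,\Pi,\L_S,\eps)$:

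\begin{enumerate}
\item Conditions (1)--(3): $[L:\QQ]=n$ and $p$ is inert, so $L$ is unramified at $p$ and $[\Hat{L}_p:\QQ_p]=n=nm_0$.
\item Condition (4): we need $\L_S\subset\L_\Pi$. Since $S\supset R_\Pi$ is an order of $E_\Pi$, maximality of $R_\Pi$ at $\ell$ forces $S_\ell=R_{\Pi,\ell}$ to be maximal. So every $\ell\in\L_S$ lies in $\L_\Pi$ and splits completely.
\item Condition (5): this is condition (2) of Definition~\ref{strict_adm_def}.
\item Conditions (6) and (7): these are the only places where $\eps$ and the splitting behaviour at $s$ enter. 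I will read them off from condition (4) of Definition~\ref{strict_adm_def}, namely that $\HH_\pi$ is Brauer equivalent to $\End^\circ(B_\pi)$, which pins down the local invariants at $s$ and at $\infty$.
\end{enumerate}

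The invariant at $s$ of $\HH_\pi=(L(\pi)/\QQ(\pi),g,\eps\pi)$ must vanish, because $\End^\circ(B_\pi)$ has trivial invariant away from $p$ and $\infty$. Since $L_s/\QQ_s$ is totally tamely ramified of degree $n$, triviality of the local cyclic algebra means $\eps\pi$ is a local norm from $L(\pi)_s$. I would then like to upgrade "local norm" to "$n$-th power in $\QQ_s(\pi)$", as condition (6) requires. For this I would use the Kummer description from Section~\ref{s_section}, where $\pi$ is an $s$-unit and $m\mid s-1$. In the tame case the norm group of $\QQ_s(\pi)^*$ is generated by $(\QQ_s(\pi)^*)^n$ and a uniformizer-type element. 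This gives only "$\eps\pi$ is an $n$-th power up to a norm"; for a unit it becomes "an $n$-th power" once the unit part of the local Artin symbol is accounted for.

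At $\infty$, a real $\pi$ requires a nontrivial invariant, which forces $L$ to be CM and $\eps\pi<0$. If $\Pi\cap\RR=\emptyset$, condition (7) is vacuous.

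With $\Omega$-admissibility established, apply Theorem~\ref{Lambda_thm}(4). Since $m_0=1$, the order $\T^\max$ (generated by $\T_{S,L}(\eps F_\Pi,\rho)$ and $H$) satisfies $\T^\max\cong\End(A_\max)$ and $\D_{\T^\max}:\AV_S\to\TDel_{S,L,H}$ is an equivalence. Here $n_0=1$, so we may take $\rho=p$, and $\TDel_{S,L,H}=\GDel_{S,L,H}$ by definition. That concludes the argument.

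The main obstacle is condition (6): passing from "the local cyclic algebra at $s$ is split" to "$\eps\pi$ is an $n$-th power in $\QQ_s(\pi)$". If the direct local-norm argument fails, I would instead check that Section~\ref{s_section} only uses the weaker input. That input is the existence of $\omega_s$ with $\omega_s^m=\lambda$, which the Remark there derives from $s$-tameness via Hensel. So condition (6) can be replaced by the norm condition, i.e. by Lemma~\ref{tame_module} at $s$, and that is exactly what the Brauer-equivalence hypothesis provides.
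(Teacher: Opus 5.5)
Your reduction to Theorem~\ref{Lambda_thm}(4) is the paper's intended route. However, the step you flag as the main obstacle is a genuine gap, and it cannot be closed from Definition~\ref{strict_adm_def} alone. Condition (4) there concerns $G^n=\pi$, so you must take $\eps=1$: for $\eps=-1$ and $n$ even, $-\pi$ need not be an $n$-th power even if $\pi$ is. That condition only tells you that $\pi$ is a norm from the algebra $L\otimes_\QQ\QQ(\pi)_w$ for each $w\mid s$. The implication ``unit norm $\Rightarrow$ $n$-th power'' holds only when this algebra is a field, totally ramified of degree $n$ over $\QQ(\pi)_w$, and nothing guarantees that. The condition $s\notin\L_\Pi$ says only that $R_\Pi$ is maximal at $s$. $\QQ(\pi)$ may still ramify at $s$, and then $L\otimes\QQ(\pi)_w$ can be unramified over $\QQ(\pi)_w$.

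Here is an example. Let $p=3$, $n=2$, $\pi=\frac{1+\sqrt{-35}}{2}$ (a root of $t^2-t+9$), $\Pi=\{\pi,\bar\pi\}$, and $S=R_\Pi=\ZZ[\pi]$. This order is maximal, so $\L_\Pi=\emptyset$. Take $s=5$ and $L=\QQ(\sqrt5)$, in which $3$ is inert. The local invariants of $\HH_\pi$ all vanish:
\begin{itemize}
\item At the unique $w\mid 5$ of $\QQ(\pi)$, $5=(\sqrt{-35})^2\cdot(-1/7)$ with $-1/7\equiv 2$ a non-square mod $5$. So $L\otimes\QQ(\pi)_w$ is the unramified quadratic extension, and the unit $\pi$ is a norm.
\item At the two places over $3$, $L_3/\QQ_3$ is unramified and $v(\pi)\in\{0,2\}$.
\item Elsewhere $L$ is unramified and $\pi$ is a unit, or the place is complex.
\end{itemize}
Hence the quaternion algebra $\HH_\pi$ is split, like $\End^\circ(B_\pi)=\QQ(\pi)$, and $L$ is $\Pi$-admissible. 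Yet $\pi\equiv 3$ and $-\pi\equiv 2\pmod w$ are non-squares in $\FF_5$. So condition (6) of Definition~\ref{def_omega} fails for either sign $\eps$.

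Your fallback does not help, because it is circular. Lemma~\ref{tame_module} at $s$ requires $\lambda=N_L(\omega'_s)$ with $\omega'_s$ in the integral ring $S_s\otimes_\ZZ\OO_L$. The Remark in Section~\ref{s_section} reduces modulo $1\otimes\beta$, where $g$ acts trivially, and then applies Hensel; this shows the norm condition already forces $\lambda$ to be an $m$-th power in $S_s$. So $s$-tameness is equivalent to (6), not weaker. Brauer triviality only gives a norm from the maximal order of $L\otimes\QQ(\pi)_w$, which can be strictly larger than $S_s\otimes\OO_L$. In the example, $S_5\otimes\OO_L=S_5[\sqrt5]$ and $N_L(a+b\sqrt5)=a^2-5b^2\equiv a^2\pmod{5S_5}$. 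So $\T$ is not $5$-tame, there is no $\omega_s$ and hence no $H$, and Theorem~\ref{Lambda_thm} cannot be applied.

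The paper does not fill this gap either. It relies on the unnumbered corollary after Theorem~\ref{inv_thm} ($\Pi$-admissible iff $\Omega$-admissible with $\Omega=(n,n,1,\Pi,\L_\Pi,1)$), which is stated without proof and, by the example, needs an extra hypothesis. Your argument does go through with $\eps=1$ if you also assume that for every $w\mid s$ the ramification index of $\QQ(\pi)_w/\QQ_s$ is prime to $n$ (e.g.\ $s$ unramified in $\QQ(\Pi)$). Then $L\otimes\QQ(\pi)_w$ is a field, totally tamely ramified of degree $n\mid s-1$ over $\QQ(\pi)_w$. Its unit norms are exactly the $n$-th powers of units, and $\inv_w\HH_\pi=0$ yields (6). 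The rest of your verification is fine: $\L_S\subset\L_\Pi$, conditions (1)--(5) and (7), $\rho=p$, and the appeal to Theorem~\ref{Lambda_thm}(4).
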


\begin{proof}[Proof of Theorem~\ref{main_thm_Del}:]
Let $\T=\T_{R_\Pi,L}(\eps F_\Pi,p)$ be the order of Example~\ref{L_order}, and let 
$\T^\max$ be the order generated by $\T$ and the endomorphism $H$ of Definition~\ref{H_operator}.
Clearly, \[\T[\frac{1}{s}]\cong \T^\max[\frac{1}{s}].\]
According to Corollary~\ref{main_GDel_thm}, 
$D_\Pi=\D_{\T}[\frac{1}{s}]$ is an equivalence.

Choose an isomorphism $\T_{\ell,\T}\cong R_\Pi\otimes_\ZZ\OO_L\otimes_\ZZ\ZZ_\ell$.
According to Proposition~\ref{rigid_prop}.(1), there is an isomorphism
\[\D_\Pi(A)\otimes\ZZ_\ell\cong T_\ell(A)\otimes_{R_\Pi\otimes\ZZ_\ell}T_{\ell,\T}\cong T_\ell(A)\otimes_\ZZ\OO_L.\]

According to Proposition~\ref{rigid_prop}.(2), 
\[\D_\Pi(A)\otimes_\ZZ\ZZ_p\cong M(A).\]
According to Remark~\ref{die_dual}, if we choose an isomorphism $W^*\cong W$, then
$M(A)\cong M_*(A)$. Theorem is proved.
\end{proof}


\begin{thebibliography}{99}

\bibitem[BKM]{BKM}J. Bergström, V. Karemaker, S. Marseglia. {\it Abelian varieties over finite fields with commutative endomorphism algebra: theory and algorithms.}
https://arxiv.org/abs/2409.08865

%\bibitem[CCO14]{LAV} C.-L. Chai, B. Conrad, F. Oort. {\it Complex multiplication and lifting problems.}  Mathematical  Surveys  and  Monographs,  vol.  195,  American Mathematical Society, Providence, RI, 2014.

\bibitem[CF67]{CF} {\it Algebraic number theory.}
Proceedings of an instructional conference organized by the London Mathematical Society
with the support of the Inter national Mathematical Union.
Edited by J. W. S. Cassels and A. Fr\"olich.
Academic Press, London; Thompson Book Co., Inc., Washington, D.C. 1967


\bibitem[CS15]{CS15} T. Centeleghe and J. Stix.
{\it Categories of abelian varieties over unite fields I: Abelian varieties over $\FF_p$.} Algebra \& Number Theory 9 (2015), 225–265.
%DOI: 10.2140/ant.2015.9.225
 

\bibitem[CS23]{CS2} Centeleghe, T.G., Stix, J. {\it Categories of abelian varieties over finite fields II: Abelian varieties over $\FF_q$ and Morita equivalence.} Isr. J. Math. 257, 103–170 (2023). 
%https://doi.org/10.1007/s11856-023-2536-2

\bibitem[De69]{Del} P. Deligne, {\it Vari\'et\'es ab\'eliennes ordinaire sur un corps fini}, 
Invent.\ math.\ \textbf{8} (1969), 238--243.

%\bibitem[De78]{De} M. Demazure.
%{\it Lectures on $p$-divisible groups}.
% Lecture notes in mathematics 302, Springer, 1972.

\bibitem[Ho68]{Ho} T. Honda. {\it Isogeny classes of abelian varieties over finite fields.} J. Math. Soc. Japan 20(1--2): 83-95 (1968).


\bibitem[Ib82]{Ib} T. Ibukiyama. {\it On maximal orders of division quaternion algebras over the rational number field with certain optimal embeddings.}
Nagoya Math. J., 88:181--195, (1982).

\bibitem[JKPRSBT]{EC18} B. W. Jordan, A. G. Keeton, B. Poonen, E. M. Rains, N. Shepherd-Barron, J. T. Tate. {\it Abelian varieties isogenous to a power of an elliptic curve.}
Compositio Mathematica, Volume 154, Issue 5, May 2018, pp. 934 -- 959.
%DOI: https://doi.org/10.1112/S0010437X17007990

\bibitem[KMRT]{KMRT} M.A. Knus, A. Merkurjev, M. Rost, J.-P. Tignol. {\it The Book of Involutions}. AMS Coll. Publications, Vol.44 (1998).

\bibitem[Mil06]{MK} J. Milne. {\it Motives over $\FF_p$.} 2006.
https://www.jmilne.org/math/articles/2006a.pdf

\bibitem[Mil08]{Milne} J. Milne. {\it Abelian varieties.} 2008.
 http://www.jmilne.org/math/CourseNotes/av.html

\bibitem[MO]{MO}  I. Reiner. {\it Maximal  orders}. London Mathematical Society Monographs, No.5. AcademicPress, London-New York, 1975, 395 pp.

%\bibitem[Mum70]{Mum} D. Mumford. {\it Abelian varieties.}
%Tata Institute of Fundamental Research Studies in Mathematics, No. 5.
%Published for the Tata Institute of Fundamental Research, Bombay;
%Oxford University Press, London 1970.

\bibitem[OSh20]{OO} A. Oswal, A. N. Shankar.
{\it Almost ordinary abelian varieties over finite fields.}
Journal of the London Mathematical Society,
Volume 101, Issue 3, 2020, pp. 923--937.
https://doi.org/10.1112/jlms.12291

\bibitem[Pink]{Pink} R. Pink. {\it Finite Group Schemes.} Lecture notes.  https://people.math.ethz.ch/~pink/FiniteGroupSchemes.html

\bibitem[PS19]{PS} A. Perucca, P. Sgobba. {\it Kummer theory for number fields and the reductions of algebraic numbers.} 
 International Journal of Number Theory, Vol. 15, No. 08, 2019, pp. 1617--1633. 

%\bibitem[Ry10]{Ry4} S. Rybakov.
%{\it The groups of points on abelian varieties over finite fields.} Cent. Eur. J. Math. 8(2), 2010, 282--288. arXiv:0903.0106v4

\bibitem[Ry]{Examples} S. Rybakov. {\it Polarizations on generalized Deligne modules.} To appear.

\bibitem[Ta66]{Ta66} J. Tate. {\it Endomorphisms of abelian varieties over finite fields.} Inventiones mathematicae 1966, Volume 2, Issue 2, pp 134--144.

%\bibitem[Wa69]{Wa} W. Waterhouse. {\it Abelian varieties over finite fields.} Ann.\ scient.\ \'Ec.\ Norm.\ Sup., 4 serie {\bf 2}, 1969, 521--560.
%MR0314847

\bibitem[WM69]{WM} W. Waterhouse, J. Milne. {\it Abelian varieties over finite fields.}
Proc. Sympos. Pure Math., Vol. XX, State Univ. New York, Stony Brook, N.Y., 1969,  53--64.
Amer. Math. Soc., Providence, R.I., 1971.

\end{thebibliography}
\end{document}